\documentclass[11pt,leqno]{article}
\openup 2.5\jot   

\usepackage{microtype} 
\usepackage[margin=2.5cm]{geometry} 

\usepackage{url}
\usepackage[pagebackref]{hyperref}
\usepackage{amsfonts}
\usepackage{amsthm}
\usepackage{amsmath}
\usepackage{amscd}
\usepackage{amssymb}
\usepackage{caption,subcaption}
\usepackage[shortlabels]{enumitem}
\usepackage{mathtools}
\usepackage{tikz}
\usepackage{tikz-cd}
\usetikzlibrary{arrows,arrows.meta}
\tikzset{>=latex}
\tikzcdset{arrow style=tikz, diagrams={>=latex}}
\usepackage{xcolor}

\def\CC {{\mathbb C}}     
\def\GG {{\mathbb G}}     
\def\PP {{\mathbb P}}     
\def\QQ {{\mathbb Q}}     
\def\RR {{\mathbb R}}     
\def\ZZ {{\mathbb Z}}     


\def\ring#1{\ifmmode \mathaccent'027 #1\else \rm\accent'027 #1\fi}

\newcommand{\mR}{{\mathbf R}}
\newcommand{\mK}{{\mathbf K}}
\newcommand{\mbk}{{\mathbf k}}

\newcommand{\Ee}{{\mathcal E}}
\newcommand{\Ff}{{\mathcal F}}
\newcommand{\Pp}{{\mathcal P}}

\newcommand{\zz}{{\mathbb Z}}

\def\ol  {\overline}

\def\im {\mathrm{Im}}

\def\mc {\mathcal}
\def\mk {\mathfrak}

\def\Ob {\mathrm{Ob}}
\def\Hom {\mathrm{Hom}}

\mathchardef\mhyphen="2D

\theoremstyle{plain}
\newtheorem{theorem}{Theorem}[section]

\newtheorem{lemma}[theorem]{Lemma}
\newtheorem{prop}[theorem]{Proposition}
\newtheorem{coro}[theorem]{Corollary}
\theoremstyle{definition}
\newtheorem{df}[theorem]{Definition}

\newtheorem{ex}[theorem]{Example}
\newtheorem{remark}[theorem]{Remark}
\theoremstyle{plain}
\newtheorem{prob}[theorem]{Problem}

\newtheorem{q}[theorem]{Question}

\newtheorem{conj}[theorem]{Conjecture}

\numberwithin{equation}{section}

\newlength{\miniwidth}

\setlength{\miniwidth}{3.1in}

\definecolor{boundaryclr}{HTML}{FFB380}

\newcommand\hexbg{
\draw[draw=boundaryclr,thick] (0,0) circle [radius=2]; 
\draw[draw=black!30!white,thick] (1,1.73) to (-1,1.73) to (-2,0) to (-1,-1.73) to (1,-1.73) to (2,0);
}

\newcommand{\Addresses}{{
  \bigskip
  \footnotesize

	(F.~Haiden) \textsc{University of Oxford, Mathematical Institute, Andrew Wiles Building, Woodstock Road, Oxford OX2 6GG, UK} \par\nopagebreak
	\textit{E-mail:} \texttt{Fabian.Haiden@maths.ox.ac.uk}
	\medskip
	
	(L.~Katzarkov) \textsc{University of Miami, Coral Gables, FL} and \textsc{Institute of Mathematics and Informatics, Bulgarian Academy of Sciences, Acad. G. Bonchev Str. bl. 8, 1113, Sofia, Bulgaria} and \textsc{HSE University, Russian Federation}\par\nopagebreak
	\textit{E-mail:} \texttt{lkatzarkov@gmail.com}
	\medskip
	
	(C.~Simpson) \textsc{Universit\'e Côte d’Azur, CNRS, LJAD, France}\par\nopagebreak
	\textit{E-mail:} \texttt{carlos.simpson@univ-cotedazur.fr}
}}

\title{Spectral networks and stability conditions for Fukaya categories with coefficients}
\author{F. Haiden, L. Katzarkov, C. Simpson}
\date{}

\begin{document}

\maketitle

\begin{abstract}
    Given a holomorphic family of Bridgeland stability conditions over a surface, we define a notion of spectral network which is an object in a Fukaya category of the surface with coefficients in a triangulated DG-category.
    These spectral networks are analogs of special Lagrangian submanifolds, combining a graph with additional algebraic data, and conjecturally correspond to semistable objects of a suitable stability condition on the Fukaya category with coefficients.
    They are closely related to the spectral networks of Gaiotto--Moore--Neitzke.
    One novelty of our approach is that we establish a general uniqueness results for spectral network representatives. We also verify the conjecture in the case when the surface is disk with six marked points on the boundary and the coefficients category is the derived category of representations of an $A_2$ quiver.
    This example is related, via homological mirror symmetry, to the stacky quotient of an elliptic curve by the cyclic group of order six.
\end{abstract}

\tableofcontents

\section{Introduction}

Homological mirror symmetry (HMS), proposed by Kontsevich~\cite{kontsevich_hms}, is a conjectural equivalence between the Fukaya category of a Calabi--Yau manifold $M$ and the derived category of coherent sheaves on its mirror dual $W$.
The Fukaya category depends only on the symplectic topology of $M$ while the category of coherent sheaves depends only on the complex structure on $W$.
The full geometry of the Calabi--Yau space is conjecturally encoded in terms of a triangulated category with stability condition in the sense of Bridgeland~\cite{bridgeland07}.
Constructing stability conditions on general derived categories of geometric origin is a major open problem~\cite{bmt1,joyce_conj}.

More recently, an extension of HMS involving families of categories with stability condition, initially suggested by Kontsevich, is starting to emerge.
On the symplectic side of mirror symmetry this involves locally constant families of categories (more generally: perverse schobers~\cite{kapranov_schechtman}) with holomorphically varying stability conditions, while on the algebro-geometric side one considers flat families of categories equipped with locally constant stability conditions.
In this paper we are concerned with the symplectic side of this correspondence. 

\vspace{\baselineskip}

\begin{tabular}{c|c}
     A-side & B-side \\
     \hline
     symplectic manifold & complex manifold \\
     compatible complex structure & K\"ahler form \\
     schober & flat family of categories \\
     Fukaya category with coefficients & category of global sections \\
     holomorphic family of stability conditions & locally constant  family of stability conditions
\end{tabular}

\vspace{\baselineskip}

An important class of examples of schobers with holomorphic family of stability conditions comes from spectral curves $\widetilde{C}\subset T^*C$ where $C$ is a Riemann surface. These in turn arise as spectra of Higgs bundles and are parameterized by the base of the $SL_n$ Hitchin system.
While studying the geometry of the Hitchin system, physicists Gaiotto, Moore, and Neitzke introduced the notion of \textit{spectral networks}~\cite{gmn} which are, roughly speaking, graphs on $C$ with edges labelled by pairs of sheets of $\widetilde{C}$ and so that each edge is a leaf of the foliation determined by the difference of tautological 1-forms on the two sheets. (See~\cite{gmn_snakes,glm,hollands_neitzke, williams, longhi_park,gabella,longhi_park2,glpy,longhi} for some further developments.)
In the special case of $SL_2$, spectral networks are just geodesics with respect to the flat metric coming from a quadratic differential, and it was shown by Bridgeland--Smith~\cite{bs} that there is a 3-d Calabi--Yau (3CY) category with stability condition (depending on a quadratic differential) so that stable objects correspond to finite length geodesics. This was extended to the case of quadratic differentials without higher order poles in~\cite{h_3cyteich}.
The case of $SL_n$, $n\geq 3$ is much harder, but one expects a similar statement: A 3CY category $\mc C$ such that the Hitchin base embeds in the space of stability conditions $\mathrm{Stab}(\mc C)$ and so that semistable objects correspond to (finite length) spectral networks.

Here, we propose a new definition of spectral networks adapted to that case of schobers on Riemann surfaces with holomorphic family of stability conditions. 
More precisely, we restrict to the simplest case when the schober has no singular points or monodromy and is thus given by a single triangulated DG-category $\mc E$.
The first task is to define a Fukaya category of a surface $S$ with marked points $M\subset S$ and coefficients in $\mc E$, denoted $\mc F(S,M;\mc E)$.
This is a category of the Novikov ring $\mathbf R$, and one can think of it as providing a ``non-archimedean K\"ahler metric'' on the category $\mc F(S,M,\mc E)_{\mathbf K}:=\mc F(S,M,\mc E)\otimes_{\mathbf R}\mathbf K$ over the Novikov field $\mathbf K$.
Objects of $\mc F(S,M;\mc E)$ are, roughly speaking, graphs in $S$ with edges labelled by objects of $\mc E$, fitting together in an exact sequence at the vertices. The precise definition takes into account certain disk corrections coming from regions of $S$ cut out by the graph.

Spectral networks, in our sense, are objects of $\mc F(S,M;\mc E)$ satisfying a certain constant phase condition, see Subsection~\ref{subsec_spectral}, similar to special Lagrangian submanifolds.
With this definition we can make the following conjecture precise (Conjecture~\ref{main_conj} in the main text, which includes the formula for the central charge).

\begin{conj}
\label{main_conj_intro}
There is a stability condition on the Fukaya category with coefficients $\mc F(S,M;\mc E)_{\mathbf K}$ with semistable objects of phase $\phi$ those objects $X$ which have a spectral network representative of phase $\phi$, i.e. an object in $\mc F(S,M;\mc E)$ which satisfies the spectral network condition and becomes isomorphic to $X$ after base-change to $\mathbf K$.
\end{conj}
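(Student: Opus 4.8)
The plan is to split the assertion into two logically independent parts: the construction of a genuine stability condition on $\mc F(S,M;\mc E)_{\mathbf K}$, and the identification of its semistable objects with spectral network representatives. Since the central charge $Z$ is prescribed by the formula recorded in Conjecture~\ref{main_conj}, the first task is to verify Bridgeland's axioms for the pair $(Z,\mc P)$, where the slicing $\mc P(\phi)$ is \emph{defined} to consist of those objects admitting a spectral network representative of phase $\phi$. The compatibility axiom — that the phase of $Z(X)$ equals $\phi$ for $X\in\mc P(\phi)$ — should follow essentially by construction, once the central charge is matched to the total $Z$-weight of the edge-labelling objects of $\mc E$ integrated against the relevant periods, because this matching is exactly what the constant phase condition of Subsection~\ref{subsec_spectral} encodes. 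The remaining and serious axiom is the Harder--Narasimhan property.

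The conceptual core, and the step I would carry out first, is the uniqueness of spectral network representatives, since this is what makes each $\mc P(\phi)$ well behaved as an additive subcategory and underlies the whole correspondence. I would treat the constant phase condition as a calibration: a spectral network is homologically mass-minimizing in its class, so any two spectral network representatives of a fixed object carry equal and minimal total mass. A nonzero Floer morphism between them is counted with Novikov weights recording symplectic area, so an isomorphism must be realized at valuation zero, i.e. by configurations of zero area; the calibration property permits this only when the two underlying graphs coincide and the edge labels agree up to the coherence data of $\mc E$. This is the non-archimedean transcription of the rigidity of special Lagrangians, and it promotes the expected correspondence from a surjection to a bijection.

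The main obstacle is existence: proving that \emph{every} object of $\mc F(S,M;\mc E)_{\mathbf K}$ admits a Harder--Narasimhan filtration whose factors lie in the $\mc P(\phi)$. This is the analog of the existence of finite-length geodesics in the $SL_2$ theory of Bridgeland--Smith, and of the existence of a special Lagrangian representative in a Hamiltonian isotopy class, and I expect it to be genuinely hard in full generality — which is precisely why the statement is a conjecture. The natural approach is a degeneration or gradient-flow argument: starting from an arbitrary representative, one decreases a mass functional while controlling the combinatorial type of the graph, and extracts a limit satisfying the constant phase condition. The difficulty concentrates in the failure of compactness — edges collapsing to zero length, or the combinatorial type jumping across a wall — and in showing that the limiting configuration still defines an object rather than degenerating.

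For the special case highlighted in the abstract, with $S$ a disk carrying six boundary marked points and $\mc E$ the derived category of $A_2$-representations, I would bypass the analytic existence problem using homological mirror symmetry. The mirror B-side is the derived category of coherent sheaves on the quotient stack of an elliptic curve by the cyclic group of order six, where a stability condition is classically available and its stable objects are completely classified. Transporting this stability condition across the HMS equivalence produces a stability condition on $\mc F(S,M;\mc E)_{\mathbf K}$, after which it remains to enumerate by hand the finitely many orbits of spectral networks on the hexagon under the order-six symmetry and to check that they match the transported stable objects with the correct phases. The uniqueness result from the second step guarantees that this enumeration yields an exact bijection, completing the verification in this case.
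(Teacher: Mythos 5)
Your decomposition (build the stability condition, then identify semistables with spectral networks) is reasonable, and you correctly locate the hard analytic content of the general statement in the Harder--Narasimhan existence problem --- the paper likewise leaves the general case open and proves only uniqueness plus the hexagon example. But your plan for the case of the hexagon with $\mc E=\mc A_2$ contains a genuine gap: the claim that one can \emph{enumerate by hand the finitely many orbits of spectral networks} and match them against the transported stable objects. The category $\mc A_5\otimes\mc A_2\cong D^b(E/(\ZZ/6))$ is of \emph{tame}, not finite, representation type: already the stable objects of phase $0$ include the one-parameter family of skyscraper sheaves $\mc O_p$, $p\in\PP^1_{\mathbf K}$, and the phases of semistable objects form the infinite set $\phi_{m,n}=\frac{1}{\pi}\mathrm{Arg}(m+e^{\pi i/3}n)$, which is dense mod $1$. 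No finite check can cover this. The paper's actual mechanism is recursive: (i) it constructs representatives for all phase-$0$ stables directly, which is itself delicate because the topology of the network representing $\mc O_p$ changes with the valuation of the coordinate ($p$ in the three regions $R_1$, $R_2$, $R_3$ gives a tripod configuration, a hexagon with spokes, or a bare hexagon); (ii) it lifts the autoequivalences $T_2$ and $T_1^{-1}$ --- which are twists along \emph{spherical adjunctions} with $\mathrm{Perf}(\mathbf K[\ZZ/6])$, not twists along spherical objects --- to the Novikov-ring category by attaching a new layer of six edges to the graph (Theorem~\ref{twistsn}), proving that the twist of a spectral network is again a spectral network of shifted phase; (iii) it then inducts, via Euclid's algorithm on $(m,n)$ (Proposition~\ref{prop_t1t2slopes}), to reach every phase in $[0,1/3)$, and finishes by the order-six symmetry of the coefficient category. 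Your finite enumeration has to be replaced by some such generation-by-twists argument; this is the core of the paper's proof and is absent from your proposal.

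A second, smaller gap concerns how you close the correspondence. You ask uniqueness to ``promote the expected correspondence from a surjection to a bijection,'' but uniqueness cannot do this, and moreover the paper's uniqueness statement (Theorem~\ref{thm_specnet_unique}) is weaker than what you assert: representatives are explicitly \emph{not} unique up to isomorphism --- only the support graph and the S-equivalence class of the edge data are determined --- and the proof is algebraic (strictness of morphisms between anti-HN filtrations, making the fixed-phase category $\mc F_G(S;\mc E)_\phi$ abelian, followed by a Maurer--Cartan transport and row-reduction argument over $\mathbf R$), not a calibration/mass-minimization argument, which in this discrete Novikov setting is only heuristic. What actually rules out an \emph{unstable} object admitting a spectral network representative is a separate Hom-vanishing statement: for spectral networks $X_1,X_2$ of phases $\phi_1<\phi_2$ one has $\mathrm{Ext}^{\leq 0}(X_2,X_1)=0$, which applied to the first HN triangle of a putative unstable representative yields the contradiction $\phi>\psi$ and $\phi\leq\psi$ simultaneously. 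You need this ingredient (or an equivalent) in addition to existence of representatives for all semistables; uniqueness alone does not supply it.
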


We establish the following general uniqueness result for spectral network representatives, assuming the family of stability conditions is locally constant, see Theorem~\ref{thm_specnet_unique} in the main text.

\begin{theorem}
Let $X,X'\in \mc F(S,M;\mc E)$ be spectral networks  representing the same object in the Fukaya category $\mc F(S,M;\mc E)_{\mathbf K}$. 
Then $X$ and $X'$ have the same support graph $G\subset S$ and  are S-equivalent in a certain sense (made precise in the main text).
\end{theorem}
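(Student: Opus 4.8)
The plan is to treat this as a non-archimedean analogue of the uniqueness of calibrated (special Lagrangian) representatives, and to split the statement into two sub-claims that are proved by different mechanisms: first the \emph{geometric} claim that the support graphs coincide, $G=G'$, and then the \emph{algebraic} claim that the edge-decorations and gluing data agree up to S-equivalence. An isomorphism in $\mc F(S,M;\mc E)_{\mathbf K}$ gives $[X]=[X']$ in $K$-theory, hence $Z(X)=Z(X')$; in particular both networks have the same phase $\phi$, and their edges are leaves of one and the same foliation determined by the constant-phase condition of the ambient (locally constant) family of stability conditions. This common phase is what makes the rigidity arguments below available, and it is the analogue of fixing a single calibration.

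For the support equality I would argue through the mapping cone and a calibration estimate. Choosing a Floer representative $f\colon X\to X'$ of the isomorphism, its inverse $g$, and setting $C=\mathrm{Cone}(f)$, the hypothesis forces $C\cong 0$ in $\mc F(S,M;\mc E)_{\mathbf K}$, while $[C]=[X']-[X]$ and hence $Z(C)=0$. The constant-phase condition serves as a non-archimedean calibration: the real part of the $e^{-i\pi\phi}$-rotated central-charge density integrates over any representative to a lower bound for its mass, with equality exactly along the phase-$\phi$ locus, and the $A_\infty$-structure constants carry Novikov weights $T^{\mathrm{area}}$ recording the enclosed symplectic areas. If $G\ne G'$, the symmetric difference $G\,\triangle\,G'$ bounds regions of strictly positive area, and one wants to show that the corresponding piece of $C$ contributes a nonzero term that no cancellation among disk corrections can kill, contradicting $C\cong 0$. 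Iterating over the connected components of $G\,\triangle\,G'$ then yields $G=G'=:G$.

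With a common support graph, $X$ and $X'$ differ only in the objects of $\mc E$ labelling the edges, the exact sequences at the vertices, and the disk corrections attached to the complementary regions. Restricting the (now diagonal) isomorphism edge-by-edge and vertex-by-vertex localizes the comparison into $\mc E$, where the spectral network condition becomes semistability of the labels, all of phase $\phi$, in the stability condition induced on $\mc E$. These semistable objects form a finite-length abelian category, so they are determined up to their Jordan--Hölder factors. The crucial point is that the base change $\mathbf R\to\mathbf K$ inverts the Novikov parameter and thereby filters the area-weighted extensions recorded by the disk corrections: passing to $\mathbf K$ can split or rescale such extensions but preserves their associated graded pieces. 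Consequently the isomorphism over $\mathbf K$ remembers exactly the collection of Jordan--Hölder factors of the labels, which is precisely the asserted S-equivalence.

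The main obstacle I expect is the support-equality step. Converting the clean algebraic statement $C\cong 0$ into the geometric statement $G=G'$ is delicate precisely because the graphs may overlap on some edges while differing on others, so one cannot read off a single enclosed area; and because the vertex disk corrections could in principle produce area-zero cancellations that mask a genuine geometric discrepancy. Controlling the interplay between the area filtration on the Floer differential and the local combinatorics at the vertices of $G\cup G'$ --- ruling out any spurious area-zero isomorphism bridging distinct leaves of the foliation --- is the technical heart of the proof, and is where the local-constancy of the family of stability conditions must be used in an essential way.
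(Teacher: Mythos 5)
There is a genuine gap, and it sits exactly where you locate it yourself: the support-equality step. Your plan is to take $C=\mathrm{Cone}(f)$, note $Z(C)=0$, and use a calibration estimate plus positivity of the area enclosed by $G\,\triangle\,G'$ to derive a contradiction with $C\cong 0$ over $\mathbf K$; but $C\cong 0$ over $\mathbf K$ is the \emph{hypothesis}, not something to be contradicted, and you supply no mechanism for showing that the symmetric difference produces ``a nonzero term that no cancellation among disk corrections can kill.'' Indeed the disk corrections carry strictly positive Novikov weights which become invertible over $\mathbf K$, so valuation arguments alone cannot rule out an isomorphism of high $t$-adic order bridging combinatorially distinct configurations. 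The paper never compares the two graphs geometrically at all. Instead it places $X_0$ and $X'_0$ into the category $\mc F_G(S;\mc E)_\phi$ of phase-$\phi$ spectral networks on a common graph, proves this category is \emph{abelian} (Proposition~\ref{prop_fixedphase_abelian}) --- the nontrivial input being the strictness Lemma~\ref{lem_strictness} for morphisms between anti-HN filtrations at a vertex, which rests on the vanishing of maps from semistables to semistables of strictly smaller phase --- and then equality of support graphs falls out of S-equivalence in that abelian category, since Jordan--H\"older factors have well-defined supports. Your proposal contains no analogue of the abelianness/strictness step, yet without it there is no Jordan--H\"older theory for the \emph{global} spectral network objects, which is what the theorem actually asserts (S-equivalence in $\mc F_G(S;\mc E)_\phi$, constraining the vertex gluing data, not merely the edgewise labels in $\mc E$).

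The second half of your argument also assumes its key point. ``Restricting the (now diagonal) isomorphism edge-by-edge'' is not an available operation: a morphism between twisted complexes supported on the same graph has off-diagonal components along boundary paths, and an isomorphism over $\mathbf K$ need not respect any edgewise decomposition. More seriously, the claim that base change to $\mathbf K$ ``preserves associated graded pieces'' of the area-weighted extensions is precisely the statement requiring proof --- it is the content of the paper's algebraic proposition preceding Theorem~\ref{thm_specnet_unique}. There the argument is: lift Jordan--H\"older filtrations of the central fibers to twisted complexes over $\mathbf R$ by Maurer--Cartan transport (Lemma~\ref{lem_mctransport_nov}); pass to a minimal model in which $\Hom^0$ between the simples is a product of matrix algebras over $\mathbf R$ and $\Hom^{<0}=0$; then diagonalize the matrix of the $\mathbf K$-isomorphism by row and column operations, i.e.\ the singular value decomposition over the Novikov ring (Proposition~\ref{prop_RmapSVD}), forcing the multisets of simple factors on the two sides to match. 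Absent an argument of this kind, your conclusion that the $\mathbf K$-isomorphism ``remembers'' the Jordan--H\"older factors is unsupported. (A small further point: equality of phases does not follow from $Z(X)=Z(X')$ alone, which pins the phase down only modulo $2$; the paper derives it from the $\mathrm{Ext}^{\leq 0}$-vanishing proposition for spectral networks of distinct phases.)
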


Our main result is to verify the above conjecture --- the existence of spectral network representatives --- in the following example.

\begin{theorem} 
The main conjecture is true in the particular case when $S$ is a disk with six marked points on the boundary which are the vertices of a regular hexagon, and $\mc E$ is the category $\mc A_2$ of representations of the $A_2$-type quiver with a certain (most symmetric) stability condition.
\end{theorem}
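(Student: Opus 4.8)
The plan is to make every ingredient of Conjecture~\ref{main_conj} completely explicit in this example and then exhibit the correspondence by direct construction, exploiting the $\ZZ/6$ symmetry throughout. First I would fix notation: the disk $S$ with its six boundary marked points $M$ carries an evident $\ZZ/6$ action by rotation, and the category $\mc A_2$ has exactly three indecomposables in its heart up to shift --- the two simples $S_1,S_2$ and their nontrivial extension $E$, fitting into a triangle $S_1\to E\to S_2$. The ``most symmetric'' stability condition assigns to $S_1,E,S_2$ phases in arithmetic progression with common difference $1/3$, so that these three together with their shifts by $1$ point along the six vertices of a regular hexagon; this is exactly what matches the hexagonal configuration of $M$, and is the source of the relation to $[E/(\ZZ/6)]$ via homological mirror symmetry. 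I would record at the outset the central charge formula of Conjecture~\ref{main_conj}, specialized to this geometry, so that ``phase of a spectral network'' becomes a concrete quantity attached to each labelled graph.

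Second, I would classify the relevant objects on both sides. On the symplectic side, because $S$ is a disk, any admissible graph is a chord diagram on the hexagon, and the exactness-at-vertices condition together with the three available edge labels $S_1,E,S_2$ severely restricts the possibilities; up to the $\ZZ/6$ symmetry and shift there should remain only finitely many combinatorial types of potential spectral network. For each type I would solve the constant-phase condition explicitly: since the relevant flat structure on the disk is the model hexagonal one, the constant-phase graphs are straight chords, and the labelling is forced by the requirement that each edge phase agree with the stability phase of its label. This yields an explicit finite list of spectral networks, each of which I would check does define an object of $\mc F(S,M;\mc E)$ with the prescribed disk corrections.

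Third, I would construct the stability condition on $\mc F(S,M;\mc E)_{\mK}$ and verify the matching. Using the explicit description of objects and morphisms in this small Fukaya category with coefficients, I would identify $\mc F(S,M;\mc E)_{\mK}$ with the expected target (the derived category attached to $[E/(\ZZ/6)]$), under which the central charge of Conjecture~\ref{main_conj} becomes a genuine Bridgeland central charge; existence of Harder--Narasimhan filtrations and the support property then transport from the corresponding known facts on the elliptic-curve side. For the correspondence itself, the easy direction is that each spectral network on the finite list is semistable, which I read off directly from its phase. The converse --- every semistable object admits a spectral network representative --- I would establish by taking an arbitrary semistable $X$, matching it against the finite list via its class in the Grothendieck group and its phase, and producing the representative; \emph{uniqueness} of that representative is then guaranteed by Theorem~\ref{thm_specnet_unique}, so the two descriptions of the semistable objects genuinely coincide.

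The hard part will be this last existence direction. Even in the $SL_2$/quadratic-differential situation the analogous statement is the existence of a finite-length geodesic in a prescribed homotopy class, which is already delicate; here the additional algebraic data on the edges and, crucially, the disk corrections coming from the hexagonal regions must be arranged to be simultaneously constant-phase and to represent the correct $\mK$-object. Concretely, I expect the obstacle to be showing that when one straightens a given object to constant phase, the disk-correction terms do not obstruct exactness at the vertices --- i.e.\ that the algebra of $\mc A_2$ is compatible with the hexagonal geometry. The $\ZZ/6$ symmetry and the smallness of $\mc A_2$ are precisely what make this tractable by hand, so I would reduce to checking a single fundamental domain and then propagate the conclusion by symmetry.
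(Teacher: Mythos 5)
There is a genuine gap, and it is the central one: your finite-list strategy cannot work for $A_5\otimes A_2$. The category $\mc A_5\otimes\mc A_2\cong D^b(E/(\ZZ/6))$ has fractional Calabi--Yau dimension exactly $1$ and is of \emph{tame} representation type, so the stable objects do not form a finite set up to symmetry and shift: already at phase $0$ the stable objects include the one-parameter family of simple torsion sheaves $\mc O_p$, $p\in\PP^1_{\mathbf K}$, and the set of phases of semistable objects is dense in $\RR/\ZZ$ (the $\widetilde{SL(2,\ZZ)}$-action generated by the twists $T_1,T_2$ is transitive on slopes, as for an elliptic curve). Consequently the underlying graphs of spectral networks grow without bound in combinatorial complexity as the phase varies within a single fundamental domain $[0,1/3)$ for the $\ZZ/6$-symmetry --- compare Figure~\ref{fig_specnet} --- so ``classify the finitely many combinatorial types, solve the constant-phase condition, match against the list'' fails at the classification step. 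Your related assertion that constant-phase graphs in the disk are straight \emph{chords} is also false: spectral networks here have interior trivalent (and higher-valent) vertices; already the phase-$0$ representatives of the length-two torsion sheaves are tripods, and the representative of $\mc O_p$ changes topology across three valuation regimes of $p$ (a tree with a $6$-valent vertex, a hexagon with six spokes, and a bare hexagon carrying monodromy), as worked out in Subsection~\ref{subsec_phase0}. Your approach is exactly the one the paper uses for the finite-type cases $A_2\otimes A_2$, $A_3\otimes A_2$, $A_4\otimes A_2$ (Subsection~\ref{subsec_a2a2}), and the paper explicitly notes at the end of Subsection~\ref{subsec_reptype} that for $A_5\otimes A_2$ the relevant heart is not finite-length and a different strategy is needed.

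The parts of your proposal that survive are the transport of the stability condition from the B-side (the paper does use slope stability on $D^b(E/(\ZZ/6))$, with heart $\mathrm{Coh}(C)[1]$ and $Z=\deg-e^{\pi i/3}\mathrm{rk}$) and the appeal to Theorem~\ref{thm_specnet_unique} for uniqueness. What is missing is the actual existence mechanism, which in the paper is recursive rather than enumerative: one first constructs spectral network representatives for all stable objects of phase $0$ by hand (including the continuous family, with its three regimes), and then lifts the spherical-adjunction twists $T_2$ and the dual twist $T_1^{-1}$ to the $\mathbf R$-linear category $\mc F(S,M;\mc E)$ as geometric operations that enlarge the hexagon and attach new edges along its boundary (Theorem~\ref{twistsn}); a Euclidean-algorithm argument (Proposition~\ref{prop_t1t2slopes}) shows every phase in $[0,1/3)$ is reached from phase $0$ by a word in these two operations, and the $\ZZ/6$-symmetry of the coefficient category then covers all phases. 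Without some substitute for this induction --- some way of producing representatives for infinitely many phases and continuous families --- the converse direction of your argument does not get off the ground.
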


This is Theorem~\ref{thm_a5a2} in the main text.
It is an example of a (constant) 3-differential related to a stability condition.
The usual Fukaya category of the disk with $n+1$ marked points on the boundary is equivalent to the category $\mc A_n$ of representations of an $A_n$-type quiver over a field.
The category $\mc F(S,M;\mc E)_{\mathbf K}$ is category of representations of the $A_n$-type quiver in the category $\mc E$.
The case $n=5$, $\mc E=\mc A_2$ is special in that $\mc F(S,M;\mc E)_{\mathbf K}$ turns out to be equivalent to $D^b(E/(\ZZ/6))$ where $E$ is an elliptic curve over $\mathbf K$ with $\ZZ/6$ action and the stacky (orbifold) quotient is taken.
This equivalence, which can be seen as an instance of homological mirror symmetry and was proven in one form by Ueda~\cite{ueda}, plays a central part in our proof.
Since we can transfer the stability condition from the B-side, it only remains to construct spectral network representatives of all semistable objects.
A typical such spectral network is shown in Figure~\ref{fig_specnet}.
They are constructed by a recursive procedure starting from a basic 1-parameter family, see Section~\ref{sec_a5a2} for details.

\begin{figure}
\begin{center}
\includegraphics[scale=.5]{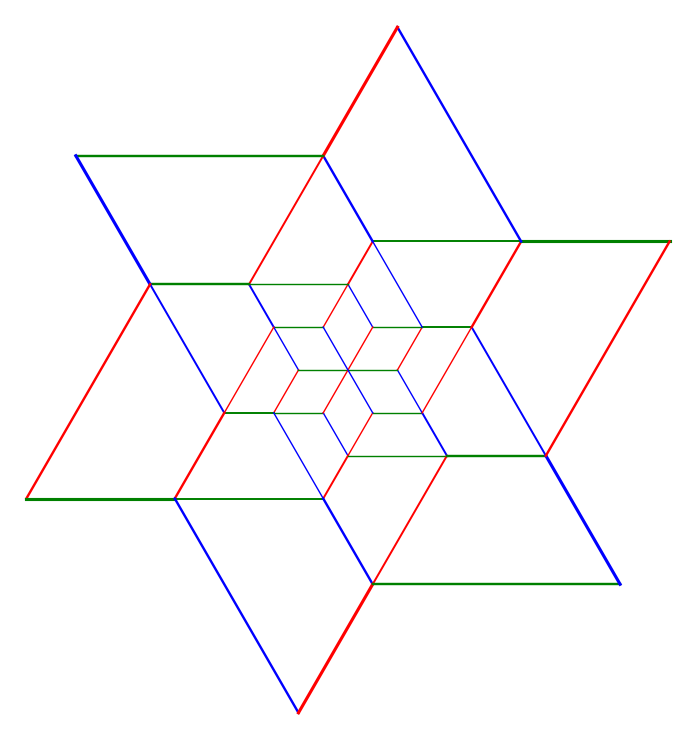}
\end{center}
\caption{Example of a spectral network for $A_5\otimes A_2$.}
\label{fig_specnet}
\end{figure}

Let us comment on the relation between Conjecture~\ref{main_conj_intro} and the Thomas--Yau conjecture~\cite{thomas01, thomas_yau02,joyce_conj} on special Lagrangian submanifolds and stability conditions.
Kontsevich has suggested a hypothetical generalization of Conjecture~\ref{main_conj_intro} to symplectic manifolds $M$ of arbitrary dimension and holomorphic families of stability conditions on any perverse schober on $M$.
This would present a generalization of the Thomas--Yau conjecture to Fukaya categories with coefficients.
On the other hand, such a conjecture can be heuristically interpreted as a degenerate limiting case of the Thomas--Yau conjecture as follows.
Suppose a Calabi--Yau manifold $X$ is fibered over a base $M$. Taking the fiberwise Fukaya category gives rise, conjecturally, to a schober $\mc E$ on $M$ equipped with a holomorphic family of stability conditions such that the Fukaya category of $M$ with coefficients in $\mc E$ recovers the Fukaya category of $X$, at least near the limit where the volume of the fibers is very small. 
At the limit where the volume of the fibers vanishes, special Lagrangian submanifolds in $X$ should project to singular Lagrangians in $X$.
If $\dim_{\RR}X=2$, these are (generalized) spectral networks, while for higher dimensional $X$ they are singular lagrangians which are a generalization of spectral networks to higher dimensions.

The text is organized as follows.
In Section~\ref{sec_alg} we discuss various aspects of curved $A_\infty$-categories and lay the algebraic foundation for the subsequent Section~\ref{sec_aside}, where the Fukaya category of a marked surface $(S,M)$ with coefficients in a triangulated DG-category $\mc E$, denoted $\mc F(S,M;\mc E)$, is constructed.
Spectral networks are defined in Section~\ref{sec_general} which also contains the statement of the main conjecture and a uniqueness result.
In Section~\ref{sec_ana2} we specialize to the case where either the base surface is a disk, the fiber category is $\mc A_2$, or both. 
The main conjecture is verified in the simplest non-trivial case: $S$ is a disk with three marked points on the boundary and $\mc E=\mc A_2$.
Finally, the example where $S$ is a disk with six marked points on the boundary and $\mc E=\mc A_2$ is discussed in Section~\ref{sec_a5a2}.

\textbf{Acknowledgements}: Pranav Pandit was initially part of this project and we thank him for his input during our discussions. 
We also thank Maxim Kontsevich for stimulating discussions.
F.~H. was partially supported by a Titchmarsh Research Fellowship.
L.~K. was partially supported by an NSF Grant, a Simons Investigator Award HMS, the National Science Fund of Bulgaria, National Scientific Program ``Excellent Research and People for the Development of European Science'' (VIHREN), Project No. KP-06-DV-7, and an HSE University Basic Research Program.
C.~S. was supported by the 3IA Côte d’Azur (ANR-19-P3IA-0002), DuaLL (ERC Horizon 2020 number 670624), the program ``Moduli of bundles and related structures'' (ICTS/mbrs2020/02), by a grant from the Institute for Advanced Study, and by the University of Miami. 
L.~K. and C.~S. were supported by a Simons Investigator Award HMS.


\section{Algebraic preliminaries}
\label{sec_alg}

This section collects some definitions and results related to $A_\infty$-categories in preparation for Section~\ref{sec_aside}.
In particular, we emphasize working with \textit{curved} $A_\infty$-categories throughout.
We recall their definition and some basic constructions in the first subsection.
The short Subsection~\ref{subsec_tensor} contains a definition of the tensor product of a DG-category with a curved $A_\infty$-category.
A type of bar construction which starts from an $A_\infty$-algebra and yields a curved DG-algebra is discussed in Subsection~\ref{subsec_bar}.
Subsection~\ref{subsec_fibdg} contains a construction of fibrations between $A_\infty$-categories.
Finally, we discuss curved $A_\infty$-categories over a Novikov ring in Subsection~\ref{subsec_catnov}.

\subsection{Curved $A_\infty$-categories}
\label{subsec_curved}

$A_\infty$-categories are a generalization of DG-categories where associativity holds only up to a homotopy which is part of an infinite tower of higher coherences.
See \cite{keller_a_infinity} for an introduction to the subject and \cite{lefevre_hasegawa,seidel08,ks_ainfty} for more in-depth treatments.
Curved $A_\infty$-categories are yet a further generalization where each object $X$ has a \textit{curvature} $\mk m_0\in\Hom^2(X,X)$. 
Such structures naturally appear in symplectic topology~\cite{FOOO} and deformation theory.
Typically, one requires $\mk m_0$ to be small in some sense, and one way to make this precise is to work with filtered categories, as will be described below.
We refer the reader to~\cite{positselski,armstrong_clarke,deken_lowen} for some related works.

Fix a ground field $\mathbf k$.
We begin with some preliminaries on filtrations.
An \textbf{$\RR$-filtration} on a vector space $V$ over $\mathbf k$ is given by subspaces $V_{\geq \lambda}\subseteq V$, $\lambda\in\RR$, with $V_{\geq\lambda}\subseteq V_{\geq\mu}$ for $\lambda\geq\mu$.
When dealing with $\ZZ$-graded vector spaces, the filtration is required to be compatible with grading in the sense that each $V_{\geq\lambda}$ is a direct sum of its homogeneous components.
Furthermore, we usually require filtrations to be complete and separated, i.e. $\widehat{V}:=\varprojlim V/V_{\geq \lambda}=V$ and $\bigcap_{\lambda} V_{\geq\lambda}=0$, respectively.
Write $V_{>\lambda}:=\cup_{\mu>\lambda}V_{\geq\mu}$.
The tensor product $V\otimes W$ of $\RR$-filtered vector spaces is given the $\RR$-filtration where $(V\otimes W)_{\geq\lambda}$ is spanned by $V_{\geq \mu}\otimes W_{\geq \lambda-\mu}$, $\mu\in\RR$.

\begin{df}
\label{def_curved_ainfty}
A \textbf{(weakly) curved $A_\infty$-category}, $\mc C$, over $\mathbf k$ is given by 
\begin{enumerate}
\item a set $\Ob(\mc C)$ of objects,
\item for each pair $X,Y\in\Ob(\mc C)$ a $\ZZ$-graded vector space $\Hom(X,Y)$ over $\mathbf k$ with $\RR$-filtration $\Hom(X,Y)_{\geq\lambda}$ (complete and separated), and
\item for $X_0,\ldots,X_n\in\Ob(\mc C)$, $n\geq 0$, filtration-preserving \textit{structure maps}
\[
\mk m_n:\Hom(X_{n-1},X_n)\otimes\cdots\otimes\Hom(X_0,X_1)\to\Hom(X_0,X_n)
\]
of degree $2-n$,
\end{enumerate}
such that:
\begin{enumerate}
\item The \textit{$A_\infty$-relations with curvature}
\begin{equation} \label{A_infty_rels_m0}
\sum_{i+j+k=n}(-1)^{\Vert a_k\Vert+\ldots+\Vert a_1\Vert}\mk m_{i+1+k}(a_n,\ldots,a_{n-i+1},\mk m_j(a_{n-i},\ldots,a_{k+1}),a_k,\ldots,a_1)=0
\end{equation}
hold, where $i,j,k,n\geq 0$ and $\Vert a\Vert:=|a|-1$.
\item
\textit{Small curvature}: $\mk m_0(X)\in\Hom^2(X,X)_{>0}$ where, as shorthand, we write $\mk m_0$ or $\mk m_0(X)$ for the image of $1$ under $\mk m_0:\mathbf k\to\Hom^2(X,X)$.
\item \textit{Strict unitality}: For every $X\in\Ob(\mc C)$ there is  $1_X\in\Hom^0(X,X)$, such that
\begin{gather*}
\mk m_2(a,1_X)=(-1)^{|a|}a,\qquad \mk m_2(1_X,a)=a, \\
\mk m_k(\ldots,1_X,\ldots)=0\text{ for }k\neq 2.
\end{gather*}
\end{enumerate}
\end{df}

\begin{remark}
Instead of strict units one could, more generally, consider units up to homotopy as in~\cite{FOOO}. However, for our purposes the more restrictive notion will suffice. 
\end{remark}

The first two $A_\infty$-relations with curvature are 
\begin{gather*}
\mk m_1(\mk m_0)=0 \\
\mk m_1(\mk m_1(a))+\mk m_2(a,\mk m_0)+(-1)^{\|a\|}\mk m_2(\mk m_0,a)=0 
\end{gather*}
so $\mk m_1$ is not a differential in general.
The next two are
\begin{gather*}
\begin{split}
& \mk m_1(\mk m_2(a,b))+\mk m_2(a,\mk m_1(b))+(-1)^{\|b\|}\mk m_2(\mk m_1(a),b) \\
&+\mk m_3(a,b,\mk m_0)+(-1)^{\|b\|}\mk m_3(a,\mk m_0,b)+(-1)^{\|a\|+\|b\|}\mk m_3(\mk m_0,a,b)=0 
\end{split} \\
\mk m_2(a,\mk m_2(b,c))+(-1)^{\|c\|}\mk m_2(\mk m_2(a,b),c)+(\text{terms with }\mk m_3\text{ or }\mk m_4)=0
\end{gather*}
from which one sees that curved $A_\infty$-categories with $\mk m_k=0$ for $k\geq 3$ correspond to curved DG-categories (in the sense of~\cite{positselski}) via
\[
h=-\mk m_0,\qquad da=(-1)^{|a|}\mk m_1(a),\qquad ab=(-1)^{|b|}\mk m_2(a,b).
\]
Indeed, the first four $A_\infty$-relations correspond to $dh=0$, $d^2a=[h,a]$, the graded Leibniz rule, and associativity of the product.

The definition of an \textbf{(uncurved) $A_\infty$-category} differs from that of a curved one in that $\mk m_0=0$ and the $\Hom$'s are not equipped with filtrations. 
Giving each $\Hom$ the discrete filtration with $\Hom(X,Y)_{>0}=0$ turns such a category into a curved $A_\infty$-category in a canonical way.
In the other direction, there are two most basic ways in which to pass from a curved $A_\infty$-category to an uncurved one.

\begin{df}
Let $\mc C$ be a curved $A_\infty$-category. 
An object $X\in\Ob(\mc C)$ is \textit{flat} if $\mk m_0(X)=0$.
Denote by $V(\mc C)$ the full \textbf{subcategory of flat objects}.
\end{df}
\begin{df}
Denote by $\mc C_0$ the uncurved $A_\infty$-category with $\Ob(\mc C_0)=\Ob(\mc C)$ and
\[
\Hom_{\mc C_0}(X,Y):=\Hom_{\mc C}(X,Y)_{\geq 0}/\Hom_{\mc C}(X,Y)_{>0}
\]
and the induced structure maps. 
\end{df}

Since we have defined flat objects, let us also mention the following related definition.

\begin{df}
The \textbf{homotopy category}, $[\mc C]$, of a curved $A_\infty$-category $\mc C$ is the $\mathbf k$-linear category $H^0(V(\mc C))$, i.e. objects of $[\mc C]$ are flat objects of $\mc C$ and morphisms are
\[
\Hom_{[\mc C]}(X,Y):=H^0\left(\Hom_{\mc C}(X,Y),\mk m_1\right)
\]
where $\mk m_1^2=0$ since both $X$ and $Y$ are flat.
\end{df}

Two curved $A_\infty$-categories $\mc C$ and $\mc D$ over $\mathbf k$ are by said to be \textit{quasi-equivalent}, if the subcategories of flat objects, $V(\mc C)$ and $V(\mc D)$, are.
In particular, quasi-equivalence implies equivalence of the homotopy categories.

\begin{remark}
The relation between $\mc C$ and the uncurved category $V(\mc C)$ is analogous to the relation between a chain complex and its homology. 
Even though we may ultimately be interested in $V(\mc C)$, there are often intermediate constructions for which one needs to keep track of curved objects also.
As a basic example, $V(\mc C)_0$ is usually strictly smaller than $V(\mc C_0)=\mc C_0$.
\end{remark}

\begin{df}
An \textbf{curved $A_\infty$-functor} $F$ between curved $A_\infty$-categories $\mc A$ and $\mc B$  is given by a map $F:\Ob(\mc A)\to\Ob(\mc B)$ and a collection of filtration preserving maps
\[
F_n:\Hom_{\mc A}(X_{n-1},X_n)\otimes\cdots\otimes \Hom_{\mc A}(X_0,X_1)\to\Hom_{\mc B}(FX_0,FX_n)
\]
of degree $1-n$, $n\geq 0$, such that $F_0\in\Hom^1(FX,FX)_{>0}$ and the $A_\infty$-equations
\begin{equation}\label{ainfty_mor}
\begin{gathered}
\sum_{k\geq 0} \sum_{i_1+\ldots+i_k=n}\mk m_k(F_{i_k}(a_n,\ldots,a_{n-i_k+1}),\ldots,F_{i_1}(a_{i_1},\ldots,a_1)) \\
=\sum_{i+j+k=n}(-1)^{\Vert a_k\Vert+\ldots+\Vert a_1\Vert}F_{i+1+k}(a_n,\ldots,a_{n-i+1},\mk m_j(a_{n-i},\ldots,a_{k+1}),a_k,\ldots,a_1).
\end{gathered}
\end{equation}
hold.
We further require $A_\infty$-functors to be \textit{strictly unital}, i.e.
\[
F_1(1_X)=1_{FX},\qquad F_n(\ldots,1_X,\ldots)=0\qquad\text{for }n>1.
\]
\end{df}

Note that because of the $F_0$-term, there are infinitely many summands on the left-hand-side of the above equation. 
Convergence to a unique limit is guaranteed by the smallness requirement on $F_0$ together with completeness and separatedness of the filtrations.

Let $F:\mc A\to \mc B$ be a curved $A_\infty$-functor between curved $A_\infty$-categories.
The case $n=0$ of \eqref{ainfty_mor} is
\[
\mk m_0(F(X))+\sum_{k\geq 1} \mk m_k(F_0(X),\ldots,F_0(X))=F_1(\mk m_0(X))
\]
so, in particular, if $X$ is flat, then $F(X)$ need not be, which is why we refer to $F$ as \textit{curved}.
However, it turns out that a curved functor may be replaced by one without $F_0$-term at the expense of enlarging the target category.

\begin{df}
Let $\mc C$ be a curved $A_\infty$-category, then $\widetilde{\mc C}$ is the curved $A_\infty$-category whose objects are pairs $(X,\delta)$ with $\delta\in\Hom^1(X,X)_{>0}$, morphisms
\[
\Hom_{\widetilde{\mc C}}((X,\delta),(Y,\gamma)):=\Hom_{\mc C}(X,Y)
\]
(with the same filtration and grading), and structure maps $\widetilde{\mk m}_k$ obtained by ``inserting $\delta$'s everywhere'':
\begin{equation}\label{twisted_structure_maps}
\widetilde{\mk m}_k(a_k,\ldots,a_1):=\sum_{n_0,\ldots,n_k\ge 0}\mk m_{k+n_0+\ldots+n_k}(\underbrace{\delta_k,\ldots,\delta_k}_{n_k\text{ times}},a_k,\ldots,a_1,\underbrace{\delta_0,\ldots,\delta_0}_{n_0\text{ times}})
\end{equation}
This converges by our smallness assumption on $\delta_i$'s as well as completeness of $\Hom$-spaces.
\end{df}

According to the case $k=0$ of \eqref{twisted_structure_maps}, an object $(X,\delta)$ is flat if and only if
\begin{equation}\label{MC_eqn}
\sum_{k=0}^{\infty}\mk m_k(\underbrace{\delta,\ldots,\delta}_{k\text{ times}})=0
\end{equation}
which is known as the \textit{$A_\infty$-Maurer--Cartan equation}, and its solutions the \textit{Maurer--Cartan elements} or \textit{bounding cochains}.
Let $\mc{MC}(X)\subseteq\Hom^1(X,X)_{>0}$ denote the set of Maurer--Cartan elements.

A useful technical tool is the ability to ``transport'' Maurer--Cartan elements along equivalences, see~\cite{h_skein} for a proof.

\begin{lemma}
\label{lem_mctransport}
Let $\mc C$ be a curved $A_\infty$-category, $X,Y\in \mc C$, $\delta\in\mc{MC}(X)$, and $f\in\Hom(X,Y)_{\geq 0}/\Hom(X,Y)_{>0}$ and equivalence in $\mc C_0$.
Then there is a $\gamma\in\mc{MC}(Y)$ and $\tilde{f}\in\Hom(X,Y)_{\geq 0}$ lifting $f$ which is an equivalence from $(X,\delta)$ to $(Y,\gamma)$.
\end{lemma}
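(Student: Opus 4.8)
The plan is to construct $\gamma$ and $\tilde f$ by transporting the bounding cochain $\delta$ along the equivalence $f$, via a successive-approximation argument that converges because the filtrations are complete and separated. It is convenient to work in the twisted category $\widetilde{\mc C}$, in which $(X,\delta)$ is flat by the Maurer--Cartan equation \eqref{MC_eqn} and in which an equivalence $(X,\delta)\to(Y,\gamma)$ is precisely an isomorphism in $[\widetilde{\mc C}]$. The key preliminary observation is that, since $\delta,\gamma\in\Hom^1_{>0}$, every insertion of a $\delta$ or a $\gamma$ in \eqref{twisted_structure_maps} strictly raises the filtration; hence reduction modulo $\Hom_{>0}$ kills all such insertions and identifies $\widetilde{\mc C}_0$ with $\mc C_0$, the reduction of any lift $\tilde f$ being $f$ no matter which $\gamma$ is chosen. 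In particular the \emph{leading-order} differential is $\mk m_1^{(0)}$ in all cases, and a twisted-closed lift of $f$ will automatically be an equivalence once $f$ is one.

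First I would fix, in $\Hom_{\ge 0}$, a lift $\tilde g$ of a homotopy inverse $g$ of $f$ together with lifts of the homotopies witnessing $\mk m_2^{(0)}(\tilde g,\tilde f)\simeq 1_X$ and $\mk m_2^{(0)}(\tilde f,\tilde g)\simeq 1_Y$ in $\mc C_0$; these exist because the quotient map $\Hom_{\ge 0}\to\Hom_{\ge 0}/\Hom_{>0}$ is surjective. As a zeroth approximation take $\tilde f$ to be a lift of $f$ and $\gamma:=\mk m_2(\tilde f,\mk m_2(\delta,\tilde g))\in\Hom^1(Y,Y)_{>0}$, the conjugate of $\delta$. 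One then measures the two defects
\[
E_{\mathrm{MC}}:=\sum_{k\ge 0}\mk m_k(\gamma^{\otimes k}),\qquad
E_f:=\sum_{p,q\ge 0}\mk m_{1+p+q}(\gamma^{\otimes q},\tilde f,\delta^{\otimes p}),
\]
and improves $(\gamma,\tilde f)\rightsquigarrow(\gamma+\epsilon,\tilde f+\phi)$. A direct expansion shows that to leading filtration order the change in $E_{\mathrm{MC}}$ is $\mk m_1^{(0)}(\ol\epsilon)$ and the change in $E_f$ is $\mk m_1^{(0)}(\ol\phi)$ together with a term coupling in $\ol\epsilon$, every further insertion of $\delta,\gamma,\epsilon$ being of strictly higher filtration. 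The $A_\infty$-relations force the leading terms of $E_{\mathrm{MC}}$ and $E_f$ to be $\mk m_1^{(0)}$-cocycles (the Bianchi identity for the Maurer--Cartan map), and the lifted homotopies furnish explicit $\mk m_1^{(0)}$-primitives for them; this is the homological-perturbation mechanism, and is exactly where the hypothesis that $f$ is an \emph{equivalence} enters. Solving first the Maurer--Cartan defect for $\ol\epsilon$ and then the closedness defect for $\ol\phi$ cancels the leading terms and pushes both defects strictly up the filtration.

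Iterating produces corrections whose defects tend to $0$; by completeness and separatedness they sum to well-defined $\gamma\in\Hom^1(Y,Y)_{>0}$ and $\tilde f\in\Hom^0(X,Y)_{\ge 0}$ lifting $f$, with $E_{\mathrm{MC}}=0$ and $E_f=0$, i.e. $\gamma\in\mc{MC}(Y)$ and $\tilde f$ a $\widetilde{\mk m}_1$-closed morphism $(X,\delta)\to(Y,\gamma)$. Finally, transporting $g$ in the same way yields a twisted-closed $\tilde g$ reducing to $g$, and $\mk m_2(\tilde g,\tilde f)$ represents a class in $[\widetilde{\mc C}]$ equal to $1_X$ up to an endomorphism of filtration $>0$; by completeness the geometric series inverts it, and similarly on the other side, so $[\tilde f]$ is invertible and $\tilde f$ is the desired equivalence.

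The hard part will be the second paragraph: keeping the two equations consistent under the coupled correction and verifying that at each stage the leading defect is not merely $\mk m_1^{(0)}$-closed but exact, which is the genuine obstruction-vanishing step and relies essentially on the leading-order data forming a homotopy equivalence. A secondary but real technical point is that, because the filtration is $\RR$-valued rather than discrete, the construction is not an ordinary induction over integers: one must organize the successive approximations so that each correction raises the filtration by a definite increment, so that the errors do not merely accumulate below some finite supremum, and then invoke completeness to pass to the limit.
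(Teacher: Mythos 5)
The paper gives no proof of this lemma at all --- it simply cites \cite{h_skein} --- and your successive-approximation argument (correct the Maurer--Cartan defect and the closedness defect order by order in the filtration, using closedness of the leading obstruction and its exactness via the homotopy-equivalence data, then pass to the limit by completeness and separatedness) is essentially the standard proof given in that reference. Your two flagged technical points are genuine but resolvable exactly along the lines you indicate: since $\delta$, the initial defects, and $\mk m_0(Y)$ each lie in $\Hom_{\geq\mu}$ for a single $\mu>0$, one can induct along the tower of square-zero extensions $\Hom_{\geq 0}/\Hom_{\geq n\mu}$, which supplies the definite filtration increment; and the exactness (not merely closedness) of the leading Maurer--Cartan obstruction follows from the consistency identity $\widetilde{\mk m}_1(E_f)=\pm\widetilde{\mk m}_2(E_{\mathrm{MC}},\tilde f)$ --- note $\widetilde{\mk m}_1$ squares to zero only once the curvature is killed, which is why your prescribed order (solve the Maurer--Cartan defect first) matters --- combined with the fact that composition with $[f]$ is invertible on associated-graded cohomology.
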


Given a curved $A_\infty$-category $\mc C$ one forms its additive closure $\mathrm{add}(\mc C)$, i.e. closure under finite direct sums and shifts, essentially as in the case of uncurved $A_\infty$-categories, see for example~\cite{seidel08}.
It is useful to combine this with the twisting construction $\mc C\mapsto \widetilde{\mc C}$ above.

\begin{df}
For a curved $A_\infty$-category $\mc C$ let $\mathrm{Tw}(\mc C):=\widetilde{\mathrm{add}(\mc C)}$.
\end{df}

There is another construction of \textit{one-sided twisted complexes} which is defined commonly applied to (uncurved) DG- and $A_\infty$-categories and for which the same notation $\mathrm{Tw}(\mc C)$ is often used. 
Instead of considering $\delta\in\Hom^1(X,X)_{>0}$ one requires $\delta$ to be strictly upper triangular with respect to some direct sum decomposition and that $\delta$ satisfies the Maurer--Cartan equation. 
The first condition ensures that in the formula for $\widetilde{\mk m_n}$ there are only finitely many non-zero terms.
Using this construction, one gets a (pre-)triangulated DG- or $A_\infty$-category. 
We refer to~\cite{seidel08} for further details.

\subsection{Tensor product of categories}
\label{subsec_tensor}

Let $\mc A$ be a DG-category over $\mathbf k$ and $\mc B$ a curved $A_\infty$-category over $\mathbf k$.
In this subsection we define the tensor product $\mc A\otimes \mc B$ as a curved $A_\infty$-category.
This is fairly straightforward except perhaps for the sign conventions.

Let $\mathrm{Ob}(\mc A\otimes\mc B):=\mathrm{Ob}(\mc A)\times \mathrm{Ob}(\mc B)$ and write $A\otimes B$ for the pair $(A,B)$.
Morphisms are given by
\[
\Hom_{\mc A\otimes\mc B}(A_1\otimes B_1,A_2\otimes B_2):=\Hom_{\mc A}(A_1,A_2)\otimes \Hom_{\mc B}(B_1,B_2)
\]
with filtration
\[
\Hom_{\mc A\otimes\mc B}(A_1\otimes B_1,A_2\otimes B_2)_{\geq\lambda}:=\Hom_{\mc A}(A_1,A_2)\otimes \Hom_{\mc B}(B_1,B_2)_{\geq\lambda}.
\]
The structure maps are
\begin{align*}
    \mk m_0(A\otimes B)&:=1_A\otimes \mk m_0(B) \\
    \mk m_1(a\otimes b)&:=(-1)^{|a|+\|b\|}da\otimes b+a\otimes \mk m_1(b) \\
    \mk m_n(a_n\otimes b_n,\ldots,a_1\otimes b_1)&:=(-1)^*a_n\cdots a_1\otimes \mk m_n(b_n,\ldots,b_1),\qquad n\geq 2
\end{align*}
where $*=\sum_{i<j}|a_i|\cdot\|b_j\|$.
The proof of the following proposition is straightforward and will be omitted.

\begin{prop}
Given a DG-category $\mc A$ and a curved $A_\infty$-category $\mc B$, $\mc A\otimes \mc B$ as defined above is a curved $A_\infty$-category.
\end{prop}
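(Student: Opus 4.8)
The plan is to check the three conditions of Definition~\ref{def_curved_ainfty} in turn, isolating the curved $A_\infty$-relations \eqref{A_infty_rels_m0} as the only substantive point and treating everything else as bookkeeping.

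First I would dispose of the routine items. The filtration on $\Hom_{\mc A\otimes\mc B}$ is $\Hom_{\mc A}\otimes\Hom_{\mc B,\geq\lambda}$, so it inherits completeness and separatedness from $\mc B$, since tensoring with the fixed space $\Hom_{\mc A}$ commutes with the relevant inverse limits and intersections. A degree count shows each $\mk m_n$ has degree $2-n$, and because every $\mk m_n$ touches the $\mc B$-factor only through the filtration-preserving $\mk m_n^{\mc B}$, the maps preserve the product filtration. Small curvature is immediate, as $\mk m_0(A\otimes B)=1_A\otimes\mk m_0(B)\in\Hom^2_{>0}$ whenever $\mk m_0(B)\in\Hom^2_{\mc B,>0}$. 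For strict unitality I would take $1_{A\otimes B}:=1_A\otimes 1_B$ and verify the three unit axioms directly from the formulas, using the units of $\mc A$ and $\mc B$.

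The heart of the matter is \eqref{A_infty_rels_m0}. I would verify it directly, exploiting that $\mc A$ is a \emph{DG}-category: $\mk m_0^{\mc A}=0$, $\mk m_k^{\mc A}=0$ for $k\geq 3$, and $\mk m_1^{\mc A}=d$, $\mk m_2^{\mc A}$ satisfy $d^2=0$, the graded Leibniz rule, associativity, and unitality. After substituting the tensor structure maps into the left-hand side of \eqref{A_infty_rels_m0}, I would group the terms according to how the inner operation $\mk m_j$ and the outer operation $\mk m_{i+1+k}$ decompose over the two factors. Two simplifications drive everything: because composition in $\mc A$ is strictly associative and unital, the $\mc A$-component of any composite collapses to the single total product $a_n\cdots a_1$ independently of the bracketing, while an inserted curvature $\mk m_0^{\otimes}=1_A\otimes\mk m_0(B)$ contributes the identity $1_A$ on the $\mc A$-side; and the differential $d$ of $\mc A$ enters only through the $d$-part of the maps $\mk m_1^{\otimes}$. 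This sorts the terms into two families. In the first, no $\mc A$-differential occurs: each such term carries the common factor $a_n\cdots a_1$, and the accompanying $\mc B$-factors reproduce exactly the curved $A_\infty$-relation \eqref{A_infty_rels_m0} for $\mc B$ (the $\mk m_0^{\otimes}$-insertions matching the curvature terms of $\mc B$), so the family sums to zero. In the second, exactly one $\mc A$-differential occurs: the term where the outer map is the $d$-part of $\mk m_1^{\otimes}$ produces $d(a_n\cdots a_1)$, whereas the terms where the $d$-part sits on an inner $\mk m_1^{\otimes}$ inside an outer composition produce $\sum_s\pm\,a_n\cdots(da_s)\cdots a_1$; these cancel by the graded Leibniz rule, and the few iterated-$d$ contributions vanish by $d^2=0$.

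The hard part will be the signs: I must ensure that the Koszul sign $(-1)^{*}$ with $*=\sum_{i<j}|a_i|\cdot\|b_j\|$, the sign $(-1)^{|a|+\|b\|}$ in $\mk m_1$, and the relation signs $(-1)^{\|a_k\|+\cdots+\|a_1\|}$ conspire to make the cancellations above exact. To avoid an error-prone term-by-term chase, I would recast the claim in the bar/tensor-coalgebra picture, where a completed $\RR$-filtered curved $A_\infty$-structure is a square-zero codifferential $b$ on the completed tensor coalgebra of the shifted morphism spaces. The formulas are arranged so that $b_{\mc A\otimes\mc B}$ corresponds to $b_{\mc A}\,\widehat\otimes\,1+1\,\widehat\otimes\,b_{\mc B}$, using that the strictly associative DG-category $\mc A$ has bar differential supported in its linear and quadratic parts. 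In that picture the Koszul signs are exactly those produced by commuting $b_{\mc A}$ past the $\mc B$-factors, and $(b_{\mc A}\widehat\otimes 1+1\widehat\otimes b_{\mc B})^2=0$ follows at once from $b_{\mc A}^2=0$, $b_{\mc B}^2=0$, and the graded-commutativity of two operations on disjoint factors. I expect this sign-matching to be the only genuine obstacle; the rest is formal.
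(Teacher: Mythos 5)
The paper itself gives no proof --- the proposition is declared straightforward and omitted --- so the benchmark is the direct verification the authors had in mind, and the skeleton of your first argument is exactly that verification. Your sorting of the terms of \eqref{A_infty_rels_m0} is correct: every term with no $\mc A$-differential carries the common factor $a_n\cdots a_1$ (strict associativity collapses all bracketings, and an inserted $\mk m_0 = 1_A\otimes\mk m_0(B)$ contributes $1_A$ on the $\mc A$-side), and these assemble to $a_n\cdots a_1$ tensored with the curved relation for $\mc B$; the terms with exactly one $d$ cancel by the graded Leibniz rule; iterated-$d$ terms die by $d^2=0$. The filtration, degree, and small-curvature checks are as routine as you say.

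The genuine gap is the step you delegate the signs to. There is no coalgebra identification of $T^c\left((\Hom_{\mc A}\otimes\Hom_{\mc B})[1]\right)$ with $T^c\left(\Hom_{\mc A}[1]\right)\widehat\otimes\, T^c\left(\Hom_{\mc B}[1]\right)$: the interleaving map $(a_n\otimes b_n)\otimes\cdots\otimes(a_1\otimes b_1)\mapsto \pm(a_n\otimes\cdots\otimes a_1)\otimes(b_n\otimes\cdots\otimes b_1)$ intertwines deconcatenation only with the \emph{simultaneous-splitting} part of the tensor-product comultiplication, so it is not a coalgebra map, and the coderivation $b_{\mc A}\widehat\otimes 1+1\widehat\otimes b_{\mc B}$, which lives on the wrong coalgebra, does not transport to a codifferential on the bar construction of $\mc A\otimes\mc B$. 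This is precisely why tensor products of two general $A_\infty$-structures require a choice of diagonal on the associahedra (Saneblidze--Umble) rather than a formula of the shape $b\otimes 1+1\otimes b$, and it is why the proposition needs $\mc A$ to be a \emph{DG}-category: if your shortcut were available, that hypothesis would be superfluous. So the sign chase you call ``the only genuine obstacle'' must be carried out directly (equivalently, by checking that the corestriction of $b^2$ to $(\Hom_{\mc A}\otimes\Hom_{\mc B})[1]$ vanishes --- which is the same finite computation in coalgebra clothing, not a reduction of it). That this bookkeeping has real content is visible already in the unit check you treat as routine: with the Koszul sign $*=\sum_{i<j}|a_i|\cdot\|b_j\|$ exactly as printed, one computes $\mk m_2(a\otimes b,\,1_A\otimes 1_B)=(-1)^{|b|}\,a\otimes b$ rather than the required $(-1)^{|a|+|b|}\,a\otimes b$, and $\mk m_2(1_A\otimes 1_B,\,a\otimes b)=(-1)^{|a|}\,a\otimes b$ rather than $a\otimes b$; strict unitality of $1_A\otimes 1_B$ in fact forces the transposed sign $\sum_{i>j}|a_i|\cdot\|b_j\|$. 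A complete proof must pin down such conventions by hand, and cannot outsource them to a coalgebra isomorphism that does not exist.
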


\subsection{Bar construction with curvature}
\label{subsec_bar}

The data of an augmented $A_\infty$-algebra $A=\mathbf k1\oplus\overline{A}$ over a field $\mathbf k$ is equivalently encoded in terms of the tensor DG-coalgebra generated by $\overline{A}[1]$ with the differential whose components are $\mk m_n$. 
This is known as the \textit{bar construction}.
Here we discuss a generalization where the augmentation is required to be only linear, not compatible with the $A_\infty$-algebra structure. The result is a \textit{curved} DG-coalgebra.
This is a generalization of parts of~\cite{positselski} to $A_\infty$-algebras. See also~\cite{hirsh_milles} for a version of this for operads and properads.

Instead of working over a fixed field, it will be useful to consider, more generally, $A_\infty$-algebras over the semisimple base ring $R:=\mathbf k^X$, where $X$ is a finite set. These are the same as $A_\infty$-categories over $\mathbf k$ with set of objects $X$.

\begin{df}
Let $R$ be a $\mathbf k$-algebra, then an \textbf{$A_\infty$-algebra over $R$} is an $A_\infty$-algebra $A$ over $\mathbf k$ such that $A$ has the structure of an $R$-bimodule and such that the structure maps induce maps of $R$-bimodules $A\otimes_R\cdots\otimes_R A\to A$.
\end{df}

If $A$ is an $A_\infty$-algebra over $R$, then define $R\to A$ by $r\mapsto r1_A=1_Ar$.
We will only consider the case $R=\mathbf k^X$, so in particular $R$ is semisimple.

\begin{df}
A \textbf{pre-augmentation} of an $A_\infty$-algebra $A$ over $R$ is a splitting $A=\ol{A}\oplus R$, where $R$ is identified with its image under the map $R\to A$, which is assumed to be injective.
An \textbf{augmentation} is a pre-augmentation such that $\mk m_n(\ol A,\ldots,\ol A)\subset \ol A$, equivalently so that the projection $A\to R$ is an $A_\infty$-morphism.
\end{df}

Our main example of interest is the following.
Let $n\geq 0$ and $\mc C_n$ be the $A_\infty$-category with objects $X_i$, $i\in \ZZ/(n+1)$, non-zero morphism spaces 
\[
\Hom(X_i,X_i)=\mathbf k1_{X_i},\qquad \Hom(X_{i-1},X_i)=\mathbf k\alpha_i
\]
with degrees of $\alpha_i$'s chosen such that 
\[
\sum_i|\alpha_i|=n-1.
\]
The only non-zero structure maps are $\mk m_2$, which is fixed by the strict unitality requirement, and
\[
\mk m_{n+1}(\alpha_{i},\alpha_{i+n},\ldots,\alpha_{i+1})=1_{X_i}.
\]
This $A_\infty$-category is independent, up to (derived) Morita equivalence, of the choice of $|\alpha_i|$ and is Morita equivalent to the path algebra of the $A_n$ quiver (with any orientation).
Viewing $\mc C_n$ as an $A_\infty$-algebra over $\mathbf k^{n+1}$, it has a unique pre-augmentation which is however not an augmentation.

Next, we turn to the discussion of the ``dual'' objects. 
The definition of a curved DG-coalgebra is obtained from that of a curved DG-algebra (as in~\cite{positselski}) by reversing all the arrows.
Let us spell this out explicitly.

\begin{df}
A \textbf{DG-coalgebra} is a $\ZZ$-graded $R$-module $A$ together with a comultiplication $\Delta:A\to A\otimes A$ of degree 0, a differential $d:A\to A$ of degree $1$, and a curvature functional $h:A\to R$ of degree 2.
These maps should satisfy associativity, the graded Leibniz rule, $h\circ d=0$, and 
\begin{equation}\label{cdga1}
d^2=(h\otimes \mathrm{id}-\mathrm{id}\otimes h)\circ\Delta.
\end{equation}
\end{df}

If $C$ is a curved DG-coalgebra, then $C^\vee$ (dual $\ZZ$-graded vector space) is a curved DG-algebra.
A \textbf{morphism of curved DG-coalgebras} $A\to B$ is given by a coalgebra homomorphism $f:A\to B$ and a functional $\alpha:A\to R$ such that
\begin{align}\label{cdga2}
f\circ d_A=d_B\circ f+(\alpha\otimes f-f\otimes\alpha)\circ\Delta \\ \label{cdga3}
h_A=h_B\circ f+\alpha\circ d_A-(\alpha\otimes\alpha)\circ\Delta
\end{align}
Composition of morphisms is defined by $(f,\alpha)\circ (g,\beta):=(f\circ g,\beta+\alpha\circ g)$.

We are now ready to spell out the definition of the curved DG-coalgebra $B\ol{A}$ of a pre-augmented strictly unital $A_\infty$-algebra $A=\overline{A}\oplus R$ over $R$.
Strict unitality implies that the operations $\mk m_n$ are determined by their restriction to $\overline{A}$.
Using the direct sum decomposition of $A$ we write
\[
\mk m_n(a_n,\ldots,a_1)=\overline{\mk m}_n(a_n,\ldots,a_1)+\mk h_n(a_n,\ldots,a_1)1_A
\]
where $\mk h_n(a_n,\ldots,a_1)\in R$.
Let
\[
B\overline{A}:=\bigoplus_{n\geq 1}(\overline{A}[1])^{\otimes_R n}
\]
be the (reduced) tensor coalgebra generated by $\overline{A}[1]$, then the operations $\overline{\mk m}_n$ induce a degree 1 codifferential, $\mk m$, on $B\overline{A}$. 
The multi-linear maps $\mk h_n:\overline{A}^{\otimes n}\to R$ combine to give a degree 2 coderivation $\mk h:B\overline{A}\to R$.

\begin{prop}
Given an pre-augmented strictly unital $A_\infty$-algebra $A$ over $R$, the tensor coalgebra $B\overline{A}$ becomes a curved DG-coalgebra with differential $\mk m$ and curvature $\mk h$ as defined above.
\end{prop}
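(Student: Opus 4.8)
The plan is to read off all four curved DG-coalgebra axioms for $B\ol A$ directly from the $A_\infty$-relations of $A$, using the splitting $A=\ol A\oplus R$ together with strict unitality to separate the internal codifferential $\mk m$ from the curvature functional $\mk h$. Two of the axioms are formal: coassociativity of the deconcatenation coproduct $\Delta$ is standard, and the co-Leibniz rule holds because $\mk m$ is by construction the \emph{unique} coderivation of the cofree coalgebra $B\ol A$ whose corestriction to the cogenerators $\ol A[1]$ equals $\sum_{n\ge 1}\ol{\mk m}_n$. Everything is defined termwise: on the reduced bar coalgebra each word has finite length, so each structure map is a finite sum and no convergence issue arises. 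It therefore remains to establish $\mk h\circ\mk m=0$ and $\mk m^2=(\mk h\otimes\mathrm{id}-\mathrm{id}\otimes\mk h)\circ\Delta$.

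Before the main computation I would record one point on which the argument hinges: because $\mk h$ is valued in $R$ and the tensor coalgebra is formed over $R$, the operator $\partial_{\mk h}:=(\mk h\otimes\mathrm{id}-\mathrm{id}\otimes\mk h)\circ\Delta$ is itself a coderivation of $B\ol A$. Indeed, when one checks the co-Leibniz identity for $\partial_{\mk h}$ the ``interior'' contributions appear twice with opposite signs and cancel after moving the resulting $R$-scalar across $\otimes_R$; this is the exact dual of the fact that $\mathrm{ad}_h=[h,-]$ is an inner derivation of a curved DG-algebra. Granting this, both sides of the curvature relation are coderivations of the cofree coalgebra $B\ol A$, and since a coderivation is determined by its corestriction $\pi\colon B\ol A\to\ol A[1]$, the identity $\mk m^2=\partial_{\mk h}$ reduces to the single equation $\pi\circ\mk m^2=\pi\circ\partial_{\mk h}$.

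The core step is then a bookkeeping of the $A_\infty$-relation of $A$ evaluated on letters $\bar a_n,\ldots,\bar a_1\in\ol A$. I would substitute $\mk m_s=\ol{\mk m}_s+\mk h_s\cdot 1_A$ into every inner and outer operation. Strict unitality, $\mk m_k(\ldots,1_A,\ldots)=0$ for $k\ne 2$, annihilates every term in which an inner operation outputs a multiple of $1_A$ that is then fed into an operation of arity $\ne 2$; the only survivors of this type are $\mk m_2(\bar a,\mk h_s(\ldots)1_A)=\pm\mk h_s(\ldots)\bar a$ and $\mk m_2(\mk h_s(\ldots)1_A,\bar a)=\mk h_s(\ldots)\bar a$, each involving a single leftover letter $\bar a$. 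Projecting the relation onto the two summands of $A=\ol A\oplus R$ (legitimate since $\ol A$ is an $R$-sub-bimodule) now splits it into its two halves: the $R$-component is $\sum\pm\,\mk h_\bullet(\ldots,\ol{\mk m}_j(\ldots),\ldots)=0$, which is exactly $\mk h\circ\mk m=0$; and the $\ol A$-component is $\pi\circ\mk m^2+(\text{the two }\mk m_2\text{-terms})=0$, where the bracketed terms are precisely $-\pi\circ\partial_{\mk h}$. This yields $\pi\circ\mk m^2=\pi\circ\partial_{\mk h}$, hence the curvature relation.

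The one genuinely delicate point is the signs. Matching the $A_\infty$ sign $(-1)^{\Vert a_k\Vert+\cdots+\Vert a_1\Vert}$ and the Koszul signs coming from the shift $\ol A[1]$ against the signs in the coproduct and in $\partial_{\mk h}$ is where the real work lies, and it is presumably why the authors describe the construction as ``spelling out'' the dual of the one in~\cite{positselski}. The remaining verifications---that $\ol A$ is preserved by the $R$-action so the projections are $R$-linear, and that the corestriction principle applies to the non-counital reduced coalgebra---are routine.
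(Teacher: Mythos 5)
Your proposal is correct and follows essentially the same route as the paper: substitute $\mk m_j=\ol{\mk m}_j+\mk h_j 1_A$ into the $A_\infty$-relations, use strict unitality to see that curvature outputs survive only through $\mk m_2$ applied to a single leftover letter (forcing inner arity $n-1$), and project onto the $\ol{A}$- and $R$-components of the splitting $A=\ol{A}\oplus R$ to obtain the curvature relation \eqref{cdga1} and $\mk h\circ\mk m=0$, respectively. Your explicit observation that $\partial_{\mk h}=(\mk h\otimes\mathrm{id}-\mathrm{id}\otimes\mk h)\circ\Delta$ is itself a coderivation (the interior terms cancelling across $\otimes_R$, since $\mk h$ is even), so that $\mk m^2=\partial_{\mk h}$ may be verified on corestrictions alone, is precisely the step the paper leaves implicit when it declares its displayed multilinear identities ``equivalent to'' the coalgebra axioms, and your deferral of the Koszul-sign bookkeeping matches the paper's own level of detail.
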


\begin{proof}
The $A_\infty$-relations and strict unitality for $A$ imply
\begin{align*}
0&=\sum_{i+j+k=n}(-1)^{\Vert a_k\Vert+\ldots+\Vert a_1\Vert}\overline{\mk m}_{i+1+k}(a_n,\ldots,\mk m_j(a_{n-i},\ldots,a_{k+1}),a_k,\ldots,a_1) \\
&=\sum_{i+j+k=n}(-1)^{\Vert a_k\Vert+\ldots+\Vert a_1\Vert}\overline{\mk m}_{i+1+k}(a_n,\ldots,\overline{\mk m}_j(a_{n-i},\ldots,a_{k+1}),a_k,\ldots,a_1) \\
&\qquad+\overline{\mk m}_2(a_1,\mk h_{n-1}(a_2,\ldots,a_n)1_A)+(-1)^{\|a_n\|}\overline{\mk m}_2(\mk h_{n-1}(a_1,\ldots,a_{n-1})1_A,a_n)
\end{align*}
for $a_1,\ldots,a_n\in \overline{A}$.
Rearranging and using unitality again we get
\begin{equation}\label{reduced_m_rel}
\begin{aligned}
&\sum_{i+j+k=n}(-1)^{\Vert a_k\Vert+\ldots+\Vert a_1\Vert}\overline{\mk m}_{i+1+k}(a_n,\ldots,\overline{\mk m}_j(a_{n-i},\ldots,a_{k+1}),a_k,\ldots,a_1)=\\
&\qquad =a_n\mk h_{n-1}(a_{n-1},\ldots,a_1)-\mk h_{n-1}(a_n,\ldots,a_2)a_1
\end{aligned}
\end{equation}
which is equivalent to \eqref{cdga1}.
Taking the $R1_A$ component instead of the $\overline{A}$ component of the $A_\infty$-relations in $A$ we get
\begin{equation}\label{lambda_rel}
0=\sum_{i+j+k=n}(-1)^{\Vert a_k\Vert+\ldots+\Vert a_1\Vert}\mk h_{i+1+k}(a_n,\ldots,\overline{\mk m}_j(a_{n-i},\ldots,a_{k+1}),a_k,\ldots,a_1).
\end{equation}
for $a_1,\ldots,a_n\in \overline{A}$.
This is equivalent to $\mk h\circ \mk m=0$.
\end{proof}

Having constructed a curved DG-coalgebra $B\overline{A}$ we can take the $R$-dual $(B\overline{A})^\vee$ to get a curved DG-algebra. 
It is non-unital, which is however easily fixed by passing to the augmented algebra $(B\overline{A})^{\vee+}=(B\overline{A})^\vee\oplus R$.
Moreover, $(B\overline{A})^{\vee+}$ is $\ZZ_{\geq 0}$-filtered with $\left((B\overline{A})^{\vee+}\right)_{\geq j}$ by definition those functionals vanishing on tensors of length $<j$, where the added copy of $R$ is considered as tensors of length 0. 

Let us return to the example $A=\mc C_n$ with its unique pre-augmentation.
Taking the dual of $B\overline{A}$, the curved DG-algebra $(B\overline{A})^{\vee+}$ has underlying graded algebra the path algebra of the quiver arrows $\beta_i$, $i\in\ZZ/(n+1)$, forming a single cycle $\beta_n\cdots\beta_1\beta_0$ of length $n+1$ and grading such that $|\beta_i|=1-|\alpha_i|$ thus $|\beta_n|+\ldots+|\beta_0|=2$. 
The curvature is the central element
\[
h=\sum_{i=0}^n \beta_{n+i}\beta_{n+i-1}\cdots\beta_i.
\]

The construction above is functorial with respect to $A_\infty$-morphisms.

\begin{prop}
A strictly unital $A_\infty$-morphism $f:C\to D$ of strictly unital, augmented $A_\infty$-algebras induces a morphism of curved DG-coalgebras $B\overline{C}\to B\overline{D}$.
\end{prop}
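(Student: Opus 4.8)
The plan is to run the same component-splitting used in the proof of the preceding proposition, but now for a morphism rather than a single algebra: the reduced components of $f$ will furnish the underlying coalgebra homomorphism, and the scalar components the accompanying functional. First I would use strict unitality to record $f$ through its restriction to $\ol C$. Writing, for $a_1,\ldots,a_n\in\ol C$,
\[
f_n(a_n,\ldots,a_1)=\ol f_n(a_n,\ldots,a_1)+\alpha_n(a_n,\ldots,a_1)\,1_D,\qquad \ol f_n\colon \ol C^{\otimes n}\to\ol D,\quad \alpha_n\colon \ol C^{\otimes n}\to R,
\]
exactly as $\mk m_n$ was split into $\ol{\mk m}_n$ and $\mk h_n$ before. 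Since the reduced tensor coalgebra $B\ol D$ is cofree, the shifted maps $\ol f_n$ assemble into a unique coalgebra homomorphism $F\colon B\ol C\to B\ol D$ whose corestriction to the cogenerators $\ol D[1]$ equals $\sum_n\ol f_n$, while the scalar parts assemble into a single degree-$1$ functional $\alpha=\sum_n\alpha_n\colon B\ol C\to R$. The claim is then that $(F,\alpha)$ is the desired morphism of curved DG-coalgebras.

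To verify \eqref{cdga2} and \eqref{cdga3} I would decompose the $A_\infty$-morphism equation \eqref{ainfty_mor} according to the splitting $D=\ol D\oplus R$ and corestrict to cogenerators. Because $C$ and $D$ are augmented (not merely pre-augmented), their scalar parts $\mk h_n$ vanish; hence $B\ol C$ and $B\ol D$ carry zero curvature, $d_C,d_D$ are built from $\ol{\mk m}^C,\ol{\mk m}^D$ alone, and the genuine content is the intertwining of differentials together with the $\alpha$-corrections. Taking the $\ol D$-component of \eqref{ainfty_mor}: the terms in which every factor $F_{i_j}$ contributes its reduced part $\ol f_{i_j}$ are exactly the corestrictions of $d_D\circ F$ (from the left side) and of $F\circ d_C$ (from the right side), this being the usual uncurved bar-construction identity. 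The remaining left-hand terms carry a scalar factor $\alpha_{i_j}1_D$; by strict unitality of $\mc B$ such a term survives only for $k=2$ with the unit at one end, where $\mk m_2(1_D,-)$ and $\mk m_2(-,1_D)$ act as $\pm\mathrm{id}$. These two boundary contributions are precisely the corestrictions of $(\alpha\otimes F)\circ\Delta$ and $(F\otimes\alpha)\circ\Delta$, so cofreeness of $B\ol D$ upgrades the corestricted identity to \eqref{cdga2}.

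Taking instead the $R\,1_D$-component of \eqref{ainfty_mor} gives \eqref{cdga3}: augmentedness of $D$ annihilates every all-reduced term, strict unitality annihilates every term with a single unit, and the only surviving left-hand term is $\mk m_2(\alpha_{i_2}1_D,\alpha_{i_1}1_D)=\pm\alpha_{i_2}\alpha_{i_1}1_D$, whose scalar part collects into $(\alpha\otimes\alpha)\circ\Delta$; on the right-hand side the scalar part of the outer $F_{i+1+k}$ yields $\alpha\circ d_C$. With $h_C=h_D=0$ this is exactly \eqref{cdga3}. (Had $C,D$ been only pre-augmented, the nonzero $\mk h_n$ would have supplied the $h_C$ and $h_D\circ F$ terms, giving the same two identities in their full form, so the mechanism is robust to that weakening.)

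The only genuinely delicate point — and the step I expect to be the main obstacle — is the sign bookkeeping: one must check that the signs $(-1)^{\Vert a_k\Vert+\ldots+\Vert a_1\Vert}$ appearing in \eqref{ainfty_mor}, once transported through the degree shift $[1]$ defining $B\ol C,B\ol D$ and through the conventions $da=(-1)^{|a|}\mk m_1(a)$, $ab=(-1)^{|b|}\mk m_2(a,b)$, match the signs built into \eqref{cdga2} and \eqref{cdga3}; the combinatorics itself is entirely dictated by strict unitality as above. I would close by noting that the assignment $f\mapsto(F,\alpha)$ respects composition, reproducing the law $(f,\alpha)\circ(g,\beta)=(f\circ g,\beta+\alpha\circ g)$, so that $B$ is genuinely functorial.
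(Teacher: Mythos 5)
Your proposal follows the paper's proof essentially verbatim: split $f_n=\overline{f}_n+\phi_n$ according to the augmentation $D=\overline{D}\oplus R$, let the reduced parts define the coalgebra homomorphism and the scalar parts the degree-one functional, then verify \eqref{cdga2} and \eqref{cdga3} by taking the $\overline{D}$- and $R$-components of the $A_\infty$-morphism equations \eqref{ainfty_mor}. The only detail hidden in your deferred sign check is that the paper takes the functional to be $-\phi_n$ (note the sign), and, as you correctly anticipate in your parenthetical remark, its verification is written so as to cover the pre-augmented case, with the $\mk h_n$-terms appearing exactly where you predict.
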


\begin{proof}
By assumption, we have splittings $C=\overline{C}\oplus R$ and $D=\overline{D}\oplus R$. 
Using the second one, we write $f_n=:\overline{f}_n+\phi_n$.
The sequence of maps $\overline{f}_n$ define a morphism of coalgebras $f:B\overline{C}\to B\overline{D}$ and the sequence of maps $-\phi_n$ define a degree 1 functional $\phi:B\overline{C}\to R$.
We need to check that these satisfy \eqref{cdga2} and \eqref{cdga3}.

Taking the $\overline{D}$-component of the $A_\infty$-morphism equations~\ref{ainfty_mor} yields
\begin{equation*}
\begin{gathered}
\sum_k \sum_{i_1+\ldots+i_k=n}\overline{\mk m}_k(\overline{f}_{i_k}(a_n,\ldots,a_{n-i_k+1}),\ldots,\overline{f}_{i_1}(a_{i_1},\ldots,a_1)) \\
+\sum_k\left( \overline{f}(a_n,\ldots,a_{k+1})\phi(a_k,\ldots,a_1)-(-1)^{\|a_1\|+\ldots+\|a_k\|}\phi(a_n,\ldots,a_{k+1})\overline{f}(a_k,\ldots,a_1)\right) \\
=\sum_{i+j+k=n}(-1)^{\Vert a_1\Vert+\ldots+\Vert a_k\Vert}\overline{f}_{i+1+k}(a_n,\ldots,a_{n-i+1},\overline{\mk m}_j(a_{n-i},\ldots,a_{k+1}),a_k,\ldots,a_1).
\end{gathered}
\end{equation*}
which is \eqref{cdga2}, while taking the $R$-component yields
\begin{equation*}
\begin{gathered}
\sum_k \sum_{i_1+\ldots+i_k=n}\lambda_k(\overline{f}_{i_k}(a_n,\ldots,a_{n-i_k+1}),\ldots,\overline{f}_{i_1}(a_{i_1},\ldots,a_1)) \\
-\sum_k(-1)^{\|a_1\|+\ldots+\|a_k\|}\phi(a_n,\ldots,a_{k+1})\phi(a_k,\ldots,a_1) \\
=\sum_{i+j+k=n}(-1)^{\Vert a_1\Vert+\ldots+\Vert a_k\Vert}\phi_{i+1+k}(a_n,\ldots,a_{n-i+1},\overline{\mk m}_j(a_{n-i},\ldots,a_{k+1}),a_k,\ldots,a_1) \\
+\lambda_n(a_n,\ldots,a_1)
\end{gathered}
\end{equation*}
which is \eqref{cdga3}.
\end{proof}

\begin{prop}
\label{prop_funmc}
Let $A$ be a pre-augmented $A_\infty$-algebra over $R=\mathbf k^n$ and $\mc A$ the corresponding $A_\infty$-category with $n$ objects, and let $\mc E$ be a DG-category over $\mathbf k$ with finite direct sums.
Assume that $\Hom_{\mc E}(X,Y)$ is finite-dimensional for each $X,Y\in\mc E$.
Then the following DG-categories are quasi-equivalent.
\begin{enumerate}
\item 
The category of $A_\infty$-functors $\mc A\to \mc E$.
\item
The category of twisted complexes $\mathrm{Tw}\left(\mc E\otimes (B\overline{A})^{\vee+}\right)$, where $(B\overline A)^{\vee+}$ is viewed as a curved DG-category with $n$ objects.
\end{enumerate}
\end{prop}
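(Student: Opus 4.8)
The plan is to exhibit an explicit dictionary between the two DG-categories and to promote it to a DG-functor that is an isomorphism on morphism complexes and essentially surjective on homotopy categories. This is a bar--cobar type statement, and essentially all the content is that the Maurer--Cartan equation on the twisted-complex side is equivalent to the $A_\infty$-functor equations on the other side.

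First I would fix the shape of the objects. Since the $n$ objects $e_1,\ldots,e_n$ of $(B\overline A)^{\vee+}$ are orthogonal idempotents, every object of $\mathrm{add}\big(\mc E\otimes(B\overline A)^{\vee+}\big)$ decomposes as $Z=\bigoplus_i E_i\otimes e_i$ with $E_i\in\mathrm{add}(\mc E)$; absorbing shifts and multiplicities into the $E_i$ I may take each $E_i\in\mc E$, corresponding to $F(X_i)=E_i$. A datum $\delta\in\Hom^1_{\mc E\otimes(B\overline A)^{\vee+}}(Z,Z)_{>0}$ then has, for each pair $i,j$ and each length $m\ge 1$, a component in $\Hom_{\mc E}(E_i,E_j)\otimes\big((\overline A[1])^{\otimes m}\big)^\vee$. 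Because $\dim\Hom_{\mc E}(E_i,E_j)<\infty$, the canonical map $\Hom_{\mc E}(E_i,E_j)\otimes V^\vee\to\Hom\!\big(V,\Hom_{\mc E}(E_i,E_j)\big)$ is an isomorphism, so such a $\delta$ is the same as a family of maps $\overline A^{\otimes m}\to\Hom_{\mc E}(E_i,E_j)$; a degree check (the functional dual to a length-$m$ tensor has degree $m-\sum_\ell|a_\ell|$) shows these have exactly the degree $1-m+\sum_\ell|a_\ell|$ of the reduced components $\overline F_m$ of a strictly unital $A_\infty$-functor. This sets up the bijection on objects, $F\leftrightarrow(Z_F,\delta_F)$, with the filtration constraint $\delta_F\in\Hom_{>0}$ matching the reducedness ($m\ge1$) forced by strict unitality.

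The crux is to verify that $(Z_F,\delta_F)$ is flat --- i.e. $\delta_F$ solves \eqref{MC_eqn} --- if and only if $F$ satisfies \eqref{ainfty_mor}. Since $(B\overline A)^{\vee+}$ is a curved DG-algebra, the construction in Subsection~\ref{subsec_tensor} gives $\mk m_k=0$ for $k\ge 3$ in $\mc E\otimes(B\overline A)^{\vee+}$, so \eqref{MC_eqn} collapses to $\mk m_0(Z_F)+\mk m_1(\delta_F)+\mk m_2(\delta_F,\delta_F)=0$; dually, as $\mc E$ is a DG-category, the left-hand side of \eqref{ainfty_mor} retains only its $k=1,2$ terms. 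I would match these three pieces term by term, dualizing the structure of $B\overline A$: the product of $(B\overline A)^{\vee+}$ is dual to deconcatenation, so $\mk m_2(\delta_F,\delta_F)$ evaluated on a length-$n$ tensor reproduces $\sum_{p+q=n}\mk m^{\mc E}_2(\overline F_p,\overline F_q)$; the internal differential of $(B\overline A)^{\vee+}$, dual to the bar differential, contributes the reduced terms $\overline F_{i+1+k}(\ldots,\overline{\mk m}_j(\ldots),\ldots)$ while $d_{\mc E}$ contributes $\mk m^{\mc E}_1(\overline F_n(\ldots))$; finally the curvature $\mk m_0$, which is dual to the functional $\mk h$ recording the unit-valued products $\mk h_j$ of $A$, supplies precisely the terms $\mk h_j(\ldots)1_{FX}$ that, by strict unitality of $F$, are the part of $F_{i+1+k}(\ldots,\mk m_j(\ldots),\ldots)$ not seen by $\overline{\mk m}_j$. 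The equations \eqref{reduced_m_rel}--\eqref{lambda_rel} are exactly the identities being dualized here. I expect this sign-and-bookkeeping verification to be the main obstacle; the signs are forced by the Koszul rule of Subsection~\ref{subsec_tensor} together with the shift $\overline A[1]$, and keeping the curvature term consistent with strict unitality is the delicate point.

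Finally I would promote the object bijection to a quasi-equivalence. On morphisms, a degree-$d$ element of $\Hom_{\mathrm{Tw}}\big((Z_F,\delta_F),(Z_G,\delta_G)\big)$ has components in $\Hom_{\mc E}(E_i,E'_j)\otimes\big((\overline A[1])^{\otimes m}\big)^\vee$, which the finite-dimensionality hypothesis again identifies with families $\overline A^{\otimes m}\to\Hom_{\mc E}(E_i,E'_j)$, i.e. with pre-natural transformations $F\to G$. Using \eqref{twisted_structure_maps} and $\mk m_k=0$ for $k\ge3$, the twisted differential is $\widetilde{\mk m}_1(-)=\mk m_1(-)+\mk m_2(\delta_G,-)+\mk m_2(-,\delta_F)$, which under this identification is the standard differential on the complex of pre-natural transformations; composition matches likewise. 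This yields an isomorphism of morphism complexes, so $F\mapsto(Z_F,\delta_F)$ is fully faithful. For essential surjectivity on homotopy categories it remains to note that every object of $\mathrm{Tw}\big(\mc E\otimes(B\overline A)^{\vee+}\big)$ is, after grouping summands, of the form $(Z,\delta)$ with $Z=\bigoplus_i E_i\otimes e_i$ and $E_i\in\mathrm{add}(\mc E)$, hence lies in the image of the corresponding functor; since $\mc E$ has finite direct sums the only discrepancy is the shifts already built into $\mathrm{add}$, which are harmless for quasi-equivalence. I regard this last step as routine relative to the Maurer--Cartan computation above.
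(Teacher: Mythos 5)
Your route is the same as the paper's: on objects you identify a strictly unital $A_\infty$-functor $F$ with a twisting cochain, i.e.\ an $R\otimes R$-linear map $B\overline{A}\to \mathrm{End}_{\mc E}\bigl(\bigoplus_i F(X_i)\bigr)$ satisfying \eqref{twisting_cochain}, and use the finite-dimensionality of the $\Hom_{\mc E}$-spaces to convert such a map into a Maurer--Cartan element in $\mc E\otimes(B\overline{A})^{\vee+}$; your term-by-term matching (using $\mk m_{\geq 3}=0$, the deconcatenation-dual product, and the curvature functional $\mk h$ absorbing the unit-valued components forced by strict unitality) is exactly the dualization of \eqref{reduced_m_rel} and \eqref{lambda_rel}, and essential surjectivity on objects from finite direct sums is handled as in the paper.

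There is, however, one genuine gap: your claim of an \emph{isomorphism} of morphism complexes is false with the paper's definition of the functor category. As in Subsection~\ref{subsec_fibdg}, a morphism $F\to G$ in $\mathrm{Fun}(\mc A,\mc E)$ has components $\lambda_d$ defined on the full tensor powers of $\Hom_{\mc A}$, i.e.\ on $A^{\otimes d}$ with $A=\overline{A}\oplus R$; strict unitality is a condition on the \emph{objects} of the functor category, not on the morphisms between them, so the units are genuine inputs and the complex of pre-natural transformations is built from the unreduced bar construction $BA^+$. The complex you extract from $\Hom_{\mathrm{Tw}}\bigl((Z_F,\delta_F),(Z_G,\delta_G)\bigr)$, by contrast, consists only of functionals on $\overline{A}^{\otimes m}$, i.e.\ it is built from the reduced $B\overline{A}^+$. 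These two complexes are not isomorphic; they are only quasi-isomorphic, via the standard comparison of the bar complex with its normalization, and this quasi-isomorphism --- checked against the twisted differential, which mixes in $\delta_F$ and $\delta_G$ --- is precisely the step your proof is missing and the one the paper invokes. With it inserted, your functor is quasi-fully-faithful rather than fully faithful, which suffices, and the remainder of your argument goes through as in the paper.
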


\begin{proof}
An $A_\infty$-functor $F:\mc A\to\mc E$ corresponds to an $A_\infty$-morphism $f:A\to E$ of $A_\infty$-algebras over $R$ where
\[
E:=\mathrm{End}_{\mc E}\left(\bigoplus_{i=1}^nF(X_i)\right)
\]
is a DG-algebra and $X_i$ is the $i$-th object in $\mc A$.
The components $f_k$ of $f$ need to satisfy the quadratic relations
\begin{gather*}
    \mk m_1\left(f_k(a_k,\ldots,a_1)\right)+\sum_{i=1}^{k-1} \mk m_2\left(f_{k-i}(a_k,\ldots,a_{i+1}),f_i(a_i,\ldots,a_1)\right) \\
    =\sum_{i=1}^k\sum_{j=0}^{k-i}(-1)^{\|a_1\|+\ldots+\|a_j\|} f_{k-i+1}\left(a_k,\ldots,\overline{\mk m}_i(a_{i+j},\ldots,a_{j+1}),\ldots,a_1\right)+\mk h_k(a_k,\ldots,a_1)1_E.
\end{gather*}
By strict unitality, $f$ is completely determined by an $R\otimes R$-linear map $f:B\overline{A}\to E$ and the above equations for $f$ then translate into the Maurer--Cartan type equation
\begin{equation}\label{twisting_cochain}
\mk h1_E+\mk m\circ f-f\circ \mk m+\mu\circ(f\otimes f)\circ\Delta=0
\end{equation}
for \textit{twisting cochains} from a curved DG-coalgebra with comultiplication $\Delta$ to a unital DG-algebra with multiplication $\mu$.
Finally, since $\dim E<\infty$  by assumption, there is a canonical isomorphism
\[
\Hom_{R\otimes R}(B\overline{A},E)\cong E\otimes_{R\otimes R}(B\overline{A})^\vee\cong\mathrm{End}\left(\bigoplus_{i=1}^n F(X_i)\otimes X_i\right)_{>0}
\]
under which \eqref{twisting_cochain} becomes the Maurer--Cartain equation in $\mc E\otimes (B\overline{A})^{\vee+}$.
Since $\mc E$ has finite direct sums, any twisted complex is of the above form for some functor $F$.

To define the correspondence on morphisms, suppose $F,G:\mc A\to \mc E$ is a pair of $A_\infty$-functors.
Then we consider, instead of $E$ above,
\[
H:=\mathrm{Hom}_{\mc E}\left(\bigoplus_{i=1}^nF(X_i),\bigoplus_{i=1}^nG(X_i)\right)
\]
so that an element in the complex of natural transformations from $F$ to $G$ is given by a linear map $BA^+\to H$.
On the other hand,
\[
\Hom_{R\otimes R}(B\overline{A}^+,H)\cong H\otimes_{R\otimes R}(B\overline{A})^{\vee+}\cong\mathrm{Hom}\left(\bigoplus_{i=1}^n F(X_i)\otimes X_i,\bigoplus_{i=1}^n G(X_i)\otimes X_i\right)
\]
which has $B\overline{A}^+$ instead of $BA^+$.
However, the two complexes are quasi-isomorphic by the standard argument for normalized bar complexes.
We omit the calculation to show that this is compatible with differentials and composition.
\end{proof}

\subsection{Fibrations of DG-categories}
\label{subsec_fibdg}

We consider a class of $A_\infty$-functors which enjoy good lifting properties.

\begin{df}
An $A_\infty$-functor $F:\mc C\to \mc D$ between (uncurved) $A_\infty$-categories over $\mathbf k$ is a \textbf{fibration} if
\begin{enumerate}
    \item the maps $\Hom_{\mc C}^\bullet(X,Y)\to\Hom_{\mc D}^\bullet(X,Y)$ are surjective, and
    \item if $f:F(X)\to Y$ is an isomorphism in the homotopy category $H^0\mc D$ then there is an isomorphism $\widetilde{f}:X\to \widetilde{Y}$ in $H^0\mc C$ with $F(\widetilde{f})=f$. 
\end{enumerate}
\end{df}

According to \cite{tabuada} there is a model structure on the category of (small) DG-categories over a commutative ring $\mathbf k$ where weak equivalences are quasi-equivalences of DG-categories and fibrations are DG-functors which are fibrations in the sense of the above definition.  
In the general case of $A_\infty$-functors between $A_\infty$-categories, these fibrations are not part of a model structure however.
A source of fibrations in this sense is obtained from filtered $A_\infty$-categories, i.e. categories as in Definition~\ref{def_curved_ainfty} but with $\mk m_0=0$. 

Let $\mc C$ be a filtered $A_\infty$-category, then the (unfiltered) $A_\infty$-category $\mc C_0$ has  the same objects as $\mc C$, morphisms 
\[
\Hom_{\mc C_0}(X,Y):=\Hom_{\mc C}(X,Y)/\Hom_{\mc C}(X,Y)_{>0}
\] 
and the induced structure maps.
There is a tautological strict functor $\mc C\to \mc C_0$ of $A_\infty$-categories.

\begin{lemma}
Let $\mc C$ be a filtered $A_\infty$-category, then the functor $F:\mc C\to \mc C_0$ is a fibration.
\end{lemma}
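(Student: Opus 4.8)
The plan is to verify the two defining conditions of a fibration directly, with completeness and separatedness of the filtration as the essential analytic input. Throughout I use that $\mk m_0=0$, so that the cases $n=1$ and $n=3$ of \eqref{A_infty_rels_m0} collapse to $\mk m_1^2=0$ and to the honest graded Leibniz rule $\mk m_1\mk m_2(a,b)=-\mk m_2(a,\mk m_1 b)-(-1)^{\|b\|}\mk m_2(\mk m_1 a,b)$, all $\mk m_0$-correction terms having vanished. Condition~(1) is then immediate: $F$ is the identity on objects, and on morphisms it is the quotient projection $\Hom_{\mc C}(X,Y)\to\Hom_{\mc C}(X,Y)/\Hom_{\mc C}(X,Y)_{>0}$, which is surjective in every degree. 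So all the content lies in the isomorphism-lifting condition~(2).

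For (2), fix $X$ and an isomorphism $f\colon F(X)=X\to Y$ in $H^0\mc C_0$. Since $F$ is the identity on objects we must produce a closed $\tilde f\in\Hom^0_{\mc C}(X,Y)$, invertible in $H^0\mc C$, whose reduction in $\mc C_0$ represents $f$. I would choose closed representatives $\bar f$ of $f$ and $\bar g$ of $f^{-1}$, together with homotopies witnessing $\mk m_2(\bar g,\bar f)\simeq 1_X$ and $\mk m_2(\bar f,\bar g)\simeq 1_Y$ in $\mc C_0$, and then split the problem into a lifting step and an invertibility step.

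The first and main step is to lift $\bar f,\bar g$ to \emph{closed} morphisms $\tilde f,\tilde g$ of $\mc C$ by successive approximation along the filtration. Starting from arbitrary lifts (available by~(1)), the failure of a lift to be closed is $\mk m_1$ of that lift, which maps to $0$ in $\mc C_0$ and therefore lies in $\Hom_{\geq\lambda}$ for some $\lambda>0$. One corrects order by order: at each stage the obstruction is an $\mk m_1$-cocycle of strictly higher filtration, and one uses the homotopy data provided by the two-sided inverse $\bar g$ to exhibit it as an $\mk m_1$-boundary, yielding the next correction term. Because every correction strictly raises the filtration level, completeness and separatedness guarantee that the corrections sum to well-defined closed lifts $\tilde f,\tilde g$ with $F(\tilde f)=f$. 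I expect this order-by-order vanishing of the obstructions --- the precise point at which the invertibility of $f$, and not merely its being a morphism, must be exploited --- to be the crux and the main difficulty of the whole argument.

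It then remains to check that $\tilde f$ is invertible in $H^0\mc C$, and here completeness does the work cleanly. By the Leibniz rule and closedness, $\mk m_2(\tilde g,\tilde f)$ and $\mk m_2(\tilde f,\tilde g)$ are closed, and each reduces in $\mc C_0$ to a morphism homotopic to the identity; hence $[\mk m_2(\tilde g,\tilde f)]=[1_X]+[w]$ in the algebra $H^0\Hom_{\mc C}(X,X)$, where $w$ may be chosen closed and of filtration $\geq\lambda$ for some $\lambda>0$. The iterated $\mk m_2$-products of $w$ then lie in filtration $\geq n\lambda$, so the Neumann series $\sum_{n\geq 0}(-1)^n w^{\ast n}$ converges by completeness to a two-sided inverse of $[\mk m_2(\tilde g,\tilde f)]$; symmetrically $[\mk m_2(\tilde f,\tilde g)]$ is a unit. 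Since both composites are invertible, so is $[\tilde f]$, which is therefore the required isomorphism lifting $f$. In summary, the invertibility step reduces to a routine convergence argument, and the genuinely delicate part of the proof is the closed-lifting step of the previous paragraph.
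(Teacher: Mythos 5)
Your check of condition (1) and your final Neumann-series step are both sound; the latter can even be run exactly at the chain level by solving $u=1_X-w\ast u$ by iteration (a contraction, since multiplying by $w$ raises filtration by at least $\lambda>0$), with completeness giving convergence and homotopy-associativity only needed afterwards in the honest algebra $H^0\Hom_{\mc C}(X,X)$. The problem is the step you yourself flag as ``the crux'': you never prove that the obstructions to a \emph{closed} lift die, and that claim is precisely the nontrivial content of the lemma. Two concrete issues with your sketch. First, the assertion that the obstruction cocycle is ``exhibited as an $\mk m_1$-boundary using the homotopy data'' does not follow by correcting $\bar f$ alone: the Leibniz rule gives $\mk m_1\mk m_2(g,f)=-\mk m_2(g,\mk m_1 f)-(-1)^{\|f\|}\mk m_2(\mk m_1 g,f)$, so at filtration level $\lambda$ the obstruction $\mk m_1 f$ is a boundary only modulo a term involving $\mk m_1 g$ \emph{at the same level}; one must correct $g$ and both homotopies simultaneously with $f$, in a coupled (cone-type) obstruction computation, before composition with the $\mc C_0$-inverse can be used to conclude. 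Second, for an $\RR$-filtration the set of filtration jumps need not be discrete or well-ordered, so ``order by order'' induction does not literally parse; the correction process has to be recast as a fixed-point problem for a filtration-raising operator, invoking completeness, rather than as a sum of corrections indexed by successive levels. Neither point is fatal — this is indeed how such rectification statements are proved — but as written the decisive step is an expectation, not an argument.

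For comparison, the paper does not carry out this analysis at all: its proof consists of noting surjectivity on $\Hom$ and quoting from~\cite{h_skein} the statement that $F$ \emph{reflects} isomorphisms (if $F(f)$ is invertible up to homotopy then so is $f$) in the filtered setting, then concluding the isomorphism-lifting property because $F$ is bijective on objects. So the entire difficulty you correctly isolate is black-boxed by that citation, and your proposal in effect attempts to reprove the cited lemma, with the gap in your write-up being exactly its content. Note also that you cannot shortcut via the Maurer--Cartan transport result (Lemma~\ref{lem_mctransport}): applied with $\delta=0$ it produces an equivalence from $X$ to a \emph{deformed} object $(Y,\gamma)$ for a possibly nonzero Maurer--Cartan element $\gamma$, which is not an object of $\mc C$ itself; since $F$ is the identity on objects, the lift is forced to land on $Y$ on the nose, and killing $\gamma$ is again the coupled obstruction argument you left unproved.
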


\begin{proof}
It is clear that $F$ is surjective on $\Hom$.
Furthermore, $F$ reflects isomorphisms, that is if $f\in\Hom_{\mc C}(X,Y)$ such that $F(f)\in\Hom_{\mc C_0}(X,Y)$ has an inverse up to homotopy, then $f$ has an inverse up to homotopy, see e.g.~\cite{h_skein}.
In particular, since $F$ is an isomorphism on objects, this implies that $F$ has the isomorphism lifting property.
\end{proof}

We want to apply this to the following situation: $\mc B$ is an $A_\infty$-category, $\mc A\subset \mc B$ is a subcategory in the sense that $\Ob(\mc A)=\Ob(\mc B)$, $\Hom_{\mc A}(X,Y)\subset \Hom_{\mc B}(X,Y)$, and the structure maps of $\mc A$ are obtained by restricting the ones from $\mc B$.
If $\mc C$ is a DG-category, then there is a DG-category of $A_\infty$-functors, $\mathrm{Fun}(\mc B,\mc C)$.
A morphism in this category is given by a natural transformation between functors $F$ and $G$ which has components
\[
\lambda_d:\Hom_{\mc B}(X_{d-1},X_d)\otimes\cdots\otimes\Hom_{\mc B}(X_0,X_1)\to \Hom_{\mc C}(FX_0,GX_d)
\]
for $d\geq 0$ and any sequence of objects $X_0,\ldots,X_d$ in $\mc B$.
The subcategory $\mc A$ can be used to define a filtration on this space $\Hom(F,G)$ of natural transformations by declaring $\Hom(F,G)_{\geq k}$, $k\geq 1$, to consist of those sequences of multilinear maps $\lambda_d$ which vanish if all but possibly $k-1$ inputs are in $\mc A$.
By construction, the category $\mathrm{Fun}(\mc B,\mc C)_0$ is then equivalent to $\mathrm{Fun}(\mc A,\mc C)$. 
Applying the previous lemma, we obtain:

\begin{coro}
\label{cor_funfib}
With $\mc A\to\mc B$ an inclusion of $A_\infty$-categories as above and $\mc C$ a DG-category, the induced functor of DG-categories
\[
\mathrm{Fun}(\mc B,\mc C)\to\mathrm{Fun}(\mc A,\mc C)
\]
is a fibration.
\end{coro}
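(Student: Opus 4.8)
The plan is to recognize $\mathrm{Fun}(\mc B,\mc C)$ as a filtered DG-category whose order-zero category is $\mathrm{Fun}(\mc A,\mc C)$, and then to invoke the preceding lemma for the tautological functor $\mc D\to\mc D_0$. Write $\mc D:=\mathrm{Fun}(\mc B,\mc C)$; since $\mc C$ is a DG-category, $\mc D$ is an honest DG-category (so $\mk m_0=0$), and we equip it with the $\ZZ_{\geq 0}$-filtration described above, in which $\lambda=(\lambda_d)_{d\geq 0}$ lies in $\Hom(F,G)_{\geq k}$ precisely when each $\lambda_d$ vanishes on every tuple having at most $k-1$ inputs outside $\mc A$. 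This filtration is defined degreewise, hence compatible with the grading, and its level records the minimal number of inputs that a surviving term must take in $\mc B\setminus\mc A$.

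First I would record the formal properties. Separatedness is immediate: a component $\lambda_d$ has only $d$ inputs, hence automatically lies in $\Hom(F,G)_{\geq d+1}$ unless it is identically zero, so $\bigcap_k\Hom(F,G)_{\geq k}=0$. Completeness follows because a tuple of length $d$ has at most $d$ non-$\mc A$ entries and is therefore already detected at level $k=d+1$, so a compatible system in $\varprojlim_k\Hom(F,G)/\Hom(F,G)_{\geq k}$ reassembles uniquely into a pre-natural transformation.

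The substantive step is to verify that the differential and composition of $\mc D$ are filtration-preserving. Composition is the gentle half: in a functor DG-category $(S\cdot T)_d$ is a sum of terms $\mk m_2^{\mc C}(S_{\ast}(\ldots),T_{\ast}(\ldots))$, and if $T\in\Hom_{\geq k}$ and $S\in\Hom_{\geq l}$ such a term survives only when the $T$- and $S$-blocks absorb at least $k$ and $l$ non-$\mc A$ inputs respectively, so $S\cdot T\in\Hom_{\geq k+l}$, which is filtration-preserving for the tensor-product filtration. For the differential $(dT)_d$ one meets three kinds of terms: $\mk m_1^{\mc C}(T_d(\ldots))$, the outer terms $\mk m_2^{\mc C}(G_{\ast}(\ldots),T_{\ast}(\ldots))$ and $\mk m_2^{\mc C}(T_{\ast}(\ldots),F_{\ast}(\ldots))$, and the inner terms $T(\ldots,\mk m_j^{\mc B}(\ldots),\ldots)$ obtained by contracting a block of inputs with a structure map of $\mc B$. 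The first two kinds feed all of, respectively a sub-collection of, the original inputs to $T$, so they cannot lower the non-$\mc A$ count. The inner terms are the only delicate point, and here I would use crucially that $\mc A\subset\mc B$ is closed under the structure maps it inherits from $\mc B$: contracting a block lying entirely in $\mc A$ returns an element of $\mc A$, while contracting a block containing at least one non-$\mc A$ factor consumes that factor, so in either case the number of non-$\mc A$ inputs presented to $T$ is at most the number of non-$\mc A$ inputs among $a_d,\ldots,a_1$. Hence every term of $(dT)_d$ vanishes whenever at most $k-1$ of the $a_i$ are non-$\mc A$, giving $d\bigl(\Hom(F,G)_{\geq k}\bigr)\subseteq\Hom(F,G)_{\geq k}$ and making $\mc D$ a filtered DG-category. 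I expect this verification --- specifically the inner-term bound --- to be the main obstacle.

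It remains to identify the order-zero category and conclude. By definition $\Hom_{\mc D_0}(F,G)=\Hom(F,G)/\Hom(F,G)_{>0}$ with $\Hom(F,G)_{>0}=\Hom(F,G)_{\geq 1}$ the transformations vanishing on all $\mc A$-input tuples; the quotient therefore records exactly the restriction of each $\lambda_d$ to $\mc A^{\otimes d}$. This is the comparison $\mc D_0\simeq\mathrm{Fun}(\mc A,\mc C)$ noted above, and in fact it is an isomorphism on $\Hom$-complexes, surjectivity being witnessed by extending a natural transformation of the restricted functors by zero on tuples with a non-$\mc A$ input, and injectivity being the identification of its kernel with $\Hom_{\geq 1}$. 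Under this identification the tautological functor $\mc D\to\mc D_0$ becomes the restriction functor $\mathrm{Fun}(\mc B,\mc C)\to\mathrm{Fun}(\mc A,\mc C)$. The preceding lemma asserts that $\mc D\to\mc D_0$ is a fibration, and since the comparison is an equivalence which is moreover an isomorphism on $\Hom$-complexes, both fibration axioms --- surjectivity on morphisms and lifting of isomorphisms in the homotopy category --- transfer across it to the restriction functor, completing the proof.
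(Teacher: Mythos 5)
Your proposal is correct and follows essentially the same route as the paper: equip $\mathrm{Fun}(\mc B,\mc C)$ with the filtration by the number of inputs outside $\mc A$, identify $\mathrm{Fun}(\mc B,\mc C)_0$ with $\mathrm{Fun}(\mc A,\mc C)$, and apply the preceding lemma on the tautological functor from a filtered $A_\infty$-category to its order-zero quotient. The only difference is one of detail: you verify explicitly what the paper leaves implicit (completeness and separatedness of the filtration, and in particular the inner-term bound showing $d$ preserves the filtration, which indeed uses that $\mc A$ is closed under the structure maps of $\mc B$).
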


\subsection{Categories over the Novikov ring}
\label{subsec_catnov}

Fix a field $\mathbf k$, playing the role of \textit{residue field}.
The \textbf{Novikov ring}, $\mathbf R$, is the ring of formal power series of the form
\[
\sum_{\lambda\in[0,\infty)}a_\lambda t^{\lambda}
\]
where $a_\lambda\in\mathbf{k}$ and $a_\lambda=0$ except for $\lambda$ in some discrete subset of $[0,\infty)$.
This is a non-Noetherian local ring with field of fractions, $\mathbf K$, consisting of formal series of the form
\[
\sum_{\lambda\in\RR}a_\lambda t^{\lambda}
\]
where $a_\lambda=0$ except for $\lambda$ in some discrete, bounded below subset of $\RR$.
If $\mathbf k$ is algebraically closed, then so is $\mathbf K$.

Linear maps $\mathbf R^m\to \mathbf R^n$ have a kind of singular value decomposition.
\begin{prop}
\label{prop_RmapSVD}
Let $f:V\to W$ be an $\mathbf R$-linear map between finite rank free modules, then there are bases $b_1,\ldots,b_m$ of $V$, $c_1,\ldots,c_n$ of $W$, and $0\leq \lambda_1\leq\ldots\leq \lambda_r$, $r\leq \min(m,n)$ such that $f(b_i)=t^{\lambda_i}c_i$ for $1\leq i\leq r$, and $f(b_i)=0$, $r<i\leq n$. 
\end{prop}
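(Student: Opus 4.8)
The plan is to recognize that the Novikov ring $\mathbf R$, though non-Noetherian, behaves like an elementary-divisor domain for the purpose of this statement, and to run the classical Smith normal form argument. First I would record the structural facts about $\mathbf R$ that I need. Every nonzero $x=\sum_\lambda a_\lambda t^\lambda\in\mathbf R$ has a well-defined valuation $v(x):=\min\{\lambda:a_\lambda\neq 0\}\geq 0$, and one may factor $x=t^{v(x)}u$ where $u$ has nonzero constant term and is therefore a unit (the units of the local ring $\mathbf R$ are exactly the elements of valuation $0$). From $v(xy)=v(x)+v(y)$ one deduces the crucial divisibility property: if $v(a)\leq v(b)$ then $a\mid b$, since $b/a=t^{v(b)-v(a)}u_bu_a^{-1}\in\mathbf R$. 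Choosing bases identifies $f$ with a matrix $A$ over $\mathbf R$, and changing the bases of $V$ and $W$ replaces $A$ by $QAP$ with $P\in\mathrm{GL}_m(\mathbf R)$ and $Q\in\mathrm{GL}_n(\mathbf R)$; so it suffices to bring $A$ into the form $\mathrm{diag}(t^{\lambda_1},\ldots,t^{\lambda_r})$, padded with zeros, by invertible row and column operations.

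Next I would carry out the reduction by induction on the matrix size. If $A=0$ take $r=0$. Otherwise, among the finitely many nonzero entries select one of minimal valuation and move it to the $(1,1)$ position by a row and a column permutation. Since the pivot $a_{11}$ now divides every entry by the divisibility property, for each $i>1$ I subtract $(a_{i1}/a_{11})$ times the first row from the $i$-th row to clear the first column, and symmetrically clear the first row by column operations; because the column below the pivot has already been made zero, clearing the row adds only multiples of zero to the lower rows and so reintroduces nothing. After these operations the first row and column vanish except for $a_{11}$. The entries of the remaining $(m-1)\times(n-1)$ block are $\mathbf R$-combinations of the original entries and hence still have valuation $\geq v(a_{11})$, so the minimality of the pivot is preserved and I may recurse on this block. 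Finally I absorb the unit in each diagonal entry $t^{\lambda_i}u_i$ by rescaling the corresponding basis vector, and permute the basis vectors so that $\lambda_1\leq\cdots\leq\lambda_r$, which yields the asserted normal form.

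The only point requiring genuine care — and the substitute for the Euclidean algorithm available over a PID — is the divisibility property $v(a)\leq v(b)\Rightarrow a\mid b$, together with the observation that choosing the pivot of minimal valuation keeps this minimality stable under the subsequent row and column operations. This is what guarantees that no inner gcd iteration is needed and that the induction on matrix size terminates cleanly, despite $\mathbf R$ being non-Noetherian. Everything else is the standard elementary-divisor bookkeeping, which I would not spell out in full.
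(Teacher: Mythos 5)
Your proof is correct and takes essentially the same approach as the paper, which disposes of the proposition with the single remark that ``the proof is a basic row and column reduction argument''; your write-up simply supplies the details left implicit there. In particular, isolating the valuation-ring divisibility property $v(a)\le v(b)\Rightarrow a\mid b$ as the substitute for gcd iterations is exactly the point that makes the one-pass row-and-column reduction succeed over the non-Noetherian ring $\mathbf R$.
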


The proof is a basic row and column reduction argument.
As a corollary, any finitely presented $\mathbf R$-module is a finite direct sum of modules of the form $\mathbf R$ or $\mathbf R/t^\lambda \mathbf R$, $t>0$.

Next, we turn to $A_\infty$-categories over $\mathbf R$.

\begin{df}
A \textbf{curved $A_\infty$-category over $\mathbf R$} is a curved $A_\infty$-category, $\mc C$, over $\mathbf k$ such that each $\Hom_{\mc C}(X,Y)$ has the structure of a free $\mathbf R$-module, the structure maps $\mk m_n$ are $\mathbf R$-linear, and the filtrations satisfy
\begin{equation}
t^\lambda\Hom(X,Y)_{\geq \alpha}\subseteq \Hom(X,Y)_{\geq \alpha+\lambda}
\end{equation}
for $\alpha\in\RR$ and $\lambda\geq 0$.
\end{df}

Geometrically, we think of an $A_\infty$-category $\mc C$ over $\mathbf R$ as a family of categories over the formal ``disk'' $\mathrm{Spec}(\mathbf R)$.
The \textbf{central fiber} of this family is the curved $A_\infty$-category $\mc C_{\mathbf k}:=\mc C\otimes_{\mathbf R}\mathbf k$ over $\mathbf k$.
This category has by definition the same objects and 
\[
\Hom_{\mc C_{\mathbf k}}(X,Y):=\Hom_{\mc C}(X,Y)\otimes_{\mathbf R} \mathbf k=\Hom_{\mc C}(X,Y)/t^{>0}\Hom_{\mc C}(X,Y)
\]
with the induced filtrations and structure maps.
The \textbf{general fiber} is the curved $A_\infty$-category $\mc C_{\mathbf K}:=\mc C\otimes_{\mathbf R}\mathbf K$ over $\mathbf K$.
There are natural $A_\infty$-functors fitting into a diagram
\[
\begin{tikzcd}
 & \mc C \arrow[dl]\arrow[dr] & \\
\mc C_{\mathbf k} & & \mc C_{\mathbf K} 
\end{tikzcd}
\]
The interplay between these three categories is of fundamental importance in this paper.

Note that the category $\mc C_0$, defined for any curved $A_\infty$-category $\mc C$, is in general different from $\mc C_{\mathbf k}$, defined for a curved $A_\infty$-category over the Novikov ring. 
On the other hand, the relation between $\mc C_0$ and $\mc C$ is in many ways similar to the relation between $\mc C_{\mathbf k}$ and $\mc C$.
As an example, we have the following variant of Lemma~\ref{lem_mctransport}.

\begin{lemma}
\label{lem_mctransport_nov}
Let $\mc C$ be a curved $A_\infty$-category over $\mathbf R$, $(X,\delta)\in V(\mathrm{Tw}(\mc C))$ and $f_0:(X,\delta_0)\to (Y,\gamma_0)$ an equivalence in $V(\mathrm{Tw}(\mc C)_{\mathbf k})$.
Then there is a Maurer--Cartan element $\gamma$ for $Y$ with constant term $\gamma_0$ and a lift of $f_0$ to an equivalence $f:(X,\delta)\to (Y,\gamma)$ in $V(\mathrm{Tw}(\mc C))$.
\end{lemma}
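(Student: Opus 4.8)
The plan is to mirror the proof of Lemma~\ref{lem_mctransport}, replacing the reduction $\mc C\to\mc C_0$ along the built-in $\RR$-filtration by the reduction $\mc C\to\mc C_{\mathbf k}$ along the $t$-adic filtration, and to run the construction as a successive approximation in powers of $t$. I regard the uncurved $A_\infty$-category $V(\mathrm{Tw}(\mc C))$ over $\mathbf R$ as a filtered $A_\infty$-category via the $t$-adic valuation; its reduction in the sense of Subsection~\ref{subsec_fibdg} is computed by $\Hom\mapsto\Hom\otimes_{\mathbf R}\mathbf k$, and completeness and separatedness of this filtration follow from $t$-adic completeness of $\mathbf R$ together with finiteness of rank of the $\Hom$-modules.

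The key device is to package the unknowns $\gamma$ and $f$ into a single Maurer--Cartan lifting problem using the mapping cone. In the central fiber form $C_0:=\mathrm{Cone}(f_0)$, the object of $V(\mathrm{Tw}(\mc C)_{\mathbf k})$ with underlying object $X[1]\oplus Y$ and bounding cochain (schematically, up to signs and shift)
\[
\Phi_0=\begin{pmatrix}\delta_0 & 0\\ f_0 & \gamma_0\end{pmatrix},
\]
which solves \eqref{MC_eqn} precisely because $\delta_0,\gamma_0$ are Maurer--Cartan and $f_0$ is closed, and which satisfies $C_0\simeq 0$ since $f_0$ is an equivalence. A lower-triangular lift $\Phi$ of $\Phi_0$ to a Maurer--Cartan element over $\mathbf R$ whose upper-left block is the given $\delta$ is exactly the data of a Maurer--Cartan element $\gamma$ for $Y$ (the lower-right block) together with a closed lift $f$ of $f_0$ (the off-diagonal block): the upper-left block of the Maurer--Cartan equation for $\Phi$ is automatically satisfied, being \eqref{MC_eqn} for the given $\delta$, and lower-triangularity is preserved by the structure maps.

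I would construct $\Phi$ by successive approximation. Suppose $\Phi^{(n)}$ is lower-triangular with upper-left block $\delta$, reduces to $\Phi_0$, and solves \eqref{MC_eqn} modulo $t^{>\lambda_n}$. Its Maurer--Cartan error then has vanishing upper-left block (the Maurer--Cartan equation for $\delta$ holds exactly) and vanishing upper-right block (by lower-triangularity), so it lies in the ``$Y$-row'' $\Hom(C_0,Y)$; it is closed for the twisted differential $\mk m_1^{\Phi_0}$ by the usual Bianchi identity, and its leading $t$-adic coefficient defines a class in $H^\bullet\big(\Hom_{V(\mathrm{Tw}(\mc C)_{\mathbf k})}(C_0,(Y,\gamma_0))\big)$. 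Because $C_0\simeq 0$ this complex is acyclic, so the class vanishes and the error is cancelled by a correction supported in the $Y$-row and of strictly larger valuation. This is the step that uses the equivalence $f_0$ essentially. By discreteness of supports the valuations $\lambda_n$ tend to infinity, so the $\Phi^{(n)}$ are $t$-adically Cauchy and converge, by completeness, to a lift $\Phi$; its blocks provide $\gamma$ (with constant term $\gamma_0$) and a closed $f$ lifting $f_0$.

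It remains to see that $f$ is an equivalence. Since $(Y,\gamma)$ reduces to $(Y,\gamma_0)$ and $f$ reduces to the equivalence $f_0$, the $t$-adic analogue of the reflection-of-isomorphisms property from Subsection~\ref{subsec_fibdg} (applicable because $V(\mathrm{Tw}(\mc C))$ is filtered as above) shows that $f$ is an equivalence in $V(\mathrm{Tw}(\mc C))$, as required. I expect the main obstacle to be the simultaneous bookkeeping of the two filtrations --- the $\RR$-filtration internal to $\mathrm{Tw}(\mc C)$ that controls the bounding cochains, and the $t$-adic filtration along which one deforms --- and, within the inductive step, checking that the obstruction genuinely lands in the acyclic complex $\Hom(C_0,Y)$ and that the corrections increase the valuation without accumulating at a finite value.
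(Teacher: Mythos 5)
Your proposal is correct, and its opening observation is in fact the entirety of the paper's proof: the paper simply re-filters $V(\mathrm{Tw}(\mc C))$ by $\Hom(X,Y)_{\geq\lambda}:=t^\lambda\Hom(X,Y)$ --- preserved by the structure maps thanks to $\mathbf R$-linearity --- so that the reduction $\mc C_0$ of Subsection~\ref{subsec_fibdg} becomes $\mc C_{\mathbf k}$, and then invokes Lemma~\ref{lem_mctransport} as a black box (whose proof is outsourced to~\cite{h_skein}). Where you diverge is that, instead of citing that lemma, you re-derive the transport statement from scratch: packaging $(\gamma,f)$ as a lower-triangular Maurer--Cartan element on the cone, identifying the obstruction at each $t$-adic stage as a closed element of $\Hom(C_0,(Y,\gamma_0))$ via the Bianchi identity, and killing it using acyclicity of this complex, which is exactly where the hypothesis that $f_0$ is an equivalence enters; your final appeal to reflection of isomorphisms matches the fibration lemma of Subsection~\ref{subsec_fibdg}. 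The trade-off: the paper's route is a one-line reduction that delegates all convergence and obstruction bookkeeping to the cited proof, while yours is self-contained and makes the mechanism visible, at the cost of having to handle the convergence subtleties yourself --- in the filtered framework these are precisely the completeness and separatedness built into Definition~\ref{def_curved_ainfty}, and your caveat about finite rank of the $\Hom$-modules (not assumed in the lemma as stated, but present in the applications such as Proposition~\ref{prop_quasiequiv}) is an honest sufficient condition for the $t$-adic filtration to be complete; your appeal to discreteness of supports to ensure $\lambda_n\to\infty$ deserves the care you flag, e.g.\ by confining all exponents to a locally finite submonoid of $[0,\infty)$ generated by the valuations of the initial data, but this is a standard point and not a gap.
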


\begin{proof}
This follows from Lemma~\ref{lem_mctransport} after changing the filtrations on $V(\mathrm{Tw}(\mc C))$ so that 
\[
\Hom(X,Y)_{\geq \lambda}:=t^\lambda \Hom(X,Y)
\]
which is still preserved by the structure maps thanks to $\mathbf R$-linearity.
\end{proof}

\begin{prop}
\label{prop_quasiequiv}
Let $F:\mc C\to\mc D$ be an $A_\infty$-functor between curved $A_\infty$-categories such that $\mathrm{Tw}(F)_{\mathbf k}: \mathrm{Tw}(\mc C)_{\mathbf k}\to \mathrm{Tw}(\mc D)_{\mathbf k}$ is a quasi-equivalence of curved $A_\infty$-categories over $\mathbf k$.
Assume that each $\Hom^n_{\mc C}(X,Y)$ and $\Hom^n_{\mc D}(X,Y)$ is a finite-rank free module.
Then $\mathrm{Tw}(F):\mathrm{Tw}(\mc C)\to \mathrm{Tw}(\mc D)$ is a quasi-equivalence.
\end{prop}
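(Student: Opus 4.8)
The plan is to unwind the definition of quasi-equivalence and reduce every assertion to a Nakayama-type principle over the complete local ring $\mathbf R$, whose residue field is $\mathbf k$. By definition $\mathrm{Tw}(F)$ is a quasi-equivalence precisely when the induced functor $V(\mathrm{Tw}(F))\colon V(\mathrm{Tw}(\mc C))\to V(\mathrm{Tw}(\mc D))$ on the uncurved subcategories of flat objects is quasi-fully faithful (a quasi-isomorphism on all $\Hom$-complexes) and essentially surjective on homotopy categories. Two preliminary observations make the passage to $\mathbf k$ available. First, since the structure maps are $\mathbf R$-linear and the $\Hom$-modules are free, the formation of $\mathrm{add}$, of the twisting $\widetilde{(\cdot)}$, and of all $\Hom$-complexes commutes with $-\otimes_{\mathbf R}\mathbf k$; in particular the Maurer--Cartan equation is $\mathbf R$-linear, so a flat object $(X,\delta)$ over $\mathbf R$ reduces to a flat object of the central fibre, and $\Hom_{\mathrm{Tw}(\mc C)}(-,-)\otimes_{\mathbf R}\mathbf k=\Hom_{\mathrm{Tw}(\mc C)_{\mathbf k}}(-,-)$ compatibly with the twisted differential $\widetilde{\mk m}_1$. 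Second, between flat objects these $\Hom$-complexes are \emph{bounded} complexes of \emph{finite-rank free} $\mathbf R$-modules, because the underlying $\Hom$'s in $\mc C$ and $\mc D$ are finite-rank free by hypothesis and a twisted complex is a finite direct sum of shifts.

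For quasi-full-faithfulness I would fix flat objects $(X,\delta),(Y,\gamma)$ and consider the $\widetilde{\mk m}_1$-linear chain map induced by $\mathrm{Tw}(F)$ on their $\Hom$-complexes. Its mapping cone $E^\bullet$ is again a bounded complex of finite-rank free $\mathbf R$-modules, and $E^\bullet\otimes_{\mathbf R}\mathbf k$ is the cone of the corresponding map on the central fibre, which is acyclic because $\mathrm{Tw}(F)_{\mathbf k}$ is a quasi-equivalence and hence quasi-fully faithful on the reductions of $(X,\delta)$ and $(Y,\gamma)$. It then suffices to establish the following Nakayama statement: a bounded complex of finite-rank free $\mathbf R$-modules that becomes acyclic after $-\otimes_{\mathbf R}\mathbf k$ is contractible. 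This follows by Gaussian elimination over the local ring $\mathbf R$: using Proposition~\ref{prop_RmapSVD} one cancels each differential matrix entry that is a unit against a pair of generators, a process that terminates because the ranks are finite and leaves a homotopy-equivalent minimal complex whose differentials have entries in the maximal ideal $t^{>0}\mathbf R$; such a minimal complex has zero differential after $-\otimes_{\mathbf R}\mathbf k$, so its reduction equals its own cohomology, which vanishes, whence each free term vanishes by Nakayama and the minimal complex is zero. Thus $E^\bullet$ is contractible and the map on $\Hom$-complexes is a quasi-isomorphism.

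For essential surjectivity, let $(Y,\gamma)\in V(\mathrm{Tw}(\mc D))$ be a flat object over $\mathbf R$ with reduction $(Y,\overline{\gamma})$. Since $\mathrm{Tw}(F)_{\mathbf k}$ is essentially surjective there is a flat object $(X',\overline{\delta})$ of $\mathrm{Tw}(\mc C)_{\mathbf k}$ and an equivalence $g_0\colon\mathrm{Tw}(F)_{\mathbf k}(X',\overline{\delta})\xrightarrow{\sim}(Y,\overline{\gamma})$. The conceptual point is that lifts of a fixed central-fibre flat object to $\mathbf R$ form a deformation problem governed by the endomorphism $A_\infty$-algebra of that object, and the quasi-full-faithfulness just proved supplies a quasi-isomorphism $\mathrm{End}_{\mathrm{Tw}(\mc C)_{\mathbf k}}(X',\overline{\delta})\xrightarrow{\sim}\mathrm{End}_{\mathrm{Tw}(\mc D)_{\mathbf k}}(Y,\overline{\gamma})$ (via $g_0$). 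I would therefore build the lift by successive approximation along the Novikov filtration: at each order the obstruction to extending the partial Maurer--Cartan solution lies in $H^2$ of the source endomorphism complex, is carried by this quasi-isomorphism to the corresponding obstruction on the $\mc D$-side, and that obstruction vanishes because $\gamma$ is an honest lift of $\overline{\gamma}$ over $\mathbf R$. Completeness of $\mathbf R$ guarantees convergence, producing a flat $(X',\delta)$ over $\mathbf R$; Lemma~\ref{lem_mctransport_nov}, applied to $(Y,\gamma)$ and $g_0^{-1}$, then transports $\gamma$ along the equivalence so as to realise $\mathrm{Tw}(F)(X',\delta)\simeq(Y,\gamma)$ over $\mathbf R$. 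Together with the previous paragraph this shows $V(\mathrm{Tw}(F))$ is a quasi-equivalence.

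I expect the main obstacle to be exactly this essential-surjectivity step, because a flat object over $\mathbf R$ is \emph{not} determined by its central fibre: there can be many inequivalent lifts of $(Y,\overline{\gamma})$, so one must guarantee that the lift constructed on the source side maps to the \emph{prescribed} target $(Y,\gamma)$ rather than to some other lift. This is precisely why the successive-approximation argument must be organised as a simultaneous induction, building $\delta$ and the lift of $g_0$ together toward the fixed $(Y,\gamma)$, using quasi-full-faithfulness over $\mathbf R$ to match and kill the obstruction classes and Lemma~\ref{lem_mctransport_nov} to transport the given $\gamma$ rather than to manufacture an uncontrolled one; since $\mathbf R$ is complete but not Artinian, the only analytic input needed is the convergence of this induction along the Novikov filtration. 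The verifications that the constructions commute with $-\otimes_{\mathbf R}\mathbf k$ and that the Gaussian elimination is genuinely $\widetilde{\mk m}_1$-equivariant are routine, and I would only sketch them.
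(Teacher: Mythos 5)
Your proposal is correct and follows essentially the same route as the paper's proof: quasi-full-faithfulness is reduced to the fact that a degreewise finite-rank free complex over $\mathbf R$ which becomes acyclic after $\otimes_{\mathbf R}\mathbf k$ is itself acyclic (the paper deduces this directly from Proposition~\ref{prop_RmapSVD}; your Gaussian-elimination/Nakayama argument is the same mechanism, and note the $\Hom$-complexes need only be degreewise finite rank, not bounded, which your argument still handles). For essential surjectivity you use exactly the paper's two ingredients --- transport of Maurer--Cartan elements via Lemma~\ref{lem_mctransport_nov} together with the bijectivity, up to gauge, of Maurer--Cartan sets under the quasi-equivalence $\Hom(X,X)\to\Hom(F(X),F(X))$ --- the only difference being that the paper invokes the latter transfer statement directly, while you re-derive it by order-by-order obstruction theory along the Novikov filtration.
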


\begin{proof}
Using Proposition~\ref{prop_RmapSVD} one sees that if $(C,d)$ is a chain complex of finite-rank free $\mathbf R$-modules such that $C\otimes_{\mathbf R}\mathbf k$ is acyclic, then $C$ is itself acyclic.
Thus, if $C,D$ are chain complexes satisfying the same condition and $f:C\to D$ is a chain map inducing a quasi-isomorphism after base-change to $\mathbf k$, then $f$ was already a quasi-isomorphism.
This shows that $\mathrm{Tw}(F)$ is quasi fully-faithful.

It remains to check essential surjectivity of $\mathrm{Tw}(F)$ on the level of homotopy categories.
Let $(Y,\gamma)\in V(\mathrm{Tw}(\mc D))$, then by assumption there is an $(X,\delta)\in\mathrm{Tw}(\mc C)$ and an equivalence $F(X)\to (Y,\gamma)$ in $\mathrm{Tw}(\mc C)_{\mathbf k}$. 
Transport of $\gamma$ along this equivalence (Lemma~\ref{lem_mctransport_nov}) gives a Maurer--Cartan element $\delta'\in\mc{MC}(F(X))$, differing from $\delta$ in higher order terms, such that $(F(X),\delta')$ is equivalent to $(Y,\gamma)$ in $\mathrm{Tw}(\mc D)$.
Finally, since $F$ induces a quasi-equivalence of curved $A_\infty$-algebras $\Hom(X,X)\to\Hom(F(X),F(X))$ there is a $\tilde{\delta}\in\mc{MC}(X)$ such that $F(\tilde{\delta})$ is gauge equivalent to $\delta'$, in particular $F(X,\tilde{\delta})=(F(X),F(\tilde{\delta}))$ is equivalent to $(F(X),\delta')$.
\end{proof}


\section{Fukaya categories of surfaces with coefficients}
\label{sec_aside}

Given a symplectic manifold $M$ with some additional structure one considers its Fukaya category $\mc F(M)_{\mathbf K}$ which is an $A_\infty$-category over the Novikov field $\mathbf K$.
Choosing a triangulated DG-category $\mc E$ over $\mathbf k$, there is the \textit{Fukaya category with coefficients in $\mc E$} which can be defined simply as $\mc F(M;\mc E)_{\mathbf K}:=\mc E\otimes_{\mathbf k}\mc F(M)_{\mathbf K}$ (triangulated completion of the tensor product) by a universal coefficient principle.
The purpose of this section is to define a certain \textit{model} of this category in the case $\dim_{\RR}M=2$, i.e. a category $\mc F(M;\mc E)$ over the Novikov ring $\mathbf R$ which gives $\mc F(M;\mc E)_{\mathbf K}$ upon base-change to $\mathbf K$.
Objects in the central fiber $\mc F(M;\mc E)_{\mathbf k}=\mc F(M;\mc E)\otimes_{\mathbf R}\mathbf k$ correspond, roughly, to graphs in $M$ with edges labeled by objects of $\mc E$.

\subsection{Grading conventions}
\label{subsec_grading}

The definition of $\ZZ$-graded Floer homologies and Fukaya categories, as opposed to $\ZZ/2$-graded ones, requires Lagrangian submanifolds to be equipped with certain additional grading structure~\cite{kontsevich_hms,seidel_graded}.
We recall the relevant definitions in the special case of surfaces.
This subsection also serves to fix our conventions.

Suppose $S$ is a Riemann surface and consider its holomorphic cotangent bundle $T^*S$ as a complex line bundle.
A \textbf{grading structure} on $S$ is a smooth, nowhere vanishing section $\nu$ of $(T^*S)^{\otimes 2}$, i.e. a (smooth) quadratic differential on $S$.
A grading of a curve in $S$ is essentially a continuous choice of $\mathrm{Arg}(\nu)$ along the curve. 
More precisely, a \textbf{grading} of an immersed curve $c:I\to S$ is a smooth function $\alpha:I\to\RR$ such that if $x\in I$ and $v\in T_{c(x)}S\setminus\{0\}$ is tangent to $c$, then 
\[
\nu(v,v)=re^{2\pi i \alpha(x)}
\]
for some $r>0$. 
Given $n\in\ZZ$, the \textbf{shift} of the grading $\alpha$ by $n$ is $\alpha+n$.

\begin{remark}
For \textit{cochain} complexes (differential has degree $+1$), the functor $[1]$ shifts the entire complex \textit{down} in degree.
Thus the $\alpha$ above is in a sense opposite to cohomological degree.
Still, we choose this conventions because 1) the phase of a special Lagrangian submanifold is usually defined as a choice of argument of the holomorphic volume form (not $-\mathrm{Arg}$) and 2) in Bridgeland's definition of a stability condition, phase is opposite to cohomological degree.
\end{remark}

\begin{remark}
The choice of complex structure on $S$ is auxiliary and can be avoided entirely, see for instance~\cite{hkk}.
\end{remark}

Suppose $L_0=(I_0,c_0,\alpha_0)$ and $L_1=(I_1,c_1,\alpha_1)$ are graded curves with transverse intersection at $p=c_0(t_0)=c_1(t_1)$. 
Define the \textbf{intersection index}
\begin{equation}
\begin{aligned}
i(L_0,t_0,L_1,t_1) & :=\lceil \alpha_0(t_0)-\alpha_1(t_1)\rceil\in\ZZ.
\end{aligned}
\end{equation}
If $t_0$ and $t_1$ are uniquely determined by $p\in S$, then we may write $i_p(L_0,L_1)$ for $i(L_0,t_0,L_1,t_1)$.

\subsection{Category with support on a graph}
\label{subsec_graph}

Fix throughout this subsection a compact Riemann surface $S$ with boundary, equipped with a grading structure $\nu$ and an area 2-form $\omega$ compatible with the orientation coming from the complex structure.
Furthermore, fix an \textbf{embedded graph} $G\subset S$ given by a finite set $G_0$ of vertices and a finite set $G_1$ of embedded intervals ending at the vertices. 
The edges meeting at a given vertex are required have distinct tangent rays.
We allow 1-valent and 2-valent vertices, multiple edges between vertices, and vertices to lie on the boundary of $S$.
Our goal is to construct a curved $A_\infty$-category $\mc F_G(S;\mc E)$ over the Novikov ring, which can be thought of as a Fukaya category of $S$ with objects supported on $G$ and coefficients in a triangulated DG-category $\mc E$ over $\mathbf k$.

It will be convenient to pass to the \textit{real blow-up} of $S$ in $G_0$, denoted $\pi:\widehat{S}\to S$ which replaces each vertex of $G$ in the interior of $S$ with a boundary circle and each vertex of $G$ on $\partial S$ by an interval with each endpoint a corner of $\widehat{S}$.
These together form the \textit{exceptional boundary} $\pi^{-1}(G_0)\subset\partial \widehat{S}$.
The edges of $G$ lift to a disjoint union of embedded intervals (arcs), $\widehat{G}$, with endpoints in the exceptional boundary, see Figure~\ref{fig_blowup}.

\begin{figure}
\centering
\includegraphics[scale=.9]{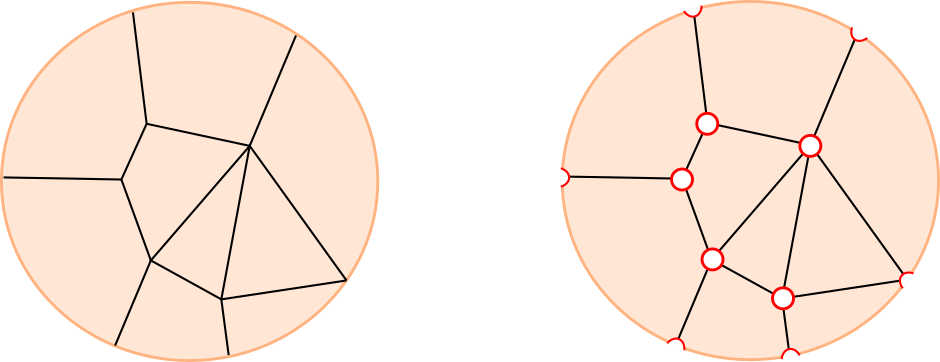}
\caption{Real blow-up $\widehat{S}$ (shown right) of a surface $S$ in the vertices of a graph $G$ (shown left). The exceptional boundary is depicted in red.}
\label{fig_blowup}.
\end{figure}

As a first application of this construction we make the following definition. 

\begin{df}
Let $X,Y\in \widehat{G}$ be arcs. 
A \textbf{boundary path} from $X$ to $Y$ is an immersed path, up to reparameterization, starting at an endpoint of $X$, ending at an endpoint of $Y$, and following the exceptional boundary of $\widehat{S}$ in the direction opposite of its natural orientation, i.e. so that the surface is to the right when following the path.
\end{df}

\begin{remark}
This convention for the orientation of boundary paths is the same as in~\cite{hkk}, but opposite to the more common convention in the (wrapped) Fukaya category literature, which is however incompatible with the combination of 1) non-zero degree zero morphisms between semistable objects in \textit{increasing} phase 2) the phase of special Lagrangians is a choice of $\mathrm{Arg}(\Omega|_L)$.
\end{remark}

If a boundary path, $a$, ends at the same endpoint of an arc $Y$ at which another boundary path, $b$, starts, then their \textit{concatenation} is also boundary path and denoted by $ba$. (Note the order!)
Furthermore, a boundary path from $X$ to $Y$ can be assigned an integer degree once gradings have been chosen for $X$ and $Y$.
To see this, assume that the complex structure and $\nu$ have been extended to $\partial\widehat{S}$, which is possible after perturbing them slightly near the boundary. 
The degree, $|a|\in\ZZ$, of a boundary path $a:[0,1]\to\widehat{S}$ from $p=a(0)\in X$ to $q=a(1)\in Y$ is then 
\[
|a|:=i(X,p,a,0)-i(Y,q,a,1)
\]
for arbitrary choice of grading on $a$.

For the purpose of defining suitable $\RR$-filtrations on $\Hom$ it will be necessary to assign weights in $\RR_{\geq 0}$ to all boundary paths such that the following conditions are satisfied.

\begin{df}
A \textbf{compatible choice of weights} is a choice of $w(a)\in\RR_{>0}$ for each boundary path such that:
\begin{enumerate}[(1)]
\item 
$w(ab)=w(a)+w(b)$ if $a,b$ are composable boundary paths.
\item 
Whenever $a_1,a_2,\ldots,a_n$ is a sequence of boundary paths, $a_i:X_{i-1}\to X_i$, $X_n=X_0$, so that the edges $X_1,X_2,\ldots,X_n$ form the boundary of an $n$-gon cut out by $G$ in $S$ with area $A>0$, then $w(a_1)+\ldots+w(a_n)\leq A$.
\end{enumerate}
\end{df}

By the first condition, a compatible choice of weights is determined by its value on \textit{primitive} boundary paths, i.e. those which do not meet any arc except at the endpoints.
Each such boundary path belongs to at most one polygon cut out by $G$, so it is easy to ensure the second condition, which only involves the primitive boundary paths.
Thus a compatible choice of weights always exists.

After these preliminaries on boundary paths, the first stage in the construction of the category $\mc F_G(S;\mc E)$ is the definition of a curved $A_\infty$-category $\mc A_G(S)_{\mathbf k}$ over $\mathbf k$ which does not depend on the coefficient category $\mc E$ or the area form $\omega$.
We fix a compatible choice of weights and choice of grading on each edge of $G$.
\begin{itemize}
\item
\textbf{Objects} of $\mc A_G(S)_{\mathbf k}$ are edges of $G$.
\item
\textbf{Morphisms:} Let $X,Y$ be objects of $\mc A_G(S)_{\mathbf k}$ and $\widehat{X},\widehat{Y}$ the corresponding arcs in $\widehat{S}$. 
A basis of $\Hom(X,Y)$ over $\mathbf k$ is given by boundary paths from $\widehat X$ to $\widehat Y$ and, if $X=Y$ the identity morphism $1_X$.
\item
Define the \textbf{filtrations} on $\Hom(X,Y)$ by
\[
a\in\Hom(X,Y)_{\geq\lambda}\Leftrightarrow w(a)\geq \lambda
\]
where $a$ is a boundary path from $X$ to $Y$.
\item
The \textbf{structure map} $\mk m_2$ is essentially concatenation of paths. 
More precisely, let $X,Y,Z$ be objects of $\mc A_G(S)_{\mathbf k}$, $a$ a boundary path from $X$ to $Y$, and $b$ a boundary path from $Y$ to $Z$. 
Define
\[
\mk m_2(b,a):=\begin{cases} (-1)^{|a|}ba & \text{ if } a,b \text{ composable } \\ 0 & \text{ otherwise}\end{cases}
\]
and $\mk m_2(a,1_X):=a$, $\mk m_2(1_X,a)=(-1)^{|a|}a$.
The structure maps $\mk m_1$ and $\mk m_{\geq 3}$ are zero.
\item
The \textbf{curvature} $\mk m_0(X)\in\Hom^2(X,X)$ of an object $X$ is defined as follows.
If $p$ is an endpoint of $X$ in the interior of $S$, then let $c_{p,X}\in\Hom^2(X,X)$ be the boundary path which starts and ends at $X$ and winds once around the component of $\widehat{S}$ corresponding to $p$.
Otherwise, if $p\in\partial S$, then let $c_{p,X}=0$.
Define
\[
\mk m_0(X):=c_{p,X}+c_{q,X}
\]
where $p,q$ are the two endpoints of $X$.
\end{itemize}

The definition of $\mk m_0$ above shows why it was necessary to assign positive weights to boundary paths, since this ensures that $\mk m_0\in \Hom^2(X,X)_{>0}$.

\begin{prop}
$\mc A_G(S)_{\mathbf k}$ defined above is a curved $A_\infty$-category over $\mathbf k$.
\end{prop}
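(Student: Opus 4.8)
The plan is to check directly the three groups of conditions in Definition~\ref{def_curved_ainfty}, together with the requirements that the structure maps have the correct degree and preserve filtrations. The degree and filtration conditions are routine. Concatenation of boundary paths adds degrees (the intersection indices defining $|a|$ telescope along a composition), so $\mk m_2$ has degree $0=2-2$, and one checks from the grading conventions that a loop winding once around an interior exceptional circle has index $2$, so that $\mk m_0(X)\in\Hom^2(X,X)$. Filtration-preservation of $\mk m_2$ is exactly condition (1) in the definition of a compatible choice of weights, $w(ba)=w(b)+w(a)$, and the small-curvature condition $\mk m_0(X)\in\Hom^2(X,X)_{>0}$ holds because every boundary path, in particular each winding loop $c_{p,X}$, carries a strictly positive weight. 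Strict unitality is built into the definition of $\mk m_2$ on identity morphisms (and $\mk m_k(\ldots,1_X,\ldots)=0$ for $k\neq 2$ is automatic since $\mk m_0$ has no inputs and $\mk m_1=\mk m_{\geq 3}=0$).

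It then remains to verify the $A_\infty$-relations \eqref{A_infty_rels_m0}, and the first step is a support analysis of the nonzero terms. Since the only nonvanishing structure maps are $\mk m_0$ and $\mk m_2$, a summand $\mk m_{i+1+k}(\ldots,\mk m_j(\ldots),\ldots)$ can be nonzero only if both the outer arity $i+1+k$ and the inner arity $j$ lie in $\{0,2\}$. The outer condition forces $i+1+k=2$, i.e. $i+k=1$, and then $j=n-1\in\{0,2\}$. Hence the only relations with potentially nonzero terms occur at $n=1$ (inner $\mk m_0$) and $n=3$ (inner $\mk m_2$); the relations at $n=0,2$ and at $n\geq 4$ hold trivially because $\mk m_1=\mk m_{\geq 3}=0$.

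The relation at $n=3$ collapses to $\mk m_2(a,\mk m_2(b,c))+(-1)^{\|c\|}\mk m_2(\mk m_2(a,b),c)=0$, the graded associativity of $\mk m_2$. Under the dictionary $ab=(-1)^{|b|}\mk m_2(a,b)$ this becomes ordinary associativity of concatenation of boundary paths, and the sign $(-1)^{|a|}$ in the definition of $\mk m_2$ is precisely the one making this translation consistent, so here the only work is the sign bookkeeping.

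The main content, and the step I expect to be the genuine obstacle, is the relation at $n=1$, which for a boundary path $a\colon X\to Y$ reads $\mk m_2(a,\mk m_0(X))+(-1)^{\|a\|}\mk m_2(\mk m_0(Y),a)=0$; equivalently, $\mk m_0$ is central for $\mk m_2$. A boundary path lies on a single exceptional component of $\partial\widehat S$, so if $a$ runs along the circle at an interior vertex $v$, then of the two summands of $\mk m_0(X)$ only $c_{v,X}$ is composable with $a$ on the right, and of the two summands of $\mk m_0(Y)$ only $c_{v,Y}$ is composable on the left; the other summands are supported on a different circle and drop out. Unwinding the signs, using $|c_{v,X}|=2$ and $\|a\|=|a|-1$, the relation reduces to the purely geometric identity $a\cdot c_{v,X}=c_{v,Y}\cdot a$ of boundary paths. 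Both sides start at the endpoint of $\widehat X$ on the circle $v$, end at the endpoint of $\widehat Y$, traverse the circle monotonically in the direction opposite its orientation, and have the same total winding, namely one full revolution together with the arc from $\widehat X$ to $\widehat Y$; hence they coincide up to reparametrization. The remaining care is to confirm that this argument is robust for the degenerate configurations the definition permits: one- and two-valent vertices, several arcs incident to $v$, the case $X=Y$, and vertices on $\partial S$, where the relevant loop is zero and both sides vanish.
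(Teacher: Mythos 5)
Your proposal is correct and follows essentially the same route as the paper, whose proof consists precisely of the observation that with only $\mk m_0$ and $\mk m_2$ nonzero the $A_\infty$-relations reduce to associativity of the product $a\cdot b=(-1)^{|b|}\mk m_2(a,b)$ and centrality of $\mk m_0$ (i.e.\ $\mk m_0(X)$ being the components of a natural transformation of the identity functor), both immediate from the boundary-path description, together with the remark that condition (2) on the weights is not yet needed. Your arity/support analysis isolating the $n=1$ and $n=3$ relations, the sign bookkeeping, and the geometric identity $a\cdot c_{v,X}=c_{v,Y}\cdot a$ are exactly the details the paper leaves implicit.
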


\begin{proof}
As only $\mk m_0$ and $\mk m_2$ are non-zero, the claim is simply that the product $a\cdot b=(-1)^{|b|}\mk m_2(a,b)$ is associative, and that $\mk m_0$ is central in the sense that $\mk m_0(X)$ are the components of a natural transformation from the identity functor to itself.
Both follow immediately from the definition in terms of boundary paths.
Also compatibility with grading and filtrations is clear. (We do not need the second condition on the weights for now, it will be used later.)
\end{proof}

The next step is to define a certain deformation of $\mc A_G(S)_{\mathbf k}$ over $\mathbf R$, i.e. a category $\mc A_G(S)$ over $\mathbf R$ whose base change to $\mathbf k$ is $\mc A_G(S)_{\mathbf k}$.
The deformation depends on the areas (with respect to $\omega$) of regions cut out by $G\subset S$.
We let $\Ob{\mc A_G(S)}:=\Ob{\mc A_G(S)_{\mathbf k}}$ and 
\[
\Hom_{\mc A_G(S)}(X,Y):=\Hom_{\mc A_G(S)_{\mathbf k}}(X,Y)\otimes_{\mathbf{k}}\mathbf R
\]
with filtration so that $t^{\mu}\Hom(X,Y)_{\geq\lambda}=\Hom(X,Y)_{\geq\lambda+\mu}$.
Higher order ($t^{>0}$) corrections to the structure maps of $\mc A_G(S)$ come from immersed disks as described below.

\begin{df}
A \textbf{marked disk} is a pair $(D,M)$ where $D$ is diffeomorphic to a closed disk and $M\subset D$ is a finite subset of marked points with at least one point on $\partial D$.
Given a marked disk $(D,M)$ we consider its real blow-up, $\widehat{D}$, in $M$ and call and immersion $f:\widehat{D}\to\widehat{S}$ \textbf{compatible} if satisfies the following conditions:
\begin{enumerate}[(1)]
\item
$f$ maps the non-exceptional boundary of $\widehat{D}$ to edges of $G$
\item
$f$ maps the exceptional boundary of $\widehat{D}$ to the exceptional boundary of $\widehat{S}$
\item
$f$ is injective on each component of the exceptional boundary of $\widehat D$ diffeomorphic to $S^1$ 
\end{enumerate}
\end{df}

If $f:\widehat D\to \widehat S$ is a compatible immersion of a marked disk $(D,M)$ then there is an associated cyclic \textbf{disk sequence} $a_n,\ldots,a_1$, $n=|M\cap\partial D|$, of boundary paths of $\widehat{S}$ by following the boundary of $D$ in counter-clockwise direction and mapping each interval belonging to the exceptional boundary of $\widehat{D}$ to $\partial\widehat{S}$ via $f$.
Furthermore, the associated area is
\[
A:=\int_{\widehat{D}}(\pi\circ f)^* \omega
\]
where $\pi:\widehat{S}\to S$ is the real blow-up and $\omega$ is the area form on $S$.
Such an immersed disk, up to reparameterization, contributes a term $t^A1_{X_0}$ to $\mk m_n(a_n,\ldots,a_1)$, a term $t^Ab$ to $\mk m_n(ba_n,\ldots,a_1)$ where $b$ is a boundary path composable with $a_n$, and a term $t^{A}(-1)^{|b|}b$ to $\mk m_n(a_n,\ldots,a_1b)$ where $b$ is a boundary path composable with $a_1$.
As in~\cite{hkk} one finds that 
\begin{equation}
\|a_1\|+\ldots+\|a_n\|=-2
\end{equation}
for any disk sequence $a_n,\ldots,a_1$, so $\mk m_n$ has correct degree $2-n$.

\begin{prop}
$\mc A_G(S)$ defined above is a curved $A_\infty$-category over $\mathbf R$.
\end{prop}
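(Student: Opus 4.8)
The plan is to verify the defining axioms of a curved $A_\infty$-category over $\mathbf R$ (Definition~\ref{def_curved_ainfty} together with the $\mathbf R$-linearity conditions) for $\mc A_G(S)$, disposing of the formal properties directly and reducing the curved $A_\infty$-relations to the combinatorial analysis of immersed disks carried out in~\cite{hkk}.

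First I would dispatch the structural properties. By construction each $\Hom_{\mc A_G(S)}(X,Y)=\Hom_{\mc A_G(S)_{\mathbf k}}(X,Y)\otimes_{\mathbf k}\mathbf R$ is a free $\mathbf R$-module and the $\mk m_n$ are $\mathbf R$-linear. The degree of each disk contribution is correct because of the quoted identity $\|a_1\|+\ldots+\|a_n\|=-2$, so $\mk m_n$ has degree $2-n$. Filtration-preservation is precisely where the compatible choice of weights enters: an immersed disk of area $A$ with boundary sequence $a_n,\ldots,a_1$ contributes a factor $t^A$, and condition (2) in the definition of compatible weights guarantees $w(a_1)+\ldots+w(a_n)\leq A$, so that each output of $\mk m_n$ lands in the correct filtration level. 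Strict unitality is inherited from $\mc A_G(S)_{\mathbf k}$ since no disk sequence contains a unit, and small curvature holds because all weights and all disk areas are strictly positive, forcing $\mk m_0(X)\in\Hom^2(X,X)_{>0}$. Finally, base change to $\mathbf k$ kills all $t^{>0}$ corrections and returns $\mc A_G(S)_{\mathbf k}$, so this really is a deformation over $\mathbf R$.

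It then remains to verify the curved $A_\infty$-relations \eqref{A_infty_rels_m0}, and I would organize these by powers of $t$. The $t^0$ component is exactly the relation already established for $\mc A_G(S)_{\mathbf k}$, namely associativity of $\mk m_2$ together with centrality of $\mk m_0$. For each area $A>0$ the remaining identity asserts that the contributions at order $t^A$ cancel. Each such contribution is a composite $\mk m_{i+1+k}(\ldots,\mk m_j(\ldots),\ldots)$ in which the two operations together produce a single factor $t^A$; geometrically these come either from a single immersed disk of area $A$ whose boundary sequence is extended by a concatenation ($\mk m_2$) or closed up by a winding boundary path ($\mk m_0$), or from two immersed disks of areas $A_1,A_2$ with $A_1+A_2=A$ glued along a common edge of $G$.

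The heart of the proof is to pair these terms so that they cancel. Following~\cite{hkk}, I would introduce the appropriate space of broken immersed configurations --- disks carrying one extra marked boundary point, or a pair of disks sharing a gluing edge --- and show that its combinatorial boundary decomposes into exactly the terms appearing in \eqref{A_infty_rels_m0}, each occurring in two adjacent ways that differ by the Koszul sign dictated by the $(-1)^{|a|}$ conventions in $\mk m_2$ and by the signs in the $A_\infty$-relation. I expect the main obstacle to be precisely this sign bookkeeping, together with the need to treat the curvature terms $c_{p,X}$ --- boundary paths winding once around an interior vertex --- on the same footing as ordinary boundary paths, so that disks whose boundary wraps an exceptional circle are absorbed into the same cancellation. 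Since the underlying disk combinatorics is identical to that of~\cite{hkk} and the curvature only adds the two extra families $c_{p,X},c_{q,X}$ already recorded in $\mk m_0$, I expect the matching of terms and signs to go through as there; I would carry out the sign verification in one representative case (a single disk glued to one concatenation) and indicate that the remaining cases, including the curvature contributions, are entirely analogous.
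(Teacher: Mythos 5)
Your proposal is correct and follows essentially the same route as the paper: the structural points (freeness, degree via $\|a_1\|+\ldots+\|a_n\|=-2$, filtration preservation from condition (2) on the weights, positivity of $\mk m_0$) are dispatched exactly as in the text, and the curved $A_\infty$-relations are verified order by order in $t$ by pairing a single disk extended by a concatenation or closed by a winding path $c_{p,X}$ against a glued pair of disks, which is precisely the paper's case-by-case cancellation (its Figures~\ref{fig_cancel2} and~\ref{fig_cancel1}), merely phrased in your write-up as a boundary decomposition of broken configurations rather than a direct enumeration. The only difference is one of completeness, not of method: the paper works through all three families of cancellations with explicit signs (including the two curvature cases and the two-disk case joined by a boundary path $b$ with $a_1bc_k$ defined), where you check one representative case and appeal to analogy.
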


\begin{proof}
The proof is a slight extension of the one in \cite{hkk}, due to the presence of curvature and disks with interior marked points. It is also quite similar to the one in \cite{h_3cyteich} except for grading and signs.
We check that the various new terms in the $A_\infty$-structure equation cancel.
This is essentially a matter of drawing pictures and checking signs.
We refer to terms of the structure maps coming from $\mc A_G(S)_{\mathbf k}$ as \textit{constant terms} and terms coming from disk corrections as \textit{higher order terms}.

Consider first terms in the structure equation which involve a constant $\mk m_0$ and a higher order $\mk m_n$.
There are three possibilities.
The first is that there is a disk sequence $a_n,\ldots,a_{i+1},c,a_i,\ldots, a_1$ with $c=c_{p,X}$, see Figure~\ref{fig_cancel2}.
Then, since $c$ goes once around a component of the exceptional boundary, $a_n,\ldots,a_{i+1}a_i,\ldots,a_1$, where $a_{i+1}a_i$ is the concatenated path, is also a disk sequence with the same area $A>0$.
Thus the following two terms cancel, where $b$ is any boundary path composable with $a_1$.
\begin{gather*}
(-1)^{\|a_1\|+\ldots+\|a_i\|}\mk m_{n+1}(a_n,\ldots,a_{i+1},\mk m_0,a_i,\ldots,a_1b)=(-1)^{|b|+\|a_1\|+\ldots+\|a_i\|}t^Ab \\
=-(-1)^{\|a_1\|+\ldots+\|a_{i-1}\|} \mk m_{n-1}(a_n,\ldots,\mk m_2(a_{i+1},a_i),a_{i-1},\ldots a_1b)
\end{gather*}
and similarly for $b$ composable with $a_n$.
The second possibility is that $a_1,\ldots,a_n$ is a disk sequence and $b$ a boundary path from $X$ to $Y$ with $a_1b=c_{p,X}$ and thus $ba_1=c_{p,Y}$.
This gives the following two terms, which cancel:
\begin{gather*}
(-1)^{\|a_2\|+\ldots+\|a_n\|}\mk m_n(\mk m_0,a_n,\ldots,a_2)=-(-1)^{|a_1|}t^Ab=-\mk m_n(a_n,\ldots,a_2,\mk m_0)
\end{gather*}

\begin{figure}
\centering
\includegraphics[scale=1]{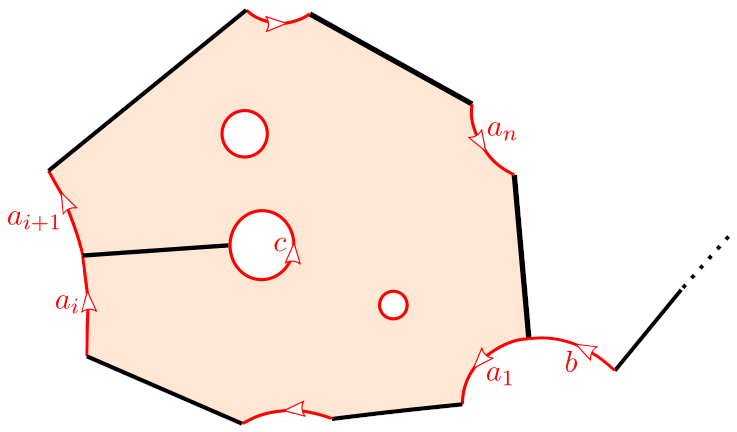}
\caption{Cancellation of terms of the form $\mk m_{n+1}(\ldots,\mk m_0,\ldots)$ and $\mk m_{n-1}(\ldots,\mk m_2(\ldots),\ldots)$}.
\label{fig_cancel2}.
\end{figure}

Next, we look at terms in the structure equation involving higher order corrections to $\mk m_n$ both times.
There are again two possibilities. 
The first is that a disk sequence $a_n,\ldots,a_1$ with area $A$ and a disk sequence $b_k,\ldots,b_1$ with area $B$ combine to a disk sequence 
\[
a_n,\ldots,a_{i+1},a_ib_k,\ldots,b_1a_{i-1},\ldots,a_1
\]
with area $A+B$, see Figure~\ref{fig_cancel1}.
If $c$ is a boundary path which concatenates with $a_1$ then this gives the following two terms, which cancel:
\begin{gather*}
(-1)^{|c|+\|a_1\|+\ldots+\|a_{i-1}\|}\mk m_n(a_n,\ldots,a_{i+1},\mk m_k(a_ib_k,\ldots,b_1),a_{i-1},\ldots,a_1c)=-(-1)^{|a_i|}t^{A+B} \\
=-(-1)^{|c|+\|a_1\|+\ldots+\|a_{i-2}\|}\mk m_{n+k-2}(a_n,\ldots,a_ib_k,\ldots,\mk m_2(b_1,a_{i-1}),\ldots,a_1c)
\end{gather*}
and similarly for a boundary path $c$ which concatenates with $a_n$.
In the second case there are disk sequences $a_n,\ldots,a_1$, $c_k,\ldots,c_1$ with areas $A$ and $C$ respectively, and a boundary paths $b$ such that the concatenation $a_1bc_k$ is defined.
We get terms
\begin{gather*}
\mk m_n(a_n,\ldots,a_2,\mk m_k(a_1bc_k,c_{k-1},\ldots,c_1))=(-1)^{|b|}t^{A+C}b \\
=-(-1)^{\|c_1\|+\ldots+\|c_{k-1}\|}\mk m_k(\mk m_n(a_n,\ldots,a_2,a_1bc_k),c_{k-1},\ldots,c_1)
\end{gather*}
which cancel.

\begin{figure}
\centering
\includegraphics[scale=1]{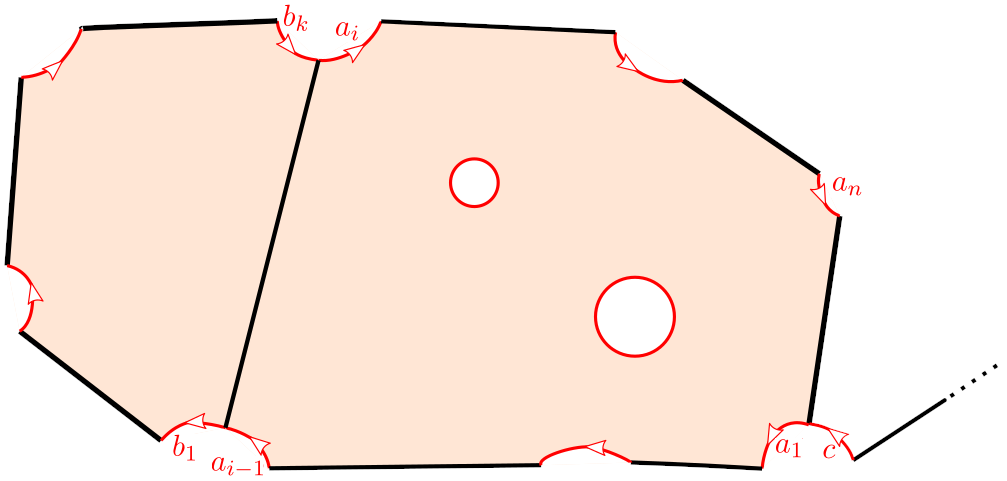}
\caption{Cancellation of terms of the form $\mk m_n(\ldots,\mk m_k(\ldots),\ldots)$ and $\mk m_{n+k-2}(\ldots,\mk m_2(\ldots),\ldots)$}.
\label{fig_cancel1}.
\end{figure}

It remains to consider terms involving a constant $\mk m_2$ and a higher order $\mk m_n$.
We have already encountered those terms where the concatenation produced by $\mk m_2$ happens inside the disk sequence associated with $\mk m_n$.
The remaining possibility looks as follows: $a_n,\ldots,a_1$ is a disk sequence of area $A$, and $b$ and $c$ are boundary path so that concatenation $a_1bc$ is defined.
Then we get terms
\begin{gather*}
\mk m_n(a_n,\ldots,a_2,\mk m_2(a_1b,c))=(-1)^{|b|}t^Abc=-(-1)^{\|c\|}\mk m_2(\mk m_n(a_n,\ldots,a_1b),c)
\end{gather*}
which cancel, and similarly if the concatenation $cba_n$ is defined.
\end{proof}

As a final step we incorporate the fiber category $\mc E$ by tensoring (as defined in Subsection~\ref{subsec_tensor}) and passing to twisted complexes,
\[
\mc F_G(S;\mc E):=\mathrm{Tw}\left(\mc E \otimes\mc A_G(S)\right),
\]
which is a curved $A_\infty$-category over $\mathbf R$.
Note that already $\mc E \otimes\mc A_G(S)$ does not depend on the choice of grading on the edges of $G$ since $\mc E$ is assumed to be triangulated, so in particular closed under shifts.

\subsection{Central fiber as homotopy limit}
\label{subsec_centralfiber}

The \textit{central fiber} of the category $\mc F_G(S;\mc E)$ over $\mathbf R$,
\[
\mc F_G(S;\mc E)_{\mathbf k}:=\mc F_G(S;\mc E)\otimes_{\mathbf R}\mathbf k=\mathrm{Tw}\left(\mc E\otimes \mc A_G(S)_{\mathbf k}\right)
\]
is a curved DG-category over $\mathbf k$.
The goal of this subsection is to show that the subcategory of flat objects, $V\left(\mc F_G(S;\mc E)_{\mathbf k}\right)$ has, up to quasi-equivalence, a more conceptual definition as global sections of a constructible sheaf of DG-categories over $G$.

Defining constructible sheaves of categories on a general space requires a certain amount of machinery, but the case of graphs is much more elementary.
The data of a constructible sheaf of categories on a graph $G$ is:
\begin{enumerate}[(1)]
\item
a category $\mc C_v$ for each vertex $v$ of $G$ (\textit{stalk at $v$}),
\item
a category $\mc C_e$ for each edge $e$ of $G$ (\textit{stalk at any point in the interior of $e$}),
\item
a functor $F_h:\mc C_v\to \mc C_e$ for each half-edge $h$ of $G$ belonging to the vertex $v$ and the edge $e$ (\textit{restriction functor}).
\end{enumerate}
We will simply take this as the definition of a \textit{sheaf of categories on $G$}.
The category of global sections is then the homotopy limit of the diagram with all the $\mc C_v$, $\mc C_e$, and $F_h$.
For the dual notion of a co-sheaf one instead has co-restriction functors $F_h:\mc C_e\to \mc C_v$.

Returning to the case at hand, we will define a sheaf of DG-categories on $G$ so that $\mc C_e$ is $\mc E$ for any edge $e$ and $\mc C_v$ is (non-canonically) quasi-equivalent to the functor category $\mathrm{Fun}(\mc C_n,\mc E)$ where $n+1$ (resp. $n$) is the valency of $v$ if $v\notin \partial S$ (resp. $v\in\partial S$). 
To define the sheaf of categories more precisely, we first define a \textit{co-sheaf} of $A_\infty$-categories $\mathbf k_G$ on $G$ which does not depend on $\mc E$.
This idea is originally due to Kontsevich and implemented e.g. in~\cite{dk_triangulated, hkk}.
Make an auxiliary choice of grading for each edge of $G$ as before.
The stalk of $\mathbf k_G$ over an edge is just the category with a single object $X$ with $\Hom(X,X)=\mathbf k$.
The stalk of $\mathbf k_G$ at a vertex $p\in G_0$ of valency $n+1$ is the category $\mc C_n$ from Subsection~\ref{subsec_bar} where $\Ob(\mc C_n)=\ZZ/(n+1)$ is identified with the set of half-edges meeting at $p$ in their \textit{counter-clockwise} cyclic order, and the degree of $\alpha_i:X_{i-1}\to X_i$ is $1-|a_i|$, where $a_i$ is the primitive boundary path from $X_i$ to $X_{i-1}$ going along the exceptional boundary component of $\widehat{S}$ corresponding to $p$.
The co-restriction functors send the unique object $X$ in the stalk over an edge to the object $X_i$ where $i$ is the corresponding half-edge.
This co-sheaf is, up to derived Morita equivalence of the stalks, independent of the choice of grading on the edges of the graph. 

The \textit{sheaf} $\mc E_G$ is obtained by taking $A_\infty$-functors to $\mc E$.
More precisely, if $\mathbf k_{G,p}$ is the stalk of $\mathbf k_G$ over $p$, then the stalk of $\mc E_G$ over $p$ is 
\[
\mc E_{G,p}=\mathrm{Fun}\left(\mathbf k_{G,p},\mc E\right)
\]
which is the DG-category of strictly unital $A_\infty$-functors.
Restriction functors of $\mc E_G$ are given by pre-composition with co-restriction functors of $\mathbf k_G$.
Up to equivalence, the sheaf of categories $\mc E_G$ does not depend on the choice of grading on the edges of $G$.

\begin{prop}
The category $V\left(\mc F_G(S;\mc E)_{\mathbf k}\right)$ is naturally quasi-equivalent to the triangulated DG-category of global sections of the sheaf $\mc E_G$.
\end{prop}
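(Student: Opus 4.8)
The plan is to compute the global sections of $\mc E_G$ by descent along the one-dimensional graph $G$ and to match the resulting descent data with the flat objects of $\mathrm{Tw}(\mc E\otimes\mc A_G(S)_{\mathbf k})$, feeding Proposition~\ref{prop_funmc} in as the local input at each vertex. The useful starting observation is that $\mc E\otimes\mc A_G(S)_{\mathbf k}$ is in fact a \emph{curved DG-category}: since $\mc A_G(S)_{\mathbf k}$ has $\mk m_1=0$ and $\mk m_{\geq 3}=0$, the tensor structure maps of Subsection~\ref{subsec_tensor} leave only $\mk m_0$, the term $\mk m_1(a\otimes b)=(-1)^{|a|+\|b\|}da\otimes b$ coming from the differential of $\mc E$, and $\mk m_2$. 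Hence a flat object of $\mathrm{Tw}(\mc E\otimes\mc A_G(S)_{\mathbf k})$ is the datum of an object $V_e$ of $\mathrm{add}(\mc E)$ for each edge $e$ together with a bounding cochain $\delta\in\Hom^1_{>0}$ solving the Maurer--Cartan equation~\eqref{MC_eqn}, which here reduces to the quadratic relation $\mk m_0+\mk m_1(\delta)+\mk m_2(\delta,\delta)=0$.

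The crucial structural point is \emph{locality at the vertices}. Every boundary path runs along a single exceptional boundary component, hence sits at a single vertex $v$; two boundary paths are $\mk m_2$-composable only if they lie at the same vertex; and the curvature of an edge splits as $\mk m_0(e)=c_{p,e}+c_{q,e}$ over its two endpoints. Therefore $\Hom^2$ of the central fiber decomposes according to the vertex around which a path winds, and the Maurer--Cartan equation decomposes into one \emph{independent} equation per vertex $v$, each supported on the paths at $v$. Writing $\mc A_v$ for the full subcategory of $\mc A_G(S)_{\mathbf k}$ on the edges incident to $v$, equipped with only the local curvature $c_{v,\bullet}$, the explicit computation of the bar dual in Subsection~\ref{subsec_bar} identifies $\mc A_v$ with $(B\overline{\mc C_{n_v}})^{\vee+}$: the primitive boundary paths at $v$ are the cyclic quiver arrows $\beta_i$, and the once-around path $c_{v,e_i}$ is the component at $e_i$ of the central curvature $h=\sum_i\beta_{n+i}\cdots\beta_i$. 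Under this identification the $v$-component of the global Maurer--Cartan equation is precisely the Maurer--Cartan equation in $\mc E\otimes(B\overline{\mc C_{n_v}})^{\vee+}$.

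I would then apply Proposition~\ref{prop_funmc} at each vertex: a solution of the local Maurer--Cartan equation at $v$ is the same as an $A_\infty$-functor $\Phi_v\colon\mc C_{n_v}\to\mc E$, i.e. an object of the stalk $\mc E_{G,v}=\mathrm{Fun}(\mathbf k_{G,v},\mc E)$, with $\Phi_v$ sending the object $X_{i(h)}$ indexed by a half-edge $h$ at $v$ to the edge label $V_e$. Since each edge $e$ is incident to two vertices and the object $V_e$ is \emph{shared}, the two functors at the endpoints of $e$ agree after evaluation on the object assigned to $e$; this is exactly a global section of $\mc E_G$, namely a functor at every vertex together with the identifications $F_{h_v}(\Phi_v)\cong V_e\cong F_{h_w}(\Phi_w)$ on each edge. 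To see that this strict descent datum genuinely computes the homotopy limit with no higher coherence, I use Corollary~\ref{cor_funfib}: the restriction functors $\mc E_{G,v}\to\mc E$ are pre-composition along the one-object inclusions $\mathbf k_e\hookrightarrow\mathbf k_{G,v}$, hence are fibrations, so over the one-dimensional $G$ the homotopy limit is computed by the strict limit, in which the shared object $V_e$ realises the compatibility on the nose. Assembling these identifications over all vertices and edges yields a natural functor from $V(\mathrm{Tw}(\mc E\otimes\mc A_G(S)_{\mathbf k}))$ to global sections of $\mc E_G$.

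It then remains to check that this functor is a quasi-equivalence. Essential surjectivity runs the construction backwards: given a compatible family $(\Phi_v)_v$ with common edge labels $V_e$, Proposition~\ref{prop_funmc} produces vertex-local bounding cochains which, because the vertex-local Maurer--Cartan equations do not interact, glue to a single global solution $\delta$. For full faithfulness one decomposes a morphism of flat objects into its vertex-local pieces: a closed degree-zero morphism assigns a natural transformation $\Phi_v\Rightarrow\Phi'_v$ at each vertex, compatible on the shared edges, and the differentials and compositions match those in the homotopy limit by the same locality argument together with the morphism part of Proposition~\ref{prop_funmc}. The main obstacle I anticipate is precisely the bookkeeping behind this decomposition---verifying that no cross-vertex terms arise in either the Maurer--Cartan or the morphism equations, and that the signs, gradings, and the fibration structure line up so that the strict descent diagram really does compute the homotopy limit. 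Finally, both source and target are pre-triangulated (the stalks are twisted-complex categories by Proposition~\ref{prop_funmc}, and $V(\mathrm{Tw}(-))$ is closed under shifts and cones), so the resulting quasi-equivalence is one of triangulated DG-categories.
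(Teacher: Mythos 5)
Your proof is correct and follows essentially the same route as the paper's: locality of the Maurer--Cartan data at the vertices identifies $V\left(\mc F_G(S;\mc E)_{\mathbf k}\right)$ with the naive (strict) global sections of a sheaf whose vertex stalks are $\mathrm{Tw}(\mc E\otimes(B\overline{\mc C_n})^{\vee+})$, Proposition~\ref{prop_funmc} converts these to the stalks of $\mc E_G$, and Corollary~\ref{cor_funfib} shows the strict limit computes the homotopy limit. One cosmetic adjustment: to invoke Corollary~\ref{cor_funfib} as stated you should take $\mc A\subset \mathbf k_{G,v}$ to be the subcategory on \emph{all} objects with only identity morphisms, yielding a fibration from the vertex stalk to the product of the incident edge stalks, rather than the one-object inclusions $\mathbf k_e\hookrightarrow\mathbf k_{G,v}$, since the corollary requires $\Ob(\mc A)=\Ob(\mc B)$.
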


\begin{proof}
Geometrically, the definition of the category $V\left(\mc F_G(S;\mc E)_{\mathbf k}\right)$ involves only the ``local'' composition of boundary paths, not the ``global'' disk corrections.
As a consequence, the category $V\left(\mc F_G(S;\mc E)_{\mathbf k}\right)$ is the category of global sections, in the naive (1-categorical) sense, of a sheaf of categories on $G$ whose stalks on edges coincide with the ones of $\mc E_G$ and whose stalks at vertices are of the form $\mathrm{Tw}(\mc E\otimes(B\overline{\mc C_n})^{\vee+})$.
This sheaf is thus equivalent to $\mc E_G$ by Proposition~\ref{prop_funmc}.
To complete the proof, we must verify that naive global sections coincide with the global sections in the homotopical sense. This follows for the sheaf $\mc E_G$ from Corollary~\ref{cor_funfib}.
\end{proof}

Let us describe the correspondence of categories attached to a vertex $p$ of $G$ of valency $n+1$ more explicitly. We assume $p$ is an interior vertex --- this is the more complicated case.
First, we have the category $\mathbf k_{G,p}=\mc C_n$ with objects $X_i$, $i\in\ZZ/(n+1)$, corresponding to half-edges attached to $p$. 
In fact, it is useful to think of the $X_i$ as edges of an $n+1$-gon which is dual to the vertex.
A basis of morphisms is given by identity morphisms and $x_i:X_{i-1}\to X_i$ with degrees such that $\sum_i|x_i|=n-2$.
Non-zero $A_\infty$-structure maps are just
\[
\mk m_{n+1}(x_{i},x_{i-1},\ldots,x_{i-n}):=1_{X_i},\qquad i\in\ZZ/(n+1)
\]
as well as those $\mk m_2$ fixed by strict unitality.
A strictly unital $A_\infty$-functor $F:\mc C_n\to \mc E$ is determined by objects $F(X_i)\in\mc E$ and morphisms
\[
F_k(x_{i},x_{i-1},\ldots,x_{i-k+1}):F(X_{i-k})\to F(X_i)
\]
for a linear sequence of morphisms of arbitrary length $k\geq 1$ drawing from the cyclic sequence of morphisms $x_n,x_{n+1},\ldots,x_0$.

The cobar construction yields a curved DG-category with objects $A_i$, $i\in \ZZ/(n+1)$, morphisms freely generated by $a_i:A_{i-1}\to A_i$ of degree $|a_i|:=1-|x_i|$, and curvature
\[
h(A_i):=a_ia_{i-1}\cdots a_{i-n}.
\]
The twisted complex corresponding to $F:\mc A_n\to \mc E$ is of the form
\[
\left(\bigoplus_i F(X_i)\otimes A_i,\delta\right)
\]
where the coefficient of $a_{i}a_{i-1}\cdots a_{i-k+1}$ in $\delta$ is $F_k(x_{i},x_{i-1},\ldots,x_{i-k+1})\otimes e^{i-k}_k$.

\subsection{Refinement}
\label{subsec_refine}

Let $G\subset S$ be a graph as before.
In this subsection we show that the category $\mc F_G(S;\mc E)$ over $\mathbf R$ is unchanged, up to quasi-equivalence, if an edge $X$ of $G$ is split into two, by inserting a new 2-valent vertex.
To fix some notation, let $G'$ be the modified graph with the additional 2-valent vertex $p\in S$ and edges $Y,Z$ of $G'$ replacing the edge $X$ of $G$.
There are primitive boundary paths $a:Y\to Z$ and $b:Z\to Y$ on the boundary component of $\widehat{S}$ lying over $p\in S$, see Figure~\ref{fig_2val}.
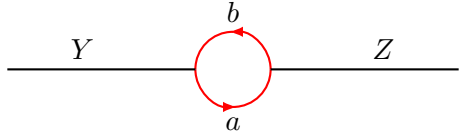
\begin{figure}
\centering
\begin{tikzpicture}
\draw[thick,draw=red] (0,.5) to [out=180,in=90] (-.5,0);
\draw[thick,->,draw=red] (.5,0) to [out=90,in=0] (-.05,.5);
\node[above] at (0,.5) {$b$};
\draw[thick,->,draw=red] (-.5,0) to [out=-90,in=180] (.05,-.5);
\draw[thick,draw=red] (0,-.5) to [out=0,in=-90] (.5,0);
\node[below] at (0,-.5) {$a$};
\draw[thick] (.5,0) to (3,0);
\node[above] at (2,0) {$Z$};
\draw[thick] (-.5,0) to (-3,0);
\node[above] at (-2,0) {$Y$};
\end{tikzpicture}
\caption{Real blow-up of 2-valent vertex.}
\label{fig_2val}
\end{figure}

We will define a canonical uncurved functor 
\begin{equation}\label{subdiv_functor}
F:\mc F_G(S;\mc E)\longrightarrow\mc F_{G'}(S;\mc E)
\end{equation}
of curved $A_\infty$-categories over $\mathbf R$.
The first step is to define a functor of curved $A_\infty$-categories
\[
D:\mc A_G(S)\longrightarrow\mathrm{Tw}\left(\mc A_{G'}(S)\right)
\]
with $F_1$-term only (i.e. uncurved and strict).
To be precise, the construction of $\mc A_G(S)$ involves a compatible choice of weights on boundary paths and a choice of grading of the edges.
For consistency, choose weights for $G'$, then these restrict to $G$, and choose gradings of the edges of $G$ and the grading on the two new edges of $G'$ obtained by restriction.
Then $|a|=|b|=1$ and we map
\[
D(X):=\left(Y\oplus Z,\begin{bmatrix} 0 & b \\ a & 0 \end{bmatrix}\right)
\]
where $X$ is considered as an object of $\mc A_G(S)$ and $Y,Z$ are considered objects of $\mc A_{G'}(S)$.
All other edges and all boundary paths are mapped to themselves by $D$.

\begin{lemma}
$D$ defined above is a functor of curved $A_\infty$-categories over $\mathbf R$.
\end{lemma}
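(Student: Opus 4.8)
The plan is to verify the defining equations of a curved $A_\infty$-functor directly, using that $D$ is strict and uncurved, i.e. has only an $F_1$-term with $F_0=0$ and $F_{\geq 2}=0$. Then the functor equations \eqref{ainfty_mor} collapse: on the left-hand side only the summand with every $i_j=1$ survives, and on the right-hand side only the summand $i=k=0$. Thus it suffices to check, for every $n\geq 0$ and every composable sequence $a_n,\ldots,a_1$ in $\mc A_G(S)$,
\[
\widetilde{\mk m}_n(Da_n,\ldots,Da_1)=D\bigl(\mk m_n(a_n,\ldots,a_1)\bigr),
\]
where on the left $\widetilde{\mk m}_n$ is the twisted structure map of $\mathrm{Tw}(\mc A_{G'}(S))$ obtained from \eqref{twisted_structure_maps} by inserting the datum $\delta$ of $D(X)$, and on the right $\mk m_n$ is the structure map of $\mc A_G(S)$. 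First I would record the easy points: $D_1$ is $\mathbf R$-linear and filtration-preserving since the weights chosen for $G'$ restrict to those of $G$; it sends a boundary path leaving $X$ to the same path leaving $Y$ or $Z$ according to the endpoint it departs from; and $\delta\in\Hom^1(Y\oplus Z,Y\oplus Z)_{>0}$ because $|a|=|b|=1$ and $w(a),w(b)>0$, so that $D(X)$ is a legitimate curved object of $\mathrm{Tw}(\mc A_{G'}(S))$.

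Next I would treat the curvature equation $n=0$. Writing $q_Y,q_Z$ for the two endpoints of $X$ other than $p$, and expanding $\widetilde{\mk m}_0(DX)=\sum_{k\geq 0}\mk m_k(\delta,\ldots,\delta)$, the only surviving terms are the diagonal curvature $\mk m_0(Y\oplus Z)$, with entries $c_{q_Y,Y}+c_{p,Y}$ and $c_{p,Z}+c_{q_Z,Z}$, and the constant term $\mk m_2(\delta,\delta)$, whose diagonal entries are $\mk m_2(b,a)=-c_{p,Y}$ and $\mk m_2(a,b)=-c_{p,Z}$. All $\mk m_k(\delta,\ldots,\delta)$ with $k=1$ or $k\geq 3$ vanish because no alternating word in $a,b$ bounds a disk at the $2$-valent vertex $p$. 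The contributions at $p$ cancel, leaving the diagonal $c_{q_Y,Y},c_{q_Z,Z}$, which is exactly $D(c_{q_Y,X}+c_{q_Z,X})=D(\mk m_0(X))$. This cancellation of the spurious curvature at the new vertex against $\mk m_2(\delta,\delta)$ is the conceptual heart of the statement.

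For $n\geq 1$ I would exploit a separation of roles: the image morphisms $Da_i$ only touch the $q_Y$- or $q_Z$-endpoints, whereas the entries $a,b$ of $\delta$ only touch the $p$-endpoint, so the constant product of an image morphism with a $\delta$ always vanishes and a $\delta$ can interact with an image morphism only through a disk correction. I would then set up a bijection between disks for $G$ and disks for $G'$: constant ($t^0$) terms involving no crossing of $X$ match tautologically since $D_1$ merely relabels, while every compatible immersed disk for $G$ whose boundary runs along $X$ exactly $r$ times corresponds to a compatible disk for $G'$ with the same image, the same area $A$, and the same factor $t^A$, in whose disk sequence the side along $X$ is replaced by the consecutive pair $Y,Z$ with $r$ interposed boundary paths $a$ or $b$ recording the crossings of $p$; these are supplied by exactly $r$ insertions of $\delta$ in $\widetilde{\mk m}_n$, and since $\|a\|=\|b\|=0$ the disk-sequence identity $\|a_1\|+\cdots+\|a_n\|=-2$ is preserved so the degrees agree, while no disk for $G'$ meets the circle over $p$ except through such crossings. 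The remaining task, and the main obstacle, is to match the Koszul signs coming from \eqref{twisted_structure_maps}, from the definition of $\mk m_2$, and from the disk contributions; this is bookkeeping of the same kind as in the proof that $\mc A_G(S)$ is a curved $A_\infty$-category, and I would carry it out by tracking the prefactor $(-1)^{\|a_k\|+\cdots+\|a_1\|}$ through each $\delta$-insertion, using $\|a\|=\|b\|=0$ so that inserting $a$ or $b$ leaves these signs unchanged.
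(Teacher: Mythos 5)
Your proposal is correct and takes essentially the same route as the paper's proof: the curvature check via the cancellation of the diagonal of $\mk m_2(\delta,\delta)=-ab-ba$ against the $p$-summands of $\mk m_0(Y)$ and $\mk m_0(Z)$, the tautological constant $\mk m_2$ compatibility, and the matching of each disk sequence for $G$ with a disk sequence for $G'$ of the same area obtained by inserting $a$ or $b$ at every crossing of $p$, with those insertions supplied by the Maurer--Cartan element of $D(X)$ in $\widetilde{\mk m}_n$. Your additional points --- ruling out the terms $\mk m_k(\delta,\ldots,\delta)$ for $k\neq 0,2$, the separation argument showing $\delta$ interacts with image morphisms only through disks, and the observation that $\|a\|=\|b\|=0$ leaves the Koszul signs undisturbed --- merely make explicit details the paper leaves implicit.
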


\begin{proof}
In the definition of $\widetilde{\mk m}_0(D(X))$, the term coming from the Maurer--Cartan element is $-ab-ba$, which cancels with the summand of $\mk m_0(Y)$ and $\mk m_0(Z)$ coming from $p$, thus
\[
D(\mk m_0(X))=\widetilde{\mk m}_0(D(X)).
\]
as required. 
Compatibility with terms of $\mk m_2$ coming from composition of paths is clear, since $D$ is just the identity on boundary paths.
It remains to check compatibility of $D$ with disk corrections.
To see this, suppose $a_n,\ldots,a_1$ is a disk sequence for $G$ with $a_i:X_{i-1}\to X_i$.
There is a corresponding disk sequence $b_m,\ldots,b_1$, $m\geq n$ for $G'$ (of the same area) obtained by inserting for each $i$ with $X_i=X$ either $a$ or $b$ into the sequence, depending on which side of $X$ the disk is on.
Then the contribution of $a_n,\ldots,a_1$ to $\mk m_n$ in $\mc A_G(S)$ has a corresponding contribution to $\widetilde{\mk m}_n$ in $\mc A_{G'}(S)$ coming from $b_m,\ldots,b_1$ and the Maurer--Cartan element of $D(X)$.
\end{proof}

The functor $D$ induces a functor $F$ as in~\eqref{subdiv_functor}. 
Concretely, if an object of $\mc F_G(S;\mc E)$ comes with an object $E\in \mc E$ on the edge $X$, then the image of that object under $F$ has $E$ on both edges $Y$ and $Z$ and the Maurer--Cartan element has an additional summand $1_E\otimes a+1_E\otimes b$.

\begin{prop}
The functor $F:\mc F_G(S;\mc E)\longrightarrow\mc F_{G'}(S;\mc E)$ is a quasi-equivalence of $A_\infty$-categories over $\mathbf R$.
\end{prop}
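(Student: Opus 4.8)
The plan is to reduce the statement first to the central fibre, then to the combinatorics of the sheaf $\mc E_G$ on the graph, where the only genuinely new phenomenon is the $2$-valent vertex.

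First I would invoke Proposition~\ref{prop_quasiequiv}. Its finiteness hypothesis holds: the coefficient category $\mc E$ has finite-dimensional morphism spaces, and between two fixed arcs of $\widehat S$ there are only finitely many boundary paths of any given degree, since each extra winding around an interior vertex strictly raises the degree, so every $\Hom^n$ of $\mc E\otimes\mc A_G(S)$ and of $\mc E\otimes\mc A_{G'}(S)$ is a free $\mathbf R$-module of finite rank. As $F$ is obtained by applying $\mathrm{Tw}$ to $\mathrm{id}_{\mc E}\otimes D$ (and $\mathrm{Tw}(\mathrm{Tw}(-))\simeq\mathrm{Tw}(-)$), Proposition~\ref{prop_quasiequiv} reduces the claim to showing that the base change $F_{\mathbf k}$ is a quasi-equivalence of curved $A_\infty$-categories over $\mathbf k$, i.e. that it induces a quasi-equivalence on the flat subcategories $V(\mc F_G(S;\mc E)_{\mathbf k})\to V(\mc F_{G'}(S;\mc E)_{\mathbf k})$.

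Next I would translate this into a statement about global sections. By the Proposition of Subsection~\ref{subsec_centralfiber}, both sides are naturally quasi-equivalent to the triangulated DG-categories of global sections (homotopy limits) of the sheaves $\mc E_G$ and $\mc E_{G'}$, and one checks that under this identification $F_{\mathbf k}$ is the functor induced by the refinement of $G$ to $G'$: it carries a global section of $\mc E_G$ to the section of $\mc E_{G'}$ with the same value on the two edges $Y,Z$ replacing $X$, together with the matching object at the new vertex $p$. So it remains to prove that this refinement induces a quasi-equivalence on global sections. The heart of the matter is the local analysis at $p$. Its stalk in $\mathbf k_G$ is $\mc C_1$ (notation of Subsection~\ref{subsec_bar}), whose generating morphisms $\alpha_1,\alpha_2$ satisfy $\mk m_2(\alpha_1,\alpha_2)=1$ and $\mk m_2(\alpha_2,\alpha_1)=1$ up to sign, so the two objects of $\mc C_1$ are canonically isomorphic. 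Hence $\mc E_{G',p}=\mathrm{Fun}(\mc C_1,\mc E)$ is quasi-equivalent to $\mc E$, and the two restriction functors $\mc E_{G',p}\to\mc E$ onto the edge stalks at $Y$ and $Z$ (evaluation at the two objects of $\mc C_1$) are quasi-equivalences. Adjoining to the homotopy-limit diagram a vertex whose two restriction functors are equivalences leaves the homotopy limit unchanged: the subdiagram $\mc E|_Y\leftarrow\mc E_{G',p}\to\mc E|_Z$ contracts onto the single stalk $\mc E|_X$, which produces a quasi-inverse to the refinement functor.

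The main obstacle is precisely this last step: verifying that inserting the $2$-valent vertex does not change the homotopy limit. Concretely, one must show that the extra Maurer--Cartan datum $1_E\otimes a+1_E\otimes b$ introduced by $F$, together with the curvature contribution of $p$ (the terms $-ab-ba$ cancelled in the preceding lemma), forces the components along $a$ and $b$ of any flat object of $\mc F_{G'}(S;\mc E)_{\mathbf k}$ to be mutually inverse equivalences, so that every such object is quasi-isomorphic to one in the image of $F_{\mathbf k}$; quasi-full-faithfulness then follows because the restriction functors at $p$ are equivalences. The sign and finiteness bookkeeping is routine, but the contractibility argument for the homotopy limit is where the real content lies.
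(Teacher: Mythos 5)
Your proposal is correct and follows essentially the same route as the paper's proof: reduce to the central fiber via Proposition~\ref{prop_quasiequiv}, identify $V\left(\mc F_G(S;\mc E)_{\mathbf k}\right)$ with global sections of the sheaf $\mc E_G$, and observe that inserting the $2$-valent vertex adds a span $\mc E\longleftarrow\mc E\longrightarrow\mc E$ of identity functors (after the identification $\mathrm{Fun}(\mc C_1,\mc E)\simeq\mc E$), which leaves the homotopy limit unchanged, together with the compatibility square for $F_{\mathbf k}$. Your explicit verification of the finite-rank hypothesis of Proposition~\ref{prop_quasiequiv} and the stalk analysis at the new vertex simply make precise steps the paper leaves implicit.
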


\begin{proof}
The strategy is to first consider the induced functor on central fibers
\[
F_{\mathbf k}:\mc F_G(S;\mc E)_{\mathbf k}\longrightarrow\mc F_{G'}(S;\mc E)_{\mathbf k}
\]
and show that this is a quasi-equivalence, then appeal to Proposition~\ref{prop_quasiequiv} to deduce that $F$ itself is a quasi-equivalence.
Alternatively, a somewhat longer direct proof would also be possible.

In the previous subsection we constructed a quasi-equivalence between $V\left(\mc F_G(S;\mc E)_{\mathbf k}\right)$ and the category of global sections, $\Gamma(G;\mc E_G)$, of the sheaf $\mc E_G$.
The quasi-equivalence of the categories of global sections of $\mc E_G$ and $\mc E_{G'}$ follows from the quasi-equivalence of homotopy limits
\begin{equation*}
\Gamma(G;\mc E_G)=\mathrm{holim}\left(\cdots\longrightarrow\mc E\longleftarrow\cdots\right)\cong\mathrm{holim}\left(\cdots\longrightarrow\mc E\longleftarrow\mc E\longrightarrow \mc E\longleftarrow\cdots\right)=\Gamma(G';\mc E_{G'})
\end{equation*}
where the diagrams on the left and right are identical except for the parts shown and arrows between copies of $\mc E$ are identity functors.
Moreover, there is a commutative (up to natural isomorphism) diagram of functors
\[
\begin{tikzcd}
\mc F_G(S;\mc E)_{\mathbf k} \arrow[r,"F_{\mathbf k}"]\arrow[d] & \mc F_G(S;\mc E)_{\mathbf k} \arrow[d] \\
\Gamma(G;\mc E_G) \arrow[r] & \Gamma(G';\mc E_{G'})
\end{tikzcd}
\]
where the vertical arrows are quasi fully-faithful embeddings.
\end{proof}

\subsection{Edge contraction/expansion}
\label{subsec_edge}

The category $\mc F_G(S;\mc E)$ is invariant, up to quasi-equivalence, under certain basic modifications of $G$ which keep the regions cut out by $G$ and their areas the same.

\begin{df}
Let $X\subset G$ be an embedded graph and $X$ an edge of $G$ connecting vertices $p\neq q$ with $q\notin \partial S$.
A modified graph $G'$ is obtained by \textbf{contracting $X$}, i.e. removing $X$ and $q$ and re-attaching all other edges previously meeting $q$ to $p$.
This should be done so that all areas of regions cut out by $G$ (equivalently $G'$) are unchanged, and is thus well-defined up to Hamiltonian isotopy of the modified edges.
The inverse operation which adds an additional edge is referred to as \textbf{edge expansion}.
\end{df}

\begin{figure}
\centering
\includegraphics[scale=1]{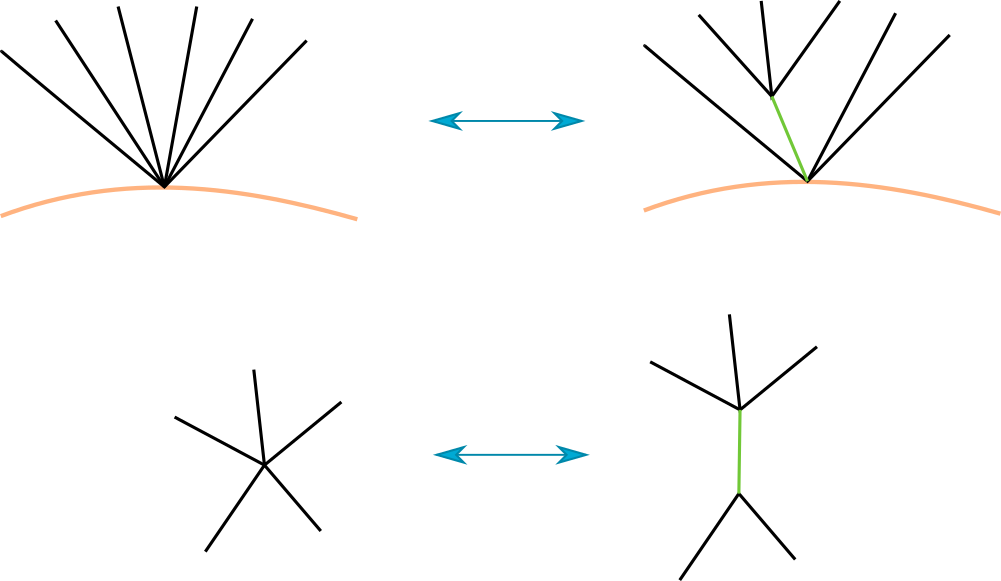}
\caption{Edge contraction/expansion at a vertex on the boundary (top) and a vertex in the interior (bottom).}
\label{fig_edgecontr}
\end{figure}

The main result of this subsection is the following proposition:

\begin{prop}\label{prop_edgecontr}
Suppose $G'$ is obtained from $G$ by edge contraction, then there is a natural quasi-equivalence
\[
F_G^{G'}:\mc F_{G'}(S;\mc E)\to \mc F_G(S;\mc E)
\]
of $A_\infty$-categories over $\mathbf R$.
\end{prop}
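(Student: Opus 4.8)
The plan is to follow the strategy of the refinement proposition in Subsection~\ref{subsec_refine}: construct an explicit functor over $\mathbf R$, reduce the quasi-equivalence claim to the central fibers by Proposition~\ref{prop_quasiequiv}, and then analyze the central fibers through the homotopy-limit description of Subsection~\ref{subsec_centralfiber}. Throughout, let $p'$ be the merged vertex of $G'$, of valency $k+\ell$, obtained by collapsing the edge $X$ joining $p$ (valency $k+1$) and $q$ (valency $\ell+1$), with $q$ interior.

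First I would construct $F_G^{G'}$ in the expanding direction, from the coarser graph $G'$ to the finer graph $G$. Geometrically, expanding $X$ subdivides the dual polygon of $p'$ (a $(k+\ell)$-gon) into the dual polygons of $p$ and $q$ (a $(k+1)$-gon and an $(\ell+1)$-gon) sharing the diagonal corresponding to $X$. Accordingly, I would define an uncurved functor $\mc A_{G'}(S)\to\mathrm{Tw}(\mc A_G(S))$ which is the identity on all edges other than those incident to $p'$, and which places on the new edge $X$ the iterated cone (partial totalization) of the edges $E_1,\ldots,E_k$ lying on the $p$-side of the diagonal, with Maurer--Cartan element assembled from the primitive boundary paths at $p$, exactly generalizing the assignment $X\mapsto(Y\oplus Z,\delta)$ of the refinement lemma. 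Tensoring with $\mc E$ and passing to twisted complexes yields $F_G^{G'}:\mc F_{G'}(S;\mc E)\to\mc F_G(S;\mc E)$. As in the refinement lemma, the crucial local point is that the curvature $c_{p,X}+c_{q,X}$ of the new edge $X$ cancels against the Maurer--Cartan term of the cone, and that the structure maps of $p'$ reassemble from the distributed ones at $p$ and $q$; verifying this is the sign-and-picture bookkeeping analogous to the proof that $\mc A_G(S)$ is an $A_\infty$-category.

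Next I would reduce to the central fiber: since the $\Hom$-spaces are finite-rank free $\mathbf R$-modules, Proposition~\ref{prop_quasiequiv} shows it suffices to check that $(F_G^{G'})_{\mathbf k}$ is a quasi-equivalence of curved $A_\infty$-categories over $\mathbf k$. On central fibers I would invoke the equivalence $V\left(\mc F_G(S;\mc E)_{\mathbf k}\right)\simeq\Gamma(G;\mc E_G)$ of Subsection~\ref{subsec_centralfiber}, reducing everything to a comparison of homotopy limits of the sheaves $\mc E_G$ and $\mc E_{G'}$. Because contraction is a local modification, the two homotopy-limit diagrams are identical away from the star of $X$, so the claim localizes to the statement that the homotopy fiber product
\[
\mathrm{Fun}(\mc C_k,\mc E)\times^h_{\mc E}\mathrm{Fun}(\mc C_\ell,\mc E)\simeq\mathrm{Fun}(\mc C_{k+\ell-1},\mc E)
\]
holds compatibly with the restriction functors to the surviving edges $E_i$ and $E_j'$. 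Equivalently, gluing the cyclic categories $\mc C_k$ and $\mc C_\ell$ along a single object (the co-sheaf $\mathbf k_G$ on the star of $X$) recovers $\mc C_{k+\ell-1}$ up to derived Morita equivalence; in polygon terms, an exact cyclic configuration around the $(k+\ell)$-gon is the same datum as two configurations around the sub-polygons sharing their common diagonal object, which is their common cone. One then checks, as in the commuting square of the refinement proof, that $(F_G^{G'})_{\mathbf k}$ corresponds to this comparison of global sections, and lifts via Proposition~\ref{prop_quasiequiv}.

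The main obstacle is this last homotopy fiber product identity. For $k=\ell=2$ it is precisely the octahedral axiom (a square is two triangles sharing a cone), and in general it is an iterated version asserting that a twisted complex may be assembled in stages from the pieces on either side of the diagonal. I would prove it by exhibiting the diagonal object as the explicit iterated cone built from $F(E_1),\ldots,F(E_k)$, verifying that evaluation at this object defines the comparison functor, and then establishing quasi-full-faithfulness and essential surjectivity directly; alternatively one identifies both sides with $\mathrm{Tw}$ of the relevant tensor product via Proposition~\ref{prop_funmc} and matches the twisted complexes. The remaining points — the curvature cancellation in the construction of $F_G^{G'}$ and the commutativity of the comparison square — are routine once the gluing identity is secured.
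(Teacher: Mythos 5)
Your reduction machinery is the same as the paper's: construct a functor over $\mathbf R$, apply Proposition~\ref{prop_quasiequiv} to reduce the quasi-equivalence to the central fiber, identify $V(\mc F_G(S;\mc E)_{\mathbf k})$ with global sections of the sheaf $\mc E_G$, and localize at the star of the contracted edge. Your key identity $\mathrm{Fun}(\mc C_k,\mc E)\times^h_{\mc E}\mathrm{Fun}(\mc C_\ell,\mc E)\simeq\mathrm{Fun}(\mc C_{k+\ell-1},\mc E)$ is exactly what the paper uses, except the paper does not reprove it: it cites the known fact that the homotopy pullback of $\mc A_m$ and $\mc A_n$ is $\mc A_{m+n-1}$~\cite{dk_triangulated,hkk}, so your octahedral/iterated-cone argument would be a self-contained substitute. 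Your proposed compatibility check of the functor with this identification corresponds to the paper's commutative triangle through $V(\mathrm{Tw}(\mc E\otimes Q_{m+n-1}))$, which extracts the Maurer--Cartan components along the surviving boundary paths.

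The genuine divergence, and the soft spot, is your one-step construction of the functor. The paper deliberately avoids this: it factors edge expansion as subdivision followed by identification of a pair of adjacent edges, at least one of which ends at a $2$-valent vertex (Figure~\ref{fig_edgemovie}); the subdivision functor is strict with the single assignment $X\mapsto(Y\oplus Z,\delta)$, while the identification functor is strict on objects but already requires a nontrivial $F_3$ component and a separate check against disk corrections. Your description conflates two things: the contracted edge $X$ is not an object of the source $\mc A_{G'}(S)$, so ``placing an iterated cone on $X$'' describes the effect of the functor on global twisted complexes (which is indeed the right picture, matching the local model at the end of Subsection~\ref{subsec_centralfiber}), not a definition of a functor on generators. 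What must actually be specified is the image of each edge incident to $p'$ and of each boundary path through $p'$ --- note that a primitive boundary path at $p'$ crossing the slot formerly occupied by $X$ joins half-edges whose lifts to $\widehat S$ lie on different exceptional circles (over $p$ and over $q$), so its image necessarily lands in matrix components of $\Hom$'s into cones, and the $A_\infty$-functor equations then force higher components: even the paper's much simpler gluing step needs $F_3\neq 0$, and with several edges on each side of the diagonal you should expect corrections of all odd orders, each to be checked against the curvature terms $\mk m_0$ and the disk corrections. Calling this verification ``routine'' and claiming it ``exactly generalizes'' the $2$-valent case is where your proposal is underspecified; the factorization through subdivision and gluing is precisely the device the paper introduces so that this bookkeeping never has to be done in one step. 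The remainder of your argument would go through once such a functor (for instance, the composite of the paper's subdivision and identification functors) is in hand.
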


For technical reasons, it will be convenient to view edge expansion not as elementary but as the result of subdividing and identifying edges, see Figure~\ref{fig_edgemovie}.
\begin{figure}
\centering
\includegraphics[scale=1]{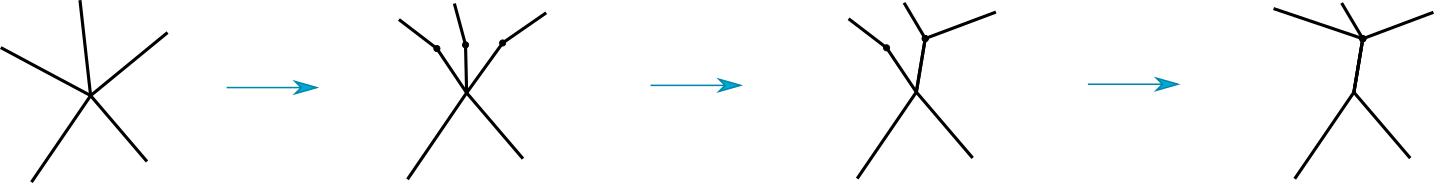}
\caption{Edge expansion realized by subdividing edges, followed by identifying pairs of edges, at least one of which ends in a 2-valent vertex.}
\label{fig_edgemovie}
\end{figure}
Since we already constructed an equivalence relating the category of a graph and its subdivision in the previous subsection, it remains to consider the operation of identifying edges.
Thus, suppose $G$ is a graph in $S$ with distinct vertices $p,q,r$ and with and edge from $p$ to $q$ and and edge from $p$ to $r$ which are attached next to each other at $p$.
We assume that $p,q,r$ are interior vertices, the cases with one or two boundary vertices ($q$ and $r$ cannot both be on the boundary) being similar.
Let $G'$ be the graph obtained by identifying the two edges (and thus also the two vertices $q$ and $r$).
We assume that this is done so that all areas of regions cut out by $G$ (equivalently $G'$) are unchanged.

\begin{figure}
\centering
\includegraphics[scale=1]{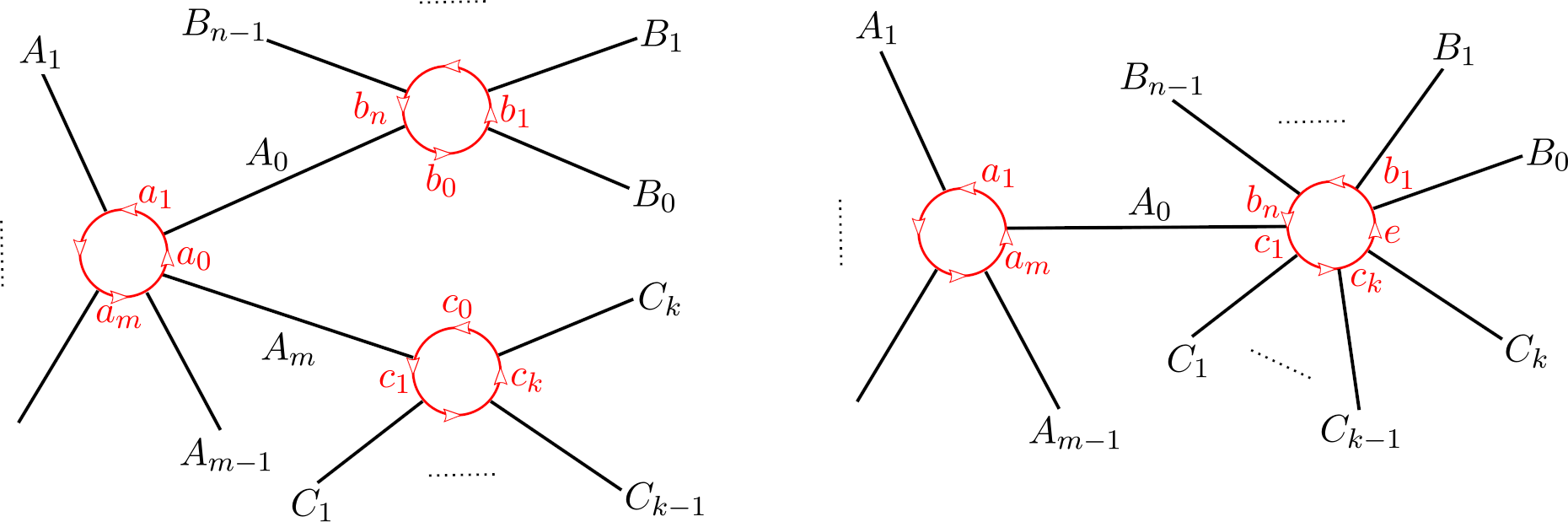}
\caption{Part of real blow-up for $G$ (left) and $G'$ (right). The pair of edges $A_0$ and $A_m$ of the graph $G$ are identified to yield the graph $G'$.}
\label{fig_edgeglue}
\end{figure}
The first step is to construct an $A_\infty$-functor  
\[
F:\mc A_G(S)\to \mc A_{G'}(S).
\]
Label the relevant edges and primitive boundary paths as in  Figure~\ref{fig_edgeglue}. 
We assume that the grading on $A_0$ and $A_m$ is chosen so that $|a_0|=0$.
$F$ is defined on objects by sending each edge in $G$ to the corresponding edge in $G'$. In particular, $F(A_0)=A_0$ and $F(A_m)=A_0$.
Define $F_1$ on the primitive boundary paths around $p,q,r$ by
\begin{gather*}
    F_1(a_0)=1_{A_0},\qquad F_1(a_1)=a_1,\qquad\ldots,\qquad F_1(a_m)=a_m \\
    F_1(b_0)=ec_k\cdots c_1,\qquad F_1(b_1)=b_1,\qquad\ldots,\qquad F_1(b_n)=b_n \\
    F_1(c_0)=b_n\cdots b_1e,\qquad F_1(c_1)=c_1,\qquad\ldots,\qquad F_1(c_k)=c_k
\end{gather*}
and sending each boundary path around a vertex  of $G$ other than $p,q,r$ to the corresponding boundary path for $G'$.
Also, $F_1$ is extended to non-primitive boundary paths by compatibility with concatenation.
There is also an $F_3$ term given by
\[
F_3(bb_0,a_0,c_0c)=F_1(b)eF_1(c)
\]
where $b$ is a boundary path starting at $B_0$ or $1_{B_0}$ and $c$ is a boundary path ending at $C_k$ or $1_{C_k}$.

\begin{lemma}
$F$ is an $A_\infty$-functor of curved $A_\infty$-categories over $\mathbf R$.
\end{lemma}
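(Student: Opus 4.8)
The plan is to verify the curved $A_\infty$-functor equations \eqref{ainfty_mor} directly for the data $(F_1,F_3)$, with $F_0=0$ and all remaining components zero. Since the only nonzero structure maps of $\mc A_G(S)$ and $\mc A_{G'}(S)$ are the curvature $\mk m_0$, the constant composition $\mk m_2$, and the disk corrections (which carry positive powers of $t$), and since $F$ itself has no $t^{>0}$ part, the equations decompose into a \emph{constant} part, obtained by reducing modulo $t^{>0}$, and a \emph{disk-correction} part; I would check these two separately. Throughout I would use the normalization $|a_0|=0$ to keep the Koszul signs $(-1)^{\Vert a_k\Vert+\ldots+\Vert a_1\Vert}$ manageable.

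For the constant part I would first dispose of the case $n=0$, which with $F_0=0$ reads $F_1(\mk m_0(X))=\mk m_0(F(X))$ for every edge $X$. Away from $p,q,r$ this is immediate, as $F_1$ is the tautological identification of boundary paths; for the edges touching the gluing one combines $F_1(a_0)=1_{A_0}$ with the long-path assignments $F_1(b_0)=ec_k\cdots c_1$ and $F_1(c_0)=b_n\cdots b_1e$, so that the winding loops around $q$ and $r$ defining $\mk m_0$ in $G$ are carried to the winding loop around the merged vertex in $G'$, the inserted $e$ accounting precisely for the new piece of exceptional boundary. Next come the equations expressing compatibility with $\mk m_2$ and the units. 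Because $F_1$ is defined to respect concatenation, the generic contribution $\mk m_2(F_1(\cdot),F_1(\cdot))=F_1(\mk m_2(\cdot,\cdot))$ holds automatically; the only discrepancies arise when the collapse $F_1(a_0)=1_{A_0}$ turns a genuine path into a unit, or when a composite runs through $b_0$ or $c_0$ and splits off a copy of $e$. These discrepancies are exactly what the cubic term is designed to repair: the relation $F_3(bb_0,a_0,c_0c)=F_1(b)\,e\,F_1(c)$ makes the $n=2$ equations containing $F_3(\ldots,\mk m_0,\ldots)$ and the $n=4$ equations containing $\mk m_2(F_3,F_1)$, $\mk m_2(F_1,F_3)$ and $F_3(\ldots,\mk m_2,\ldots)$ hold, the latter being an associativity/naturality identity for $F_3$ that I would verify by direct substitution, reading off the composites from Figure~\ref{fig_edgeglue}.

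For the disk-correction part the key combinatorial input, exactly as in the proofs that $\mc A_G(S)$ and the subdivision functor $D$ are well defined, is an area-preserving correspondence between immersed disks for $G$ and for $G'$: a disk sequence for $G$ is sent to one for $G'$ of equal area by deleting the occurrences of $a_0$ and inserting $e$ wherever $b_0$ or $c_0$ appears. Under this correspondence the contribution of each disk to $\mk m_n$ on the $G$-side matches, after applying $F_1$, its contribution on the $G'$-side, so that the terms $\mk m_k(F_1,\ldots,F_1)$ balance $F_1(\mk m_n)$; the few disks that straddle the glued strip contribute instead through the cubic term and the long paths.

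The step I expect to be the main obstacle is precisely this last one: matching, with the correct signs, the disk corrections that interact with $F_3$ and with the collapsed path $a_0$, together with the constant $n=4$ identity for $F_3$. As in the preceding propositions I expect that, once the relevant configurations near the vertices $p,q,r$ are drawn, each such term pairs off with exactly one other and the verification reduces to the same elementary area-and-sign bookkeeping used throughout this section.
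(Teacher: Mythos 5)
Your proposal follows the paper's proof essentially verbatim: the same decomposition into a constant ($t^0$) part checked by number of inputs --- with the $a_0$-collapse discrepancies at two inputs absorbed by $F_3$ terms carrying $\mk m_0$ insertions, and the four-input cancellations between $\mk m_2(F_3,\cdot)$, $\mk m_2(\cdot,F_3)$ and $F_3(\ldots,\mk m_2,\ldots)$ --- plus the same area-preserving disk-sequence correspondence for the $t^{>0}$ part, under the same normalization $|a_0|=0$. The only imprecision is your phrasing of that correspondence (``inserting $e$ wherever $b_0$ or $c_0$ appears''), where the paper's rule is the local substitution $\cdots b_0, a_0, c_0 \cdots \mapsto \cdots e \cdots$ (delete each $\alpha_i=a_0$, cut $b_0$ and $c_0$ from its neighbours, and concatenate, shortening the sequence by two), but your remark that disks straddling the glued strip pair off with $F_3$ terms shows you have the correct mechanism.
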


\begin{proof}
We need to check the $A_\infty$-functor equations~\eqref{ainfty_mor}.
We do this first for the corresponding categories before disk-corrections (terms with $t^{>0}$), which have only $\mk m_0$ and $\mk m_2$, then show compatibility with the deformation.

First look at the case of 0 inputs, then~\eqref{ainfty_mor} is just $F_1(\mk m_0(X))=\mk m_0(F(X))$ for all objects $X$.
Suppose $X=A_0$, then
\begin{align*}
    F_1\left(\mk m_0(A_0)\right)&=F_1(a_0a_m\cdots a_1+b_n\cdots b_1b_0) \\
    &=a_m\cdots a_1+b_n\cdots b_1ec_k\cdots c_1=\mk m_0(F(A_0))
\end{align*}
and similarly for other objects.

For the case of 1 input, there are no non-zero terms, since $\mk m_1=0$ and $F_2=0$.
The case of 2 inputs is more interesting. 
For boundary paths $x,y$ we have
\[
\mk m_2(F_1(x),F_1(y))=F_1(\mk m_2(x,y))
\]
unless $x$ or $y$ is $a_0$, in which case the right-hand side vanishes.
In those cases we have
\begin{align*}
    \mk m_2(F_1(a_0)),F_1(c_0c))&=(-1)^{|c_0c|}b_n\cdots b_1eF_1(c) =(-1)^{\|a_0\|+\|c_0c\|}F_3(\mk m_0(A_0),a_0,c_0c)
\end{align*}
and
\begin{align*}
    \mk m_2(F_1(bb_0),F_1(a_0))&=F_1(b)ec_k\cdots c_1 =F_3(bb_0,a_0,\mk m_0(A_m))
\end{align*}
using $|a_0|=0$.
For 3 inputs there are again no non-zero terms.
For 4 inputs, terms of the form $\mk m_2(F_3(x_3,x_2,x_1),x_0)$ cancel with terms of the form $F_3(x_3,x_2,\mk m_2(x_1,x_0))$ and terms of the form $\mk m_2(x_3,F_3(x_2,x_1,x_0))$ cancel with terms of the form $F_3(\mk m_2(x_3,x_2),x_1,x_0)$.
For $\geq 5$ inputs there are no non-zero terms.

To show compatibility of $F$ with disk corrections note that given a disk sequence $\alpha_k,\alpha_{k-1},\ldots,\alpha_1$ for $G$ there is a corresponding disk sequence $\beta_l,\beta_{l-1},\ldots,\beta_1$, $l\leq k$ for $G'$ where the original sequence is modified according to the rule
\[
\cdots b_0,a_0,c_0\cdots\mapsto \cdots e \cdots
\]
i.e. for any $i$ with $\alpha_i=a_0$, $\alpha_i$ is deleted, $b_0$ is cut from $\alpha_{i+1}$, $c_0$ is cut from $\alpha_{i-1}$, and the two ends are concatenated yielding a sequence of length two less.
This shows that the term $F_1(\mk m_k(\alpha_k,\ldots,\alpha_1))$ cancels with a term of the form $\mk m_l(F_*(\ldots),\ldots,F_*(\ldots))$ where each $*=1$ or $3$.
\end{proof}

Having constructed a functor $F:\mc A_{G}(S)\to\mc A_{G'}(S)$ we obtain a functor
\[
F:\mc F_G(S;\mc E)=\mathrm{Tw}(\mc E\otimes\mc A_{G}(S))\to\mathrm{Tw}(\mc E\otimes \mc A_{G'}(S))=\mc F_{G'}(S;\mc E)
\]
for which we use the same letter.

\begin{prop}
Suppose one of $q,r$, say $r$, is 2-valent.
The functor $F:\mc F_G(S;\mc E)\to \mc F_{G'}(S;\mc E)$ constructed above is a quasi-equivalence of curved $A_\infty$-categories over $\mathbf R$. 
\end{prop}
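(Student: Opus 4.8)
The plan is to follow the same two-step strategy used in the proof of the refinement proposition (Subsection~\ref{subsec_refine}): first reduce the claim to the central fibers, and then analyze the central fibers through the homotopy-limit description of Subsection~\ref{subsec_centralfiber}. For the reduction, I would apply Proposition~\ref{prop_quasiequiv} to the functor $\mc E\otimes\mc A_G(S)\to\mc E\otimes\mc A_{G'}(S)$ induced by $F$, whose associated functor on twisted complexes is exactly the functor $F:\mc F_G(S;\mc E)\to\mc F_{G'}(S;\mc E)$ under consideration. The finite-rank freeness hypothesis holds because the morphism spaces of $\mc A_G(S)$ and $\mc A_{G'}(S)$ are free on boundary paths, of which there are finitely many in each fixed degree, while $\mc E$ has finite-dimensional $\Hom$-spaces. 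Thus it suffices to prove that the induced functor on central fibers
\[
F_{\mathbf k}:\mc F_G(S;\mc E)_{\mathbf k}\longrightarrow\mc F_{G'}(S;\mc E)_{\mathbf k}
\]
is a quasi-equivalence of curved $A_\infty$-categories over $\mathbf k$.

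For the second step I would pass to the subcategories of flat objects and invoke the quasi-equivalence of Subsection~\ref{subsec_centralfiber} identifying $V\big(\mc F_G(S;\mc E)_{\mathbf k}\big)$ with the global sections $\Gamma(G;\mc E_G)$, and likewise for $G'$. As in the refinement proof, one arranges a square
\[
\begin{tikzcd}
\mc F_G(S;\mc E)_{\mathbf k} \arrow[r,"F_{\mathbf k}"]\arrow[d] & \mc F_{G'}(S;\mc E)_{\mathbf k} \arrow[d] \\
\Gamma(G;\mc E_G) \arrow[r] & \Gamma(G';\mc E_{G'})
\end{tikzcd}
\]
commuting up to natural isomorphism, with quasi-fully-faithful vertical embeddings, so that the problem reduces to showing the bottom functor is a quasi-equivalence of homotopy limits.

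The heart of the matter is this homotopy-limit comparison, where the hypothesis that $r$ is $2$-valent enters. Its stalk is $\mc E_{G,r}=\mathrm{Fun}(\mc C_1,\mc E)$, and since the two objects of $\mc C_1$ are mutually isomorphic (the generators $x_0,x_1$ being mutually inverse under $\mk m_2$), this stalk is quasi-equivalent to $\mc E$ with both restriction functors equivalences. Consequently the $2$-valent vertex $r$ together with its adjacent edge $pr$ forms a span of equivalences in the diagram computing $\Gamma(G;\mc E_G)$. Identifying the edges $pq$ and $pr$ and the vertices $q,r$ then amounts to folding this span onto the corresponding leg through $q$; because the span consists of equivalences, this operation leaves the homotopy limit unchanged, precisely dual to the way the insertion $\cdots\to\mc E\leftarrow\mc E\to\mc E\leftarrow\cdots$ left the limit unchanged in the refinement proposition. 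This produces the quasi-equivalence $\Gamma(G;\mc E_G)\simeq\Gamma(G';\mc E_{G'})$ realized by the bottom arrow.

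The step I expect to be the main obstacle is the bookkeeping behind this last paragraph: one must verify that the explicit functor $F$, with its $F_1$- and $F_3$-components and the new path $e$ appearing in $F_1(b_0)$ and $F_1(c_0)$, genuinely induces the folding map on the homotopy-limit diagrams and that the square above commutes up to natural isomorphism. Making the identification of diagrams precise --- tracking how the boundary arcs of $q$ and $r$ combine into those of the merged vertex, and how the collapse $F_1(a_0)=1_{A_0}$ implements the contraction of the span of equivalences --- is the delicate combinatorial core, although conceptually it is governed entirely by the $2$-valency of $r$.
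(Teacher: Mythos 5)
Your reduction step is exactly the paper's: apply Proposition~\ref{prop_quasiequiv} to pass to the central fiber, then use the sheaf description of $V\left(\mc F_G(S;\mc E)_{\mathbf k}\right)$ as $\Gamma(G;\mc E_G)$, and your identification of the stalk at $r$ as $\mathrm{Fun}(\mc C_1,\mc E)\simeq \mc E$ with both restriction functors equivalences is correct. But the core step --- the ``folding'' --- does not do what you claim, and this is a genuine gap. Folding the span of equivalences $\mc E_{A_m}\leftarrow \mc E_{G,r}\rightarrow \mc E_{X}$ at the $2$-valent vertex $r$ is precisely the inverse of the subdivision move of Subsection~\ref{subsec_refine}: it computes the global sections of the graph in which $r$ is smoothed, so that $A_m$ and the other edge at $r$ merge into a single edge \emph{still attached to $p$}. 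It does not produce $\Gamma(G';\mc E_{G'})$. In $G'$ the valency of $p$ drops by one and the merged vertex $q=r$ has valency $\mathrm{val}(q)+1$, so the stalks at \emph{both} $p$ and the merged vertex change (e.g.\ at $p$ from $\mathrm{Fun}(\mc C_{\mathrm{val}(p)-1},\mc E)$ to $\mathrm{Fun}(\mc C_{\mathrm{val}(p)-2},\mc E)$), whereas a fold of an equivalence-span at $r$ leaves both of those nodes of the diagram untouched. Moving a leg from $p$ to $q$ is not a formal consequence of equivalences sitting somewhere in the diagram; it is exactly the edge contraction/expansion invariance, whose content is the nontrivial local statement that the homotopy pullback of $\mc A_m$ and $\mc A_n$ is $\mc A_{m+n-1}$. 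The paper uses the $2$-valency of $r$ differently: it observes that the passage from $G$ to $G'$ is the slide of the edge carrying $r$ from $p$ to $q$ along $A_0$, realizes this slide as a contraction followed by an expansion of $A_0$, and then invokes the known invariance of $\Gamma(G;\mc E_G)$ under contraction/expansion~\cite{dk_triangulated,hkk}. Nothing in your proposal supplies this input, and after your fold one would still face an edge slide with no $2$-valent vertex left to exploit.

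Separately, the compatibility you defer as ``bookkeeping'' --- that the explicit $F$, with its $F_3$-component and the new path $e$, agrees with the a priori identification of global sections --- is a step the paper actually argues, and not by chasing the homotopy-limit diagrams directly: it identifies the local sections near $p,q,r$ with $V\left(\mathrm{Tw}(\mc E\otimes Q_{m+n-1})\right)$ via Proposition~\ref{prop_funmc}, where $Q_{m+n-1}$ is the $A_{m+n-1}$-quiver algebra with consecutive compositions zero, builds functors from $V\left(\mc F_G(S;\mc E)_{\mathbf k}\right)$ and $V\left(\mc F_{G'}(S;\mc E)_{\mathbf k}\right)$ to this local model by extracting the Maurer--Cartan summands on $a_1,\ldots,a_m$, $b_1,\ldots,b_n$ and their compositions, and notes these summands are unchanged by $F$; the resulting commutative triangle pins $F_{\mathbf k}$ down. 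Some such argument would be needed to close your square, but the primary defect is the folding step above: as written, it proves the refinement proposition again, not the edge-identification one.
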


The restriction on the valency is not really necessary, as we will see later.

\begin{proof}
The strategy is to first show that the functor induces a quasi-equivalence on central fibers
\[
F_{\mathbf k}:\Gamma(\mc E_G;G)\cong V\left(\mc F_{G}(S;\mc E)_{\mathbf k}\right)\to V\left(\mc F_{G'}(S;\mc E)_{\mathbf k}\right)\cong \Gamma(\mc E_{G'};G')
\]
and then conclude by Proposition~\ref{prop_quasiequiv} that $F$ is itself a quasi-equivalence.
Since $r$ is 2-valent, the passage from $G$ to $G'$ can alternatively be thought of as sliding the edge on which $r$ lies from $p$ to $q$ along $A_0$.
Such a modification can in turn be realized by contracting and expanding $A_0$.
Since it is known that $\Gamma(\mc E_G;G)$ is invariant under edge contractions/expansions~\cite{dk_triangulated,hkk}, which just comes down to the fact the the homotopy pullback of $\mc A_m$ and $\mc A_n$ is $\mc A_{m+n-1}$, it follows that the source and target of $F_0$ are quasi-equivalent.
It remains to show that $F_0$ is compatible with this quasi-equivalence.

To see this, let $U\subset G$ be a contractible open neighbourhood of $p,q,r$ and $U'\subset G'$ be a contractible open neighbourhood of $p,q$.
Then both $\Gamma(\mc E_G;U)$ and $\Gamma(\mc E_{G'};U')$ are naturally identified with the category of functors from the path algebra of the $A_{m+n-1}$-quiver (with all arrow oriented to the right, say) to $\mc E$ so that the images of the $m+n-1$ vertices are the objects on the edges $A_{m-1},A_{m-2},\ldots,A_1,B_{n-1},\ldots,B_1,B_0$. 
As a special case of Proposition~\ref{prop_funmc}, this category is equivalent to $V(\mathrm{Tw}(\mc E\otimes Q_{m+n-1}))$ where $Q_{m+n-1}$ is the path algebra of the $A_{m+n-1}$-quiver but with relations that all consecutive pairs of arrows compose to zero.
Finally, there are functors from $V(\mc F_G(S;\mc E)_{\mathbf k})$ and $V(\mc F_{G'}(S;\mc E)_{\mathbf k})$ to $V(\mathrm{Tw}(\mc E\otimes Q_{m+n-1}))$ which extract from a Maurer--Cartan element only those summands involving the boundary paths $a_1,\ldots,a_m$, $b_1,\ldots,b_n$, and their compositions.
Since these are unchanged by $F$, we get a commutative diagram of functors
\[
\begin{tikzcd}
V(\mc F_G(S;\mc E)_{\mathbf k}) \arrow[r,"F_{\mathbf k}"]\arrow[d] & V(\mc F_{G'}(S;\mc E)_{\mathbf k})\arrow[dl] \\
V(\mathrm{Tw}(\mc E\otimes Q_{m+n-1}))
\end{tikzcd}
\]
which shows the compatibility of $F_{\mathbf k}$ with the identification we have a priori. 
\end{proof}

\begin{proof}[Proof of Proposition~\ref{prop_edgecontr}]
This follows from the previous proposition and the fact that an edge expansion can be realized in terms of subdivision and gluing edges, see Figure~\ref{fig_edgemovie}.
\end{proof}

\subsection{General fiber}
\label{subsec_generalfiber}

As before, we fix a compact surface with boundary $S$ with grading structure $\nu$, area form $\omega$, and coefficient category $\mc E$.
Also fix a finite subset $M\subset\partial S$ of the boundary and an affine structure on $S$. 
If $\nu$ is not just smooth but holomorphic then there is an affine structure coming from the flat metric $|\nu|$. We always assume this affine structure has been chosen if $\nu$ is holomorphic.
We no longer consider a fixed graph $G$, but the set $\GG$ of graphs in $S$ with the following properties.
\begin{df}
An embedded graph $G$ in $S$ is \textbf{compatible} if
\begin{enumerate}
\item $G_0\cap\partial S\subset M$,
\item edges of $G$ are linear with respect to the affine structure on $S$.
\end{enumerate}
\end{df}
The second condition is to ensure that the union of a pair of graphs is a graph. 
Alternatively, we could require edges to be semi-algebraic with respect to some algebraic structure on $S$.

We say that $G'\in\GG$ is a \textit{subdivision} of $G$ if it is obtained by subdividing some of the edges of $G$ by inserting new 2-valent vertices, or if $G=G'$.
We say that $G\in\GG$ is a \textit{subgraph} of $G'$ if it is obtained by deleting some of the edges and vertices of $G'$, or if $G=G'$.
Define a partial order on $\GG$ by $G\leq G'$ if some subdivision of $G$ is a subgraph of $G'$.
For a pair of graphs $G,G'\in\GG$, the union $G\cup G'\in\GG$ is the graph whose vertices are the vertices of $G$ and $G'$ together with the intersections points of edges, and whose edges are pieces of edges of $G$ or $G'$.
We then have $G,G'\leq G\cup G'$.

If $G\leq G'$ then there is a natural functor 
\[
F_{G',G}:\mc F_G(S;\mc E)\longrightarrow \mc F_{G'}(S;\mc E)
\]
and these satisfy
\[
F_{G'',G}=F_{G'',G'}\circ F_{G',G}
\]
for $G\leq G'\leq G''$.
In the case where $G'$ is a subdivision of $G$, this functor was defined in Subsection~\ref{subsec_refine}.
In the case where $G$ is a subgraph of $G'$, it is simply inclusion of a full subcategory. 
In general, it is the composition of the two functors.

Define
\[
\mc F(S,M;\mc E):=\varinjlim_{G\in\GG}\mc F_G(S;\mc E)
\]
where the direct limit is the usual one (not homotopical), and
\[
\mc F(S,M;\mc E)_{\mathbf K}:=\mc F(S,M;\mc E)\otimes_{\mathbf R}\mathbf K.
\]

A graph $G\in\GG$ is a \textbf{skeleton} of $(S,M)$ if $M\subset G_0$ and it is a deformation retract of $S$.

\begin{prop}
\label{prop_skelgen}
Suppose $G\in\GG$ contains a skeleton as a subgraph, then the canonical map
\[
\mc F_G(S;\mc E)_{\mathbf K}\longrightarrow\mc F(S,M;\mc E)_{\mathbf K}
\]
is a quasi-equivalence.
\end{prop}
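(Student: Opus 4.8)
The plan is to realize $\mc F(S,M;\mc E)_{\mathbf K}$ as a filtered colimit over the graphs dominating $G$ and to show that every transition functor in that system is a quasi-equivalence over $\mathbf K$. First I would observe that the subposet $\{G'\in\GG : G'\geq G\}$ is cofinal in $\GG$: for any $H\in\GG$ the union satisfies $G\cup H\geq G$ and $G\cup H\geq H$. Hence $\mc F(S,M;\mc E)=\varinjlim_{G'\geq G}\mc F_{G'}(S;\mc E)$, and it suffices to prove that $F_{G',G}$ is a quasi-equivalence over $\mathbf K$ for every $G'\geq G$. The structure map into a filtered colimit of quasi-equivalences is then itself a quasi-equivalence: filtered colimits are exact, so they preserve the quasi-isomorphisms on $\Hom$-complexes coming from quasi-full-faithfulness, and essential surjectivity is inherited term by term. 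Note that any $G'\geq G$ again contains a skeleton, since a subdivision of a skeleton is a skeleton.

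Next I would factor each transition functor, using its definition in this subsection, as a refinement (subdivision) followed by the inclusion of a full subgraph. Subdivisions are quasi-equivalences already over $\mathbf R$ by the result of Subsection~\ref{subsec_refine}, so the content lies in the subgraph inclusion $\tilde G\subseteq G'$, where $\tilde G$ is a subdivision of $G$ sitting inside $G'$ and still containing a skeleton. This inclusion is fully faithful on the nose: for two edges $X,Y$ of $\tilde G$ the boundary paths from $X$ to $Y$, and the immersed disks contributing to the structure maps between them, are unaffected by the extra edges and vertices of $G'$ (the additional edges lie in the non-exceptional boundary and merely add docking points that a boundary path passes over, while any new disk whose boundary meets a new edge contributes to $\Hom$'s involving that edge, not between two edges of $\tilde G$). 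Thus $\Hom_{\mc A_{\tilde G}}(X,Y)=\Hom_{\mc A_{G'}}(X,Y)$, and after applying $\mathrm{Tw}(\mc E\otimes-)$ the inclusion remains fully faithful. It therefore only remains to establish essential surjectivity over $\mathbf K$.

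The crucial point — and where passing to $\mathbf K$ is essential — is that this subgraph inclusion is generally \emph{not} a quasi-equivalence on the central fiber, since $G'$ may have larger first Betti number than $\tilde G$; Proposition~\ref{prop_quasiequiv} does not apply and one must argue directly over $\mathbf K$. Since both categories are of the form $\mathrm{Tw}(\mc E\otimes-)$, hence triangulated, it suffices to show that every edge $e$ of $G'$ with arbitrary coefficient $E\in\mc E$ lies, over $\mathbf K$, in the triangulated subcategory generated by the edges of $\tilde G$. Adding the edges of $G'\setminus\tilde G$ one at a time — they are pairwise non-crossing, and after an auxiliary subdivision their endpoints are vertices — I reduce to adding a single edge $e$ whose interior lies in one region $R$ of the current graph $H\supseteq\tilde G$. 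Because $\tilde G$ is a deformation retract of $S$, I would push $e$ across $R$ onto the boundary edges of $R$. The mechanism is the standard surface-Floer generation argument: an isotopy of arcs sweeping out an immersed disk of area $A$ yields a pair of degree-zero morphisms whose composite is $t^A$ times the identity, hence an isomorphism over $\mathbf K$; and an isotopy crossing an edge $Z$ of $\tilde G$ sweeps out a triangle and produces an exact triangle exhibiting the new arc as a mapping cone built from the old arc and $Z$. Iterating these moves rewrites $e$ as an iterated cone of edges of $\tilde G$ over $\mathbf K$, giving essential surjectivity.

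I expect the main obstacle to be making this last geometric generation step fully rigorous: one must present the isotopy of $e$ onto $\tilde G$ as a finite sequence of elementary moves, identify for each move the bigon or triangle whose area controls the relevant power of $t$, and verify the signs and $A_\infty$-relations ensuring that the resulting morphisms are genuine isomorphisms and exact triangles after inverting $t$. This is exactly the place where invertibility of $t$ in $\mathbf K$ enters in an essential way, in contrast to the central-fiber arguments of the preceding subsections.
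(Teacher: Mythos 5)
Your overall scaffolding matches the paper's: the cofinality of $\{G'\in\GG : G'\geq G\}$, the reduction to single transition functors $F_{G',G}$ over $\mathbf K$, the factorization through a subdivision (already a quasi-equivalence over $\mathbf R$ by Subsection~\ref{subsec_refine}), and full faithfulness of the resulting subgraph inclusion are all correct and are exactly what the paper leaves implicit. The genuine divergence is in essential surjectivity. You propose to generate each new edge as an iterated cone of old edges by sweeping it across a region via bigon and triangle moves; as you yourself note, these moves are not available off the shelf in the twisted-complex model and each would require fresh verification of signs, degrees, and $A_\infty$-relations. The paper instead proves two removal lemmas and argues in the opposite direction, deleting the superfluous edges of $G'$ one at a time. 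First, the 1-gon removal lemma: if an edge $X$ bounds a 1-gon of area $A$ attached first at a boundary vertex, then $\Hom(Y,X)=0$ for every other edge $Y$, so any twisted complex on $G'$ is an extension of $(E\otimes X,\delta_X)$ by an object supported on $G'\setminus X$, and the former is a zero object over $\mathbf K$ because the 1-gon gives $\mk m_1(\eta)=t^A 1_X$, hence $1_E\otimes 1_X$ is exact once $t$ is inverted. This is the same $t^A$-rescaled-identity phenomenon as your bigon move, but deployed to kill a complementary summand rather than to build an isomorphism of arcs, which sidesteps constructing cones altogether. Second, the edge removal lemma (Lemma~\ref{edgeremoval}) reduces the removal of an arbitrary edge bounding an exterior and an interior region to the 1-gon case by a finite sequence of edge contractions/expansions, i.e. the equivalences of Subsections~\ref{subsec_refine} and~\ref{subsec_edge}, each already certified over $\mathbf R$, play the role of your sweeping isotopy. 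So where your route would force you to prove the triangle move ($Z\cong\mathrm{Cone}(X\to Y)$ over $\mathbf K$ for a triangle cut out by three edges) from scratch — which is exactly the obstacle you flag — the paper's route spends nothing beyond machinery it has already built. One small point you should make explicit in your full-faithfulness step: a compatible disk for $\tilde G$ may pass over a vertex $q$ of $G'\setminus\tilde G$ in its \emph{interior}, not just meet new edges along its boundary; such disks still match disks for $G'$ because the definition of compatible immersions allows interior marked points whose exceptional circles wrap once around the boundary circle over $q$, with the same area and the same disk sequence — this is precisely why the structure maps restrict on the nose, and it deserves a sentence.
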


The proof of the proposition will be based on several lemmas.
Suppose first $G\subset S$ is an embedded graph, $p$ a vertex of $G$ on $\partial S$, and $X$ an edge of $G$ which starts and ends at $p$ and so that both half-edges of $X$ are attached to $p$ before any other ones in their counter-clockwise order, and so that $X$ bounds a 1-gon of area $A$. (See Figure~\ref{fig_1gon}.)

\begin{figure}
\centering
\includegraphics[scale=1]{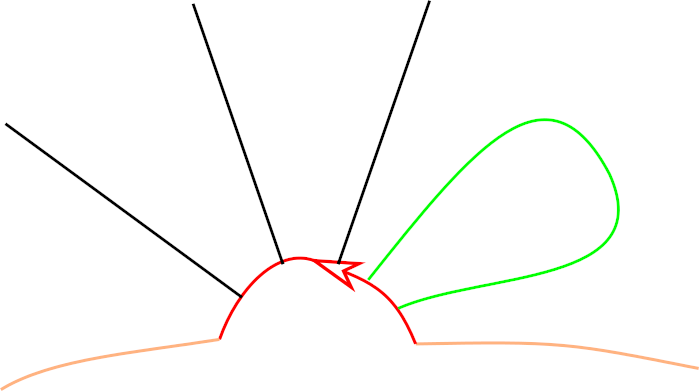}
\caption{Edge bounding a 1-gon in $S$, lifted to $\widehat{S}$.}
\label{fig_1gon}
\end{figure}

\begin{lemma}[1-gon removal]
Suppose $G\subset S$ and $X$ is a loop of $G$ as above.
Let $G':=G\setminus X$, i.e. the graph with $X$ removed, then the inclusion functor $\mc F_{G'}(S;\mc E)\to \mc F_G(S;\mc E)$ induces a quasi-equivalence 
\[
\mc F_{G'}(S;\mc E)_{\mathbf K} \to \mc F_G(S;\mc E)_{\mathbf K}
\]
of curved $A_\infty$-categories over $\mathbf K$.
\end{lemma}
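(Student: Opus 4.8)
The plan is to prove the statement directly over $\mathbf K$. Unlike the quasi-equivalences of the preceding subsections, the analogous statement is false on the central fibre: the disk correction coming from the $1$-gon is divisible by $t^A$ and hence vanishes at $t=0$, so Proposition~\ref{prop_quasiequiv} is not available and inverting $t$ is genuinely essential. The functor in question is the inclusion of a full subcategory, so the work splits into checking quasi-fully-faithfulness and essential surjectivity on homotopy categories.

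Quasi-fully-faithfulness I would obtain for free. Since the two half-edges of $X$ are attached at $p$ before all others in their counter-clockwise order, deleting $X$ does not disturb the order of the remaining half-edges at $p$; consequently the boundary paths between arcs other than $\widehat X$, and the immersed disks among them, are literally the same for $G$ and $G'$. Thus $\mc A_{G'}(S)\subseteq\mc A_G(S)$ is a strict full $A_\infty$-subcategory over $\mathbf R$, whence so is $\mc E\otimes\mc A_{G'}(S)\subseteq\mc E\otimes\mc A_G(S)$ and the induced inclusion of twisted complexes. Full faithfulness is preserved by $-\otimes_{\mathbf R}\mathbf K$.

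The heart of the argument is the observation that $X$ becomes a zero object after base change to $\mathbf K$. Because $p\in\partial S$, both curvature terms $c_{p,X}$ vanish, so $\mk m_0(X)=0$, $X$ is flat, and $\mk m_1^2=0$ on $\Hom(X,X)$. Let $a\in\Hom^{-1}(X,X)$ be the primitive boundary path winding along the exceptional boundary over $p$ from one endpoint of $\widehat X$ to the other; the degree follows from $\Vert a\Vert=-2$ for a length-one disk sequence. The $1$-gon bounded by $X$ is a compatible immersion whose disk sequence is exactly $a$, and by the construction of $\mc A_G(S)$ it contributes the term $t^A1_X$ to $\mk m_1(a)$. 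Since the two endpoints of $\widehat X$ are adjacent on the exceptional interval over the boundary vertex $p$ and no further windings are possible, $\Hom(X,X)$ is spanned over $\mathbf R$ by $1_X$ and $a$, with $\mk m_1(a)=\pm t^A1_X$. Hence $(\Hom(X,X)_{\mathbf K},\mk m_1)$ is acyclic and $h:=\pm t^{-A}a$ is a contracting homotopy with $\mk m_1(h)=1_X$. Tensoring with $\mc E$, the element $1_{E\otimes X}$ is null-homotopic for every $E\in\mc E$, so each $E\otimes X$ is a zero object in $[\mc F_G(S;\mc E)_{\mathbf K}]$.

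Essential surjectivity then follows by cancelling the $X$-summands. Any object of $\mc F_G(S;\mc E)_{\mathbf K}$ is a twisted complex with underlying module $\bigoplus_i E_i\otimes Z_i$, each $Z_i$ an edge of $G$. Using the explicit homotopy $h$ on every summand with $Z_i=X$, a homological-perturbation (Gaussian elimination) argument removes all such summands and yields an isomorphic twisted complex supported only on edges of $G'$, i.e. an object in the image of $\mc F_{G'}(S;\mc E)_{\mathbf K}$; equivalently, since the $E\otimes X$ are zero, discarding them from the iterated cones that assemble a twisted complex does not change its isomorphism class. Together with full faithfulness this gives the asserted quasi-equivalence. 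I expect the main obstacle to be making this cancellation rigorous uniformly across all twisted complexes — the bookkeeping of the Gaussian elimination over $\mathbf K$, and in particular the fact that the homotopy $h$ carries a pole $t^{-A}$, so that the simplification genuinely lives over the Novikov field rather than over $\mathbf R$.
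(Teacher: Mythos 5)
Your first two steps coincide with the paper's: full faithfulness holds already over $\mathbf R$ because the inclusion is (quasi-)fully faithful on $\Hom$, and the heart of the matter is the computation that the $1$-gon contributes $\mk m_1(\eta)=t^A 1_X$, so that $t^{-A}\eta$ contracts $E\otimes X$ over $\mathbf K$. That part is fine. The gap is in how you cancel the $X$-summands. The paper's $\mathrm{Tw}(\mc C)=\widetilde{\mathrm{add}(\mc C)}$ allows an \emph{arbitrary} bounding cochain $\delta\in\Hom^1_{>0}$, not an upper-triangular one, so a twisted complex is not a priori an iterated cone and your parenthetical ``discard the zero subquotients from the iterated cones'' has no filtration to work with. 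Your main route, homological perturbation with the homotopy $h=\pm t^{-A}\eta$, runs into exactly the convergence problem you flag, and it is not a bookkeeping issue but a genuine one: by condition (2) on compatible weights applied to the $1$-gon, $w(\eta)\leq A$, so $h$ has filtration weight $w(\eta)-A\leq 0$, while the components of $\delta$ are merely of weight $>0$ and may have weight smaller than $A-w(\eta)$. The perturbation series $\sum_k (h\delta)^k$ then has terms of weight tending to $-\infty$ and need not converge in the complete filtered $\Hom$-spaces, even after inverting $t$. As written, the essential-surjectivity step is therefore not established.

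The missing idea — and the paper's actual proof — is a structural vanishing you never invoke, which comes from the hypothesis that both half-edges of $X$ are attached at $p$ \emph{before all other half-edges} in counter-clockwise order: since boundary paths run along the exceptional interval over $p\in\partial S$ in a fixed direction, there are no boundary paths from any edge $Y$ of $G'$ into $X$, i.e. $\Hom(Y,X)=0$. Hence for \emph{any} object $((E\otimes X)\oplus B,\delta)$ the component of $\delta$ mapping the $B$-part into the $X$-part vanishes automatically, $\delta$ is block lower-triangular, and the object sits in an exact triangle
\[
(B,\delta_B)\longrightarrow \bigl((E\otimes X)\oplus B,\delta\bigr)\longrightarrow (E\otimes X,\delta_X)\longrightarrow (B,\delta_B)[1].
\]
Your contraction shows the third term is a zero object of $\mc F_G(S;\mc E)_{\mathbf K}$, so the total object is isomorphic to $(B,\delta_B)$, which lies in the image of $\mc F_{G'}(S;\mc E)_{\mathbf K}$ — no perturbation series, and in particular no divergent composition through $h$, ever arises. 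So your proposal contains the correct local computation but lacks the one geometric observation that makes the cancellation legitimate; with $\Hom(Y,X)=0$ added, your argument collapses to the paper's.
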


\begin{proof}
The functor induces a quasi-equivalence on $\Hom$ already over $R$. It remain to show essential surjectivity. 

An object of $V(\mc F_G(S;\mc E))$ is a twisted complex of the form $((E\otimes X)\oplus B,\delta)$ for some $B\in V(\mc F_G(S;\mc E))$ and bounding cochain $\delta$.
By the condition on $X$, $\Hom(Y,X)=0$ for any edge $Y$ of $G'$, so $\delta$ is of the form
\[
\delta=\begin{pmatrix} \delta_{X} & 0 \\ \delta_{BX} & \delta_B \end{pmatrix}
\]
with respect to the direct sum decomposition, thus the extension of $(E\otimes X,\delta_X)$ by $(B,\delta_B)$.
We claim that $(E\otimes X,\delta_X)$ is a zero object in the homotopy category of $\mc F_G(S;\mc E)_{\mathbf K}$, from which the claim of the lemma follows.
To see this, let $\eta$ be the boundary path which starts and ends at the two endpoints of $X$, respectively. The 1-gon bounded by $X$ gives $\mk m_1(\eta)=t^{A}1_X$ thus
\[
\mk m_1(1_E\otimes t^{-A}\eta)=1_E\otimes 1_X
\]
thus $\mathrm{Ext}^{\bullet}(E\otimes X,E\otimes X)=0$.
\end{proof}

\begin{lemma}[edge removal]
\label{edgeremoval}
Let $G\subset S$ be an embedded graph so that all regions cut out by $G$ are simply connected.
Suppose $X$ is an edge of $G$ which bounds both an exterior region $R_{\mathrm{ext}}$ and an interior region $R_{\mathrm{int}}$, and let $G'=G\setminus X$.
Then then the inclusion functor $\mc F_{G'}(S;\mc E)\to \mc F_G(S;\mc E)$ induces a quasi-equivalence 
\[
\mc F_{G'}(S;\mc E)_{\mathbf K} \to \mc F_G(S;\mc E)_{\mathbf K}
\]
of curved $A_\infty$-categories over $\mathbf K$.
\end{lemma}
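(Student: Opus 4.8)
The plan is to mirror the proof of the $1$-gon removal lemma: first dispose of full faithfulness for formal reasons, then establish essential surjectivity over $\mathbf{K}$ by locating every object supported on $X$ inside the image. Since $G'=G\setminus X$ is a subgraph, the functor $\mc F_{G'}(S;\mc E)\to\mc F_G(S;\mc E)$ is the inclusion of the full subcategory of those twisted complexes whose underlying object carries no summand on the edge $X$. It is therefore fully faithful already over $\mathbf{R}$, and in particular quasi fully-faithful after base change to $\mathbf{K}$. For essential surjectivity I would use that the homotopy category of $\mc F_G(S;\mc E)_{\mathbf{K}}$ is generated, as a triangulated category, by the objects $E\otimes Y$ with $Y$ an edge of $G$ and $E\in\mc E$, since by construction every object is a twisted complex of such. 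The image of the pre-triangulated category $\mc F_{G'}(S;\mc E)_{\mathbf{K}}$ is a triangulated subcategory which already contains $E\otimes Y$ for every edge $Y\neq X$, so the entire statement reduces to the single claim that $E\otimes X$ lies in this subcategory for all $E\in\mc E$.

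To prove that claim I would exploit the disk correction attached to the interior region $R_{\mathrm{int}}$. Listing the edges of $R_{\mathrm{int}}$ in cyclic order as $X,P_1,\ldots,P_s$, all $P_i$ being edges of $G'$, the immersed polygon contributes a structure map $\mk m_{s+1}$ whose value on the corresponding sequence of primitive boundary paths equals $t^{A_{\mathrm{int}}}1_X$, where $A_{\mathrm{int}}>0$ is the area of $R_{\mathrm{int}}$. The cleanest instance is the bigon $s=1$: the two primitive boundary paths $u\colon X\to P_1$ and $v\colon P_1\to X$ sitting at the two endpoints of $X$ then satisfy $\mk m_2(v,u)=t^{A_{\mathrm{int}}}1_X$ and $\mk m_2(u,v)=\pm t^{A_{\mathrm{int}}}1_{P_1}$, so over $\mathbf{K}$ the morphism $t^{-A_{\mathrm{int}}}v$ is a two-sided homotopy inverse of $u$ and $E\otimes X\cong E\otimes P_1$ up to shift. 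For general $s$ the same boundary paths, together with their $t$-weighted compositions, assemble into an $s$-step Postnikov system exhibiting $E\otimes X$ as an iterated cone of the $E\otimes P_i$; the decisive feature is that each connecting map carries a positive power of $t$ and hence becomes invertible exactly upon inverting $t$, i.e. over $\mathbf{K}$. This is precisely the point at which passing from $\mathbf{R}$ to $\mathbf{K}$ is essential, and it is the analogue of the identity $\mk m_1(t^{-A}\eta)=1_X$ used for the $1$-gon. The exterior region $R_{\mathrm{ext}}$ enters only to guarantee that $X$ separates two distinct simply-connected regions, so that $R_{\mathrm{int}}\cup_X R_{\mathrm{ext}}$ is again a simply-connected region after removal and the hypotheses are preserved when the argument is iterated in the proof of Proposition~\ref{prop_skelgen}.

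Granting the claim, the image of $\mc F_{G'}(S;\mc E)_{\mathbf{K}}$ contains all generators, hence is essentially surjective, and combined with the quasi fully-faithful property this yields the asserted quasi-equivalence. The step I expect to be the main obstacle is the explicit construction for $s\geq 2$: one must choose gradings on the edges of $R_{\mathrm{int}}$, write down the boundary paths with their correct degrees, and verify that the $t$-weighted disk corrections organize into a genuine twisted-complex differential whose associated Postnikov tower maps to $E\otimes X$ by a morphism having a unit multiple of $t^{A_{\mathrm{int}}}$ as a component. The requisite sign bookkeeping is of the same nature as in the verification of the $A_\infty$-relations for $\mc A_G(S)$. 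A cleaner route that avoids the higher polygons altogether is to first reduce to the bigon case using the subdivision and edge-contraction equivalences of Subsections~\ref{subsec_refine} and~\ref{subsec_edge}, contracting interior vertices of $R_{\mathrm{int}}$ to collapse it either to a bigon or, when both endpoints of $X$ are interior, to a monogon where the $1$-gon removal lemma applies directly; one then only needs the bigon computation above.
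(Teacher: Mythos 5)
Your reduction to the single claim about $E\otimes X$, and the computation you build on it, treat the edge objects as if they lived in a triangulated category, and this is where the argument breaks. Whenever an endpoint of an edge lies in the interior of $S$, the object $E\otimes X$ (and likewise each $E\otimes P_i$) has nonzero curvature $\mk m_0$, so it is not an object of the homotopy category at all: ``two-sided homotopy inverse'', ``iterated cone'', ``Postnikov system'' and ``triangulated subcategory generated by the $E\otimes Y$'' are undefined for such curved objects, and already your bigon computation only makes literal sense when both vertices of the bigon lie on $\partial S$. Even setting curvature aside, essential surjectivity does not reduce to hitting generators: a flat object of $\mathrm{Tw}(\mc E\otimes\mc A_G(S))$ is a direct sum equipped with a Maurer--Cartan element $\delta$ that is \emph{not} one-sided, and in the present situation $\Hom(Y,X)\neq 0$ for edges $Y$ of $G'$, so $\delta$ generally has components mapping into the $X$-summand as well as out of it. Such an object is not an iterated extension of its diagonal pieces, and one cannot simply substitute a $G'$-built replacement for $E\otimes X$ and transport $\delta$. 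This is precisely the role of the positional hypothesis in the 1-gon removal lemma (both half-edges of $X$ attached at a vertex of $\partial S$ before any other edges in counter-clockwise order): it forces $\Hom(Y,X)=0$, makes $\delta$ triangular, exhibits every flat object as an honest extension of $(E\otimes X,\delta_X)$ by an object of $\mc F_{G'}(S;\mc E)$, and only then does the $t^{-A}\eta$ trick kill the $X$-part over $\mathbf K$.

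Your closing ``cleaner route'' is in fact the paper's actual proof, but only half of it. The paper reduces to 1-gon removal in two stages: first $R_{\mathrm{int}}$ is collapsed (boundary vertices of $R_{\mathrm{int}}$ are pushed into the interior with a new edge back to $\partial S$, loops are removed by parallel edges, and all edges of $\partial R_{\mathrm{int}}$ except $X$ are contracted), so that $X$ bounds a 1-gon; second --- and this is what your sketch omits --- the same three-step procedure is applied along $R_{\mathrm{ext}}$ to the vertices and edges between $\partial S$ and $X$, so that $X$ follows immediately after $\partial S$ in the counter-clockwise boundary of $R_{\mathrm{ext}}$. Without this second stage the hypotheses of the 1-gon lemma are simply not satisfied: the 1-gon may be based at an interior vertex, or other edges may be attached before $X$, and then $\Hom(Y,X)\neq 0$ and the triangularity argument fails. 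Collapsing to a bigon does not help, since the bigon case is not covered by the 1-gon lemma and your proposed bigon argument is the curved computation criticized above. So $R_{\mathrm{ext}}$ does more than guarantee that the complement stays simply connected when the lemma is iterated: it is the corridor through which $X$ is moved next to $\partial S$, and that repositioning is an essential, not cosmetic, part of the proof.
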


\begin{proof}
The idea is to reduce this to 1-gon removal by a series of edge contractions/expansions.
First, the graph is modified so that $R_{\mathrm{int}}$ is bounded by $X$ only, i.e. so that $X$ bounds a 1-gon. 
This is achieved by:
\begin{enumerate}
\item 
Making all vertices on the boundary of the interior region interior vertices by pushing-off any vertices on $\partial S$ while adding an edge connecting them back to the boundary.
\item
Removing any loops along the boundary of $R_{\mathrm{int}}$ by creating a parallel edge.
\item
Contracting all edges except $X$, which is possible by the previous two steps.
\end{enumerate} 
In the second stage of the procedure, the graph is modified so that $X$ follows immediately after $\partial S$ as one goes around the boundary of $R_{\mathrm{ext}}$ in counter-clockwise order.
This is achieved by the same three-step strategy as above but applied to those vertices and edges (if any) which come after $\partial S$ but before $X$ as one travels around the boundary of $R_{\mathrm{ext}}$ in counter-clockwise order.
\end{proof}


\section{Spectral networks and stability conditions}
\label{sec_general}

This Section contains definitions, results, and conjectures about the relation between spectral networks and stability conditions in the context of general base surfaces and fiber categories, while in later sections both will be significantly restricted.
Section~\ref{subsec_stab} recalls the definition and basic properties of stability conditions, while also fixing some conventions which differ from those in Bridgeland's original paper~\cite{bridgeland07}.
In Section~\ref{subsec_spectral} we define spectral networks, in our sense, in some generality. This relies on the work in the previous two sections of the paper.
Section~\ref{subsec_mainconj} contains the main conjecture about spectral networks and stability conditions at the level of generality at which we can make it precise.
Finally, Section~\ref{subsec_uniqueness} states and proves a general uniqueness result for spectral network representatives.

\subsection{Stability conditions}
\label{subsec_stab}

Stability conditions on triangulated categories were introduced in seminal work of Bridgeland~\cite{bridgeland07}, based on ideas from algebraic geometry and D-branes in string theory.
We recall the basic definitions.

Let $\mc C$ be a triangulated category (essentially small) and $\mathrm{cl}:K_0(\mc C)\to\Gamma$ a homomorphism to a  finite rank abelian group $\Gamma$. 
(In the examples considered in this paper, $K_0(\mc C)$ is finitely generated so we can take $\mathrm{cl}$ to be the identity map.)
A \textbf{stability condition} for $(\mc C,\Gamma,\mathrm{cl})$ is given by 
\begin{enumerate}
\item the \textit{central charge}: an additive map $Z:\Gamma\to\CC$, and
\item full additive subcategories $\mc C_\phi\subset\mc C$, $\phi\in\RR$, the \textit{semistable objects of phase $\phi$}
\end{enumerate}
such that
\begin{itemize}
\item $\mc C_{\phi+1}=\mc C_\phi[1]$,
\item $\Hom(E_2,E_1)=0$ for $E_i\in\mc C_{\phi_i}$, $\phi_1<\phi_2$.
\item For any $E\in\mc C$ there is a \textit{Harder--Narasimhan filtration} 
\[
\begin{tikzcd}[column sep=tiny]
0=E_0 \arrow{rr} & & E_1\arrow{rr}\arrow{dl} & & E_2\arrow{rr}\arrow{dl} & &\cdots \arrow{rr} &  & E_{n-1} \arrow{rr} & & E_n\cong E \arrow{dl} \\
& A_1 \arrow[dashed]{ul} & & A_2 \arrow[dashed]{ul} & & & &  & & A_n \arrow[dashed]{ul}
\end{tikzcd}
\]
with \textit{semistable factors} $0\neq A_i\in\mc C_{\phi_i}$, $\phi_1>\phi_2>\ldots>\phi_n$, and the triangles are exact.
\item $Z(E):=Z(\mathrm{cl}(E))\in\RR_{>0}e^{\pi i\phi}$ for $E\in\mc C_{\phi}$, $E\neq 0$.
\item The \textit{support property}: There is a norm $\|.\|$ on $\Gamma\otimes_{\ZZ}\RR$ and $C>0$ such that
\[
\|\gamma\|\leq C|Z(\gamma)|
\]
for any $\gamma\in\Gamma$ which is the class of a semistable object. 
\end{itemize}

\begin{remark}
In~\cite{bridgeland07} there is no $\Gamma$, and instead of the support property, which was introduced in~\cite{ks}, one requires a closely related finiteness condition.
\end{remark}

The stability condition is completely determined by $Z$ and the abelian subcategory $\mc C_{(0,1]}\subset \mc C$ of objects with semistable factors of phase contained in the interval $(0,1]$, which is the heart of a bounded t-structure.
This leads to an equivalent definition in terms of t-structures, see~\cite{bridgeland07} for details.

Let $\mathrm{Stab}(\mc C)=\mathrm{Stab}(\mc C,\Gamma,\mathrm{cl})$ be the set of stability condition for ($\mc C$, $\Gamma$, $\mathrm{cl}$). 
There is a natural (metric) topology on $\mathrm{Stab}(\mc C)$ which makes 
\[
\mathrm{Stab}(\mc C,\Gamma,\mathrm{cl})\to\Hom(\Gamma,\CC),\qquad \left((\mc C_\phi)_{\phi\in\RR},Z\right)\mapsto Z
\]
a local homeomorphism. 
Thus, $\mathrm{Stab}(\mc C)$ naturally has the structure of a complex manifold of dimension $\mathrm{rk}(\Gamma)$.

Suppose $(\mc C,\Gamma_{\mc C},\mathrm{cl}_{\mc C})$ and
$(\mc D,\Gamma_{\mc D},\mathrm{cl}_{\mc D})$ are triples as above.
If $F:\mc C\to \mc D$ is an equivalence of triangulated categories together with an isomorphism $f:\Gamma_{\mc C}\to \Gamma_{\mc D}$ compatible with the induced map $K_0(F):K_0(\mc C)\to K_0(\mc D)$, then there is an induced map 
\[
F_*:\mathrm{Stab}(\mc C,\Gamma_{\mc C},\mathrm{cl}_{\mc C})\longrightarrow\mathrm{Stab}(\mc D,\Gamma_{\mc D},\mathrm{cl}_{\mc D})
\]
sending $(Z,(\mc C_\phi)_{\phi\in\RR})$ to $(Z\circ f^{-1},\left(F(\mc C_\phi)_{\phi\in\RR})\right)$.
In particular, if $\mathrm{cl}:K_0(\mc C)\to \Gamma$ is constructed naturally from $\mc C$, as is usually the case in practice, then $\mathrm{Aut}(\mc C)$ acts on $\mathrm{Stab}(\mc C)$ on the left.

The universal cover of the group $\mathrm{GL}^+(2,\RR)$ also acts on $\mathrm{Stab}(\mc C)$, however we will mostly be interested in the action of the subgroup $\CC$ which is given by
\[
z\cdot \left(Z,(\mc C_\phi)_{\phi\in\RR}\right):=\left(e^{\pi i z}Z,(\mc C_{\phi-\mathrm{Re}(z)})_{\phi\in\RR}\right)
\]
so, in particular, $1\in\CC$ acts like the shift functor $[-1]$.

\subsection{Spectral networks}
\label{subsec_spectral}

In Section~\ref{sec_aside} we defined the Fukaya category with coefficients in a fixed triangulated DG-category $\mc E$. 
More generally, one expects that as coefficients of the Fukaya category one can take any \textit{(perverse) schober}~\cite{kapranov_schechtman}.
The full theory is still under development~\cite{bks, donovan19,harder_katzarkov,kss_infrared,christ,dkss21}. 
For much of this section we restrict to the following special class of schobers.

\begin{df}
Let $S$ be a smooth, oriented surface. 
A \textbf{schober without singular fibers} on $S$ is a local system $\mc E$ of triangulated DG-categories on the projectivized tangent bundle $\PP TS$ with monodromy $[1]$ along any fiber (for counterclockwise rotation).
\end{df}

If $\PP TS$ admits a section $\nu$ (e.g. coming from the horizontal foliation of a quadratic differential) then pullback along $\nu:S\to \PP TS$ gives a correspondence, depending on $\nu$, between schobers without singular fibers on $S$ and local systems of triangulated DG-categories on $S$.
In particular, if $\mc E$ is a triangulated DG-category, thought of as a constant local system on $S$, and $\nu$ a section of $\PP TS$ (which is the setup in Section~\ref{sec_aside}), then this data defines a schober without singular fibers on $S$.

\begin{remark}
More general schobers are allowed to have singularities on a discrete subset $D\subset S$, which means, roughly, that the local system is defined on $\PP T(S\setminus D)$ and there is the data of a spherical adjunction (see~\cite{spherical}) for each $p\in D$ between the ``nearby fiber'' and given ``vanishing cycles'' categories.
As we do not know the precise formulation of the main conjecture about stability conditions and spectral networks in this case, we restrict to the case without singular fibers.
\end{remark}

We expect that the definition of the Fukaya category with coefficients, $\mc F(S,M;\mc E)$, can be extended to the case where $\mc E$ is a schober with or without singular fibers.
The basic idea, at least in the case without singular fibers, is that objects are graphs $G$ with additional algebraic data as before, but instead of having an object in a fixed category $\mc E$ for each edge of $G$, one has, for each $p\in G$ which is not a vertex, an object $E_p$ in $\mc E_{p,l}$, where $l$ is the tangent line to $G$ at $p$, so that $E_p$ is essentially locally constant in $p$.

Given a complex structure on $S$, compatible with its orientation, let $T^{1,0}S$ be the holomorphic tangent bundle, $(T^{1,0}S)^{\otimes 2}$ its square as a holomorphic line bundle, and $0_S$ the zero section.
The quotient of $(T^{1,0}S)^{\otimes 2}\setminus 0_S$ by its $\RR_{>0}$ scaling action is naturally identified with $\PP TS$, and the quotient map is a fiberwise homotopy equivalence.
Thus, we may consider schobers without singular fibers, $\mc E$, as local systems on this $\CC^\times$-bundle instead.
The following definition is made from this point of view.

\begin{df}
A \textbf{holomorphic family of stability conditions} on $\mc E$ is choice of stability condition $\sigma_{p,q}$ on each fiber $\mc E_{p,q}$ such that
\begin{enumerate}
\item
$\sigma_{p,q}$ depends holomorphically on $(p,q)\in (T^{1,0}S)^{\otimes 2}\setminus 0_S$ (which makes sense since $\mc E_{p,q}$ is locally constant and $\mathrm{Stab}(\mc E_{p,q})$ is a complex manifold), and
\item
$\sigma_{p,q}$ depends equivariantly on $q$:
\begin{equation}
\label{stabfam_equiv}
\sigma_{p,e^{2\pi iz}q}=z\tau_*\sigma_{p,q}
\end{equation}
where $\tau:\mc E_{p,q}\to \mc E_{p,e^{2\pi iz}q}$ is the equivalence given by parallel transport along the path $e^{2\pi i tz}$, $t\in [0,1]$, and $z\in\CC$ acts on $\mathrm{Stab}$ in the usual way (so $1\in \CC$ acts like pushforward along the functor $[-1]$).
\end{enumerate}
\end{df}

As an example, consider the special case where the fibers $\mc E_{p,q}$ are (non-canonically) equivalent to $\mathrm{Perf}(\mathbf k)$, the category of finite-dimensional cochain complexes over $\mathbf k$.
Each $\mc E_{p,q}$ then has a unique, up to shift and isomorphism, indecomposable object $E_{p,q}$, thus $W(p,q):=(Z(E_{p,q}))^2$ is well-defined and satisfies $W(p,\lambda q)=\lambda W(p,q)$ for $\lambda\in \CC^\times$, thus extends (setting $W(p,0)=0$) to a non-vanishing holomorphic quadratic differential on $S$.

We now come to the definition of a spectral network in the case of schobers without singular fibers together with a holomorphic family of stability condition.
While we haven not precisely defined objects of $\mc F(S,M;\mc E)$ in the case where $\mc E$ has monodromy, what matters in the definition of a spectral network is only the underlying graph $G$ and the objects $E_p\in \mc E_{p,l}$ for $p\in G$ not a vertex.

\begin{df}
An object $E\in V(\mc F(S,M;\mc E))$ supported on a graph $G\subset S$ is a \textbf{spectral network} of phase $\phi\in\RR$ if $E_p$ is semistable of phase $\phi$ for any $p\in G$ which is not a vertex.
In this case, $E$ is said to be a \textbf{spectral network representative} of any object isomorphic to its image in $V\left(\mc F(S,M;\mc E)_{\mathbf K}\right)$.
\end{df}

Furthermore we define the central charge and mass for arbitrary objects, $X$, with underlying graph $G$ and objects $E_p$.
Note first that $Z(E_p)$ restricts to a $\CC$-valued 1-density on each edge of the graph.
Let
\begin{equation}
\label{graph_Z}
Z(X):=\sum_{\text{edges } e}\int_e Z(E_p)
\end{equation}
and
\begin{equation}
m(X):=\sum_{\text{edges } e}\int_e |Z(E_p)|
\end{equation}
where the sums are over the set of edges of $G$ and the integrals are along the interior of the given edge.
One has the \textit{BPS inequality} $|Z(X)|\leq m(X)$ which is saturated for spectral networks.

Let us consider a special case where both the local system of categories and the family of stability conditions are essentially constant.
We start with a triangulated DG-category $\mc E$ with a stability condition $\sigma$ and a nowhere vanishing holomorphic quadratic differential $\nu$ on $S$.
The pointwise dual of $\nu$ is a section $\nu^\vee$ of $(T^{1,0}S)^{\otimes 2}$ along which the local system of categories is the trivial one.
The family of stability conditions is characterized by $\sigma_{p,\nu^{\vee}(p)}=\sigma$ and $\CC$-equivariance~\eqref{stabfam_equiv}. 
We call this a \textbf{constant family of stability conditions} on $S$.
An object in the fiber category $\mc E_{p,q}$ of the local system can be written as $(E,\phi)$, where $E\in \mc E$ and $\phi\in\RR$ is such that $e^{2\pi i\phi}\nu^\vee=q$, and $(E[1],\phi)=(E,\phi+1)$.
The phase of an object in $\mc F(S,M;\mc E)$ at a point $p\in S$ in the supporting graph can then be written as a sum of contributions from the \textit{base} $(S,\nu)$ and the \textit{fiber} $(E,\sigma)$:
\begin{equation}
\label{sum_of_phases}
\phi_{\mathrm{total}}(E,\phi_{\mathrm{base}})=\phi_{\mathrm{base}}+\phi_{\sigma}(E)
\end{equation}
In particular, for a spectral network the edges of the graph must be straight lines with respect to the flat metric $|\nu|$ and the objects on the edges semistable such that the total phase as above is constant across the graph.

\begin{prop}
Assume the family of stability conditions is constant as above. Then if $X_1,X_2\in\mc F(S,M;\mc E)$ are spectral networks of phases $\phi_1$ and $\phi_2$, respectively, with $\phi_1<\phi_2$, then $\mathrm{Ext}^{\leq 0}(X_2,X_1)=0$
\end{prop}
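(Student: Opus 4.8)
The plan is to compute the morphism complex $\Hom(X_2,X_1)$ on a single graph carrying both networks and to show that every generator already sits in strictly positive degree. First I would pass to the union graph $G:=G_1\cup G_2\in\GG$, which refines the supports $G_1,G_2$ of $X_1,X_2$. By the refinement invariance of Subsection~\ref{subsec_refine}, Proposition~\ref{prop_edgecontr}, and the structure functors $F_{G,G_i}\colon\mc F_{G_i}(S;\mc E)\to\mc F_G(S;\mc E)$ of Subsection~\ref{subsec_generalfiber}, the objects $X_1,X_2$ are represented by twisted complexes in $\mc F_G(S;\mc E)=\mathrm{Tw}(\mc E\otimes\mc A_G(S))$ and, by definition of the colimit category, $\mathrm{Ext}(X_2,X_1)$ is computed there. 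Because morphisms in $\mc E\otimes\mc A_G(S)$ are spanned by elements $\psi\otimes a$, where $a$ is a boundary path (or a unit) from an edge carrying an object $E_2$ in the support of $X_2$ to an edge carrying an object $E_1$ in the support of $X_1$ and $\psi\in\Hom_{\mc E}(E_2,E_1)$, and since the twisted-complex differential acts entrywise, it suffices to bound the degree $|\psi\otimes a|=|\psi|+|a|$ of each such generator from below.

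For the base factor, the grading conventions of Subsection~\ref{subsec_grading} give that the degree of any boundary path $a$ from the $X_2$-edge to the $X_1$-edge is bounded below by the intersection index at the connecting point: $|a|\geq\lceil\alpha_2-\alpha_1\rceil\geq\alpha_2-\alpha_1$, where $\alpha_2,\alpha_1$ are the base gradings of the two edges (for a shared edge $\alpha_2=\alpha_1$ and the unit realizes degree $0$; longer paths only wind more and have strictly larger degree, as in the surface computation of~\cite{hkk}). For the fiber factor, since $E_2,E_1$ are semistable of phases $\phi_\sigma(E_2),\phi_\sigma(E_1)$ for the stability condition $\sigma$ on $\mc E$, the Hom-vanishing axiom applied to $E_2$ and $E_1[|\psi|]$ (whose phase is $\phi_\sigma(E_1)+|\psi|$) forces $\psi=0$ unless $|\psi|\geq\phi_\sigma(E_2)-\phi_\sigma(E_1)$.

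Finally I would combine the two estimates using the defining property of a spectral network, namely that the total phase $\phi_{\mathrm{total}}=\phi_{\mathrm{base}}+\phi_\sigma$ is constant across each network. Thus $\alpha_2+\phi_\sigma(E_2)=\phi_2$ and $\alpha_1+\phi_\sigma(E_1)=\phi_1$ at the connecting point, so for any nonzero generator
\[
|\psi|+|a|\;\geq\;\bigl(\phi_\sigma(E_2)-\phi_\sigma(E_1)\bigr)+\bigl(\alpha_2-\alpha_1\bigr)\;=\;\phi_2-\phi_1\;>\;0.
\]
As degrees are integers this yields $|\psi\otimes a|\geq 1$, so the complex $\Hom(X_2,X_1)$ vanishes identically in degrees $\leq 0$, whence $\mathrm{Ext}^{\leq 0}(X_2,X_1)=0$; this holds before and after base change to $\mathbf K$. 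The main obstacle is the base estimate $|a|\geq\alpha_2-\alpha_1$: one must verify it uniformly over all boundary paths — the primitive paths at a transverse crossing, paths routed through shared vertices, and paths winding around interior vertices where each winding is a copy of $\mk m_0$ contributing degree $2$ — and align the sign and orientation conventions so that the source/target roles of $X_2$ and $X_1$ correctly match the intersection index. Everything else (the reduction to the common graph, the entrywise nature of the bound, and the fiber estimate) is then routine.
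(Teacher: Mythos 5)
Your proof is correct, and its mathematical core --- bounding the degree of each generator $\psi\otimes a$ by combining the fiber estimate $|\psi|\geq\phi_\sigma(E_2)-\phi_\sigma(E_1)$ (Hom-vanishing between semistables) with the base estimate $|a|\geq\alpha_2-\alpha_1$, then using constancy of the total phase to get $|\psi|+|a|\geq\phi_2-\phi_1>0$ --- is exactly the paper's vertex computation. Even the point you flag as the main obstacle is handled there in the way you anticipate: the paper shifts $\alpha_1$ and $E_1$ in opposite directions so that the shortest boundary path has $|a|=0$, reads off $\alpha_1-\alpha_2\geq 0$ from the definition of the degree (which is your inequality after undoing the shift), and asserts that longer and winding paths only increase the degree. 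Where you genuinely diverge is in the global packaging. The paper first reduces to the central fiber $\mc F(S,M;\mc E)_{\mathbf k}$, on the grounds that Ext-ranks can only decrease when the $t^{>0}$ disk corrections are turned on, and then localizes at a single vertex via the sheaf property of $V\left(\mc F(S,M;\mc E)_{\mathbf k}\right)$ --- with an explicit caveat that one must prove $\mathrm{Ext}^{\leq 0}=0$ rather than just $\mathrm{Ext}^{0}=0$, since gluing can shift extension classes up in degree. You instead observe that the underlying graded module of the Hom complex on the union graph is concentrated in degrees $\geq 1$, so the vanishing holds no matter what the Maurer--Cartan-twisted, disk-corrected differential does; this makes both of the paper's reductions unnecessary, works directly over $\mathbf R$, and survives base change to $\mathbf K$ for free. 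Note also that since boundary paths only exist between edges meeting at a common vertex, your global complex decomposes into vertex-local pieces automatically, so nothing is lost relative to the paper's localization. What the paper's route buys is a conceptual sheaf-theoretic framing reused elsewhere in the text; what yours buys is the elimination of the semicontinuity and sheaf-property steps, at the cost that the uniform base estimate over all boundary paths (transverse crossings, shared vertices, windings contributing $+2$ per loop) must be checked by hand --- which is precisely the point the paper itself only sketches.
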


We expect the statement to be true in general, but restrict to the case of constant family of stability conditions for simplicity. 

\begin{proof}
It suffices to show that $\mathrm{Ext}^{\leq 0}(X_2,X_1)=0$ in $\mc F(S,M;\mc E)_{\mathbf k}$, since the rank can only decrease when adding higher order corrections.
Subdividing as necessary, we can assume that $X_1$ and $X_2$ are defined on subgraphs of the same graph in $S$.
Furthermore, given the sheaf property of $V\left(\mc F(S,M;\mc E)_{\mathbf k}\right)$, it suffices to check the statement locally, i.e. for a single vertex. (Here it is essential that the statement is of the form $\mathrm{Ext}^{\leq i}=0$ not just $\mathrm{Ext}^0=0$, since the gluing can shift non-trivial extension classes up.)

Thus, suppose $Y_i$ is an edge of $X_i$ with grading $\alpha_i$ and semistable object $E_i$ of phase $\beta_i$, $i=1,2$, so that $Y_1$ and $Y_2$ meet at a vertex $p$.
Let $a$ be the shortest (less than a full circle) boundary path from $Y_2$ to $Y_1$ in the real blow-up of $p$ or $1_{Y_1}$ if $Y_1=Y_2$. 
For longer boundary paths, the degree will only be larger, so it suffices to consider this case.

After possibly shifting $\alpha_1$ and $E_1$ in opposite directions, which does not change the underlying object $X_1$ up to isomorphism, we can assume that $|a|=0$.
From the definition of $|a|$ it follows that  $\alpha_1-\alpha_2\geq 0$.
By assumption, $X_2$ has strictly bigger phase than $X_1$, so $\beta_2-\beta_1>\alpha_1-\alpha_2\geq 0$, thus $\mathrm{Ext}^{\leq 0}_{\mc E}(E_2,E_1)=0$, which holds for any stability condition by definition.
This shows that any morphism of the form $f\otimes a:E_2\otimes Y_2\to E_1\otimes Y_1$, which is non-zero in $\mathrm{Ext}(X_2,X_1)$ is in strictly positive degree.
\end{proof}

One consequence of the above proposition is that a non-zero object in $V(\mc F(S,M;\mc E)_{\mathbf K})$ cannot have a pair of spectral network representatives with distinct phases.
Another application is the following.

\begin{prop}
Under the same assumptions as in the previous proposition, and also that there is a stability condition on $H^0(\mc F(S,M;\mc E)_{\mathbf K})$ so that every semistable object of phase $\phi$ has a spectral network representative of phase $\phi$, no unstable object can have a spectral network representative of any phase.
\end{prop}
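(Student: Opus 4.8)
The plan is to argue by contradiction, using only the Ext-vanishing established in the previous proposition together with the Harder--Narasimhan formalism of the ambient stability condition. Suppose $X\in H^0(\mc F(S,M;\mc E)_{\mathbf K})$ is unstable but nonetheless admits a spectral network representative $\tilde X\in V(\mc F(S,M;\mc E))$ of some phase $\phi$. Since $X$ is unstable, its HN filtration has factors $A_1,\ldots,A_n$ with $n\geq 2$ and phases $\psi_1>\psi_2>\cdots>\psi_n$, so in particular $\psi_1>\psi_n$. By the standing hypothesis, each semistable factor $A_i$ has a spectral network representative $Y_i\in V(\mc F(S,M;\mc E))$ of phase $\psi_i$. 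Note that I will \emph{not} need to identify $\phi$ with the phase of the central charge $Z(X)$; the argument uses only the comparison of $\phi$ with $\psi_1$ and $\psi_n$, which makes it pleasantly robust.

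The key elementary observation is that a single real number $\phi$ cannot satisfy both $\phi\geq\psi_1$ and $\phi\leq\psi_n$, since $\psi_1>\psi_n$; hence at least one of $\phi<\psi_1$ or $\phi>\psi_n$ must hold. I would then split into these two cases. If $\phi<\psi_1$, then $Y_1$ and $\tilde X$ are spectral networks of phases $\psi_1>\phi$, so the previous proposition (with the higher-phase object $Y_1$ in the first slot) gives $\mathrm{Ext}^{\leq 0}(Y_1,\tilde X)=0$, whence $\mathrm{Hom}(A_1,X)=0$. If instead $\phi\geq\psi_1$, then $\phi>\psi_n$, so $\tilde X$ and $Y_n$ are spectral networks of phases $\phi>\psi_n$, and the proposition gives $\mathrm{Ext}^{\leq 0}(\tilde X,Y_n)=0$, whence $\mathrm{Hom}(X,A_n)=0$. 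Two points must be verified here: that $Y_i$ and $\tilde X$ represent $A_i$ and $X$ in $V(\mc F(S,M;\mc E)_{\mathbf K})$, so these Hom-groups agree with the corresponding ones between $A_i$ and $X$; and that the vanishing of the previous proposition, proved over $\mathbf k$, persists after base change to $\mathbf K$, which holds because the rank of $\mathrm{Ext}$ can only drop under the higher-order corrections and under $-\otimes_{\mathbf R}\mathbf K$.

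It then remains to check that the maximal destabilizing subobject and the minimal destabilizing quotient genuinely carry nonzero morphisms, i.e. $\mathrm{Hom}(A_1,X)\neq 0$ and $\mathrm{Hom}(X,A_n)\neq 0$. This is the standard HN computation: applying $\mathrm{Hom}(A_1,-)$ to the triangle $E_1\to X\to X/E_1$ and using that $X/E_1$ has all HN phases strictly below $\psi_1$ (so both $\mathrm{Hom}(A_1,X/E_1)$ and $\mathrm{Hom}(A_1,(X/E_1)[-1])$ vanish) yields $\mathrm{Hom}(A_1,X)\cong\mathrm{Hom}(A_1,A_1)\neq 0$, and dually $\mathrm{Hom}(X,A_n)\cong\mathrm{Hom}(A_n,A_n)\neq 0$. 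In either case this contradicts the vanishing obtained in the previous paragraph, so no unstable $X$ can have a spectral network representative.

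The argument is almost entirely formal, and the main obstacle is bookkeeping rather than mathematics: confirming that the ``phase'' of a spectral network (the constant total phase in \eqref{sum_of_phases}) is normalized to match the phase of the ambient stability condition, exactly as built into the hypothesis, and that the inequality direction in the previous proposition places the higher-phase object in the first Ext-slot, so that the two cases feed into $\mathrm{Hom}(A_1,X)$ and $\mathrm{Hom}(X,A_n)$ respectively. Once these conventions are pinned down, the dichotomy $\phi<\psi_1$ versus $\phi>\psi_n$ closes the proof immediately, with no further geometric input from the surface required.
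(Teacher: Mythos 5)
Your proof is correct and takes essentially the same route as the paper's: a contradiction between the $\mathrm{Ext}^{\leq 0}$-vanishing of the previous proposition (valid after base change to $\mathbf K$, as you note) and nonzero morphisms guaranteed by the Harder--Narasimhan filtration, using spectral network representatives of the semistable factors. The only cosmetic difference is that the paper extracts the two incompatible inequalities $\phi\leq\psi$ and $\phi>\psi$ from the single last HN triangle $Y\to X\to A$ (both of whose maps are nonzero), whereas you case-split on $\phi$ against the extreme phases $\psi_1>\psi_n$ via the standard nonvanishing of $\mathrm{Hom}(A_1,X)$ and $\mathrm{Hom}(X,A_n)$.
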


This proposition is useful when checking the main conjecture (Conjecture~\ref{main_conj} below) in specific examples --- see later sections of this paper.
In those examples, we do not define the stability condition in terms of spectral networks, but instead construct it directly, guessing what the correct one is.
If we can then show that all semistable objects have spectral network representatives of the correct phase it follows, using the above proposition, that the semistable objects are exactly those which admit a spectral network representative.

\begin{proof}
Suppose that $X\in V(\mc F(S,M;\mc E))$ is a spectral network of phase $\phi$ so that its image in $V(\mc F(S,M;\mc E)_{\mathbf K})$, also denoted by $X$, is unstable.
By assumption, $X$ has a Harder--Narasimhan filtration, thus fits into an exact triangle (the last triangle in the tower of extensions)
\[
Y\xrightarrow{x} X\xrightarrow{a} A\xrightarrow{y} Y[1]
\]
where $A$ is non-zero semistable of some phase $\psi$ and $Y$ is an extension of non-zero semistable objects of phases $>\psi$.
Since the triangle is part of a Harder--Narasimhan filtration, neither $x$ nor $a$ can be zero.
By the previous proposition and the presumed existence of spectral network representatives, this implies both $\phi>\psi$ and $\phi\leq \psi$, a contradiction.
\end{proof}

\subsection{The main conjecture}
\label{subsec_mainconj}

The following conjecture is part of a larger program suggested by Kontsevich (unpublished), c.f. the remarks in the introduction to this paper.
As before, $S$ is an oriented surface with area form and marked points $M\subset S$, and $\mc E$ is a schober of triangulated categories over $\mathbf k$ on $S$ without singular fibers.
We assume that the Fukaya category of $S$ with coefficients in $\mc E$, $\mc F(S,M;\mc E)$ is defined.
Furthermore, let $\sigma$ be a holomorphic family of stability conditions on $\mc E$, as defined in the previous subsection.
One should require, in the case of non-compact $S$, some convexity/completeness at infinity.
We will not attempt to formulate precise conditions, but note that the product case of quadratic differential (meromorphic or exponential type as in~\cite{hkk}) and constant stability condition should be a basic example satisfying these conditions.

\begin{conj}
\label{main_conj}
There is a stability condition on the homotopy category of the Fukaya category with coefficients $\mc F(S,M;\mc E)_{\mathbf K}$ with central charge given by \eqref{graph_Z} and semistable objects of phase $\phi$ those objects which have a spectral network representative of phase $\phi$.
\end{conj}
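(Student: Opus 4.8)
The plan is to split the conjecture into the two implications it asserts and to build the stability condition itself as a preliminary step. First I would check that the central charge defined in \eqref{graph_Z} descends to a genuine additive homomorphism $Z\colon K_0\bigl(H^0(\mc F(S,M;\mc E)_{\mathbf K})\bigr)\to\CC$. Because $Z(E_p)$ is already additive on $K_0$ of the fiber category, and the edge integrals are manifestly additive over direct sums and shifts, the only thing to verify is that the disk corrections and the gluing relations at vertices do not change the class: concretely, that the contributions of the edges meeting in an exact sequence at a vertex combine consistently, using the decomposition \eqref{sum_of_phases} of the phase into base and fiber parts. This is essentially a bookkeeping argument once one knows $K_0$ of the Fukaya category with coefficients.

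The construction of the stability condition itself is the step where I would, whenever possible, avoid defining it through spectral networks and instead transport a known stability condition along an equivalence $H^0(\mc F(S,M;\mc E)_{\mathbf K})\simeq\mc T$ with some category $\mc T$ whose space of stability conditions is already understood --- exactly the role played by homological mirror symmetry in the verifiable example, where $\mc T=D^b(E/(\ZZ/6))$. Having fixed such a $\sigma$, the conjecture reduces to matching its semistable objects with spectral networks. One inclusion is already close at hand: the vanishing result $\mathrm{Ext}^{\leq 0}(X_2,X_1)=0$ for spectral networks $X_1,X_2$ of phases $\phi_1<\phi_2$, proved above, shows that spectral networks obey the Hom-vanishing characteristic of semistable objects, and together with the saturated BPS bound $|Z(X)|=m(X)$ it shows that a spectral network of phase $\phi$ is forced to be $\sigma$-semistable of phase $\phi$. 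The companion proposition then rules out spectral network representatives for objects that are $\sigma$-unstable.

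The substantive content, and what I expect to be the main obstacle, is the reverse inclusion: producing a spectral network representative of \emph{every} $\sigma$-semistable object of the prescribed phase. This is genuinely an existence problem for special-Lagrangian-type representatives and admits no formal reduction; it is the analogue of the hard direction of the Thomas--Yau picture alluded to in the introduction. The approach I would take is constructive and inductive on the structure of the semistable objects: start from a manifest one-parameter family of spectral networks realizing the ``simple'' semistable objects (those supported on a single straight edge carrying a stable fiber object), and then build the rest by a recursive procedure that realizes extensions geometrically as splitting or merging of the support graph, always keeping the total phase \eqref{sum_of_phases} constant by straightening edges with respect to the flat metric $|\nu|$ and choosing semistable fiber objects of the matching fiber phase. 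At each stage one must check that the glued graph-with-coefficients is a legitimate flat object of $\mc F(S,M;\mc E)$, i.e. that the Maurer--Cartan equation is solved and the constant-phase condition persists across every vertex.

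Finally, I would assemble these pieces into a full stability condition: existence of Harder--Narasimhan filtrations follows from the HN filtrations for $\sigma$ combined with the realization of each semistable factor by a spectral network, and the support property is inherited from $\sigma$ through the equivalence. The uniqueness theorem~\ref{thm_specnet_unique} guarantees that the representatives so produced are essentially canonical --- same support graph, S-equivalent --- so the assignment $\phi\mapsto\{\text{objects with a phase-}\phi\text{ spectral network representative}\}$ is well defined and recovers precisely the semistable subcategories $\mc C_\phi$ of $\sigma$. The crux throughout remains the explicit recursive construction of representatives, which is exactly why, at present, the conjecture can only be established in special cases rather than in the stated generality.
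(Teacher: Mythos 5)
Your proposal is, by design, a programme rather than a proof, and it is worth saying plainly that the paper itself offers no proof of Conjecture~\ref{main_conj} in the stated generality: it is posed as a conjecture and verified only in examples, for $\mc E=\mc A_2$ over a disk with three marked points (Subsection~\ref{subsec_a2a2}) and six marked points (Theorem~\ref{thm_a5a2}). At the level of strategy your outline does reconstruct the paper's programme for those cases faithfully: transport a stability condition along a B-side equivalence (HMS with $D^b(E/(\ZZ/6))$ and slope stability), use the propositions of Subsection~\ref{subsec_spectral} to exclude spectral network representatives of unstable objects, and attack the existence direction recursively starting from a basic family. The genuine gap is exactly where you locate it, but your proposed mechanism for closing it would not work as described. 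You suggest building all semistable representatives by realizing \emph{extensions} geometrically, gluing graphs edge-by-edge and solving Maurer--Cartan equations, with simples on single straight edges as the base of the induction. But the abelian category of semistable objects of a fixed phase is in general not finite length --- the paper notes precisely this already for $A_5\otimes A_2$, which is why the naive extension-by-extension strategy that works for $A_2\otimes A_2$ (finite representation type, Subsection~\ref{subsec_a2a2}) had to be abandoned there. The paper's actual mechanism in Section~\ref{sec_a5a2} is different: explicit representatives for \emph{all} stable objects of phase $0$ (including a $1$-parameter family of skyscraper sheaves whose network topology changes with the valuation of the modulus, across three regions $R_1,R_2,R_3$), a lift of the spherical twists $T_2$ and $T_1^{-1}$ to the category over $\mathbf R$ by appending edges at the boundary (Theorem~\ref{twistsn}, via the functor $\epsilon_\sigma$ and Lemma~\ref{coefftwist}), and transitivity of the twist monoid on slopes in $[0,1/3)$ by Euclid's algorithm (Proposition~\ref{prop_t1t2slopes}). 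Your induction has no analogue of this transitivity and no termination argument, so it does not reduce the conjecture to anything checkable.

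There is also a secondary logical slip in your ``easy inclusion''. You assert that the $\mathrm{Ext}^{\leq 0}$-vanishing together with the saturated BPS bound $|Z(X)|=m(X)$ already forces a spectral network of phase $\phi$ to be $\sigma$-semistable. The paper proves no such unconditional statement: its proposition ruling out spectral network representatives of unstable objects is explicitly \emph{conditional} on first knowing that every semistable object of every phase has a spectral network representative, since the contradiction is produced by comparing the given network with representatives of the HN factors. The geometric mass $m(X)$ is defined edgewise and is not a priori comparable to the categorical mass of the $\sigma$-HN filtration, so BPS saturation alone yields nothing categorical. In short: your formal assembly steps (additivity of the central charge~\eqref{graph_Z}, HN filtrations and the support property inherited through the equivalence, well-definedness via Theorem~\ref{thm_specnet_unique}) are fine once existence is granted, but both inclusions ultimately hinge on the existence construction, and the recursion you propose is not the one that works even in the solved cases.
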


When $\mc E$ has fiber $\mathrm{Perf}(\mathbf k)$, the derived category of finite-dimensional chain complexes over $\mathbf k$, hence a holomorphic family of stability conditions on $\mc E$ is essentially a quadratic differential by the remarks in the previous subsection, the conjecture is proven in~\cite{hkk}. 
Note that in this case, spectral networks are just finite-length geodesics: saddle connections and geodesic loops. 
There is a similar phenomenon whenever the stability conditions on the fibers have central charge whose image is contained in a line in $\CC$, which will always be the case if the $K_0$-group of the fibers has rank one.
For instance, when the general fiber of $\mc E$ is the triangulated category generated by an objects $S$ with $\mathrm{Ext}^\bullet(S)=H^\bullet(S^2;\mathbf k)$ and $\mc E$ is allowed to have some singular fibers with vanishing cycles category $\mathrm{Perf}(\mathbf k)$ so that the monodromy of $\mc E$ around a singular fiber is the spherical twist along $S$.
Depending on whether $S$ is closed or not, the category $\mc F(S,M;\mc E)_{\mathbf K}$ should be quasi-equivalent to a quiver category as in~\cite{bs} or one of the categories defined \cite{h_3cyteich}. 
A form of Conjecture~\ref{main_conj} was then verified in these works, in the sense that it is shown that semistable objects correspond to finite length geodesics.

\begin{ex}
Let $S$ be a Riemann surface and $\widetilde S\subset T^*S$ holomorphic curve in the holomorphic cotangent bundle which is a branched covering over $S$ via the projection $T^*S\to S$.
Such $\widetilde S$ arise, for instance, as \textit{spectral curves} of Higgs bundles for the gauge group $\mathrm{SL}(n+1,\CC)$.
Suppose $p\in S$ is not a branch point and let 
\[
T^*_pS\cap\widetilde S=:\left\{z_0,\ldots,z_n\right\}
\]
be the fiber of $\widetilde S$ over $p$.
Given $q\in (T_S)^{\otimes 2}\setminus \{0\}$ there is a triangulated DG-category $\mc E_{p,q}$ with stability condition associated with the collection of $z_i$'s.
The category $\mc E_{p,q}$ is the (unique up to equivalence) 2CY category over $\mathbf k$ generated by an $A_n$ collection of spherical objects~\cite{thomas06,bridgeland_kleinian}.
Generically, the straight line from $z_i$ to $z_j$ does not pass through any other $z_k$'s and corresponds to a stable spherical object $S_{ij}$ (up to shift) with central charge $Z(S_{ij})=\pm(z_i-z_j)\sqrt{q}$ using the pairing of 1-forms and tangent vectors.

The categories $\mc E_{p,q}$ are the fibers of a schober $\mc E$ on $S$ with singular fibers at the branch points of $\widetilde S\to S$.
The stability conditions on the fibers combine to give a holomorphic family of stability conditions on $\mc E$.
(We have not defined what this means in the case of singular fibers, so strictly speaking the statement only makes sense on the complement of the branch locus.)
The global categories $\mc F(S;\mc E)_{\mathbf K}$ should recover the 3CY quiver categories considered in \cite{goncharov_webs,smith20}, at least for certain choices of $S$.
In the case $n=1$, the work~\cite{christ} recovers the 3CY quiver category from the schober using a definition of the Fukaya category with coefficients based on ribbon graphs and homotopy limits of DG-categories.

One can also formulate the conjectures about spectral networks in the sense of \cite{gmn} and stability conditions without the use of schobers, instead working directly with the open threefold fibered over $S$, c.f.~\cite{kerr_soibelman}.
\end{ex}

In the next section we will consider examples where the fiber category category is $\mc A_2$, the bounded derived category of the $A_2$-quiver, equipped with the standard (symmetric) stability condition. 
This is closely related to a special case of the above example where $n=2$ and $z_0,z_1,z_2$ form an equilateral triangle, except that the fiber category is not 2CY.
We verify Conjecture~\ref{main_conj} in the case when the base surface is a disk with three marked points on the boundary in Subsection~\ref{subsec_a2a2}, and the case of a disk with six marked points on the boundary in Section~\ref{sec_a5a2}.

\subsection{Uniqueness}
\label{subsec_uniqueness}

Spectral network representatives are typically not unique up to isomorphism, however we expect the following statement to hold, which implies in particular that the underlying graph is unique.

\begin{conj}
All spectral network representatives of a given semistable object in $V\left(\mc F(S,M;\mc E)_{\mathbf K}\right)$ have the same underlying support graph in $S$ with S-equivalent objects on the edges.
\end{conj}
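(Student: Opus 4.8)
The plan is to prove the statement under the additional hypothesis actually available to us (compare Theorem~\ref{thm_specnet_unique}), namely that the family of stability conditions is locally constant, so that the total phase decomposes as in \eqref{sum_of_phases} into a base contribution and a fiber contribution. Let $X,X'\in\mc F(S,M;\mc E)$ be spectral networks of phases $\phi,\phi'$ supported on graphs $G,G'$, and let $f$ be an isomorphism between their images in $\mc F(S,M;\mc E)_{\mathbf K}$. The proposition above forbidding isomorphisms between spectral networks of distinct phase gives $\phi=\phi'$; since the central charge \eqref{graph_Z} depends only on the class in $K_0$, we also get $Z(X)=Z(X')$, and saturation of the BPS inequality yields $m(X)=|Z(X)|=|Z(X')|=m(X')$. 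Replacing $G,G'$ by the common refinement $G\cup G'$ and invoking the refinement invariance of Subsection~\ref{subsec_refine}, I may assume $X$ and $X'$ are both objects of $\mc F_{G\cup G'}(S;\mc E)$ supported on subgraphs, so that $f$ lives in the single category $\mc F_{G\cup G'}(S;\mc E)_{\mathbf K}$.

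The central device is a valuation $v$ on morphisms of $\mc F_{G\cup G'}(S;\mc E)_{\mathbf K}$, built from the Novikov exponent together with the weights $w$ of boundary paths and the stability phase on $\mc E$. The key lemma I would establish is a \emph{phase--valuation inequality}: for a degree-zero morphism $g$ between spectral networks of the same phase one has $v(g)\ge 0$, and the valuation-zero part $g_{0}$ is supported only on pairs of edges of $G$ and $G'$ that coincide geometrically, the boundary-path factor being trivial and the $\mc E$-factor a morphism between semistable objects of equal fiber phase. This is exactly the equality analysis underlying the earlier vanishing proposition: a non-trivial boundary path contributes strictly positive weight, while by \eqref{sum_of_phases} any excess in base phase must be compensated by a degree shift of the fiber morphism, which for equal-phase semistable objects is controlled by $\Ext^{\le 0}=0$ off the diagonal. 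Applying this to $f$ and to $f^{-1}$, and using that $m_2$ is filtration-preserving so that $0=v(1)\ge v(f)+v(f^{-1})$ while both valuations are $\ge 0$, forces $v(f)=v(f^{-1})=0$.

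For the first assertion, the valuation-zero parts $f_{0}$ and $(f^{-1})_{0}$ are, to leading order, mutually inverse up to strictly higher valuation, and by the lemma they are block-diagonal with respect to coinciding edges; an edge of $G$ carrying a non-zero object but unmatched by a geometrically coinciding edge of $G'$ would give a summand of the central fiber on which $f_{0}$ cannot be part of an invertible map, contradicting that $f$ is an isomorphism, and symmetrically for $G'$, so $G=G'$ as support graphs. For the S-equivalence, on each common edge $e$ the leading part of $f_{0}$ restricts to a degree-zero map $E_{e}\to E'_{e}$ of semistable objects of the fiber phase whose induced map on Jordan--Hölder associated gradeds is an isomorphism, the global identities read off in valuation $0$ and pushed to $\mathrm{gr}$ killing precisely the extension ambiguities distinguishing non-isomorphic semistable objects with the same stable factors; hence $E_{e}$ and $E'_{e}$ are S-equivalent. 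Equivalently, one argues structurally: the stable Jordan--Hölder factors of $X$ in $\mc F(S,M;\mc E)_{\mathbf K}$ are unique, each is representable by a rigid stable spectral network, and $X,X'$ are assembled from the same multiset of stable pieces, differing only in how these are grouped and extended along the edges.

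The main obstacle will be the phase--valuation inequality and, above all, its equality case: making precise that the leading term of an isomorphism is supported exactly on geometrically coinciding edges and induces isomorphisms on associated gradeds, with the combined base-and-fiber bookkeeping under control. A secondary difficulty, already flagged in the proof of the earlier vanishing proposition, is that gluing at vertices can raise extension classes, so the passage from the edgewise (local) statement to global S-equivalence must be organized so that only $\Ext^{\le 0}$-type inputs are used and the local data patch coherently; the locally-constant hypothesis is what keeps \eqref{sum_of_phases} exact and makes this bookkeeping tractable, whereas the general conjecture would further require controlling the variation of the fiber stability condition along the edges.
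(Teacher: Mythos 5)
Your proposal hinges on the ``phase--valuation inequality'' ($v(g)\geq 0$ for every degree-zero morphism between spectral networks of the same phase, with invertible valuation-zero leading part for an isomorphism), and this key lemma is false. Take two simple objects $S,S'$ of the fixed-phase category carried by the same graph, with a non-zero class $\epsilon\in\Hom^1(S,S')$, and consider the twisted complexes $X=(S\oplus S',\,t^a\epsilon)$ and $Y=(S\oplus S',\,t^b\epsilon)$ with $0<a<b$. Both are flat spectral networks of the same phase with the \emph{same} central fiber (the split object, since the extension terms die mod $t^{>0}$), and the closed degree-zero morphisms $f=\mathrm{diag}(1,t^{b-a}):X\to Y$ and $g=\mathrm{diag}(1,t^{a-b}):Y\to X$ are mutually inverse over $\mathbf K$; yet $v(g)=a-b<0$, and the ``leading part'' of $f$ is $\mathrm{diag}(1,0)$, which is not invertible. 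So one cannot arrange $v(f)=v(f^{-1})=0$, and the ensuing block-diagonality and associated-graded analysis collapse. This is not a removable technicality: isomorphisms between Novikov lifts of S-equivalent central fibers genuinely carry non-trivial ``singular values'' $t^{\lambda_i}$ with $\lambda_i>0$, which is precisely the ambiguity the theorem must accommodate (also note that $v(ff^{-1})\geq v(f)+v(f^{-1})$ only controls a representative of the identity up to homotopy, so even the normalization step $0\geq v(f)+v(f^{-1})$ is not available as stated).

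The paper's proof is organized to sidestep any claim about $v(f)$. First it shows that spectral networks of fixed phase on a fixed graph form an \emph{abelian} category $\mc F_G(S;\mc E)_\phi$ (Proposition~\ref{prop_fixedphase_abelian}), the essential input being the strictness Lemma~\ref{lem_strictness} for filtration-preserving maps between anti-HN filtrations at the vertices --- an ingredient entirely missing from your sketch, and needed even to make sense of the Jordan--H\"older factors entering S-equivalence, which in the paper is S-equivalence in this global category, not merely edgewise in $\mc E$ (your edgewise matching of graded pieces does not by itself control the vertex data). Second, it rewrites $X$ and $X'$ as one-sided twisted complexes on their simple factors and lifts this over $\mathbf R$ by Maurer--Cartan transport (Lemma~\ref{lem_mctransport_nov}); third, it diagonalizes the matrix of the given isomorphism by row and column operations in the spirit of Proposition~\ref{prop_RmapSVD}, reaching $a_{ij}=\delta_{ij}t^{\lambda_i}$ with $\lambda_i\geq 0$ \emph{not necessarily zero}, which pairs off the simples $S_i\cong T_i$ and gives S-equivalence; the equality of support graphs then follows because the multiset of simple factors determines the support. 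Finally, your fallback ``structural'' argument --- that every stable factor is representable by a rigid stable spectral network --- presupposes the existence part of Conjecture~\ref{main_conj}, whereas the uniqueness theorem is proved independently of it; as written, that route is circular.
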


Here, S-equivalence for semistable objects $X,Y$ of the same phase $\phi$ in the fiber category means that both have filtrations by semistable subobjects of the same phase which are equivalent in the sense of the Jordan--H\"older theorem.
We will prove the above conjecture in the special case of constant coefficients $\mc E$ and holomorphic family of stability conditions of product type, i.e. coming from a stability condition on $\mc E$ and a quadratic differential $\nu$ on $S$.
We expect the proof in more general cases to be similar.
In fact, we will prove a slightly stronger uniqueness statement which involves a certain abelian category of spectral networks of the same phase.

We consider the setup in which the category $\mc F(S,M;\mc E)$ has been rigorously defined (in Section~\ref{sec_aside}). 
Thus, $S$ is a Riemann surface with quadratic differential $\nu$, $M\subset \partial S$ a set of marked points, and $\mc E$ a triangulated DG-category over $\mathbf k$.
Furthermore, we fix a stability condition on $\mc E$ and a global phase $\phi\in\RR$.
Let $G$ be a graph in $S$ whose edges are straight lines, i.e. geodesics with respect to the flat metric $|\nu|$.

\begin{df}
Let $\mc F_G(S;\mc E)_\phi$ be the full subcategory of $V\left(\mc F_G(S;\mc E)_{\mathbf k}\right)$ of objects which are spectral networks of phase $\phi$, i.e. so that the object in $\mc E$ on an edge of $G$ with grading $\phi_1\in\RR$ is semistable of phase $\phi-\phi_1$.
\end{df}

Without loss of generality, choose the grading of the edges of $G$ in the range $[\phi-1,\phi)$, then for an object of $\mc F_G(S;\mc E)_\phi$ the semistable objects on the edges have phase in $(0,1]$, thus are by definition contained in the heart of the t-structure $\mc A\subset \mc E$ associated with the stability condition.
Let us fix an object of $\mc F_G(S;\mc E)_\phi$ and focus on a single vertex $p$ of $G$.
The horizontal foliation of $\nu$ partitions the set of half-edges at $p$ into two groups so that the phases of the objects on the half-edges, in clockwise order, are $\phi_1,\ldots,\phi_m,\psi_1,\ldots,\psi_n$ with
\[
0<\phi_1<\phi_2<\ldots<\phi_m\leq 1,\qquad 0<\psi_1<\psi_2<\ldots<\psi_n\leq 1
\]
and $m+n$ is the number of half-edges at $p$, see Figure~\ref{vertexrays}.
\begin{figure}
\centering
\begin{tikzpicture}
\draw[purple] (-3,0) -- (3,0);
\draw[thick] (0,0) -- (-1.8,1.3) node[anchor=south east] {$\phi_1$};
\draw[thick] (0,0) -- (-1.1,1.6) node[anchor=south] {$\phi_2$};
\node at (0,1) {$\cdots$};
\draw[thick] (0,0) -- (.9,1.6) node[anchor=south] {$\phi_{m-1}$};
\draw[thick] (0,0) -- (1.6,1.3) node[anchor=south west] {$\phi_m$};
\draw[thick] (0,0) -- (-1.5,-1.3) node[anchor=north east] {$\psi_n$};
\draw[thick] (0,0) -- (1.3,-1.6) node[anchor=north] {$\psi_2$};
\draw[thick] (0,0) -- (1.9,-1.3) node[anchor=north west] {$\psi_1$};
\node at (0,-1) {$\cdots$};
\end{tikzpicture}
\caption{Half-edges at a vertex of spectral network partitioned into two groups with phases of the objects on the edges in the range $(0,1]$, increasing in clockwise order.}
\label{vertexrays}
\end{figure}
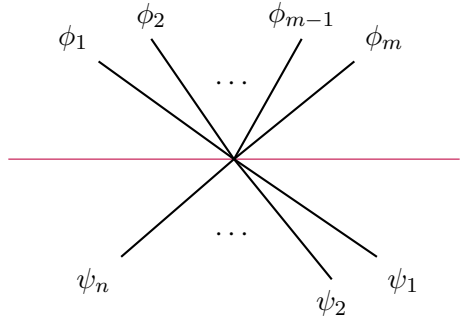
The stalk at $p$ of the sheaf of categories on $G$ whose global sections are $V\left(\mc F_G(S;\mc E)\right)$ is the category of representations of the quiver
\begin{equation}
\label{double_filt_quiver}
\bullet \longrightarrow \cdots \longrightarrow \bullet \longrightarrow \bullet \longleftarrow \bullet \longleftarrow \cdots \longleftarrow \bullet
\end{equation}
with $m-1$ arrows pointing one way and $n-1$ arrows pointing the other way.
The objects on the half-edges are then the cones over the morphisms in $\mc E$ assigned to the arrows, as well as the objects assigned to the first and last vertex.
If the object supported on $G$ is in fact in $\mc F_G(S;\mc E)_\phi$, i.e. a spectral network of phase $\phi$, then the object $E\in\mc E$ assigned to the sink vertex of \eqref{double_filt_quiver} is contained in the abelian category $\mc A=\mc E_{(0,1]}$ and the data of the representation of \eqref{double_filt_quiver} amounts to two \textit{anti-HN filtrations}, i.e. with semistable subquotients of strictly \textit{increasing} phase.

The above consideration for a single vertex lead to the following elementary description of the category $\mc F_G(S;\mc E)_\phi$ in terms of $\mc A$.
An object of $\mc F_G(S;\mc E)_\phi$ is given by an object $E_p\in \mc A$ for each vertex $p$ of $G$ together with a pair of anti-HN filtrations with subquotients of the correct phase as well as an isomorphism of subquotients for each edge of $G$.
Morphisms are required to preserve filtrations and intertwine the identifications of subquotients. 

\begin{prop}
\label{prop_fixedphase_abelian}
The category $\mc F_G(S;\mc E)_\phi$ is abelian.
\end{prop}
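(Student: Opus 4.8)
The plan is to work directly from the explicit description of $\mc F_G(S;\mc E)_\phi$ given just above the statement and to build kernels and cokernels vertex by vertex inside the abelian heart $\mc A=\mc E_{(0,1]}$. Throughout I would use three standard facts about the induced stability condition on $\mc A$ (Bridgeland~\cite{bridgeland07}): for each phase $\psi$ the full subcategory $\mc P(\psi)\subset\mc A$ of semistable objects of phase $\psi$ is abelian, with kernels, images and cokernels computed in $\mc A$; one has $\Hom(A,B)=0$ whenever $\phi^-(A)>\phi^+(B)$, i.e. all Harder--Narasimhan factors of $A$ have phase strictly greater than those of $B$; and any object admitting a filtration with semistable subquotients of phases in a range $[\alpha,\beta]$ has all its HN factors in $[\alpha,\beta]$. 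Recall that an object of $\mc F_G(S;\mc E)_\phi$ is a family $(E_p)_p$ of objects of $\mc A$, one per vertex, each carrying two anti-HN filtrations whose subquotients are semistable of the prescribed strictly increasing phases determined by the half-edges at $p$, together with isomorphisms identifying the subquotients across each edge; a morphism is a family $f_p\colon E_p\to E'_p$ preserving both filtrations and intertwining the edge identifications. Since a morphism is determined by its vertex components, the whole construction will be carried out vertexwise.

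The technical heart, which I expect to be the main obstacle, is a strictness statement: for any such $f_p$ and either of the two filtrations $0=E_{p,0}\subset\cdots\subset E_{p,k}=E_p$ with subquotient phases $\mu_1<\cdots<\mu_k$, one has $f_p(E_{p,i})=f_p(E_p)\cap E'_{p,i}$ for all $i$. I would prove this by induction on $i$. The inclusion $\supseteq$ is clear, and the obstruction to the reverse inclusion is the well-defined map $\Psi_i\colon\ker(\mathrm{gr}_i f_p)\to E'_{p,i-1}/f_p(E_{p,i-1})$ sending the class of an $x\in E_{p,i}$ with $f_p(x)\in E'_{p,i-1}$ to $[f_p(x)]$; strictness at level $i$ is equivalent to $\Psi_i=0$. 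By the inductive hypothesis $f_p$ is strict below level $i$, so $f_p(E_{p,i-1})$ is a filtered subobject of $E'_{p,i-1}$ and the quotient $E'_{p,i-1}/f_p(E_{p,i-1})$ acquires a filtration with subquotients $\mathrm{coker}(\mathrm{gr}_j f_p)\in\mc P(\mu_j)$, $j\le i-1$; hence all its HN factors have phase $\le\mu_{i-1}<\mu_i$. As the source $\ker(\mathrm{gr}_i f_p)$ lies in $\mc P(\mu_i)$, the Hom-vanishing fact forces $\Psi_i=0$, completing the induction. It is precisely this step that uses the strict monotonicity of the phases and prevents the naive filtered kernels from leaving the subcategory.

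Granting strictness, the remainder is routine. Given $f=(f_p)$ I set $(\ker f)_p:=\ker f_p$ and $(\mathrm{coker}\,f)_p:=\mathrm{coker}\,f_p$ in $\mc A$, equipped with the filtrations induced from the $E_{p,\bullet}$ and $E'_{p,\bullet}$. Strictness identifies the subquotients as $\mathrm{gr}_i\ker f_p=\ker(\mathrm{gr}_i f_p)$ and $\mathrm{gr}_i\mathrm{coker}\,f_p=\mathrm{coker}(\mathrm{gr}_i f_p)$, which lie in $\mc P(\mu_i)$ since that category is abelian; this holds for both filtrations at every vertex, and applying $\mathrm{gr}_i$ to the edge identifications (isomorphisms, hence sent to isomorphisms) supplies the gluing data. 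Thus $\ker f$ and $\mathrm{coker}\,f$ are again objects of $\mc F_G(S;\mc E)_\phi$ and satisfy the expected universal properties, all comparison maps again preserving filtrations and gluings. Additivity is immediate, as vertexwise direct sums carry the direct sums of the filtrations and gluings and the zero family is a zero object. Finally, because everything is computed vertexwise in the abelian category $\mc A$, the canonical comparison $\mathrm{coim}\,f\to\mathrm{im}\,f$ is an isomorphism on each $E_p$, and strictness makes it a \emph{filtered} isomorphism, being the coimage-to-image isomorphism of $\mc P(\mu_i)$ on each subquotient. Hence every monomorphism is a kernel and every epimorphism a cokernel, so $\mc F_G(S;\mc E)_\phi$ is abelian.
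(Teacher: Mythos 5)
Your proof is correct and takes essentially the same route as the paper: the paper likewise isolates the identical strictness property for filtration-preserving maps between anti-HN filtrations (its Lemma~\ref{lem_strictness}), proves it via the vanishing of $\Hom$'s from a semistable of higher phase to an object of strictly lower phase, and then deduces abelianness by equipping vertexwise kernels and cokernels with the intersection and image filtrations so that every mono and epi is normal. The only difference is in bookkeeping: where you run an induction on the filtration level with the obstruction map $\Psi_i$ landing in the quotient $F_{i-1}B/f(F_{i-1}A)$, whose HN phases are at most $\mu_{i-1}$, the paper picks a minimal violating pair $(r,s)$ in the double filtration $f(F_iA)\cap F_jB$ and exhibits a nonzero map between two semistable objects of decreasing phase --- the same Hom-vanishing in slightly different packaging.
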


The key point is the following strictness property of maps between anti-HN filtrations.

\begin{lemma}
\label{lem_strictness}
Let $\mc C$ be a triangulated category with stability condition, $\mc A=\mc C_{(0,1]}\subset \mc C$ the associated heart.
Fix phases $0<\phi_1<\phi_2<\ldots<\phi_n\leq 1$ and let $A\in \mc A$ with filtration $0=F_0A\subset F_1A\subset \ldots \subset F_nA=A$ such that $F_iA/F_{i-1}A$ is semistable of phase $\phi_i$. 
Also let $B\in \mc A$ with filtration $F_iB$ of the same type.
Then any morphism $f:A\to B$ preserving the filtrations is strict, i.e.
\[
F_iB\cap \im(f)=f(F_iA)
\]
for $i=1,\ldots,n$.
\end{lemma}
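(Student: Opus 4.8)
The plan is to prove the statement by induction on the length $n$ of the filtration, carrying along the slightly stronger assertion that $\ker f$, $\im f$ and $\mathrm{coker}\,f$ inherit filtrations of the same type whose $i$-th subquotients are $\ker(\mathrm{gr}_i f)$, $\im(\mathrm{gr}_i f)$ and $\mathrm{coker}(\mathrm{gr}_i f)$, where $\mathrm{gr}_i f\colon F_iA/F_{i-1}A\to F_iB/F_{i-1}B$ is the induced map on associated graded. Throughout I write $\mc P(\psi)\subset\mc A$ for the full subcategory of semistable objects of phase $\psi$, and use the standard facts that $\mc P(\psi)$ is an abelian subcategory whose inclusion into $\mc A$ is exact (so $\ker$, $\im$, $\mathrm{coker}$ of a morphism of $\mc P(\psi)$-objects again lie in $\mc P(\psi)$), that $\Hom(\mc P(\psi),\mc P(\psi'))=0$ for $\psi>\psi'$, and the monotonicity of Harder--Narasimhan phases: the maximal phase does not increase under passage to subobjects, and the minimal phase does not decrease under passage to quotients. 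The inclusion $f(F_iA)\subseteq \im f\cap F_iB$ is clear, so only the reverse inclusion is at issue. The base case $n=1$ is immediate, since then $A,B\in\mc P(\phi_1)$ and the required identity reads $\im f=f(A)$.

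For the inductive step I apply the snake lemma to the morphism of short exact sequences relating the subobject $F_{n-1}$, the total object, and the top quotient $\mathrm{gr}_n$:
\begin{equation*}
0\to \ker f'\to \ker f\to \ker\bar f \xrightarrow{\ \delta\ } \mathrm{coker}\,f'\to \mathrm{coker}\,f\to \mathrm{coker}\,\bar f\to 0,
\end{equation*}
where $f'=f|_{F_{n-1}A}\colon F_{n-1}A\to F_{n-1}B$ and $\bar f=\mathrm{gr}_n f$. By the inductive hypothesis applied to $f'$, the object $\mathrm{coker}\,f'$ carries a filtration with subquotients $\mathrm{coker}(\mathrm{gr}_i f)\in\mc P(\phi_i)$ for $i\le n-1$; since $\phi_i<\phi_n$ and such objects are closed under extension, every Harder--Narasimhan factor of $\mathrm{coker}\,f'$ has phase $<\phi_n$. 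On the other hand $\ker\bar f\in\mc P(\phi_n)$ by the base-case remark. The crux of the argument is that the connecting morphism $\delta$ therefore vanishes: its image is simultaneously a quotient of $\ker\bar f$, hence has all factors of phase $\ge\phi_n$, and a subobject of $\mathrm{coker}\,f'$, hence has all factors of phase $<\phi_n$; if it were nonzero this would give $\phi_n\le\phi^-(\im\delta)\le\phi^+(\im\delta)<\phi_n$, a contradiction. Equivalently, $\delta=0$ directly by the orthogonality $\Hom(\mc P(\phi_n),\mc P(\psi))=0$ for $\psi<\phi_n$.

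Once $\delta=0$, the snake sequence splits into two short exact sequences exhibiting $\ker f$ and $\mathrm{coker}\,f$ as extensions of the top pieces $\ker\bar f,\ \mathrm{coker}\,\bar f\in\mc P(\phi_n)$ by the filtered objects $\ker f',\ \mathrm{coker}\,f'$, which re-establishes the strengthened inductive statement for $f$ (the assertion for $\im f$ then follows from either exact sequence). Strictness is read off from $\delta=0$ at the top step: unwinding the definition of the connecting map, the vanishing of $\delta$ says exactly that any $u\in\im f\cap F_{n-1}B$, which necessarily maps to zero in $\mathrm{gr}_n B$ and hence comes from some $\bar a\in\ker\bar f$, already lies in $f(F_{n-1}A)$; thus $\im f\cap F_{n-1}B=f(F_{n-1}A)$. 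The remaining identities $\im f\cap F_iB=f(F_iA)$ for $i<n-1$ follow by intersecting with $F_iB\subseteq F_{n-1}B$ and invoking the strictness of $f'$ from the inductive hypothesis, while the case $i=n$ is trivial. The only genuine obstacle is the vanishing of $\delta$, and its two essential inputs are the orthogonality between semistable objects of strictly decreasing phase and the control over the phases occurring in $\mathrm{coker}\,f'$, which is precisely what the inductive bookkeeping of subquotients supplies.
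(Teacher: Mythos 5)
Your proof is correct, and it is organized genuinely differently from the paper's. The paper argues by contradiction on the double filtration $f(F_iA)\cap F_jB$: strictness is equivalent to the vanishing of the subquotients $G_{i,j}$ for $i>j$; one picks a violation $(r,s)$ with $r-s$ minimal, and a zig-zag of subquotient maps (two isomorphisms, a surjection, and a nonzero horizontal map) produces a nonzero morphism from a semistable object of phase $\phi_r$ (the kernel of $\mathrm{gr}_rf$) to a semistable object of phase $\phi_s<\phi_r$, contradicting Hom-orthogonality. You instead induct on the filtration length and encode the failure of strictness at the top step as the image of the snake-lemma connecting map $\delta:\ker(\mathrm{gr}_nf)\to\mathrm{coker}\,f'$, which vanishes because its source lies in $\mc P(\phi_n)$ while, by the inductive bookkeeping, its target has all HN phases $<\phi_n$; your induction plays the role of the paper's minimality of $r-s$, with all lower phases aggregated into $\mathrm{coker}\,f'$ rather than probed one level at a time (your $\delta$ is, in effect, the adjacent-level version of the paper's long-range zig-zag). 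Both arguments consume exactly the same stability-theoretic inputs: $\Hom(\mc P(\phi),\mc P(\psi))=0$ for $\phi>\psi$, and closure of $\mc P(\phi)$ under kernels and cokernels taken in the heart. What your version buys is the strengthened conclusion that $\ker f$, $\im(f)$ and $\mathrm{coker}\,f$ inherit anti-HN filtrations with graded pieces $\ker(\mathrm{gr}_if)$, $\im(\mathrm{gr}_if)$, $\mathrm{coker}(\mathrm{gr}_if)$ --- precisely the exactness data the paper re-derives from the lemma when proving Proposition~\ref{prop_fixedphase_abelian}, so your induction would streamline that step. What the paper's version buys is a single non-inductive contradiction treating all filtration levels symmetrically, with no diagram chases by elements; since the heart is an abstract abelian category, your identification of $\im\,\delta$ with $\left(\im(f)\cap F_{n-1}B\right)/f(F_{n-1}A)$ should be justified via pseudo-elements or a Freyd--Mitchell embedding --- routine, and not a gap.
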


\begin{proof}
The target $B$ of $f$ has a double filtration by subspaces $f(F_iA)\cap F_jB$ with associated subquotients
\[
G_{i,j}:=\left(f(F_iA)\cap F_jB\right)/\left((f(F_{i-1}A)\cap F_jB)+(f(F_iA)\cap F_jB)\right)
\]
and strictness of $f$ is equivalent to $G_{i,j}=0$ for $i>j$.
Suppose for contradiction that $f$ is not strict, then there are $r>s$ with $G_{r,s}\neq 0$ and $r-s>0$ minimal among such pairs.

We consider subquotients of $B$, arising from the double filtration, which are indicated schematically in Figure~\ref{fig_staircase}.
\begin{figure}
\centering
\begin{tikzpicture}
\fill[color=gray!30] (0,0) -- (0,.5) -- (.5,.5) -- (.5,1) -- (1,1) -- (1,1.5) -- (1.5,1.5) -- (1.5,2) -- (2,2) -- (2,2.5) -- (2.5,2.5) -- (2.5,3) -- (3,3) -- (3,3.5) -- (3.5,3.5) -- (3.5,4) -- (4,4) -- (4,3.5) -- (3.5,3.5) -- (3.5,3) -- (3,3) -- (3,2.5) -- (2.5,2.5) -- (2.5,2) -- (2,2) -- (2,1.5) -- (1.5,1.5) -- (1.5,1) -- (1,1) -- (1,.5) -- (.5,.5) -- (.5,0) -- (0,0);
\fill[color=gray!30] (1.5,0) -- (1.5,.5) -- (2,.5) -- (2,1) -- (2.5,1) -- (2.5,1.5) -- (3,1.5) -- (3,2) -- (3.5,2) -- (3.5,2.5) -- (4,2.5) -- (4,0) -- (1.5,0);
\fill[color=red,opacity=.5] (2,.5) rectangle +(2,.5);
\fill[color=green,opacity=.5] (2,0) rectangle +(.5,1);
\fill[color=yellow!80!black] (2,.5) rectangle +(.5,.5);
\draw[step=.5,gray,very thin] (0,0) grid (4,4);
\draw[color=green,thick] (2,0) rectangle +(.5,2);
\draw[color=red,thick] (1,.5) rectangle +(3,.5);
\draw (4.125,2) -- (4.25,2) -- (4.25,0);
\node[anchor=west] at (4.25,1.5) {$F_{r-1}B$};
\draw (4.625,1) -- (4.725,1) -- (4.725,0);
\node[anchor=west] at (4.725,.5) {$F_sB$};
\draw (2.5,-.125) -- (2.5,-.25) -- (0,-.25);
\node[anchor=north] at (2,-.25) {$f(F_rA)$};
\draw (1,-.625) -- (1,-.75) -- (0,-.75);
\node[anchor=north] at (.5,-.75) {$f(F_sA)$};
\end{tikzpicture}
\caption{The double filtration $f(F_iA)\cap F_jB$ and some of its subquotients. Boxes correspond to $G_{i,j}$'s. A white box indicates $G_{i,j}=0$}
\label{fig_staircase}
\end{figure}
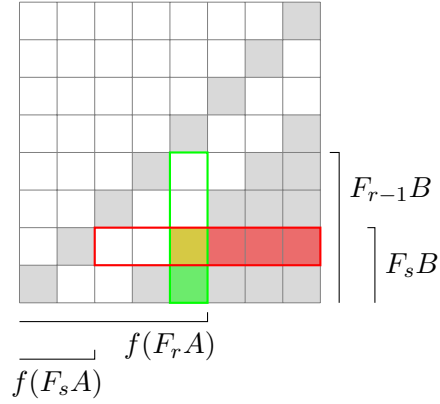
They are connected by a chain of maps as follows.
\begin{equation*}
\begin{tikzcd}
\left(F_rA\cap f^{-1}(F_{r-1}B)\right)/F_{r-1}A \arrow[d,two heads] & \\
\left(f(F_rA)\cap F_{r-1}B\right)/f(F_{r-1}A) \arrow[d,"\cong"] & F_sB/\left(F_{s-1}B+f(F_sA)\right) \arrow[d,"\cong"] \\
\left(f(F_rA)\cap F_sB\right)/\left(f(F_{r-1}A)\cap F_sB\right) \arrow[r] & F_sB/\left(F_{s-1}B+f(F_{r-1}A)\cap F_sB\right) \\
\end{tikzcd}
\end{equation*}
The object on the top left of the diagram is the kernel of the map $F_rA/F_{r-1}A\to F_rB/F_{r-1}B$, induced by $f$, between semistable objects of phase $\phi_r$, thus also semistable of phase $\phi_r$. (Recall that semistable objects of a given phase form an abelian category.) Similarly, the object on the top right of the diagram is the cokernel of the map $F_sA/F_{s-1}A\to F_sB/F_{s-1}B$, thus semistable of phase $\phi_s<\phi_r$.
The vertical map on the top-left is surjective.
The vertical map on the bottom-left and the vertical map on the right are isomorphisms since the relevant $G_{i,j}$'s vanish by our assumption on $(r,s)$.
Finally, the horizontal map at the bottom has image $G_{r,s}$, so is non-zero by assumption.
Following the diagram in counterclockwise direction thus gives a non-zero map from a semistable object to a semistable object of strictly smaller phase, a contradiction.
\end{proof}

\begin{proof}[Proof of Proposition~\ref{prop_fixedphase_abelian}]
Kernels in $\mc F_G(S;\mc E)_\phi$ are the usual kernels of linear maps together with the filtration obtained by intersection. 
Cokernels are the usual cokernels of linear maps with filtration obtained by taking the image under the quotient map.
For injective $f$, Lemma~\ref{lem_strictness} implies that the filtration on the source is equal to the one obtained from the target by taking the preimage under $f$, which implies that every mono is normal.
Similarly, for surjective $f$, Lemma~\ref{lem_strictness} implies that the filtration on the target is equal to the one obtained by taking the image under $f$, hence every epi is normal.
This shows that $\mc F_G(S;\mc E)_\phi$ is abelian.
\end{proof}

We borrow the following terminology from geometric invariant theory.
Suppose $\mc A$ is a finite--length abelian category, then $A,B\in \mc A$ are \textbf{S-equivalent} if there are filtrations 
\[
0=F_0A\subset F_1A\subset \ldots\subset F_nA=A,\qquad 0=F_iB\subset F_1B\subset \ldots\subset F_nB=B
\]
and a permutation $\sigma\in S_n$ such that
\[
F_iA/F_{i-1}A\cong F_{\sigma(i)}B/F_{\sigma(i)-1}B
\]
for $i=1,\ldots,n$.
Note that if $A$ and $B$ are S-equivalent, then $A=B$ is $K_0(\mc A)$. 
The converse follows from the fact that $K_0(\mc A)$ has a basis given by simple objects, as $\mc A$ is finite-length.

As before, we let $\mathbf R$ be the Novikov ring with residue field $\mathbf k$, which we assume to be algebraically closed, and field of fractions $\mathbf K$.

\begin{prop}
Let $\mc A$ be a curved $A_\infty$-category over $\mathbf R$ of the form $\mc A=\mathrm{Tw}(\mc A')$ such that $H^0\mc A_{\mathbf k}$ is finite-length admissible abelian.
If $X,Y\in \mc A$ are ismorphic in $\mc A_{\mathbf K}$, then $X_0,Y_0\in H^0\mc A_{\mathbf k}$ are S-equivalent.
\end{prop}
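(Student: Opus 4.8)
The plan is to reduce the statement to an equality of classes in the Grothendieck group. As observed just before the proposition, in a finite-length abelian category two objects are S-equivalent if and only if they have the same class in $K_0$: the group $K_0$ is free on the (finitely many) simple objects, and the class records exactly the multiplicities of the Jordan--H\"older factors. Thus it suffices to prove that $[X_0]=[Y_0]$ in $K_0(H^0\mc A_{\mathbf k})$, and I would detect this equality by pairing against test objects defined over $\mathbf R$ and exploiting that Euler characteristics are constant across the family over $\mathrm{Spec}(\mathbf R)$.

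Concretely, write $\mc A=\mathrm{Tw}(\mc A')$ and let $T$ be any object of $\mc A'$, so that $T$ is defined over $\mathbf R$ and $\Hom^\bullet_{\mc A}(T,X)$ is a bounded complex of finite-rank free $\mathbf R$-modules (here I use that $\mc A'$ has finite-rank free Hom-spaces and that $X$ is a finite twisted complex). For any field $F$ under $\mathbf R$, the Euler characteristic of a bounded complex of finite free modules after $\otimes_{\mathbf R}F$ equals the alternating sum of the ranks of its \emph{terms}, hence is independent of $F$. Applying this with $F=\mathbf k$ and $F=\mathbf K$, and using $\Hom_{\mc A}(T,X)\otimes_{\mathbf R}\mathbf k=\Hom_{\mc A_{\mathbf k}}(T_0,X_0)$ and $\Hom_{\mc A}(T,X)\otimes_{\mathbf R}\mathbf K=\Hom_{\mc A_{\mathbf K}}(T_{\mathbf K},X_{\mathbf K})$, gives
\[
\chi(T_0,X_0)=\sum_j(-1)^j\,\mathrm{rk}_{\mathbf R}\Hom^j_{\mc A}(T,X)=\chi(T_{\mathbf K},X_{\mathbf K}),
\]
where $\chi$ denotes the Euler form of graded Hom-spaces. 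The structure theorem for $\mathbf R$-modules (Proposition~\ref{prop_RmapSVD}) is the structural reason behind this: it shows the cohomology modules are finitely presented, the torsion summands $\mathbf R/t^\lambda$ being exactly what is invisible over $\mathbf K$ while contributing a telescoping, hence vanishing, net correction to $\chi$ over $\mathbf k$. The same identity holds for $Y$. Since $X_{\mathbf K}\cong Y_{\mathbf K}$, we have $\chi(T_{\mathbf K},X_{\mathbf K})=\chi(T_{\mathbf K},Y_{\mathbf K})$ for every $T$, and therefore $\chi(T_0,X_0)=\chi(T_0,Y_0)$ for every object $T$ of $\mc A'$.

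It remains to upgrade this collection of numerical identities to the equality $[X_0]=[Y_0]$. The classes $[T_0]$, as $T$ ranges over the objects of $\mc A'$, generate $K_0(H^0\mc A_{\mathbf k})$ because $\mc A'$ generates $\mc A=\mathrm{Tw}(\mc A')$; hence the functionals $\chi([T_0],-)$ span the image of the Euler form $K_0\to\Hom(K_0,\ZZ)$, and the previous step shows $[X_0]-[Y_0]$ lies in its radical. The conclusion follows once this radical is trivial, and this non-degeneracy is exactly where I expect the hypothesis that $H^0\mc A_{\mathbf k}$ is \emph{admissible} to enter --- the main technical point of the argument. One must verify that admissibility (e.g.\ finite homological dimension, making the Cartan/Euler matrix on simples invertible) renders the Euler pairing perfect on $K_0$; an alternative that sidesteps non-degeneracy would be to check that a set of projective generators of $H^0\mc A_{\mathbf k}$ lifts to objects over $\mathbf R$, so that pairing with them reads off composition multiplicities directly. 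Granting this, $[X_0]=[Y_0]$ and $X_0,Y_0$ are S-equivalent.
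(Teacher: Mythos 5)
Your reduction of S-equivalence to equality in $K_0$ of the finite-length heart is correct (the paper makes the same observation just before the proposition), and base-change invariance of Euler characteristics for bounded complexes of finite-rank free $\mathbf R$-modules is fine where it applies. The fatal step is the hoped-for non-degeneracy, and it cannot be repaired. Since $\chi(T_0,-)$ is computed from Ext-groups in the ambient triangulated category, it factors not only through $K_0(H^0\mc A_{\mathbf k})$ but through the image of that group in $K_0(\mc A_{\mathbf k})$, i.e.\ it sees only the numerical class; a finite-length abelian category can have far more simples than the rank of that numerical group, and the paper's intended application is exactly such a case. For $D^b(E/(\ZZ/6))$ one has $K_0\cong\ZZ^{10}$, while the heart of phase-$0$ semistables (torsion sheaves) contains a one-parameter family of pairwise non-isomorphic simples $\mc O_p$; for $p\neq q$ the skyscrapers are deformation equivalent, so $\chi(T,\mc O_p)=\chi(T,\mc O_q)$ for \emph{every} test object $T$, yet $\mc O_p$ and $\mc O_q$ are not S-equivalent. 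Thus $[X_0]-[Y_0]$ lying in the radical of the Euler pairing is strictly weaker than $[X_0]=[Y_0]$ in $K_0$ of the heart, and no reading of ``admissible'' saves this: in the paper that hypothesis is used to realize Jordan--H\"older filtrations as one-sided twisted complexes and to control $\Hom^{\leq 0}$ between simples, not to make the Euler form perfect (it is degenerate in the main example). Your fallback via projective generators fails for the same reason: hearts like torsion sheaves have no nonzero projectives, and $\dim\Hom$, unlike $\chi$, is not invariant under base change from $\mathbf k$ to $\mathbf K$ --- that jumping is the whole subtlety. A secondary problem: a general test object $T\in\mc A'$ is curved, $\mk m_0(T)\neq 0$, so $\Hom_{\mc A}(T,X)$ is not even a complex and has no well-defined Euler characteristic of cohomology; restricting to flat $T$ leaves the generation claim unsupported.

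The paper's actual proof avoids $K_0$ entirely and matches simples object by object: using admissibility, write $X_0\cong(S,\delta_0)$ as a one-sided twisted complex on the direct sum $S$ of its Jordan--H\"older factors, lift to $(S,\delta)\cong X$ over $\mathbf R$ by Maurer--Cartan transport (Lemma~\ref{lem_mctransport_nov}), and similarly $Y\cong(T,\gamma)$. Passing to a minimal model, one has $\Hom^0_{\mc A}(U_i,U_j)=R^{\delta_{ij}}$ and $\Hom^{<0}(U_i,U_j)=0$ for the distinct simples involved (here algebraic closedness of $\mathbf k$ and admissibility enter), so the degree-zero matrix of the given $\mathbf K$-isomorphism can be diagonalized to entries $t^{\lambda_i}$ by row and column operations as in Proposition~\ref{prop_RmapSVD}, forcing $m=n$ and $S_i\cong T_i$. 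This direct matching works even with a continuum of simple objects, which is precisely where your pairing argument breaks down.
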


\begin{proof}
Since $H^0\mc A_{\mathbf k}$ is finite length, there is a filtration $0=F_0X_0\subset F_1X_0\subset\ldots F_nX_0=X_0$ with $S_i:=F_iX_0/F_{i-1}X_0$ simple.
Let $S:=\bigoplus_{i=1}^nS_i\in \mc A_{\mathbf k}$.
Using the assumption that $H^0\mc A_{\mathbf k}$ is admissible, we can find $\delta_0\in\mathrm{Hom}_{\mc A_{\mathbf k}}(S,S)$, strictly upper triangular, such that $X_0\cong (S,\delta_0)$ where $(S,\delta_0)$ is considered as one-sided twisted complex. 
By transfer of Maurer--Cartan elements (Lemma~\ref{lem_mctransport_nov}), we lift $(S,\delta_0)\in\mc A_{\mathbf k}$ to $(S,\delta)\in \mc A$ such that $X\cong (S,\delta)$.
Similarly, $Y\cong (T,\gamma)$, $T=\bigoplus_{i=1}^mT_i$.

Without loss of generality, we may assume that $\mc A$ is minimal in the sense that the differentials in $V(\mc A_\textbf{0})$ vanish, see~\cite{FOOO} where this is called the \textit{canonical model}.
We can also assume that if two of $S_1,\ldots,S_n,T_1,\ldots,T_m$ are isomorphic, then they are equal (otherwise replace objects in the list by isomorphic copies).
Let $U_1,\ldots,U_r$, $r\leq m+n$, be the pairwise non-isomorphic simple objects among the $S_i$'s and $T_i$'s.
As a consequence of the assumption that the $U_i$'s are simple, $\mathbf k$ is algebraically closed, and $H^0\mc A_{\mathbf k}$ is admissible it follows that
\[
\mathrm{Hom}_{\mc A}^0(U_i,U_j)=R^{\delta_{ij}},\qquad \mathrm{Hom}_{\mc A}^{<0}(U_i,U_j)=0
\]
where $R^{\delta_{ij}}=R$ if $i=j$ and $=0$ else.
In particular, $\Hom^0_{\mc A}(S\oplus T,S\oplus T)$ is a product of matrix algebras, and by strict unitality, the action on the bimodule $\Hom^1_{\mc A}(S\oplus T,S\oplus T)$ involves $\mk m_2$ only, i.e. is standard matrix multiplication.

By assumption, there is an isomorphism $f:(S,\delta)\to (T,\gamma)$.
Let $a_{ij}\in \Hom^0_{\mc A}(S_j,T_i)$ be its matrix coefficients.
Elementary row and column operations act on $\delta$, $\gamma$, and $f$, and can be used to diagonalize $(a_{ij})_{ij}$, i.e. so that $a_{ij}=\delta_{ij}t^{\lambda_i}$ for certain $\lambda_i\geq 0$.
These possibly permute the order of the $S_i$'s and $T_i$'s, so that $\delta_0$ and $\gamma_0$ are perhaps no longer strictly upper triangular, but this does not really matter.
In any case, we conclude that $m=n$ and $S_i\cong T_i$, so $X_0$ and $Y_0$ are S-equivalent.
\end{proof}

As a consequence, we obtain the following uniqueness statement for spectral network representatives.

\begin{theorem}
\label{thm_specnet_unique}
Let $X,X'\in V\left(\mc F(S,M;\mc E)\right)$ be spectral networks with $X_{\mathbf K}\cong X'_{\mathbf K}$, i.e. representing the same object in the Fukaya category $V\left(\mc F(S,M;\mc E)_{\mathbf K}\right)$. 
Then $X$ and $X'$ have the same support graph $G\subset S$ and $X_0$ and $X'_0$ are S-equivalent in $\mc F_G(S;\mc E)_\phi$.
\end{theorem}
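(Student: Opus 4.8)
The plan is to deduce both assertions from the S-equivalence proposition immediately preceding this theorem, applied to the abelian category of fixed-phase spectral networks $\mc F_G(S;\mc E)_\phi$. First note that $X$ and $X'$ must share a common phase: if $X_{\mathbf K}\cong X'_{\mathbf K}$ is the zero object then both are supported on the empty graph and the statement is vacuous, while for a nonzero object the consequence of the Ext-vanishing proposition of Subsection~\ref{subsec_spectral} rules out two spectral network representatives of distinct phases. Write $\phi$ for this phase and let $G,G'$ be the two support graphs. Setting $G'':=G\cup G'$ and enlarging it within $\GG$ so that it contains a skeleton, Proposition~\ref{prop_skelgen} lets us realize the isomorphism already in $\mc F_{G''}(S;\mc E)_{\mathbf K}$; after subdividing (harmless by Subsection~\ref{subsec_refine}) both $X$ and $X'$ are supported on subgraphs of $G''$, carrying the zero object on the remaining edges. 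Since subdivision and inclusion of a subgraph preserve semistability and phase edge-by-edge, the central fibers $X_0,X'_0$ lie in the abelian category $\mc F_{G''}(S;\mc E)_\phi$ of Proposition~\ref{prop_fixedphase_abelian}, and they remain isomorphic over $\mathbf K$.

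The heart of the proof is to present $\mc F_{G''}(S;\mc E)_\phi$ as $H^0\mc A_{\mathbf k}$ for a category $\mc A=\mathrm{Tw}(\mc A')$ over $\mathbf R$ to which the preceding proposition applies. I would take $\mc A$ to be generated, under one-sided twisted complexes, by the \emph{simple spectral networks}: the stable objects of $\mc E$ supported on single maximal straight edges of $G''$, graded so that their total phase is $\phi$. As $X$ and $X'$ are assembled from finitely many such stable pieces, both lie in $\mc A$ and stay isomorphic over $\mathbf K$. It then remains to verify the hypotheses of the preceding proposition on the central fiber: abelianness is exactly Proposition~\ref{prop_fixedphase_abelian}; finite length and admissibility (realizability of objects as one-sided twisted complexes of simples) follow from finite length of the heart $\mc A=\mc E_{(0,1]}$ via the local description of a vertex stalk as representations of the double-filtration quiver~\eqref{double_filt_quiver} valued in $\mc A$; and the required $\mathrm{Hom}$-computation between the simple generators $U_i$, namely $\mathrm{Hom}^0(U_i,U_j)=\mathbf R^{\delta_{ij}}$ and $\mathrm{Hom}^{<0}=0$, follows from $\mathbf k$ being algebraically closed together with the vanishing $\mathrm{Ext}^{\leq 0}_{\mc E}(E_2,E_1)=0$ across strictly increasing phases (the Ext-vanishing proposition of Subsection~\ref{subsec_spectral} and strictness, Lemma~\ref{lem_strictness}) and the non-isomorphy of distinct simples in an abelian category. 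With these checked, the preceding proposition (whose proof transfers the relevant Maurer--Cartan elements via Lemma~\ref{lem_mctransport_nov} and then diagonalizes the isomorphism) yields that $X_0$ and $X'_0$ are S-equivalent in $\mc F_{G''}(S;\mc E)_\phi$.

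Finally I would extract the two conclusions. Because every simple object of $\mc F_{G''}(S;\mc E)_\phi$ is supported on a single straight edge, the support of any object equals the union of the supports of its Jordan--H\"older factors; as S-equivalence preserves the multiset of simple factors it preserves this union, so the supports of $X_0$ and $X'_0$ agree as subsets of $S$, which forces $G=G'$. With a common support graph, the filtrations and subquotient identifications witnessing the S-equivalence involve only edges of $G$, hence descend to $\mc F_G(S;\mc E)_\phi$, which is the assertion of the theorem. The main obstacle is the middle step: identifying the simple objects of $\mc F_{G''}(S;\mc E)_\phi$ and verifying that the $\mathbf R$-model $\mc A$ genuinely satisfies the finite-length, admissibility, and $\mathrm{Hom}$-vanishing hypotheses needed to invoke the preceding proposition — once the simples are pinned down, both the S-equivalence and the equality of support graphs are immediate.
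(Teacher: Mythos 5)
Your overall architecture is the one the paper intends: the theorem is deduced from the S-equivalence proposition immediately preceding it, applied after reducing to a common phase (via the Ext-vanishing proposition) and a common graph $G''=G\cup G'$ using subdivision and Proposition~\ref{prop_skelgen}. But the step you yourself flag as the main obstacle contains a genuine error: the simple objects of $\mc F_{G''}(S;\mc E)_\phi$ are \emph{not} the stable objects supported on single straight edges. In one direction, a single-edge object $E\otimes X$ with an interior endpoint $p$ has nonvanishing curvature $\mk m_0=1_E\otimes(c_{p,X}+c_{q,X})$, so it is not flat and is not an object of $V(\mc F_{G''}(S;\mc E)_{\mathbf k})$ at all, let alone a simple of the abelian category; only twisted complexes whose Maurer--Cartan data at the vertices cancel the curvature are flat. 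In the other direction, genuinely multi-edge spectral networks are simple: the trivalent networks in Figure~\ref{fig_a2a2Z} and the hexagon-with-spokes representative of $\mc O_p$, $p\in R_2$, are stable, hence simple in their fixed-phase categories, yet supported on three, respectively twelve, edges (a proper sub at the vertex, such as $A\subset B$ in the tripod, fails to have semistable induced subquotients of the prescribed phases, so the tripod has no nonzero proper subobjects). Consequently your verification of $\Hom^0(U_i,U_j)=\mathbf R^{\delta_{ij}}$ and $\Hom^{<0}(U_i,U_j)=0$ is aimed at the wrong objects: in the proposition the $U_i$ are the Jordan--H\"older simples of $X_0$ in $H^0\mc A_{\mathbf k}$, whatever their support, and the Hom conditions come from their simplicity (Schur over the algebraically closed $\mathbf k$, minimality, admissibility), not from single-edge geometry. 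A second flaw is your source of finite length: the heart $\mc E_{(0,1]}$ need not be finite length in general (the paper itself notes the relevant heart already fails to be finite length for $A_5\otimes A_2$); what is finite length is each category $\mc P(\psi)$ of semistables of a fixed phase, by the support property, and it is this, together with the finiteness of $G''$ and the strictness Lemma~\ref{lem_strictness}, that makes $\mc F_{G''}(S;\mc E)_\phi$ finite length.

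Both defects are repairable without changing your plan. Take $\mc A$ to be the full subcategory of $\mathrm{Tw}(\mc E\otimes\mc A_{G''}(S))$ on objects whose central fiber lies in $\mc F_{G''}(S;\mc E)_\phi$, so that $H^0\mc A_{\mathbf k}$ is exactly the abelian category of Proposition~\ref{prop_fixedphase_abelian}, and apply the proposition with the $U_i$ being its honest simples. For the extraction of the support graph, the single-edge claim is unnecessary: kernels and cokernels in $\mc F_{G''}(S;\mc E)_\phi$ are computed edge-wise (with intersected, respectively induced, filtrations, strict by Lemma~\ref{lem_strictness}), and an extension of semistable objects of the same phase on a given edge vanishes iff both sub and quotient vanish there; hence the support of any object is the union of the supports of its Jordan--H\"older factors, so S-equivalent objects have equal support and $G=G'$ follows, after which your descent of the filtration data to $\mc F_G(S;\mc E)_\phi$ goes through as written.
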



\section{Examples of type $A_n\otimes A_2$}
\label{sec_ana2}

Following the general discussion in the previous section, we specialize in this section to cases where the base surface is a disk and/or the coefficient category is $\mc A_n$.
Much of this is preparation for Section~\ref{sec_a5a2}.
In Subsection~\ref{subsec_reptype} we discuss the representation type of the triangulated categories $\mc A_m\otimes \mc A_n$ and their interpretation as Fukaya categories with coefficients.
We fix a DG-model and stability condition for $\mc A_2$ in Subsection~\ref{subsec_coeffcat}.
Subsection~\ref{subsec_fukdisk} contains some preliminaries on the categories $\mc F(S,M;\mc E)$ in the case where $S$ is a disk with marked points $M$ on the boundary.
Finally, in Subsection~\ref{subsec_a2a2}, the main conjecture is checked in the case $A_2\otimes A_2$. 

\subsection{Representation type}
\label{subsec_reptype}

Some of the simplest examples of triangulated categories are $\mc A_n=D^b(A_n)$ where $A_n$ is the path algebra, over some field $\mathbf K$, of an $A_n$-type quiver, i.e. a quiver with underlying graph the $A_n$ Dynkin diagram.
As can be shown via reflection functors, the category $D^b(A_n)$ is independent of the orientation of the arrows.
By a combination of Gabriel's theorem and an argument for derived categories of hereditary abelian categories, it follows that $\mc A_n$ has only finitely many indecomposable objects up to isomorphism and shift.
The category $\mc A_n$ has a geometric interpretation as the Fukaya category of a disk with $n+1$ marked points on the boundary. Indecomposable objects then correspond to graded arcs in the disk, up to isotopy, connecting distinct marked points, see for example~\cite{hkk}.

More complicated examples of triangulated categories are obtained by taking tensor products.
Write $\mc A_m\otimes \mc A_n$ for the triangulated completion of the tensor product of DG-categories.
Then $\mc A_m\otimes \mc A_n\cong D^b(A_m\otimes A_n)$ where $A_m\otimes A_n$ is the tensor product of algebras and is the path algebra of a quiver with $(m-1)\times (n-1)$ squares and the relations that these squares commute. 
If $K$ is a Novikov field, then there is a quasi-equivalence
\[
\mc F(S,M;\mc A_n)_{\mathbf K}\cong \mc A_m\otimes \mc A_n
\]
where $S$ is a disk with $|M|=m+1$ marked points on the boundary, as will be shown in Subsection~\ref{subsec_fukdisk} below. 
They are thus basic examples of categories of the form $\mc F(S,M;\mc E)_{\mathbf K}$ where both the base $(S,M)$ and the fiber $\mc E$ are non-trivial, at least for $m,n\geq 2$.
It turns out that for small values of $m$ and $n$, $\mc A_m\otimes \mc A_n$ is significantly simpler than for general $m,n$, and the purpose of this subsection is to collect some facts in this direction.

A useful invariant of $\mc A_m\otimes \mc A_n$ is its Calabi--Yau dimension.
In general, a triangulated category $\mc C$ with Serre functor $S:\mc C\to \mc C$ is said to be \textbf{fractional Calabi--Yau} of dimension $p/q\in\QQ$ if $S^q\cong [p]$ for some $q\geq 1$, $p\in\ZZ$, see~\cite{keller_cy} for more details. 
We write this as $\dim_{\mathrm{fCY}}(\mc C)=p/q$.
For example 
\[
\dim_{\mathrm{fCY}}\left(\mc A_n\right)=1-\frac{2}{n+1}
\]
from which it follows by additivity that
\[
\dim_{\mathrm{fCY}}\left(\mc A_{n_1}\otimes\cdots\otimes \mc A_{n_k}\right)=k-\frac{2}{n_1+1}-\ldots-\frac{2}{n_k+1}.
\]
Without loss of generality, we assume that $n_i>1$ and $n_i\geq n_{i+1}$, then the above dimension is $<1$ for $\mc A_n$, $\mc A_2\otimes \mc A_2$, $\mc A_3\otimes \mc A_2$, and $\mc A_4\otimes \mc A_2$, equal to $1$ for $\mc A_2\otimes \mc A_2\otimes \mc A_2$, $\mc A_3\otimes \mc A_3$, and $\mc A_5\otimes \mc A_2$, and $>1$ in all other cases.

Let us look at the cases where $\dim_{\mathrm{fCY}}<1$ first.
It turns out, see~\cite{ladkani}, that there are derived Morita equivalences
\[
A_2\otimes A_2\sim D_4,\qquad A_3\otimes A_2\sim E_6,\qquad A_4\otimes A_2\sim E_8 
\]
where $D_4$ is the path algebra of some orientation of the $D_4$ Dynkin diagram, and similarly for $E_6$ and $E_8$.
By the already mentioned theorem of Gabriel it follows that all these cases are, like $A_n$, of finite representation type, i.e. there are only finitely many indecomposable objects up to isomorphism and shift in the derived category.
This makes it possible to verify Conjecture~\ref{main_conj} by hand, which will be demonstrated in the simplest case of $A_2\otimes A_2$ and for a particular stability condition in Subsection~\ref{subsec_a2a2}.

Next, we consider the cases where $\dim_{\mathrm{fCY}}=1$.
It turns out that not only is the derived category of tame representation type, i.e. there are at most one-parameter families of indecomposable objects, but it also has a geometric interpretation as the derived category of coherent sheaves on an orbifold $\PP^1$ which is a stacky quotient of an elliptic curve.
This equivalence was proven by Ueda in~\cite{ueda} and can be interpreted as an instance of Homological Mirror Symmetry.
\begin{center}
\begin{tabular}{c|c|c}
algebra & orbifold points on $\PP^1$  & elliptic curve quotient \\
\hline
$A_2\otimes A_2\otimes A_2$ & $\ZZ/3,\ZZ/3,\ZZ/3$ & $E/(\ZZ/3)$ \\
$A_3\otimes A_3$ & $\ZZ/4,\ZZ/4,\ZZ/2$ & $E/(\ZZ/4)$ \\
$A_5\otimes A_2$ & $\ZZ/6,\ZZ/3,\ZZ/2$ & $E/(\ZZ/6)$ 
\end{tabular}
\end{center}
In Section~\ref{sec_a5a2} we verify Conjecture~\ref{main_conj} in the case $A_5\otimes A_2$ where $\mc A_2$ plays the role of coefficient category and $\mc A_5$ is the Fukaya category of the base surface, a disk with six marked points on the boundary. 
We expect the other tame cases to be similar.
Furthermore, both the stability condition on the coefficient category and the geometry of the base surface are as symmetric as possible. 

For $\dim_{\mathrm{fCY}}>1$ the triangulated category has wild representation type, i.e. there are families of indecomposable objects of arbitrarily high dimension~\cite{Leszczynski1994}.
Hence, any direct verification of Conjecture~\ref{main_conj} by hand seems hopeless, and a different, more general approach is needed.

\subsection{Coefficient category}
\label{subsec_coeffcat}

The coefficient category in our main example is the bounded derived category of representations of the $A_2$ quiver over $\mathbf k$.
It will be convenient to use the following DG model. 
Start with the category $A_2$ with two objects $A,C$ with $\mathrm{End}(A)=\mathrm{End}(C)=\mathbf k$, $\Hom(A,C)=0$, and $\Hom(C,A)=\mathbf k[-1]$ (i.e. concentrated in degree 1).
Then let $\mc E$ be the category of one-sided twisted complexes over $A_2$.
Concretely, an object of $\mc E$ is of the form
\begin{equation}\label{a2_tw_cx}
\left((V\otimes C)\oplus (W\otimes A),\begin{bmatrix} d_V & 0 \\ f & d_W \end{bmatrix}\right)
\end{equation}
where $(V,d_V)$ and $(W,d_W)$ are $\ZZ$-graded finite-dimensional co-chain complexes over $\mathbf k$ and $f:V\to W$ is a chain map.
In particular we have the object $B:=(C\xrightarrow{1} A)$ which fits into an exact triangle $A\to B\to C\to A[1]$.

We choose the stability condition on $\mc E$ such that $A,B,C$ are semistable of phase $0,\frac{1}{3},\frac{2}{3}$, respectively, and unit length central charge.
Thus, the central charge of an object of the form \eqref{a2_tw_cx} is
\[
Z((V\otimes C)\oplus (W\otimes A))=\chi(V)e^{2\pi i/3}+\chi(W)
\]
where $\chi$ denotes the Euler characteristic, and semistable objects are of the form $E[m]^{\oplus n}$ where $E$ is one of $A,B,C$, $m\in\ZZ$, $n\in\ZZ_{\geq 0}$.
In fact any stability condition in the same $\CC$-orbit of $\mathrm{Stab}(\mc E)$ would work just as well --- we fix this one for concreteness.

\subsection{Fukaya category of the disk (with coefficients)}
\label{subsec_fukdisk}

We specialize to the case where the base surface $S$ is a disk with some number $n+1$ of marked points on the boundary.
More precisely, choose $\zeta\in\CC^*$ and let $S=S_\zeta\subset\CC$ be the closed disk of radius $|\zeta|$, $M=M_\zeta$ the orbit of $\zeta$ under the rotation by an angle of $2\pi/(n+1)$ around $0\in \CC$, $\nu=\zeta^2dz^2$ (which has ``horizontal'' foliation given by lines parallel to the one through $0$ and $\zeta$), and $\omega=dx\wedge dy$ the standard area form.
For the canonical choice $\zeta=1$, $M_\zeta$ is the set of $n+1$-st roots of unity, however it will be useful in our later arguments to have slightly more flexibility. 

Fix a coefficient DG-category $\mc E$.
We want to identify $\mc F(S_\zeta,M_\zeta;\mc E)_{\mathbf K}$ with the category of twisted complexes $\mathrm{Tw}((A_n\otimes\mc E)\otimes_{\mathbf k} \mathbf R)_{\mathbf K}$.
Assume first that $\zeta\in\RR_{>0}$.
Let $G\subset S$ be the graph with edges the segments between consecutive points in $M_\zeta$ except for the segment from $\zeta$ to $e^{2\pi i/(n+1)}\zeta$, i.e. all the sides except one of the regular $n+1$-gon with vertices $M_\zeta$.
The category $\mc A_G(S_\zeta)$ then has objects $X_1,\ldots,X_n$ and a basis of morphisms given by $x_i:X_{i-1}\to X_i$, $2\leq i\leq n$, and identity morphisms. 
We choose the (unique) grading on the edges of $G$ so that $|x_i|=1$ and the grading of $X_1$ is in the range $(0,1/2)$.
For example, for $n=5$ the grading on $X_i$ is $\frac{1}{3}-\frac{1}{3}(i-1)$.
By Proposition~\ref{prop_skelgen}, there is a quasi-equivalence
\[
\mc F_G(S_\zeta;\mc E)_{\mathbf K}\longrightarrow\mc F(S_\zeta,M_\zeta;\mc E)_{\mathbf K}
\]
and the category on the left is essentially by definition the category twisted complexes over $(A_n\otimes\mc E)\otimes_{\mathbf k}\mathbf R$, as there are no disk corrections.

For general $\zeta\in\CC^*$, the identification 
\[
\mc F(S_\zeta,M_\zeta;\mc E)\cong \mc F(S_{|\zeta|},M_{|\zeta|};\mc E)
\]
depends on a choice of $\mathrm{Arg}(\zeta)\in\RR$ which is used to identify gradings of graphs with endpoints in $M_\zeta$ with gradings of the corresponding rotated graph with endpoints in $M_{|\zeta|}$.

Fix for now $S=S_\zeta$ and $M=M_\zeta$.
We already know that for various $G\in\GG$ which are skeleta, i.e. \textit{trees} since $S$ is a disk, the categories $\mc F_G(S;\mc E)_{\mathbf K}$ are canonically quasi-equivalent.
For general $S$, there is no reason for this quasi-equivalence to be defined over $\mathbf k$, but in case of a disk, the equivalence has a simple description.

\begin{prop}
\label{prop_treecat}
Let $G,G'\in\GG$ be trees, then the following diagram of functors commutes up to natural isomorphism
\[
\begin{tikzcd}
\mc F_G(S;\mc E)_{\mathbf K} \arrow{dr} \arrow{rr} & & \mc F_{G'}(S;\mc E)_{\mathbf K}\arrow{dl} \\
& \mc F(S,M;\mc E)_{\mathbf K}
\end{tikzcd}
\]
where the horizontal arrow is the functor obtained by tensoring with $\mathbf K$ the canonical functor $\mc F_G(S;\mc E)_{\mathbf k}\to \mc F_{G'}(S;\mc E)_{\mathbf k}$ coming from edge contractions/expansions.
\end{prop}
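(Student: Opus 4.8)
The plan is to use that, since $S$ is a disk, the trees $G,G'$ are skeleta, so by Proposition~\ref{prop_skelgen} the two diagonal functors $\iota_G^{\mathbf K},\iota_{G'}^{\mathbf K}$ into the direct limit are already quasi-equivalences; thus the triangle is one of quasi-equivalences and it remains only to produce the natural isomorphism. Write $\Phi_{G',G}\colon\mc F_G(S;\mc E)_{\mathbf K}\to\mc F_{G'}(S;\mc E)_{\mathbf K}$ for the horizontal functor, which is the base change to $\mathbf K$ of the edge contraction/expansion quasi-equivalence over $\mathbf R$ from Proposition~\ref{prop_edgecontr}; the assertion is $\iota_{G'}^{\mathbf K}\circ\Phi_{G',G}\cong\iota_G^{\mathbf K}$. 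I would first record the coherence of the edge-move functors, $\Phi_{G'',G}\cong\Phi_{G'',G'}\circ\Phi_{G',G}$ and $\Phi_{G,G}\cong\mathrm{id}$, which is implicit in~\cite{dk_triangulated,hkk}. Granting this, ``the triangle for $(G,G')$ commutes'' is reflexive, symmetric (passing to quasi-inverses) and transitive, so since any two skeleta are joined by a finite chain of elementary moves, it suffices to treat a single move.

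If $G'$ is a subdivision or a subgraph of $G$ (or conversely), then $\Phi_{G',G}$ is by construction exactly the transition functor $F_{G',G}$ of the direct system (Subsections~\ref{subsec_refine} and~\ref{subsec_generalfiber}), and the relation $\iota_{G'}\circ F_{G',G}=\iota_G$ holds already over $\mathbf R$ by the universal property of the colimit; base-changing to $\mathbf K$ gives commutativity on the nose. By the reduction of edge expansion to subdivision-and-gluing (Figure~\ref{fig_edgemovie}), the only remaining case is the edge-identification functor $F\colon\mc F_G(S;\mc E)\to\mc F_{G'}(S;\mc E)$ of Subsection~\ref{subsec_edge}.

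For this case I would pass to the common refinement $\widetilde G:=G\cup G'\in\GG$, which satisfies $G,G'\leq\widetilde G$ and carries transition functors with $\iota_{\widetilde G}\circ F_{\widetilde G,G}=\iota_G$ and $\iota_{\widetilde G}\circ F_{\widetilde G,G'}=\iota_{G'}$. It then suffices to produce a natural isomorphism
\[
F_{\widetilde G,G'}^{\mathbf K}\circ F^{\mathbf K}\;\cong\;F_{\widetilde G,G}^{\mathbf K}\colon\ \mc F_G(S;\mc E)_{\mathbf K}\longrightarrow\mc F_{\widetilde G}(S;\mc E)_{\mathbf K},
\]
since applying $\iota_{\widetilde G}^{\mathbf K}$ then yields the claim. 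Both sides are now concrete functors into the category of a \emph{fixed} finer graph $\widetilde G$: they send an object of $\mc F_G(S;\mc E)$ (objects of $\mc E$ on edges of $G$ together with a Maurer--Cartan element built from boundary paths) to twisted complexes over $\widetilde G$, and by the explicit formulas for $F$ (Subsection~\ref{subsec_edge}) together with the sheaf-theoretic description of the central fibres (Subsection~\ref{subsec_centralfiber}) the two composites agree canonically after base change to $\mathbf k$.

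The hard part is to promote this central-fibre agreement to a natural isomorphism over $\mathbf K$. The two composites are supported on different subgraphs of $\widetilde G$ --- one on the edges of $G$, the other on those of $G'$ --- and become isomorphic only through the disk corrections attached to the positive-area regions that $G$ and $G'$ cut out between them: a boundary path $\eta$ across such a region of area $A$ contributes $\mk m_1(\eta)=t^{A}1_X$, which is invertible only after inverting $t$, exactly as in the edge-removal argument (Lemma~\ref{edgeremoval}). I would construct the comparison transformation explicitly from these corrections and verify that it is a quasi-isomorphism by the base-change method of Proposition~\ref{prop_quasiequiv} (via the singular value decomposition of Proposition~\ref{prop_RmapSVD}: a map of finite-rank free complexes invertible over $\mathbf k$ is invertible over $\mathbf K$), so that the central-fibre isomorphism lifts to an isomorphism of the two $\mathbf K$-functors. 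The bookkeeping of signs and of the $t$-powers attached to the intermediate regions is the only delicate point; the geometric input --- that $\widetilde G$ contains both $G$ and $G'$ and that the disk has no cycles to obstruct the identification --- is immediate, and is precisely why the comparison, though defined over $\mathbf k$, can only be realized as an isomorphism after inverting $t$.
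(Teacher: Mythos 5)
Your scaffolding is mostly sound: the reduction to a single elementary move is legitimate (in fact you do not need the coherence isomorphisms $\Phi_{G'',G}\cong\Phi_{G'',G'}\circ\Phi_{G',G}$ from~\cite{dk_triangulated,hkk} at all, since the canonical functor along a chain is by definition the composite of the elementary ones, so transitivity of ``the triangle commutes'' is automatic), and the strict identities $\iota_{\widetilde G}\circ F_{\widetilde G,G}=\iota_G$ and $\iota_{\widetilde G}\circ F_{\widetilde G,G'}=\iota_{G'}$ for the common refinement $\widetilde G=G\cup G'$ are correct. The genuine gap sits in the one case that carries all the content, the edge-identification move, where your argument is internally inconsistent. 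You claim the two composites $F_{\widetilde G,G'}\circ F$ and $F_{\widetilde G,G}$ ``agree canonically after base change to $\mathbf k$''. They do not: their images are supported on the subdivided copies of $G$ and of $G'$ inside $\widetilde G$, and in the central fiber $V\bigl(\mc F_{\widetilde G}(S;\mc E)_{\mathbf k}\bigr)\simeq\Gamma(\widetilde G;\mc E_{\widetilde G})$ isomorphism is detected edgewise (restriction to an edge stalk is a functor), so an object with zero stalk on an edge of $G'$ not lying on $G$ cannot be isomorphic to one with nonzero stalk there --- as you yourself observe two sentences later, any comparison map is built from disk corrections carrying factors $t^{A}$ with $A>0$, hence vanishes over $\mathbf k$. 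This inconsistency kills your verification mechanism: Propositions~\ref{prop_quasiequiv} and~\ref{prop_RmapSVD} promote invertibility \emph{over $\mathbf k$} to invertibility over $\mathbf R$ (hence over $\mathbf K$), whereas your comparison map is divisible by $t^{>0}$ and so is zero over $\mathbf k$ and never invertible over $\mathbf R$; its invertibility over $\mathbf K$ would have to be established by an explicit computation in the style of the 1-gon removal argument and Lemma~\ref{edgeremoval}, exhibiting the $t^{-A}$-rescaled homotopies and checking closedness and naturality of the transformation against all the $\mk m_n$'s of $\mc F_{\widetilde G}(S;\mc E)$. That construction is precisely the hard part of the proof, and in your text it appears only as ``I would construct \ldots and verify'' --- a plan, not an argument.

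It is worth noting that the paper's proof avoids this entire comparison problem by a rigidity argument: it first reduces to $\mc E=\mathrm{Perf}(\mathbf k)$ (the general case being obtained by tensoring with $\mc E$ and checking compatibility with the restriction functors at the marked boundary points), then observes --- as you do via Proposition~\ref{prop_skelgen} --- that all three functors in the triangle are quasi-equivalences, so the discrepancy is an autoequivalence of $\mc F(S,M;\mathrm{Perf}(\mathbf k))_{\mathbf K}\simeq\mc A_n$; since autoequivalences of $\mc A_n$ are just rotations of the disk composed with shifts, an autoequivalence fixing the objects given by arcs between consecutive marked points must be the identity. If you want to salvage your route, you should either import this kind of rigidity to produce the missing natural isomorphism, or actually carry out the $\mathbf K$-linear homotopy computation over $\widetilde G$; as written, the central-fiber claim is false and the $\mathbf K$-invertibility is unproven.
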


\begin{proof}
First we can reduce to the case $\mc E=\mathrm{Perf}(\mathbf k)$, since the general case is obtained by tensoring everything by $\mc E$.
Then use the fact that autoequivalences of $\mc A_n$ very simple --- just rotations of the disk and shift --- so only the identity fixes the object which is an arc from one marked boundary point to the next.
Finally, everything is compatible with the restriction functor to $\mc E$ corresponding to any of the marked boundary points.
\end{proof}

Suppose $\zeta,\eta\in\CC^\times$ so that the straight line segment from $\zeta$ to $\eta$ does not intersect the $(n+1)$-gon corresponding to $\zeta$ and $M_\zeta$ is contained in the convex hull of $M_\eta$.
Then there is a functor
\begin{equation}
\label{fun_add_spokes}
\mc F(S_\zeta,M_\zeta;\mc E)\to \mc F(S_\eta,M_\eta;\mc E)
\end{equation}
which takes a graph $G$ and adds $n+1$ edges, from $e^{2\pi ik/(n+1)}\zeta$ to $e^{2\pi ik/(n+1)}\eta$ for $k=0,\ldots,n$.
The object of $\mc E$ on each new edge is the extension of the objects on the edges coming into the original marked boundary point. In fact, the functor is just the composition of $n+1$ edge expansion functors.

\begin{prop}
\label{prop_fun_spokes}
The functor \eqref{fun_add_spokes} is compatible with the identification of both categories, after tensoring with $\mathbf K$, with $\mathrm{Tw}((A_n\otimes\mc E)\otimes_{\mathbf k} \mathbf R)_{\mathbf K}$.
\end{prop}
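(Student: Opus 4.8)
The plan is to show that, after transporting through the skeleton identifications of Subsection~\ref{subsec_fukdisk}, the functor \eqref{fun_add_spokes} becomes naturally isomorphic to the identity on $\mathrm{Tw}((A_n\otimes\mc E)\otimes_{\mathbf k}\mathbf R)_{\mathbf K}$. First I would reduce to the case $\mc E=\mathrm{Perf}(\mathbf k)$, exactly as in the proof of Proposition~\ref{prop_treecat}: the edge expansion functors of Subsection~\ref{subsec_edge} have the form $\mathrm{Tw}(\mathrm{id}_{\mc E}\otimes F)$ for a base functor $F$ on the $\mc A_G(S)$ which does not involve $\mc E$, and both skeleton identifications are likewise obtained by tensoring with $\mc E$, so by $\mc E$-linearity the general statement follows from the case where the fiber is a point.

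Next I would fix convenient skeleta. For $(S_\zeta,M_\zeta)$ take the path graph $G_\zeta$ consisting of all but one side of the inner $(n+1)$-gon; this carries no disk corrections and realizes the identification $\mc F(S_\zeta,M_\zeta;\mc E)_{\mathbf K}\cong\mathrm{Tw}((A_n\otimes\mc E)\otimes_{\mathbf k}\mathbf R)_{\mathbf K}$ directly. Applying \eqref{fun_add_spokes} to $G_\zeta$ produces the graph $T:=G_\zeta\cup\{\text{spokes}\}$ in $S_\eta$; counting vertices and edges ($2n+2$ vertices, $2n+1$ edges) shows $T$ is a tree whose leaves are exactly the outer points $M_\eta$, with all remaining vertices (the points of $M_\zeta$) interior to $S_\eta$ by the hypothesis $M_\zeta\subset\mathrm{conv}(M_\eta)$, so $T$ is a skeleton of $(S_\eta,M_\eta)$. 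By Proposition~\ref{prop_skelgen} the map $\mc F_T(S_\eta;\mc E)_{\mathbf K}\to\mc F(S_\eta,M_\eta;\mc E)_{\mathbf K}$ is a quasi-equivalence, and by Proposition~\ref{prop_treecat} the resulting identification with the standard category agrees, up to natural isomorphism, with the one coming from the minimal skeleton of $S_\eta$. Thus it suffices to compute \eqref{fun_add_spokes} using $G_\zeta$ on the source and $T$ on the target, where by construction it is precisely the composite of the $n+1$ edge expansions attaching the spokes; each of these is a quasi-equivalence by Proposition~\ref{prop_edgecontr} (the valency hypothesis there being unnecessary, as remarked in Subsection~\ref{subsec_edge}). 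Hence \eqref{fun_add_spokes} is a quasi-equivalence and induces some auto-equivalence $\Psi$ of $\mc A_n$ over $\mathbf K$.

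It then remains to identify $\Psi$. Since $\mathrm{Aut}(\mc A_n)$ is generated by the disk rotation and the shift, I would pin down $\Psi$ by its action on the arc joining two consecutive marked points together with a grading computation, exactly as in Proposition~\ref{prop_treecat}. The spoke from $e^{2\pi ik/(n+1)}\zeta$ to $e^{2\pi ik/(n+1)}\eta$ carries the same index $k$, so the construction matches the $k$-th point of $M_\zeta$ with the $k$-th point of $M_\eta$ and preserves their cyclic order; the hypotheses that the segment from $\zeta$ to $\eta$ misses the inner polygon and that $M_\zeta\subset\mathrm{conv}(M_\eta)$ ensure the spokes are embedded, radial, and pairwise non-crossing, so no monodromy is introduced and the rotational part of $\Psi$ is trivial. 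Contracting the spokes identifies the arc between consecutive points of $M_\zeta$ with the arc between the corresponding consecutive points of $M_\eta$, and with the choices of $\mathrm{Arg}(\zeta)$ and $\mathrm{Arg}(\eta)$ used to match gradings of the two differentials $\zeta^2\,dz^2$ and $\eta^2\,dz^2$ along these radial segments, the induced shift vanishes as well. Therefore $\Psi\cong\mathrm{id}$, which is the asserted compatibility.

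The main obstacle is this last step: verifying that the composite edge expansion $A_\infty$-functor $F$ of Subsection~\ref{subsec_edge}, which places the cone of the incoming edges on each spoke, realizes the intended operation on the nose, including the sign and grading bookkeeping forced by the differing grading structures on $S_\zeta$ and $S_\eta$. Everything preceding it is either a formal $\mc E$-linear reduction or a direct appeal to the skeleton and edge-contraction invariance results already established.
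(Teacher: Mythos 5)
Your proposal is correct and follows essentially the same route as the paper: the paper's proof is simply ``Follows from Proposition~\ref{prop_treecat}'', and your argument --- reduction to $\mc E=\mathrm{Perf}(\mathbf k)$, recognizing $G_\zeta\cup\{\text{spokes}\}$ as a tree skeleton so that Propositions~\ref{prop_skelgen} and~\ref{prop_treecat} apply, and then pinning down the induced autoequivalence of $\mc A_n$ as the identity via its action on an arc between consecutive marked points together with the $\mathrm{Arg}$/grading conventions --- is exactly the intended unwinding of that citation, since the proof of Proposition~\ref{prop_treecat} itself proceeds by the same $\mc E$-linear reduction and the rigidity of $\mathrm{Aut}(\mc A_n)$ (rotations and shifts only). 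Your closing worry about on-the-nose sign and grading bookkeeping is defused by precisely this rigidity, so no genuine gap remains.
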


\begin{proof}
Follows from Proposition~\ref{prop_treecat}.
\end{proof}

\subsection{Example: $A_2\otimes A_2$}
\label{subsec_a2a2}

We consider the simplest example where a spectral network with a vertex appears. 
Let $(S,M)$ be the unit disk with three marked points on the boundary the third roots of unity, and let $\mc E$ be the category $\mc A_2$ with the standard stability conditions as in Subsection~\ref{subsec_coeffcat}.
The Fukaya category $\mc F(S,M;\mc E)_{\mathbf K}$ is then quasi-equivalent to the derived category of the algebra $A_2\otimes A_2$ over $\mathbf K$.

We find the following 12 irreducible spectral networks with phase in $(0,1]$, depicted next to their central charge, see Figure~\ref{fig_a2a2Z}.
\begin{figure}
\centering
\begin{tikzpicture}
\draw[gray] (0,0) -- (5,0);
\draw[->] (0,0) -- (1.73,1);
\draw[gray] (2,.75) -- (2,1.25) -- (2.43,1);
\draw[thick] (2,.75) -- (2.43,1);
\node[below] at (2.21,.87) {$\scriptstyle A$};
\draw[gray] (3,.75) -- (3.43,1) -- (3,1.25);
\draw[thick] (3,.75) -- (3,1.25);
\node[left] at (3,1) {$\scriptstyle C$};
\draw[gray] (4,1.25) -- (4,.75) -- (4.43,1);
\draw[thick] (4,1.25) -- (4.43,1);
\node[above] at (4.21,1.12) {$\scriptstyle B$};
\draw[->] (0,0) -- (1.73,3);
\draw[gray] (2,3.25) -- (2.43,3.5) -- (2,3.75) -- cycle;
\draw[thick] (2.16,3.5) -- (2,3.25);
\node[below] at (1.9,3.25) {$\scriptstyle A$};
\draw[thick] (2.16,3.5) -- (2.43,3.5);
\node[right] at (2.43,3.5) {$\scriptstyle B$};
\draw[thick] (2.16,3.5) -- (2,3.75);
\node[above] at (1.9,3.75) {$\scriptstyle C$};
\draw[->] (0,0) -- (0,2);
\draw[gray] (-.13,2.25) -- (-.13,2.75) -- (.29,2.5);
\draw[thick] (-.13,2.25) -- (.29,2.5);
\node[below] at (.1,2.37) {$\scriptstyle B$};
\draw[gray] (-.13,3.75) -- (.29,3.5) -- (-.13,3.25);
\draw[thick] (-.13,3.75) -- (-.13,3.25);
\node[left] at (-.13,3.5) {$\scriptstyle A$};
\draw[gray] (-.13,4.75) -- (-.13,4.25) -- (.29,4.5);
\draw[thick] (-.13,4.75) -- (.29,4.5);
\node[above] at (.1,4.67) {$\scriptstyle C$};
\draw[->] (0,0) -- (-1.73,3);
\draw[gray] (-2.23,3.25) -- (-1.8,3.5) -- (-2.23,3.75) -- cycle;
\draw[thick] (-2.07,3.5) -- (-2.23,3.25);
\node[below] at (-2.33,3.25) {$\scriptstyle B$};
\draw[thick] (-2.07,3.5) -- (-1.8,3.5);
\node[right] at (-1.8,3.5) {$\scriptstyle C$};
\draw[thick] (-2.07,3.5) -- (-2.23,3.75);
\node[above] at (-2.33,3.75) {$\scriptstyle A$};
\draw[->] (0,0) -- (-1.73,1);
\draw[gray] (-4.43,1.25) -- (-4.43,.75) -- (-4,1);
\draw[thick] (-4.43,1.25) -- (-4,1);
\node[above] at (-4.21,1.12) {$\scriptstyle A$};
\draw[gray] (-3.43,.75) -- (-3,1) -- (-3.43,1.25);
\draw[thick] (-3.43,.75) -- (-3.43,1.25);
\node[left] at (-3.43,1) {$\scriptstyle B$};
\draw[gray] (-2.43,.75) -- (-2.43,1.25) -- (-2,1);
\draw[thick] (-2.43,.75) -- (-2,1);
\node[below] at (-2.21,.82) {$\scriptstyle C$};
\draw[->] (0,0) -- (-3.46,0);
\draw[gray] (-4.43,.25) -- (-4,0) -- (-4.43,-.25) -- cycle;
\draw[thick] (-4.29,0) -- (-4.43,.25);
\node[above] at (-4.53,.25) {$\scriptstyle B$};
\draw[thick] (-4.29,0) -- (-4,0);
\node[right] at (-4,0) {$\scriptstyle A$};
\draw[thick] (-4.29,0) -- (-4.43,-.25);
\node[below] at (-4.53,-.25) {$\scriptstyle C$};
\end{tikzpicture}
\caption{Irreducible spectral networks with phase in $(0,1]$ and their central charges.}
\label{fig_a2a2Z}
\end{figure}
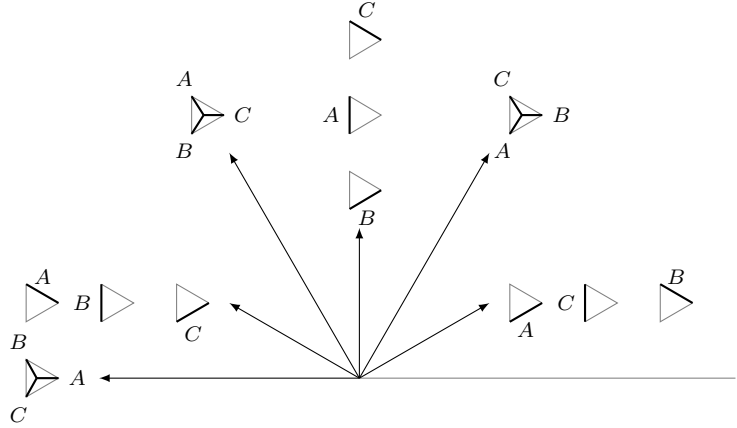
Looking at Figure~\ref{fig_a2a2Z}, we see that all central charges of spectral networks are of the form $me^{\pi i/6}-n\sqrt{3}$ for $m,n\in\ZZ_{\geq 0}$, where $e^{\pi i/6}$ is the central charge of the spectral networks representing the objects $A\to 0$, $0\to C$, and $B\to B$, and $-\sqrt{3}$ is the central charge of the spectral network representing the object $A[1]\to B[1]$.
Thus, we expect these four objects to be precisely the simple ones in the heart of the bounded t-structure associated with the desired stability condition.
The non-zero morphisms between the four objects are, besides multiples of the identity, all in degree one, with a basis corresponding to the three (long) arrows in the following quiver.
\[
\begin{tikzcd}
B\to B & & \\
 & A[1]\to B[1] \arrow[ul]\arrow[dl]\arrow[r] & A\to 0\\
0 \to C & & 
\end{tikzcd}
\]
In this way, one re-discovers the derived equivalence between $A_2\otimes A_2$ and $D_4$.
The point of this discussion is that it is now clear how to construct a suitable stability condition on $D^b(A_2\otimes A_2)$, i.e. one whose semistable objects are precisely those admitting a spectral network representative.
Namely, one uses the derived equivalence with $D^b(D_4)$ and its  natural bounded t-structure, together with the central charge as above.
This verifies the main conjecture in this example.

The same strategy works for $A_3\otimes A_2$ and $A_4\otimes A_2$, and also for other stability conditions on the coefficient category and/or less symmetric configurations of marked points in the base surface.
The reason is that the heart of the bounded t-structure of the desired stability condition is automatically finite-length, as the category has only finitely many indecomposable objects up isomorphism and shift.
Already for $A_5\otimes A_2$ it turns out the relevant abelian subcategory is not finite-length, so a different strategy is needed, see Section~\ref{sec_a5a2}.


\section{Example: $A_5\otimes A_2$}
\label{sec_a5a2}

This section contains a proof of our main result, the verification of the conjecture about spectral networks and stability conditions in the case where the base surface is a disk with marked points at the vertices of a regular hexagon and the coefficient category is $\mc A_2$ with the standard stability condition.
The first subsection contains an outline of the remainder of the section.
Some open problems and questions are formulated in Subsection~\ref{subsec_summary}.

\subsection{Outline}
\label{subsec_outline}

We consider the case where $S\subset C$ is a closed disk centered at the origin, $M\subset \partial S$ are the vertices of a regular hexagon, and $\mc E$ is the category $\mc A_2$ with the standard stability conditions as in Subsection~\ref{subsec_coeffcat}.
The Fukaya category $\mc F(S,M;\mc E)_{\mathbf K}$ is then quasi-equivalent to the derived category of the algebra $A_5\otimes A_2$ over $\mathbf K$, see Subsection~\ref{subsec_fukdisk}.
The goal of this section is to verify Conjecture~\ref{main_conj} in this case:

\begin{theorem}
\label{thm_a5a2}
For $S,M,\mc E$ as above, there is a stability condition on $\mc F(S,M;\mc E)_{\mathbf K}$ with central charge~\eqref{graph_Z} so that all semistable objects have spectral network representatives.
\end{theorem}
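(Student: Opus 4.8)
The plan is to transfer a stability condition along the homological mirror symmetry equivalence and then build spectral network representatives geometrically. First I would combine the identification $\mc F(S,M;\mc E)_{\mathbf K}\cong D^b(A_5\otimes A_2)$ from Subsection~\ref{subsec_fukdisk} with Ueda's equivalence~\cite{ueda}, $D^b(A_5\otimes A_2)\cong D^b(\mathbb{X})$, where $\mathbb{X}$ is the weighted projective line of tubular type $(2,3,6)$ (equivalently the stacky quotient $E/(\ZZ/6)$). On the coherent-sheaf side there is the standard stability condition whose heart is $\mathrm{coh}(\mathbb{X})$ and whose central charge is built from the rank and degree functions on $K_0(\mathbb{X})$; after possibly acting by the $\CC$- and $SL(2,\ZZ)$-actions this is the candidate stability condition, and one checks that its central charge agrees with~\eqref{graph_Z} by comparing the two linear functionals on $K_0$, which is a finite computation on a spanning set of objects. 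Since the theorem asserts only the forward direction (every semistable object admits a representative), it then remains to produce, for each semistable object, a spectral network of the correct phase; the reverse inclusion and hence the full Conjecture~\ref{main_conj} follows from the second Proposition of Subsection~\ref{subsec_spectral}.

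Next I would exploit the tubular structure of $\mathrm{coh}(\mathbb{X})$. Because $\dim_{\mathrm{fCY}}=1$, the category is tame: the indecomposable semistable objects of any fixed slope $\mu\in\QQ\cup\{\infty\}$ are organized into tubes and form a one-parameter family, and a group $\widetilde{SL(2,\ZZ)}$ of (shifts of) autoequivalences acts on the set of slopes by Möbius transformations, transitively on $\QQ\cup\{\infty\}$. This reduces the problem to two tasks: (i) exhibit spectral network representatives for one \emph{basic} one-parameter family — I would take the family of simple torsion objects, realized as the most symmetric closed spectral networks in the hexagon, with edges carrying the objects $A,B,C$ of $\mc E$ fitting into the exact triangle $A\to B\to C\to A[1]$ at the trivalent vertices and with constant total phase~\eqref{sum_of_phases}; and (ii) realize the generators of the $SL(2,\ZZ)$-action geometrically as explicit operations on spectral networks, namely rotations of the hexagon together with a ``twisting'' move that enlarges the support graph.

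The core of the argument is then a recursion: starting from the basic family and repeatedly applying these geometric autoequivalences, I would produce spectral network representatives for every slope, and hence — after also accounting for the finitely many rigid objects supported at the three orbifold points, which receive their own short spectral networks — for all semistable objects. At each step one must check that the constant-phase condition is preserved, that the output is a legitimate object of $\mc F(S,M;\mc E)$ (in particular that the edges remain straight geodesics of $|\nu|$ and the disk corrections behave correctly), and that the geometric move realizes the intended autoequivalence up to the $\CC$-action on stability conditions.

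I expect the main obstacle to be controlling this recursion. Unlike the cases of Subsection~\ref{subsec_a2a2}, the relevant heart here is not finite-length, so the support graphs grow without bound as one moves between slopes and one cannot simply enumerate indecomposables. The delicate points are: verifying that the twisting move corresponds precisely to the autoequivalence inducing the parabolic generator of $SL(2,\ZZ)$ on slopes; ensuring the constructed networks remain of finite length and genuinely constant-phase after iteration; and matching the central charge~\eqref{graph_Z} of the constructed networks with the transferred central charge throughout, which is what pins down the $\CC$-normalization. Once the recursion is shown to reach every slope and the orbifold-point objects are handled, Theorem~\ref{thm_a5a2} follows.
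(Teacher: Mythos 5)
Your proposal takes essentially the same route as the paper's proof: transfer the slope stability condition through Ueda's equivalence $\mc F(S,M;\mc E)_{\mathbf K}\cong D^b(E/(\ZZ/6))$, construct networks for the one-parameter family of simple torsion sheaves (phase $0$) as the base case, and then recurse by realizing the $\widetilde{SL(2,\ZZ)}$-generators geometrically as graph-enlarging twist moves, with rotation by $\pi/3$ handling the phase shifts by $1/3$. The delicate points you flag --- that the twisting move genuinely lifts the twist along the spherical adjunction (six equivariant objects, not a single spherical object), that constant phase is preserved on the new edges, and that the non-finite-length heart forces induction on the number of twists/dual twists reaching phases in $[0,1/3)$ --- are precisely what the paper resolves in Proposition~\ref{prop_t1t2slopes}, Lemma~\ref{sphericaltwistdiagram}, and Theorem~\ref{twistsn} via the fiberwise computation $T_A(A)=0$, $T_A(B)=C$, $T_A(C)=C$ in $\mc A_2$.
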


\begin{figure}
\centering
\includegraphics[scale=1]{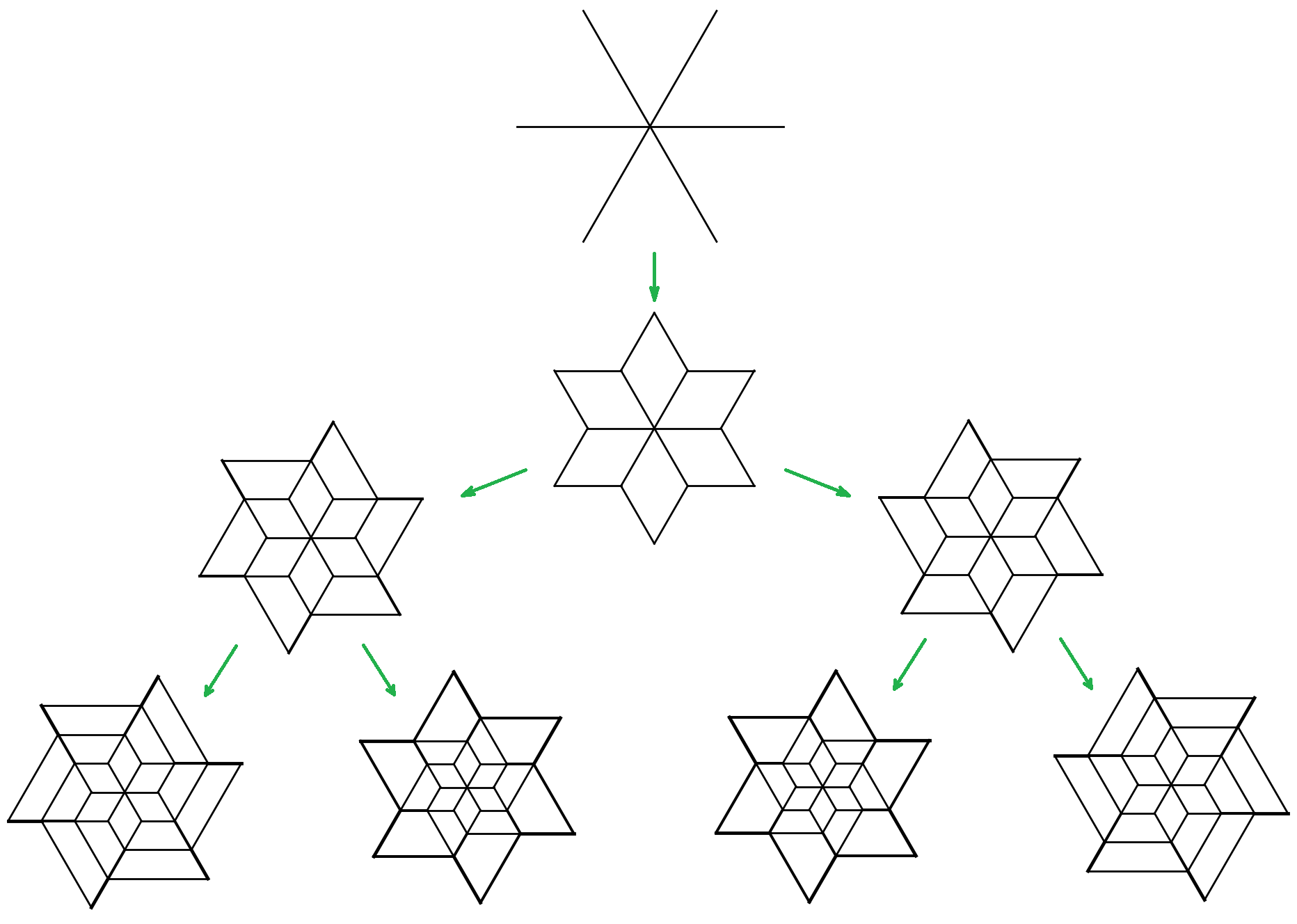}
\caption{Recursive construction of spectral networks.}
\label{fig_recursion}
\end{figure}

Let us outline the strategy of the proof.
\begin{enumerate}
    \item 
    The category $\mc A_5\otimes \mc A_2$ is identified with the derived category of coherent sheaves on the stacky quotient $E/(\ZZ/6)$ of an elliptic curve.
    The quotient is a $\PP^1$ with orbifold points with stabilizer groups $\ZZ/6$, $\ZZ/3$, and $\ZZ/2$.
    (Section~\ref{subsec_bside})
    \item
    There is an obvious stability condition on $D^b(E/(\ZZ/6))$ coming from slope stability of coherent sheaves. (Section~\ref{subsec_bside})
    \item
    Similar to the case of elliptic curves, there is a braid group action on the category $D^b(E/(\ZZ/6))$. The action is generated by twists along spherical adjunctions.
    Moreover, the action is transitive on slopes of semistable objects, hence the non-zero categories of semistable objects of fixed slopes are all equivalent.
    (Section~\ref{subsec_bside})
    \item
    The stability condition on $\mc F(S,M;\mc E)_{\mathbf K}$ is obtained by transfer along the equivalence with $D^b(E/(\ZZ/6))$.
    To verify the main conjecture, it remains to show that each stable objects has a spectral network representative.
    \item
    We show that stable objects of phase 0, which are the simple torsion sheaves on $E/(\ZZ/6)$, have spectral network representatives.
    (Section~\ref{subsec_phase0})
    \item
    Given a spectral network, one obtains two new ones by adding additional edges at the endpoints (see Figure~\ref{fig_recursion}). This operation lifts the twist along spherical adjunctions to the category $\mc F(S,M;\mc E)$.
    (Section~\ref{subsec_induction})
    \item Objects of phase contained in the sector $[0, 1/3)$ come from objects of phase $0$ by successive application
    of a twist $T_2$ or a dual-twist $T_1^{-1}$. By induction on the number of (dual-)twists in the sequence,
    the previous two steps combined show that all semistable objects with phase in $[0,1/3)$ have spectral network representatives.
    \item
    Applying autoequivalences of the coefficient category $\mc A_2$, which shift the phase by some integer multiple of $1/3$, one concludes that semistable objects of any phase have spectral network representatives. 
    This completes the proof.
\end{enumerate}

\subsection{B-side constructions}
\label{subsec_bside}

Recall from Subsection~\ref{subsec_reptype} that there is an equivalence of triangulated categories $D^b(A_5\otimes A_2)\cong D^b(E/(\ZZ/6))$ where $E$ is an elliptic curve with $\ZZ/6$-action and the quotient is considered as an orbifold/stacky $\PP^1$ with orbifold points $\ZZ/6,\ZZ/3,\ZZ/2$, a result of Ueda~\cite{ueda}.
The plan of this subsection is to fix an explicit equivalence between the two categories and to discuss the natural stability condition on $D^b(E/(\ZZ/6))$ coming from slope stability of coherent sheaves.

Fix an algebraically closed field $\mathbf K$ of characteristic zero --- we will take this to be a Novikov field later.
A concrete elliptic curve with $\ZZ/6$-action is the Fermat elliptic curve
\begin{equation*}
E:=\{x^3+y^3+z^3=0\}\subset\mathbb{P}^2.
\end{equation*}
The subgroup of $\mathrm{Aut}(E)$ which fixes the point $q:=(1,-1,0)\in E$ is cyclic of order 6, generated by $\sigma:E\to E$ with
\begin{equation*}
    \sigma(x,y,z):=(y,x,\omega z)
\end{equation*}
where $\omega$ is a primitive third root of unity.
This action of $\mathbb{Z}/6$ on $E$ has a single fixed point, $q$, an orbit of length 2 given by $(1,-\omega^{\pm 1},0)$, an orbit of length 3 given by $(1,1,-2^{1/3}\omega^k)$, $k=0,1,2$, and all other orbits are free.
Thus, the quotient orbifold 
\begin{equation*}
C:=E\big/(\mathbb Z/6)
\end{equation*}
is $\mathbb{P}^1$ with three stacky points with stabilizer groups $\mathbb Z/6,\mathbb Z/3,\mathbb Z/2$.
Make the identification $C\cong\mathbb{P}^1$ so that these points lie at $\mathbf{0,1,\infty}$ respectively.

The action of $\mathbb Z/6$ on $E$ naturally extends to an action on the bounded derived category of coherent sheaves on $E$, $D^b(E)$, and the quotient (orbit) category in the sense of~\cite{keller_orbit} is equivalent to $D^b(C)$, where $C$ is considered as a stack:
\begin{equation}\label{DbC_orbitcat}
    D^b(E)\big/(\mathbb Z/6)\cong D^b(C)
\end{equation}
see \cite{elagin}.
Thus, the category $D^b(C)$ looks a lot (up to a finite group action) like the derived category of an elliptic curve.
In fact, it would suffice for our purposes to \textit{define} $D^b(C)$ via~\eqref{DbC_orbitcat}.
Another approach which circumvents the general theory of stacks is to embed $C$ or more general $\PP^1$'s with orbifold points into a weighted projective space, see the influential work of Geigle--Lenzing~\cite{gl}.

\subsubsection*{Torsion sheaves and line bundles}

Let us review some basic facts about torsion sheaves and line bundles on $C$, as well as fix notation needed later.
We refer to~\cite{gl} for more details.

A simple sheaf $S\in\mathrm{Coh}(C)$ is necessarily supported at a single point $p\in C$. 
If $p$ is not an orbifold point, then $S=S_p$ is uniquely determined by $p$ up to isomorphism.
On the other hand, if $p$ is a $\mathbb Z/n$ orbifold point, then there are $n$ isomorphism classes of simple sheaves supported at $p$ corresponding to the characters of $\mathbb Z/n$.
More precisely, let $S_{p,1}$ be the skyscraper sheaf at $p$ with fiber the cotangent space $T^*_pC$ and its $\mathbb Z/n$ action, and let $S_{p,i}:=\left(S_{p,1}\right)^{\otimes i}$, $i\in\mathbb Z/n$ be its tensor powers.
Then $S_{p,0}$ is the skyscraper sheaf at $p$ with trivial action of $\mathbb Z/n$.
By Serre duality,
\begin{equation*}
    \mathrm{Ext}^1(S_{p,1},\mathcal O_C)\cong\mathrm{Hom}(\mathcal O_C,S_{p,0})\cong{\mathbf K}
\end{equation*}
and thus
\begin{equation*}
    \mathrm{Ext}^1(S_{p,i+1},S_{p,i})\cong{\mathbf K}
\end{equation*}
and these are the only non-zero $\mathrm{Hom}$'s and $\mathrm{Ext}^1$'s for pairs of non-isomorphic sheaves among $\mathcal O_C$, $S_{p,0}$, \ldots, $S_{p,n-1}$.

The Picard group $\mathrm{Pic}(C)$ is generated by $\mathcal O(p)$, $p\in\{\mathbf{0,1,\infty}\}$ and has relations
\begin{equation*}
    \mathcal O(6\cdot \mathbf 0)=\mathcal O(3\cdot \mathbf 1)=\mathcal O(2\cdot\mathbf \infty).
\end{equation*}
The dualizing sheaf is the line bundle
\begin{equation*}
    \omega_C=\mathcal O(5\cdot\mathbf 0-1\cdot\mathbf 1-1\cdot\mathbf\infty)
\end{equation*}
which generates the torsion subgroup in $\mathrm{Pic}(C)$.
The line bundles are related to the skyscraper sheaves via exact sequences
\begin{equation*}
    0\longrightarrow \mathcal O((k-1)\cdot p) \longrightarrow \mathcal O(k\cdot p) \longrightarrow S_{p,k} \longrightarrow 0.    
\end{equation*}
The direct sum of line bundles
\begin{equation*}
    \mathcal O\oplus\mathcal O(1\cdot\mathbf 0)\oplus\mathcal O(2\cdot\mathbf 0)\oplus\cdots \oplus\mathcal O(6\cdot\mathbf 0)\oplus\mathcal O(1\cdot\mathbf 1)\oplus\mathcal O(2\cdot\mathbf 1)\oplus\mathcal O(1\cdot\mathbf \infty)
\end{equation*}
is a formal Beilinson--type generator of $D^b(C)$ with corresponding quiver
\begin{equation}
\label{beilinson10}
\begin{tikzcd}[column sep=small]
 & \mathcal O(1\cdot\mathbf 0) \arrow[r] & \mathcal O(2\cdot\mathbf 0) \arrow[r] & \mathcal O(3\cdot\mathbf 0) \arrow[r] & \mathcal O(4\cdot\mathbf 0) \arrow[r] & \mathcal O(5\cdot\mathbf 0) \arrow[dr,start anchor=east,bend left] \\
\mathcal O \arrow[ur,end anchor=west,bend left]\arrow[rr]\arrow[drrr,end anchor=west,bend right=10] & & \mathcal O(1\cdot\mathbf 1) \arrow[rr] & & \mathcal O(2\cdot\mathbf 1) \arrow[rr] & & \mathcal O(6\cdot\mathbf 0) \\
 & & & \mathcal O(1\cdot\mathbf \infty) \arrow[urrr,start anchor=east,bend right=10]
\end{tikzcd}
\end{equation}
with relations that all three paths from the left-most vertex to the right-most vertex are equal.

\subsubsection*{Braid group action}

We begin with some generalities.
Let $\mathcal C$ and $\mathcal D$ be triangulated DG-categories, linear over a field $\mathbf K$, and $F:\mathcal D\to\mathcal C$ and $G:\mathcal C\to\mathcal D$ a pair of adjoint functors between them with counit $\varepsilon:FG\to 1_{\mathcal C}$ and unit $\eta:1_{\mathcal D}\to GF$.
Following Anno--Logvinenko~\cite{spherical}, the adjunction is called \textbf{spherical} if the endofunctors
\begin{equation*}
    T=\mathrm{Cone}(\varepsilon):\mathcal C\to\mathcal C,\qquad
    S=\mathrm{Cone}(\eta):\mathcal D\to\mathcal D
\end{equation*}
are autoequivalences, the \textit{twist} and \textit{cotwist}, respectively.
There is also a notion of \textit{dual twist} $T'$ which fits into an exact triangle 
\[
T'\longrightarrow 1_{\mc C}\longrightarrow FG'\longrightarrow T'[1]
\]
i.e. $T'$ is the cocone over the unit. For spherical adjunctions, $T'=T^{-1}$, see~\cite{spherical}.

Twists by spherical objects arise as a special case of this.
Namely, suppose that $\mathrm{Ext}^{\bullet}(X,Y)$ is  finite-dimensional for any pair of objects, and that $\mathcal C$ is a Calabi--Yau category of dimension $d$, i.e. $[d]$ is the Serre functor of $\mathcal C$.
Let $S\in\mathcal C$ be a spherical object, meaning that $\mathrm{Ext}^{\bullet}(S,S)=H^\bullet(S^d;\mathbf K)=\mathbf K\oplus\mathbf K[-d]$.
Under these assumptions, the hom-tensor adjunction with 
\begin{gather*}
    F:\mathrm{Perf}(\mathbf K)\to\mathcal C,\quad F(X):=X\otimes S \\
    G:\mathcal C\to\mathrm{Perf}(\mathbf K),\quad G(X)=\mathrm{Hom}^{\bullet}(S,X).
\end{gather*}
is a spherical adjunction and the autoequivalence $T_S:=\mathrm{Cone}(\varepsilon)$ is the \textbf{spherical twist} along $S$.
More explicitly, $T_S(X)$ fits into an exact triangle
\begin{equation*}
    \mathrm{Hom}^{\bullet}(S,X)\otimes S\to X\to T_S(X)\to\mathrm{Hom}^{\bullet+1}(S,X)\otimes S.
\end{equation*}
Its inverse, the dual twist, fits into an exact triangle
\[
\mathrm{Hom}^{\bullet-1}(X,S)\otimes S\to  {T_S}^{-1}(X)\to X\to \mathrm{Hom}^{\bullet}(X,S)\otimes S.
\]

We return to our example of the Fermat elliptic curve, $E$, with its $\mathbb Z/6$-action and fixed-point $q\in E$.
The category $D^b(E)$ has an abundance of spherical objects. 
We focus on the structure sheaf, $\mathcal O_E$, and the skyscraper sheaf at the fixed-point, $\mathcal O_q$.
The corresponding twist functors 
\begin{equation*}
    T_1:=T_{\mathcal O_q}=\mathcal O(q)\otimes\underline{\quad},\qquad T_2:=T_{\mathcal O_E}
\end{equation*}
satisfy the relations
\begin{equation*}
    T_1T_2T_1=T_2T_1T_2,\qquad (T_1T_2)^6=[2]
\end{equation*}
(see Seidel--Thomas~\cite{seidel_thomas}) and thus give an action of the group
\begin{equation*}
    \widetilde{SL(2,\mathbb Z)}:=\left\langle a,b,t\mid aba=bab,(ab)^6=t^2,t\text{ central}\right\rangle
\end{equation*}
which is a central extension of $SL(2,\mathbb Z)$,
on $D^b(E)$.

Since both $\mc O_q$ and $\mc O_E$ are fixed under the $\ZZ/6$-action, the $\widetilde{SL(2,\mathbb Z)}$-action commutes with the $\ZZ/6$-action.
Thus we obtain an induced $\widetilde{SL(2,\mathbb Z)}$-action on the orbit category $D^b(C)=D^b(E)/(\ZZ/6)$.
It turns out that the autoequivalences of $D^b(C)$ corresponding to $T_1$ and $T_2$ are not twists by spherical objects, but arise from spherical adjunctions between $D^b(C)$ and the category
\[
\mathrm{Perf}(\mathbf K)\big/(\ZZ/6)\cong\mathrm{Perf}\left(\mathbf K[\mathbb Z/6]\right)\cong\mathrm{Perf}(\mathbf K)^{6}
\]
which is the derived category of a set of six points (as a triangulated category).
The situation is summarized in the following diagram.
\[
\begin{tikzcd}[column sep=large,row sep=large]
\mathrm{Perf}(\mathbf K) \arrow[shift left,r,"\underline{\enspace}\otimes S"] \arrow[shift left,d] & D^b(E) \arrow[shift left,l,"\Hom^\bullet{(S,\underline{\enspace})}"] \arrow[shift left,d,"\pi_*"] \arrow[r,"T_S"] & D^b(E) \arrow[shift left,d,"\pi_*"] \\
\mathrm{Perf}(\mathbf K[\ZZ/6]) \arrow[shift left,u]\arrow[shift left,r] & D^b(C) \arrow[shift left,l]\arrow[shift left,u,"\pi^*"]\arrow[r] & D^b(C) \arrow[shift left,u,"\pi^*"] 
\end{tikzcd}
\]
Here, the second row is obtained from the first by passing to the $\ZZ/6$ orbit categories and the vertical arrows are induction/restriction functors.
The functor $\mathrm{Perf}(\mathbf K[\ZZ/6])\to D^b(C)$ sends a character $\chi:\ZZ/6\to \mathbf K^\times$ to the corresponding equivariant object $(S,\chi)\in D^b(C)$, which makes sense since $S$ is assumed to be fixed.
The corresponding twist autoequivalence is explicitly given by
\begin{equation}\label{twist6}
    T_{S_1,\ldots,S_6}(X):= \mathrm{Cone}\left(\bigoplus_{i\in\ZZ/6}\Hom^\bullet(S_i,X)\otimes S_i\to X\right)
\end{equation}
where 
\[
\left\{S_1,\ldots,S_6\right\}:=\left\{(S,\chi)\mid \chi:\ZZ/6\to\mathbf K^\times\right\}
\]
is the set of equivariant objects corresponding to $S$.
Note that the spherical adjunction is completely determined (up to equivalence) by the list of six objects, just as a spherical twist is determined by a single spherical object.
In fact, since the $S_i$'s are the indecomposable summands of $S_{1\ldots 6}:=S_1\oplus\cdots\oplus S_6$, the twist is determined by the single object $S_{1\ldots 6}$. 
We will occasionally refer to $T_{S_1,\ldots,S_6}$ as the \textit{twist by $S_{1\ldots 6}$} for brevity. 

In the particular case $S=\mathcal O_q$, the six different $\mathbb Z/6$-equivariant structures give objects
\begin{equation*}
    S_{0,0}, S_{0,1}, \ldots, S_{0,5}\in D^b(C)
\end{equation*}
i.e. the simple sheaves at the $\mathbb Z/6$ orbifold point, while the six equivariant structures on $\mathcal O_E$ give objects
\begin{equation*}
    O_C,\omega_C^{-1},\ldots,\omega_C^{-5}\in D^b(C)
\end{equation*}
where $\omega_C$ is the canonical bundle.
We write $T_1,T_2\in\mathrm{Aut}(D^b(C))$ for the corresponding autoequivalences.

\subsubsection*{Stability conditions}

While the categories $D^b(E)$ and $D^b(C)$ are in many respects quite similar, the space of stability conditions of $D^b(C)$ is significantly more complicated than that of $D^b(E)$.
For $D^b(E)$ one takes $\Gamma:=K_{\mathrm{num}}(D^b(E))\cong \ZZ^2$, the numerical Grothendieck group, which records rank and degree, and $\mathrm{cl}$ the quotient map from $K_0(D^b(E))$.
Thus, the space of stability conditions has $\dim_{\CC}=2$ and it was shown by Bridgeland~\cite{bridgeland07} that the action of $\widetilde{GL^+(2,\mathbb R)}$ is free and transitive.
All stability conditions have the same set of semistable objects (just with relabelled phases), which are shifts of torsion sheaves and slope-semistable vector bundles.

On the other hand 
\[
K_0(D^b(C))=K_{\mathrm{num}}(D^b(C))\cong \ZZ^{10}
\]
as follows from the existence of the Beilinson exceptional collection~\eqref{beilinson10}.
In particular, taking $\Gamma=K_0(D^b(C))$, we have
\[
\dim_{\CC}\mathrm{Stab}(D^b(C))=10.
\]
Furthermore, since $\ZZ/6$ acts trivially on $\mathrm{Stab}(D^b(E))$, there is a map
\begin{equation}
\label{stab_induction}
\mathrm{Stab}(D^b(E))\longrightarrow\mathrm{Stab}(D^b(C))
\end{equation}
which is a special case of a result of Macri--Mehrotra--Stellari~\cite{mms09}.
The map is defined so that, given a stability condition on $D^b(E)$, and object in $D^b(C)$ is semistable of a particular phase if the corresponding object in $D^b(E)$, obtained by forgetting the equivariant structure, is semistable of that same phase.

Here, we will not attempt to compute all of $\mathrm{Stab}(D^b(C))$, which seems to be a formidable task, but instead focus on a single stability condition which corresponds to our A-side geometry (regular hexagon in the base, equilateral triangle in the fiber) and is in the image of the map~\eqref{stab_induction}. 
\begin{df}
The \textbf{standard stability condition} on $D^b(C)$ has heart $\mathrm{Coh}(C)[1]\subset D^b(C)$ and central charge
\begin{equation}\label{Z_std}
    Z(E):=\mathrm{deg}(E)-e^{\pi i/3}\mathrm{rk}(E).
\end{equation}
\end{df}
Here $\deg$ is normalized so that $\deg S_{\mathbf{0},k}=1$, $\deg S_{\mathbf{1},k}=2$, and $\deg S_{\mathbf{\infty},k}=3$.
We introduce the shift $[1]$ above for convenience so that torsion sheaves have phase $\phi=0$ instead of $\phi=1$.
From now on, unless explicitly stated otherwise, we assume that $D^b(C)$ is equipped with the standard stability condition.

\begin{prop}
\label{prop63}
Any semistable object of $D^b(C)$ is obtained as the image of a semistable object of phase 0 (torsion sheaf) under the action of $\widetilde{SL(2,\ZZ)}$ on $D^b(C)$.
\end{prop}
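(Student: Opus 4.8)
The plan is to reduce the statement to the analogous, and well understood, assertion on the elliptic curve $E$ and then descend along the forgetful functor $\pi^*\colon D^b(C)\to D^b(E)$. Recall that the standard stability condition on $D^b(C)$ was chosen to lie in the image of the map \eqref{stab_induction}, so that $Y\in D^b(C)$ is semistable of phase $\phi$ if and only if $\pi^*Y\in D^b(E)$ is semistable of phase $\phi$ for the corresponding stability condition $\sigma_E$ on $E$. Moreover, since $T_1$ and $T_2$ are induced from the autoequivalences $T_{\mathcal O_q},T_{\mathcal O_E}$ of $D^b(E)$, which commute with the $\ZZ/6$-action, the two $\widetilde{SL(2,\ZZ)}$-actions are intertwined by the forgetful functor, i.e. $\pi^*\circ g_C\cong g_E\circ\pi^*$ for every $g\in\widetilde{SL(2,\ZZ)}$ (this is the commutativity of the right-hand square in the diagram preceding \eqref{twist6}). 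These two compatibilities are what make the descent possible.

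First I would prove the statement on $E$: every $\sigma_E$-semistable object lies in the $\widetilde{SL(2,\ZZ)}$-orbit of a torsion sheaf of phase $0$. Two facts enter. On the one hand, the action on the numerical Grothendieck group $K_{\mathrm{num}}(D^b(E))\cong\ZZ^2$, recording $(\mathrm{rk},\deg)$, is through the standard $SL(2,\ZZ)$-action: a direct computation from $T_S[X]=[X]-\chi(S,X)[S]$ gives $T_1\colon(r,d)\mapsto(r,d+r)$ and $T_2\colon(r,d)\mapsto(r-d,d)$, whose matrices generate $SL(2,\ZZ)$. Since $SL(2,\ZZ)$ acts transitively on primitive vectors of $\ZZ^2$, and the class of any semistable object is a nonnegative multiple of a primitive vector (nonzero since its central charge is nonzero), one can choose $g\in\widetilde{SL(2,\ZZ)}$ carrying that primitive class to $(0,1)$. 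On the other hand, $E$ has the special feature (Bridgeland~\cite{bridgeland07}) that all stability conditions share the same semistable objects, so $g$ sends $\sigma_E$-semistable objects to $g_*\sigma_E$-semistable ones, which are again $\sigma_E$-semistable. Hence $g\cdot X$ is a $\sigma_E$-semistable object of class $(0,n)$, $n\geq 1$, i.e. a torsion sheaf; normalizing by the central shift $[2]=(T_1T_2)^6$ we may take its phase to be $0$.

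With this in hand the descent is immediate. Let $Y\in D^b(C)$ be semistable; then $\pi^*Y$ is $\sigma_E$-semistable, so by the previous step there is $g\in\widetilde{SL(2,\ZZ)}$ with $g\cdot\pi^*Y$ a phase-$0$ torsion sheaf on $E$. By the intertwining relation, $\pi^*(g_C\cdot Y)\cong g_E\cdot\pi^*Y$ is a phase-$0$ torsion sheaf, and the defining biconditional for the induced stability condition forces $g_C\cdot Y$ to be $\sigma_C$-semistable of phase $0$, hence a torsion sheaf on $C$. Writing $T:=g_C\cdot Y$ we obtain $Y=g_C^{-1}\cdot T$, exhibiting $Y$ as the image under $g_C^{-1}\in\widetilde{SL(2,\ZZ)}$ of a semistable object of phase $0$, as claimed.

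The main obstacle is not any single estimate but the careful bookkeeping of the equivariant structures: one must verify that the braid relations and the intertwining $\pi^*\circ g_C\cong g_E\circ\pi^*$ genuinely hold at the level of the $\ZZ/6$-equivariant (equivalently, orbit) category rather than merely on $D^b(E)$, and that the map \eqref{stab_induction} really yields the stated biconditional characterization of semistability after pullback. A secondary point to get right is the phase normalization: full transitivity of $SL(2,\ZZ)$ must be used to land on $(0,1)$ rather than $(0,-1)$, and the central extension $\widetilde{SL(2,\ZZ)}$, through the element $(T_1T_2)^6=[2]$, must be invoked to place the resulting torsion sheaf in phase exactly $0$.
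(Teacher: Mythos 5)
Your proof is correct and follows essentially the same route as the paper: reduce to the well-known statement on $E$ and descend to $D^b(C)$ via the induced stability condition of \eqref{stab_induction} and the intertwining of the $\widetilde{SL(2,\ZZ)}$-actions, merely spelling out explicitly (via the $K_{\mathrm{num}}$-computation, transitivity on primitive vectors, and Bridgeland's description of $\mathrm{Stab}(D^b(E))$) what the paper cites as well known. The only addition in the paper is a one-line alternative bypassing the descent entirely: indecomposable objects of $D^b(C)$ are already slope-semistable by Geigle--Lenzing~\cite{gl}.
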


\begin{proof}
The corresponding statement is well-known for $D^b(E)$, where it follows from the fact that any indecomposable object is semistable, thus any autoequivalence preserves semistable objects, only changing their phases.
Since the standard stability condition on $D^b(C)$ is induced from the stability condition on $D^b(E)$ with heart $\mathrm{Coh}(E)[1]$ and central charge defined as in~\eqref{Z_std}, and the $\widetilde{SL(2,\ZZ)}$-action descends to $D^b(C)$, the statement also holds for $D^b(C)$.
Alternatively, one simply uses the fact that indecomposable implies slope semistable also in $D^b(C)$ as shown in~\cite{gl}.
\end{proof}

We will need a slightly different version of the above Proposition where we only act by the submonoid in $\mathrm{Aut}(D^b(C))$ generated by $T_1^{-1}$ and $T_2$ (defined above).

\begin{prop}
\label{prop_t1t2slopes}
Any semistable object of $D^b(C)$ with phase in the interval $[0,1/3)$ is obtained from a semistable object of phase $0$ by applying the functors $T_1^{-1}$ and $T_2$ some number of times.
\end{prop}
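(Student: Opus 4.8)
The plan is to reduce the claim to a combinatorial statement about the monoid generated by $T_1^{-1}$ and $T_2$ acting on the numerical Grothendieck group, and then to lift the result back to objects using that the relevant autoequivalences preserve semistability, exactly as in the proof of Proposition~\ref{prop63}.

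First I would record the action of $T_1$ and $T_2$ on $K_0(D^b(C))$ via the class $v(X):=(\mathrm{rk}(X),\deg(X))\in\ZZ^2$, with $\deg$ normalized as in~\eqref{Z_std}. Since $T_1=\mathcal O(\mathbf 0)\otimes(-)$ raises degree by the rank, and $T_2=T_{\mathcal O_C}$ acts through the (antisymmetric, as $D^b(C)$ is fractional Calabi--Yau of dimension $1$) Euler pairing $\chi(\mathcal O_C,-)=\deg$, one gets
\[
T_1=\begin{pmatrix}1&0\\1&1\end{pmatrix},\qquad T_2=\begin{pmatrix}1&-1\\0&1\end{pmatrix},\qquad T_1^{-1}=\begin{pmatrix}1&0\\-1&1\end{pmatrix},
\]
and a direct computation gives $Z\circ(T_1T_2)=e^{\pi i/3}Z$, so $T_1T_2$ rotates phases by $+1/3$ (consistent with $(T_1T_2)^6=[2]$). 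From~\eqref{Z_std} one then reads off that a primitive class has phase in $[0,1/3)$ precisely when it is $(0,1)$ or $(-a,b)$ with $a,b\geq 1$ coprime, i.e. it lies in the closed second quadrant minus the negative-rank axis; the two boundary rays $(0,1)$ and $(-1,0)$ carry phases $0$ and $1/3$. General classes of phase in $[0,1/3)$ are positive multiples of these.

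For the combinatorial core I would conjugate by $\mathrm{diag}(-1,1)$, which fixes $(0,1)$ and turns $T_1^{-1},T_2$ into the standard generators $\left(\begin{smallmatrix}1&0\\1&1\end{smallmatrix}\right)$ and $\left(\begin{smallmatrix}1&1\\0&1\end{smallmatrix}\right)$ of the monoid of nonnegative integer matrices in $SL(2,\ZZ)$. The Stern--Brocot/continued-fraction description of that monoid then shows that every primitive class in the above cone equals $w\cdot(0,1)$ for a word $w$ in $T_1^{-1}$ and $T_2$ (the case $(0,1)$ being the empty word), and hence every class in the cone is $w\cdot(0,n)$ for some $n\geq 1$.

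Finally I would lift to objects. Given a semistable $X$ of phase $\phi\in[0,1/3)$, its class $v(X)$ lies in the cone; choose $w$ as above and set $Y:=w^{-1}X$. Because every indecomposable in $D^b(C)$ is slope-semistable (Proposition~\ref{prop63}, cf.~\cite{gl}), the autoequivalence $w^{-1}$ preserves semistability, so $Y$ is semistable with class a positive multiple of $(0,1)$; thus $Z(Y)\in\RR_{>0}$ and $Y$ is a shift $T_0[2k]$ of a torsion sheaf. It remains to force $k=0$, and this is the step I expect to be the main obstacle: Proposition~\ref{prop63} only determines the connecting autoequivalence up to the central shift $[2]=(T_1T_2)^6$, which is \emph{not} in the monoid $\langle T_1^{-1},T_2\rangle$, so one must check that the continued-fraction word $w$ realizes the genuine real phase $\phi$ rather than $\phi+2$. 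The clean way to close this is to track phases on the universal cover and observe that the monoid orbit of $(0,1)$ never leaves the phase cone $[0,1/3)$ (each generator moves a class within this cone, by the orbit computation above); hence the rotation number of $w$ on the phase-$0$ locus equals $\phi(v(X))\in[0,1/3)$, which agrees with $\phi$, so $Y$ has phase exactly $0$ and is a genuine torsion sheaf. This yields $X\cong w\cdot Y$, as required.
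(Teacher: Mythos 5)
Your proposal is correct and is essentially the paper's own argument: the paper likewise reduces everything to the induced action on numerical classes, recording $T_1^{-1}:(m,n)\mapsto(m+n,n)$ and $T_2:(m,n)\mapsto(m,n+m)$ in the coordinates $(m,n)=(\deg,-\mathrm{rk})$, and runs Euclid's algorithm starting from $(1,0)$ --- your Stern--Brocot monoid is the same computation in conjugated coordinates, and your lift back to objects rests on the same fact (indecomposables are slope-semistable) invoked in Proposition~\ref{prop63}. Your only genuine addition is the explicit treatment of the $[2]$-shift ambiguity, which the paper's proof passes over in silence; your universal-cover sketch does close it, provided you upgrade ``the class orbit stays in the cone'' to an object-level statement by pinning down the lifted action on real phases with a single computation (e.g.\ $T_2(\mathcal O_p)\cong\mathcal O(-p)[1]$ has phase $1/6$) together with monotonicity of the lifted circle maps, which fix $0$ for $T_1^{-1}$ and $1/3$ for $T_2$ and hence map $[0,1/3)$ into itself.
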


\begin{proof}
We see from~\eqref{Z_std} that the possible phases of semistable objects are of the form
\[
\phi_{m,n}=\frac{1}{\pi}\mathrm{Arg}\left(m+e^{\pi i/3}n\right)
\]
for $(m,n)\neq (0,0)$, which we can choose to be primitive, and all branches of $\mathrm{Arg}$.
The case $\phi_{m,n}\in [0,1/3)$ corresponds to $(m,n)\in\ZZ_{>0}\times\ZZ_{\geq 0}$.
The dual twist $T_1^{-1}$ sends objects with $(\mathrm{deg},-\mathrm{rk})=(m,n)$ to $(m+n,n)$ and the twist $T_2$ sends them to $(m,n+m)$. 
(This is easy to see for the corresponding twists on $D^b(E)$ and thus follows for $D^b(C)=D^b(E/(\ZZ/6))$ by construction.)
By Euclid's algorithm we get, starting from $(1,0)$, any primitive $(m,n)$ with $m>0$, $n\geq 0$, thus all $\phi_{m,n}$ in the range $[0,1/3)$.
\end{proof}

\subsubsection*{Equivalence with $\mc A_5\otimes \mc A_2$}

We need to fix an equivalence of the two categories in order to transfer stability conditions from $D^b(C)$ to $D^b(A_5\otimes A_2)$.
This will give (an outline of) an alternative proof of the theorem of Ueda mentioned in Subsection~\ref{subsec_reptype}.
It suffices to give a generator in $D^b(A_5\otimes A_2)$ whose endomorphism algebra is the path algebra of the Beilinson-type quiver for $C$, in particular concentrated in degree 0, thus formal.

The $A_2$ quiver has three indecomposable representations, $A,B,C$ which fit into a short exact sequence $0\to A\to B\to C\to 0$.
It will be convenient to describe representations of $A_2\otimes A_5$ by listing five objects of $D^b(A_2)$ and maps between them.

\begin{center}
\begin{tabular}{|c|c|}
    \hline
    $\mathrm{Coh}(C)$ & $A_2\otimes A_5$ \\ 
    \hline\hline
    $S_{\mathbf 0,0}$ & $B\to B\to B\to B\to B$ \\ \hline
    $S_{\mathbf 0,1}$ & $0\to 0\to 0\to 0\to A[1]$ \\ \hline
    $S_{\mathbf 0,2}$ & $0\to 0\to 0\to C\to 0$ \\ \hline
    $S_{\mathbf 0,3}$ & $0\to 0\to B\to 0\to 0$ \\ \hline
    $S_{\mathbf 0,4}$ & $0\to A\to 0\to 0\to 0$ \\ \hline
    $S_{\mathbf 0,5}$ & $C[-1]\to 0\to 0\to 0\to 0$ \\ \hline\hline
    $S_{\mathbf 1,0}$ & $0\to 0\to C\to C\to C$ \\ \hline
    $S_{\mathbf 1,1}$ & $0\to B\to B\to B\to 0$ \\ \hline
    $S_{\mathbf 1,2}$ & $A\to A\to A\to 0\to 0$ \\ \hline\hline
    $S_{\mathbf \infty,0}$ & $0\to B\to B\to C\to C$ \\ \hline
    $S_{\mathbf \infty,1}$ & $A\to A\to B\to B\to 0$ \\ \hline
\end{tabular}
\end{center}

\begin{center}
\begin{tabular}{|c|c|}
    \hline
    $\mathrm{Coh}(C)$ & $A_2\otimes A_5$ \\ 
    \hline \hline
    $\mathcal O$ & $0 \to 0 \to 0 \to 0 \to B$ \\ \hline
    $\mathcal O(1\cdot\mathbf 0)$ & $0 \to 0 \to 0 \to 0 \to C$ \\ \hline
    $\mathcal O(2\cdot\mathbf 0)$ & $0 \to 0 \to 0 \to C \to C$ \\ \hline
    $\mathcal O(3\cdot\mathbf 0)$ & $0 \to 0 \to B \to C \to C$ \\ \hline
    $\mathcal O(4\cdot\mathbf 0)$ & $0 \to A \to B \to C \to C$ \\ \hline
    $\mathcal O(5\cdot\mathbf 0)$ & $C[-1] \to A \to B \to C \to C$ \\ \hline
    $\mathcal O(1\cdot\mathbf 1)$ & $0 \to B \to B \to B \to B$ \\ \hline
    $\mathcal O(2\cdot\mathbf 1)$ & $A \to A\oplus B \to A\oplus B \to B \to B$ \\ \hline
    $\mathcal O(1\cdot\mathbf \infty)$ & $A \to A \to B \to B \to B$ \\ \hline
    $\mathcal O(6\cdot\mathbf 0)$ & $A \to A\oplus B \to B\oplus B \to B\oplus C \to B\oplus C$ \\ \hline
    
\end{tabular}
\end{center}

The 1-parameter family of skyscraper sheaves $S_p$ corresponds to a family of representations of the form
\[
A\to A\oplus B\to B\oplus B\to B\oplus C\to C.
\]
To see the parameter note that the image of the maps from each copy of $A$ and the kernel of the maps to each copy of $C$ determine a quadruple of lines in $B\oplus B$, so we get four points on a projective line, which have a single invariant --- their cross-ratio.
For concreteness, we can take the representations of the form
\renewcommand{\arraystretch}{.5}
\[
\begin{tikzcd}[column sep=huge,row sep=huge,ampersand replacement=\&]
0 \arrow[r]\arrow[d] \& \mathbf K \arrow[r,"\begin{bmatrix} 0 \\ 1 \end{bmatrix}"]\arrow[d,"\begin{bmatrix} 0 \\ 1 \end{bmatrix}"] \& \mathbf K^2 \arrow[r,"{\begin{bmatrix} 1 & 0 \\ 0 & 1 \end{bmatrix}}"]\arrow[d,"{\begin{bmatrix} 1 & 0 \\ 0 & 1 \end{bmatrix}}"] \& \mathbf K^2 \arrow[r,"{\begin{bmatrix} 0 & 1 \end{bmatrix}}"]\arrow[d,"{\begin{bmatrix} 1 & -1 \end{bmatrix}}"] \& \mathbf K \arrow[d] \\
\mathbf K \arrow[r,"\begin{bmatrix} x \\ y \end{bmatrix}"'] \& \mathbf K^2 \arrow[r,"{\begin{bmatrix} 1 & 0 \\ 0 & 1 \end{bmatrix}}"'] \& \mathbf K^2 \arrow[r,"{\begin{bmatrix} 1 & -1 \end{bmatrix}}"'] \& \mathbf K \arrow[r] \& 0 \\
\end{tikzcd}
\]
where $p=x/y\in\mathbb{P}^1_{\mathbf{K}}$.
For $p\in\{0,1,\infty\}$ the above representation corresponds to an extension of the simple torsion sheaves at the orbifold point of order $6,3,2$ respectively. (This agrees with our previous convention for the coordinate on the orbifold.)

\subsection{Spectral networks of phase 0}
\label{subsec_phase0}

From now on we assume $n=5$, i.e. $|M_\zeta|=6$, and that $\mc E=\mc A_2$.
Equip the triangulated category
\[
\mc F\left(S_\zeta,M_\zeta;\mc E\right)\cong \mc A_5\otimes \mc A_2\cong D^b\left(E/(\ZZ/6)\right)
\]
with the stability condition constructed on the B-side.
Recall that for general $\zeta\in\CC^\times$, the equivalence between categories, and thus the stability condition, depend on a choice of $\mathrm{Arg}(\zeta)$. For $\zeta\in\RR_{>0}$ we choose $\mathrm{Arg}(\zeta)=0$.

\begin{prop}
Every semistable object of phase $0$ in $\mc F(S_\zeta,M_\zeta;\mc E)$ has a spectral network representative.
\end{prop}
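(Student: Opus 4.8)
The plan is to first translate the statement into the abelian category of torsion sheaves and then to realize a generating set geometrically. Under the standard stability condition \eqref{Z_std} an object has phase $0$ precisely when its class lies in $\RR_{>0}$, i.e. when $\mathrm{rk}=0$ and $\deg>0$; thus the semistable objects of phase $0$ are exactly the nonzero torsion sheaves on $C$, all of which are automatically semistable of the same phase, since any subobject of a torsion sheaf is again torsion of phase $0$. These torsion sheaves form a finite-length abelian category whose simple objects are the generic skyscrapers $S_p$ ($p\neq\mathbf 0,\mathbf 1,\mathbf\infty$) together with the orbifold simples $S_{\mathbf 0,k},S_{\mathbf 1,k},S_{\mathbf\infty,k}$. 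Since $\mc F_G(S_\zeta;\mc E)_0$ is abelian by Proposition~\ref{prop_fixedphase_abelian}, it suffices to (i) produce a phase-$0$ spectral network representative for each simple torsion sheaf, and (ii) check that the class of objects admitting a phase-$0$ representative is closed under the direct sums and extensions needed to build an arbitrary torsion sheaf.

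The geometric dictionary comes from the phase decomposition \eqref{sum_of_phases}. For $\zeta\in\RR_{>0}$ a straight edge in direction $\theta$ has base phase $\theta/\pi$, so a phase-$0$ edge carrying a semistable fibre object must satisfy $\theta/\pi+\phi_\sigma(E)=0\pmod 1$. As $A,B,C$ have fibre phases $0,\tfrac13,\tfrac23$, the admissible edge directions are exactly $0,\tfrac{\pi}{3},\tfrac{2\pi}{3}$ modulo $\pi$ — the three directions realized by the sides of the regular hexagon (and by the radial spokes from the centre to the vertices) — with the fibre label on an edge forced by its direction up to shift. A trivalent vertex at which the three directions meet realizes precisely the exact triangle $A\to B\to C$ of $\mc A_2$, so these tripods are exactly the allowed vertices. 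Hence a phase-$0$ spectral network is a planar graph assembled from the three characteristic directions with $A/B/C$ routed accordingly and tripod vertices encoding the $\mc A_2$-exact sequences; its central charge \eqref{graph_Z} is then automatically a positive real, equal to the total $|\nu|$-length and matching the degree of the corresponding torsion sheaf.

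With this in place I would construct the generic skyscraper $S_p$ first, as the universal case. Starting from the explicit $\mc A_5\otimes\mc A_2$-representation $A\to A\oplus B\to B\oplus B\to B\oplus C\to C$ recorded in Subsection~\ref{subsec_bside}, I split each skeleton edge according to its $A,B,C$ summands and re-route the summands along their respective characteristic directions, joining them at tripod vertices realizing the corresponding exact triangles. The four lines in $B\oplus B$ that determine the cross-ratio $p=(x:y)$ become the transverse positions of four legs of the resulting network, so the internal moduli of the tripod configuration recover the parameter $p$, while the total $|\nu|$-length is $6$, matching $\deg S_p=6$. Degenerating $p\to\mathbf 0,\mathbf 1,\mathbf\infty$ collapses this network onto $\ZZ/6$-, $\ZZ/3$-, $\ZZ/2$-symmetric configurations whose pieces, of $|\nu|$-lengths $1,2,3$, are the networks representing the orbifold simples $S_{\mathbf 0,k},S_{\mathbf 1,k},S_{\mathbf\infty,k}$ (checked against the table entries), the symmetry of each degenerate configuration accounting for the $6,3,2$ isomorphism classes at each point. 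Finally, direct sums are realized by superimposing disjoint networks and extensions by gluing networks along shared vertices; Theorem~\ref{thm_specnet_unique} ensures no spurious objects arise, completing the passage from (i) to an arbitrary semistable object of phase $0$.

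I expect the main obstacle to be step (i) for $S_p$: verifying that the combinatorially constructed graph, together with its Maurer--Cartan and disk-correction data, actually maps under $-\otimes_{\mathbf R}\mathbf K$ and the skeleton equivalence to the prescribed representation with the correct cross-ratio. This is a matching-of-maps computation rather than a numerical check — one must confirm that the tripod vertices produce exactly the differentials of the \eqref{a2_tw_cx}-type twisted complexes and that the dependence on the internal moduli is precisely the dependence on $p$, not merely a reproduction of the right central charge and $K_0$-class.
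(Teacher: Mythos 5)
Your overall skeleton (reduce phase-$0$ semistables to simple torsion sheaves, exhibit explicit networks for the $6+3+2$ orbifold simples, then treat the one-parameter family of generic skyscrapers) matches the paper's, and the orbifold-simple step is handled there exactly as you suggest, via Proposition~\ref{prop_treecat} since those networks are trees. But there is a genuine gap in your treatment of the generic skyscraper $\mc O_p$: you posit a single network topology built from tripods, whose ``internal moduli'' --- real transverse positions of four legs --- are supposed to recover the cross-ratio $p=(x:y)$. This cannot work, because $p$ ranges over $\mathbf K^\times\setminus\{1\}$, and a point of the Novikov field is encoded by a valuation \emph{and} a unit part; real positional moduli alone see neither correctly. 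The paper's proof partitions $\mathbf K^\times\setminus\{1\}$ into three regions according to $\mathrm{val}(x/y)$ relative to the hexagon area $\alpha$, and the topology of the representative changes across them: for $\mathrm{val}\leq 0$ the network is a star of three diameters through a single $6$-valent vertex (no tripods at all --- so your claim that trivalent vertices are ``exactly the allowed vertices'' is false; the general vertex condition permits any valency, encoding a pair of anti-HN filtrations); for $\mathrm{val}\in(0,\alpha)$ it is a hexagon with six spokes, the spoke length carrying the valuation (this case is reduced to the third by rescaling via Proposition~\ref{prop_fun_spokes}); for $\mathrm{val}\geq\alpha$ it is the bare hexagon, with the unit part of $p$ carried by the \emph{total monodromy} $x/t^{\alpha}\in\mathbf R^\times$ in the Maurer--Cartan data. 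So the parameter lives partly in the geometry (edge lengths) and partly in the algebra (coefficients), and your proposal has no mechanism for either.

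Two further points. First, the verification you explicitly defer --- that the constructed network maps under the skeleton equivalence to the representation with the correct cross-ratio --- is precisely the technical core of the paper's proof: in the region $\mathrm{val}\geq\alpha$ an explicit closed isomorphism is written down in $\mc F(S_\zeta,M_\zeta;\mc E)$, and checking closedness requires the disk-correction terms $\mk m_6$ of the hexagon; without this your argument establishes only the central charge and $K_0$-class, as you acknowledge. Second, your closure step misuses Theorem~\ref{thm_specnet_unique}: that is a \emph{uniqueness} statement for representatives and cannot certify that a superimposed or glued network represents a prescribed direct sum or extension. The paper instead reduces at the outset to stable objects, so that no gluing argument for general extensions (e.g.\ non-reduced points $\mc O_{np}$) is ever needed in this proposition.
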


\begin{proof}
Without loss of generality, we can assume $\zeta\in\RR_{>0}$ --- the general case follows by rotation.
It suffices to show that stable objects have spectral network representatives.
The stable objects of phase 0 in $D^b\left(E/(\ZZ/6)\right)$ are simple torsion sheaves and the corresponding objects in $D^b\left(A_5\otimes A_2\right)$ were described in the previous subsection.

The spectral networks for the $6+3+2$ simple torsion sheaves at the orbifold points are as follows.
\[
\tikz{\hexbg (0,-1) \node[below] at (0,-2) {$B\to B\to B\to B\to B$}; \draw[thick] (2,0) to (1,1.73); \node[anchor=north east] at (1.5,.87) {$B$};} \qquad
\tikz{\hexbg (0,-1) \node[below] at (0,-2) {$0\to 0\to 0\to 0\to A[1]$}; \draw[thick] (-1,1.73) to (1,1.73); \node[anchor=north] at (0,1.73) {$A$};} \qquad
\tikz{\hexbg (0,-1) \node[below] at (0,-2) {$0\to 0\to 0\to C\to 0$}; \draw[thick] (-2,0) to (-1,1.73); \node[anchor=north west] at (-1.5,.87) {$C$};} 
\]
\[
\tikz{\hexbg (0,-1) \node[below] at (0,-2) {$0\to 0\to B\to 0\to 0$}; \draw[thick] (-2,0) to (-1,-1.73); \node[anchor=south west] at (-1.5,-.87) {$B$};} \qquad
\tikz{\hexbg (0,-1) \node[below] at (0,-2) {$0\to A\to 0\to 0\to 0$}; \draw[thick] (-1,-1.73) to (1,-1.73); \node[anchor=south] at (0,-1.73) {$A$};} \qquad
\tikz{\hexbg (0,-1) \node[below] at (0,-2) {$C[-1]\to 0\to 0\to 0\to 0$}; \draw[thick] (2,0) to (1,-1.73); \node[anchor=south east] at (1.5,-.87) {$C$};} 
\]
\[
\tikz{\hexbg (0,-1) \node[below] at (0,-2) {$0\to 0\to C\to C\to C$}; \draw[thick] (1,1.73) to (-1,-1.73); \node[anchor=south east] at (0,0) {$C$};} \qquad
\tikz{\hexbg (0,-1) \node[below] at (0,-2) {$0\to B\to B\to B\to 0$}; \draw[thick] (1,-1.73) to (-1,1.73); \node[anchor=north east] at (0,0) {$B$};} \qquad
\tikz{\hexbg (0,-1) \node[below] at (0,-2) {$A\to A\to A\to 0\to 0$}; \draw[thick] (-2,0) to (2,0); \node[anchor=north] at (0,0) {$A$};} 
\]
\[
\tikz{\hexbg (0,-1) \node[below] at (0,-2) {$0\to B\to B\to C\to C$}; \draw[thick] (1,1.73) to (0,0) to (1,-1.73); \draw[thick] (0,0) to (-2,0); \node[anchor=north west] at (.5,.87) {$C$}; \node[anchor=north east] at (.5,-.87) {$B$}; \node[anchor=south] at (-1,0) {$A$};} \qquad
\tikz{\hexbg (0,-1) \node[below] at (0,-2) {$A\to A\to B\to B\to 0$}; \draw[thick] (-1,1.73) to (0,0) to (-1,-1.73); \draw[thick] (2,0) to (0,0); \node[anchor=north] at (1,0) {$A$}; \node[anchor=south west] at (-.5,.87) {$B$}; \node[anchor=south east] at (-.5,-.87) {$C$};}
\]
In each case one must show that the spectral network object is isomorphic to the object of $D^b(A_5\otimes A_2)$ written below it.
This follows easily from Proposition~\ref{prop_treecat}, as all these spectral networks are supported on single edges or trees.

Next, we look at simple torsion sheaves away from the orbifold points.
Recall from Subsection~\ref{subsec_bside} that the object of $D^b(A_5\otimes A_2)$ corresponding to $\mc O_p$ is 
\begin{equation}
\label{genpt_a5a2}
\begin{tikzcd}[column sep=huge,row sep=huge,ampersand replacement=\&]
0 \arrow[r]\arrow[d] \& \mathbf K \arrow[r,"\begin{bmatrix} 0 \\ 1 \end{bmatrix}"]\arrow[d,"\begin{bmatrix} 0 \\ 1 \end{bmatrix}"] \& \mathbf K^2 \arrow[r,"{\begin{bmatrix} 1 & 0 \\ 0 & 1 \end{bmatrix}}"]\arrow[d,"{\begin{bmatrix} 1 & 0 \\ 0 & 1 \end{bmatrix}}"] \& \mathbf K^2 \arrow[r,"{\begin{bmatrix} 0 & 1 \end{bmatrix}}"]\arrow[d,"{\begin{bmatrix} 1 & -1 \end{bmatrix}}"] \& \mathbf K \arrow[d] \\
\mathbf K \arrow[r,"\begin{bmatrix} x \\ y \end{bmatrix}"'] \& \mathbf K^2 \arrow[r,"{\begin{bmatrix} 1 & 0 \\ 0 & 1 \end{bmatrix}}"'] \& \mathbf K^2 \arrow[r,"{\begin{bmatrix} 1 & -1 \end{bmatrix}}"'] \& \mathbf K \arrow[r] \& 0 \\
\end{tikzcd}
\end{equation}
where $p=x/y\in\mathbb{P}^1_{\mathbf{K}}\setminus \{0,1,\infty\}$.
Let $\alpha>0$ be the area of the hexagon with vertices $M_\zeta$.
We divide $\PP_{\mathbf K}^1\setminus \{0,1,\infty\}=\mathbf K^\times\setminus \{1\}$ into three regions.
\begin{align*}
R_1&:=\left\{a\in \mathbf K^\times\setminus \{1\}\mid \mathrm{val}(a)\in (-\infty,0]\right\} \\
R_2&:=\left\{a\in \mathbf K^\times\setminus \{1\}\mid \mathrm{val}(a)\in (0,\alpha)\right\} \\
R_3&:=\left\{a\in \mathbf K^\times\setminus \{1\}\mid \mathrm{val}(a)\in [\alpha,+\infty)\right\} 
\end{align*}
It turns out that the topology of the corresponding spectral network is different for each of the three regions, so we will consider them separately.

\underline{\textit{Region} $R_1$}.
For $x/y\in R_1$ we can normalize to $x=1$, then $y\in \mathbf R$, so the representation \eqref{genpt_a5a2} is defined over $\mathbf R$ and thus corresponds to some object supported on $G$ where $G$ is any skeleton for $(S_1,M_1)$.
In particular we can choose $G$ with a single 6-valent vertex.
We claim that the objects on the spokes (edges of the tree), which depend only on the reduction of \eqref{genpt_a5a2} modulo $t^{>0}$, are as indicated in the following figure.
\[
\tikz{\hexbg (0,-1) \node[below] at (0,-2) {$\mc O_p$, $p\in R_1$}; \draw[thick] (1,1.73) to (-1,-1.73); \draw[thick] (-1,1.73) to (1,-1.73); \draw[thick] (2,0) to (-2,0); \node[anchor=north west] at (.5,.87) {$C$}; \node[anchor=north east] at (.5,-.87) {$B$}; \node[anchor=south] at (-1,0) {$A$}; \node[anchor=north] at (1,0) {$A$}; \node[anchor=south west] at (-.5,.87) {$B$}; \node[anchor=south east] at (-.5,-.87) {$C$};} 
\]
In particular, they are semistable of the correct phase, which implies that we get a spectral network.

We look at spokes in counter-clockwise order, starting with the one pointing to the right.
The object on the first spoke is the first object in the sequence \eqref{genpt_a5a2}, which is $A$.
The object on the second spoke is the cone over the reduction mod $t^{>0}$ of the first map in the sequence, which is $B=(C\xrightarrow{1} A)$, as the image in $\mathbf k^2$ of the first horizontal arrow in the lower row is different from the image of the second vertical arrow.
Similar, looking at the cones over the remaining three maps and the fifth object in the sequence we get the objects $C,A,B,C$ as claimed.

\underline{\textit{Region} $R_2$}.
For $x/y\in R_2$ we claim that the object \eqref{genpt_a5a2} has a spectral network of the following form.
\[
\tikz{\hexbg (0,-1) \node[below] at (0,-2) {$\mc O_p$, $p\in R_2$}; \draw[thick] (1,0) to (.5,.87) to (-.5,.87) to (-1,0) to (-.5,-.87) to (.5,-.87) to cycle; \draw[thick] (1,0) -- (2,0); \draw[thick] (.5,.87) -- (1,1.73); \draw[thick] (-.5,.87) -- (-1,1.73); \draw[thick] (-1,0) -- (-2,0); \draw[thick] (-.5,-.87) -- (-1,-1.73); \draw[thick] (.5,-.87) -- (1,-1.73); \node[anchor=north] at (0,.87) {$A$}; \node[anchor=south] at (0,-.87) {$A$}; \node[anchor=east] at (.75,-.43) {$C$}; \node[anchor=west] at (-.75,.43) {$C$}; \node[anchor=east] at (.75,.43) {$B$}; \node[anchor=west] at (-.75,-.43) {$B$}; 
\node[anchor=west] at (.75,1.3) {$C$}; \node[anchor=east] at (.75,-1.3) {$B$}; \node[anchor=south] at (-1.5,0) {$A$}; \node[anchor=north] at (1.5,0) {$A$}; \node[anchor=west] at (-.75,1.3) {$B$}; \node[anchor=east] at (-.75,-1.3) {$C$};} 
\]
This reduces to the proof in the case $x/y\in R_3$ (see below) by rescaling the size of the outer hexagon so that its area becomes $\mathrm{val}(x/y)$ and applying Proposition~\ref{prop_fun_spokes}.

\underline{\textit{Region} $R_3$}.
For $x/y\in R_3$ we normalize to $y=1$, then $x/t^{\alpha}\in\mathbf R$.
We claim that the object \eqref{genpt_a5a2} is isomorphic to the object
\[
\tikz{\hexbg (0,-1) \node[below] at (0,-2) {$\mc O_p$, $p\in R_3$}; \draw[thick] (2,0) to (1,1.73) to (-1,1.73) to (-2,0) to (-1,-1.73) to (1,-1.73) to cycle; \node[anchor=north] at (0,1.73) {$A$}; \node[anchor=south] at (0,-1.73) {$A$}; \node[anchor=south east] at (1.5,-.87) {$C$}; \node[anchor=north west] at (-1.5,.87) {$C$}; \node[anchor=north east] at (1.5,.87) {$B$}; \node[anchor=south west] at (-1.5,-.87) {$B$};}
\]
with \textit{total monodromy} $x/t^\alpha$. 
This means the following: The Maurer--Cartan element for a graph with objects on edges as above is any $\mathbf R$-linear combination of the six boundary paths corresponding to the six vertices of the hexagon. The product of the six scalars is by definition the total monodromy.

To prove the claim, we describe an explicit isomorphism in $\mc F(S_\zeta,M_\zeta;\mc E)$.
\[
\begin{tikzcd}[column sep=huge,row sep=huge,ampersand replacement=\&]
C \arrow[d,"1"] \arrow[r,"1"] \& C[-1] \arrow[dr,"x/t^{\alpha}"] \arrow[ddd,"1",orange] \arrow[ddr,swap,pos=.4,"x-1",orange]\& \& C \arrow[r,"1"]\arrow[d,"1"]  \arrow[dd,"t^\alpha{\begin{bmatrix} x-1 \\ 0 \end{bmatrix}}",orange,bend left] \& C \arrow[dr,"1"] \arrow[dd,"t^\alpha{\begin{bmatrix} x-1 \\ 0 \end{bmatrix}}",orange] \\
A \arrow[ddr,"1",orange] \& \& A \arrow[r,"1"] \arrow[dd,swap,pos=.5,"t^\alpha{\begin{bmatrix} 1 \\ 1 \end{bmatrix}}",orange,bend right] \& A \arrow[dd,swap,pos=.65,"t^\alpha{\begin{bmatrix} 1 \\ 1 \end{bmatrix}}",orange,bend right] \& \& A \arrow[lllll,bend left=10,"1"] \\
 \&  \& C \arrow[r,"\begin{bmatrix} 0 \\ 1 \end{bmatrix}"]\arrow[d,"\begin{bmatrix} 0 \\ 1 \end{bmatrix}"] \& C^2 \arrow[r,"I_2"]\arrow[d,"I_2"] \& C^2 \arrow[r,"{\begin{bmatrix} 0 & 1 \end{bmatrix}}"]\arrow[d,"{\begin{bmatrix} 1 & -1 \end{bmatrix}}"] \& C  \\
 \& A \arrow[r,"\begin{bmatrix} x \\ y \end{bmatrix}"] \& A^2 \arrow[r,"I_2"] \& A^2 \arrow[r,"{\begin{bmatrix} 1 & -1 \end{bmatrix}}"] \& A \&  \\
\end{tikzcd}
\]
When checking that the morphism is closed, $\mk m_6$ terms appear: In the fourth column $t^\alpha{\begin{bmatrix} x \\ 1 \end{bmatrix}}$ from $C$ to $A^2$ and in the fifth column $t^\alpha(x-1)$ from $C$ to $A$.
To check that the morphism is an isomorphism one constructs an explicit inverse in a similar way.
\end{proof}

\begin{remark}
Let us comment on the above proof.
Note that while all the categories $\mc F(S_\zeta,M_\zeta;\mc E)_{\mathbf K}$ are canonically identified for $\zeta\in\RR_{>0}$, the spectral network representative of a given object can depend on the area of the hexagon.
Suppose $p\in R_2$ for a given area $\alpha$, so the spectral network representative of $\mc O_p$ is a hexagon with six spokes. 
If we shrink $\alpha$, then the spokes shrink with it, and eventually $p\in R_3$ so the spectral network representative is just the outer hexagon without spokes.
\end{remark}

\subsection{Induction Step}
\label{subsec_induction}

As before, for $\zeta\in\CC^\times$ we write $M_\zeta$ for the set of vertices of a regular hexagon centered at the origin in $\CC$ and with $\zeta\in M_\zeta$, and $S_\zeta$ for the closed disk with $M_\zeta$ on its boundary.
We equip $S_\zeta$ with the quadratic differential $\nu_\zeta=\zeta^2dz^2$.
The coefficient category is $\mc E=\mc A_2$ with the stability condition as in Subsection~\ref{subsec_coeffcat} throughout.

Suppose $G$ is a graph with endpoints on $M_{\zeta}$, then we obtain the category over $\mR$
and its extension of scalars to $\mK$
$$
\Ff _G(S_\zeta; \Ee ) \rightarrow \Ff _G(S_\zeta; \Ee )_{\mK}.
$$
We'll assume that $G$ stays within the convex hull of $M_{\zeta}$. 

If $\eta$ is another point, we'll say it is {\em outside $\zeta$} if 1) the straight line segment 
$\rho$ from $\zeta$ to $\eta$ only intersects the convex hull of $M_{\zeta}$ at the point $\zeta$, and 2) the convex hull of $M_\eta$ contains $M_\zeta$. 
In this case, the rotational translates of $\rho$ will be disjoint and also stay outside the convex hull of $M_{\zeta}$.
Recall from Subsection~\ref{subsec_fukdisk} that there is a functor
\begin{equation}
\epsilon_{\rho} :\mc F_G(S_\zeta;\mc E)\to \mc F_{G^+}(S_\eta;\mc E)
\end{equation}
where $G^+$ is a graph containing $G$ and the six rotational translates of $\rho$.

On the other hand we have the B-side category $\Pp _{\mK} := D^b(C_{\mK}) \cong D^b(A_5\otimes A_2)$
defined over the field $\mK$. 
The following proposition is a special case of Proposition~\ref{prop_fun_spokes}.

\begin{prop}
\label{functorH}
Suppose $G$ is a connected graph  meeting all the vertices of  $M_{\zeta}$. 
Then there is an equivalence of categories
$$
H_{\zeta}:\Pp _{\mK} \stackrel{\cong}{\longrightarrow} \Ff _G(S; \Ee )_{\mK}
$$
for any choice of $\mathrm{Arg}(\zeta)$.
These satisfy the following compatibility constraint. Suppose $\eta$ is another point outside of $\zeta$ in the above sense,
with $\rho$ the line segment from $\zeta$ to $\eta$ and $\epsilon_{\rho}$ the extension of objects functor to 
a graph $G^+$ that is connected graph with endpoints on $M_{\eta}$ containing $G$ and the rotational translates of $\rho$.
Also choose the unique $\mathrm{Arg}(\eta)$ with $|\mathrm{Arg}(\eta)-\mathrm{Arg}(\zeta)|<\pi$.
Then the diagram 
$$
\begin{array}{ccc}
\Pp _{\mK} & \stackrel{H_{\zeta}}{\longrightarrow} & 
\Ff _G(S; \Ee )_{\mK} \\
{\scriptstyle = }\downarrow & & \downarrow {\scriptstyle \epsilon_{\rho}} \\
\Pp _{\mK} & \stackrel{H_{\eta}}{\longrightarrow} & 
\Ff _{G^+ }(S; \Ee )_{\mK} 
\end{array}
$$
commutes.  
\end{prop}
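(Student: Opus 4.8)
The plan is to reduce the statement to Proposition~\ref{prop_fun_spokes} by unwinding the construction of $H_\zeta$. Recall that $H_\zeta$ is the composite of the explicit equivalence $\mathcal{P}_{\mathbf K} = D^b(C_{\mathbf K}) \cong D^b(A_5\otimes A_2)$ fixed in Subsection~\ref{subsec_bside} with an identification $\iota_\zeta^G \colon D^b(A_5\otimes A_2) \cong \mathcal{F}_G(S_\zeta;\mathcal{E})_{\mathbf K}$. The latter is itself built from the canonical identification of $D^b(A_5\otimes A_2)$ with $\mathrm{Tw}((A_5\otimes\mathcal{E})\otimes_{\mathbf k}\mathbf R)_{\mathbf K} \cong \mathcal{F}_{G_0}(S_\zeta;\mathcal{E})_{\mathbf K}$ for a skeleton $G_0$ as in Subsection~\ref{subsec_fukdisk} (which depends on $\mathrm{Arg}(\zeta)$ through the grading convention), followed by the comparison $\mathcal{F}_{G_0}(S_\zeta;\mathcal{E})_{\mathbf K} \cong \mathcal{F}_G(S_\zeta;\mathcal{E})_{\mathbf K}$. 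Since $G$ is connected and meets every vertex of $M_\zeta$ it contains a skeleton, so this last comparison is the canonical one furnished by Propositions~\ref{prop_treecat} and~\ref{prop_skelgen}, and the same remarks apply to $G^+$ and $\eta$.

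Next I would observe that $\epsilon_\rho$ is literally the spoke-addition functor of Subsection~\ref{subsec_fukdisk}. The hypothesis that $\eta$ lies outside $\zeta$ is exactly the hypothesis under which \eqref{fun_add_spokes} is defined: the segment $\rho$ meets the convex hull of $M_\zeta$ only at $\zeta$, and $M_\zeta$ lies in the convex hull of $M_\eta$. Thus $\epsilon_\rho$ is the composite of the six edge-expansion functors attaching the rotational translates of $\rho$, precisely as in \eqref{fun_add_spokes}.

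With these identifications in place, Proposition~\ref{prop_fun_spokes} applies directly: after $\otimes_{\mathbf R}\mathbf K$ the functor $\epsilon_\rho$ intertwines the identifications of source and target with $\mathrm{Tw}((A_5\otimes\mathcal{E})\otimes_{\mathbf k}\mathbf R)_{\mathbf K}$, that is $\iota_\eta^{G^+} \cong \epsilon_\rho\circ\iota_\zeta^G$ up to natural isomorphism. Precomposing both sides with the common equivalence $\mathcal{P}_{\mathbf K}\cong D^b(A_5\otimes A_2)$ yields $H_\eta \cong \epsilon_\rho\circ H_\zeta$, which is exactly the asserted commutativity, the left vertical arrow being the identity of $\mathcal{P}_{\mathbf K}$.

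I expect the one genuine point to verify, and the main obstacle, to be the compatibility of the two determinations of $\mathrm{Arg}$. The identifications $\iota_\zeta^G$ and $\iota_\eta^{G^+}$ each depend on a choice of $\mathrm{Arg}$ used to transport gradings to the reference disk of positive real radius, and the object placed on each new spoke is the extension of the objects of $\mathcal{E}$ meeting the corresponding marked point, carried along a direction interpolating between $\arg\zeta$ and $\arg\eta$. Choosing $\mathrm{Arg}(\eta)$ with $|\mathrm{Arg}(\eta)-\mathrm{Arg}(\zeta)|<\pi$ guarantees that this interpolation does not wind around the origin, so that the induced gradings on the spokes and on the outer edges are the unwound ones and no net shift is introduced; for any other determination the two composites would differ by a power of $[1]$. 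I would make this precise by tracking the intersection index $i(-,-)$ of Subsection~\ref{subsec_grading} along $\rho$ and checking that the bound $|\mathrm{Arg}(\eta)-\mathrm{Arg}(\zeta)|<\pi$ forces the associated grading contribution to vanish, which is what pins down the left vertical arrow as the identity rather than $[k]$.
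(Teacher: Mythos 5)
Your proposal is correct and matches the paper's own argument: the paper proves Proposition~\ref{functorH} precisely by observing that it is a special case of Proposition~\ref{prop_fun_spokes} (itself resting on Proposition~\ref{prop_treecat}), which is exactly your reduction. Your additional care about the choice of $\mathrm{Arg}$ and the induced gradings on the spokes is a sound elaboration of a point the paper leaves implicit in Subsection~\ref{subsec_fukdisk}, not a departure from its route.
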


\begin{coro}
\label{secondpart}
Suppose we are given an object $B \in \Pp _{\mK}$, and let $B' = T_{Z}(B)$ be the result of applying the twist
along an object  $Z=Z_1\oplus\cdots \oplus Z_6$ as in Subsection~\ref{subsec_bside}. 
We have the object $Z_{\zeta,\mK}:= H_{\zeta}(Z)$.
Let $A_{\zeta , \mK}:= H_{\zeta}(B)$ and $A'_{\zeta , \mK}:= H_{\zeta}(B')$.
Then
$$
A'_{\zeta , \mK} = T_{Z_{\zeta,\mK}}(A_{\zeta , \mK} ).
$$
\end{coro}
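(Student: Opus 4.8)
The plan is to deduce the corollary from the fact that the twist along a (collection defining a) spherical adjunction is a purely categorical construction, so that any equivalence of triangulated DG-categories intertwines it with the twist along the image collection. Concretely, I would show that the defining exact triangle of $T_Z$ is carried by the equivalence $H_\zeta$ of Proposition~\ref{functorH} to the defining triangle of $T_{Z_{\zeta,\mK}}$, and then specialize to the object $B$.

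First I would recall, from equation~\eqref{twist6}, that for $Z=Z_1\oplus\cdots\oplus Z_6$ the twist is given on an object $X$ by the exact triangle
\begin{equation*}
\bigoplus_{i\in\ZZ/6}\Hom^\bullet(Z_i,X)\otimes Z_i \xrightarrow{\ \mathrm{ev}\ } X \longrightarrow T_Z(X) \longrightarrow [1],
\end{equation*}
the evaluation (counit) map coming from the spherical adjunction of Subsection~\ref{subsec_bside}. By Proposition~\ref{functorH}, $H_\zeta\colon \Pp_\mK\to\Ff_G(S;\Ee)_\mK$ is a $\mK$-linear equivalence of triangulated categories; since $\Pp_\mK$ and its image are equivalent, the spherical adjunction defining $T_Z$ transports along $H_\zeta$ to a spherical adjunction on $\Ff_G(S;\Ee)_\mK$ whose associated collection is $H_\zeta(Z)=H_\zeta(Z_1)\oplus\cdots\oplus H_\zeta(Z_6)=Z_{\zeta,\mK}$, using additivity of $H_\zeta$. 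In particular $T_{Z_{\zeta,\mK}}$ is well-defined and equals $H_\zeta\circ T_Z\circ H_\zeta^{-1}$ up to natural isomorphism.

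Then I would apply $H_\zeta$ to the triangle above with $X=B$. Being an exact functor it preserves the triangle; being additive and fully faithful it identifies $\bigoplus_i\Hom^\bullet(Z_i,B)\otimes Z_i$ with $\bigoplus_i\Hom^\bullet(H_\zeta Z_i,H_\zeta B)\otimes H_\zeta Z_i$ compatibly with the evaluation map. Hence $H_\zeta$ sends the defining triangle of $T_Z(B)$ to the defining triangle of $T_{Z_{\zeta,\mK}}(A_{\zeta,\mK})$, giving
\begin{equation*}
A'_{\zeta,\mK}=H_\zeta(B')=H_\zeta(T_Z(B))\cong T_{Z_{\zeta,\mK}}(H_\zeta(B))=T_{Z_{\zeta,\mK}}(A_{\zeta,\mK}),
\end{equation*}
which is the claim.

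The argument is essentially formal, so I do not expect a serious obstacle; the only points requiring care are checking that the unit and counit of the adjunction transport along $H_\zeta$ compatibly with the evaluation map — so that the image triangle is genuinely the twist triangle rather than merely an abstractly isomorphic cone — and that $H_\zeta$ respects the direct-sum decomposition $Z=\bigoplus_i Z_i$ into the six equivariant summands. I note that, combined with the compatibility square of Proposition~\ref{functorH} relating $H_\zeta$, $H_\eta$, and the spoke-adding functor $\epsilon_\rho$, this corollary is precisely what allows the recursive geometric operation of Figure~\ref{fig_recursion} to realize the spherical twist on the Fukaya side in the induction step.
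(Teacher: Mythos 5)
Your argument is correct and is exactly the paper's proof spelled out: the paper disposes of this corollary in one line, ``this is just because of the functoriality of the twist operation,'' which is precisely your observation that the equivalence $H_\zeta$ transports the spherical adjunction and hence intertwines $T_Z$ with $T_{Z_{\zeta,\mathbf K}}$. Your extra care about the evaluation map and the summand decomposition is a sound elaboration of the same formal point, not a different route.
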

\begin{proof}
This is just because of the functoriality of the twist operation. 
\end{proof}

Now suppose $\eta$ is outside of $\zeta$, and  $\eta '= e^{\pm i\pi /3}\eta$. 
We suppose that $\eta '$ is also outside of $\zeta$ in the previous sense. 

Let $\sigma$ and $\sigma '$ be the straight path segments joining $\eta$ to $\zeta$ and $\zeta$ to $\eta '$, respectively. 
Let $\Sigma$ be the union of $\sigma$ and $\sigma '$, it is a path joining $\eta$ to $\eta '$. 

Suppose given a graph $G$ whose endpoints are on $M_{\zeta}$. Assume that $G$ is contained in the convex hull of $M_{\zeta}$. 
Let $G\langle \Sigma  \rangle$ denote the graph obtained by adding the edges $e^{i\pi n/ 3} \sigma$  and $e^{i\pi n/ 3} \sigma '$ to $G$, to get a graph whose endpoints are on $M_{\eta}$. 
The new edges don't intersect $G$ except on $M_{\zeta}$.

\begin{lemma}
\label{twochoices}
Suppose $G$ is a spectral network graph for some phase $\phi$. We are interested in the possible choices of the pair $\{ \eta , \eta '\}$ such that $\sigma$ and $\sigma '$ are spectral network edges of the same phase $\phi$. 
\newline
(1)\, If the edges of $G$ are parallel to the edges of the
hexagon $M_{\zeta}$, then there is a single choice for $\{ \eta , \eta '\}$ with this property. 
\newline
(2)\, If the edges of $G$ are not parallel to the edges of the
hexagon $M_{\zeta}$, then are two distinct choices for $\{ \eta , \eta '\}$ with this property. 
\end{lemma}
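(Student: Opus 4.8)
The plan is to reduce the count to an elementary plane-geometry problem about triangles erected on an edge of the hexagon $M_\zeta$, exploiting a rotational symmetry of the set of admissible spectral-network directions.

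First I would record the admissible directions at the given phase $\phi$. By the decomposition~\eqref{sum_of_phases}, a straight edge in direction $\theta$ carrying a semistable object of $\mc E=\mc A_2$ is a spectral edge of phase $\phi$ exactly when its base phase lies in $\{\phi,\phi-\tfrac13,\phi-\tfrac23\}\pmod 1$, since the indecomposables $A,B,C$ have fiber phases $0,\tfrac13,\tfrac23$. For $\nu_\zeta=\zeta^2\,dz^2$ the grading convention gives $\phi_{\mathrm{base}}(\theta)=\tfrac1\pi(\mathrm{Arg}\,\zeta+\theta)\pmod 1$, so that a rotation of the direction by $\pi/3$ changes $\phi_{\mathrm{base}}$ by exactly $\tfrac13$. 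Consequently the admissible set $\Theta_\phi$ is three line-directions spaced $60^\circ$ apart, and -- the crucial point -- it is invariant under rotation by $\pm\pi/3$. The dichotomy in the lemma is precisely whether $\Theta_\phi$ equals the three edge-directions of $M_\zeta$ (case (1)) or is rotated $30^\circ$ from them (case (2)).

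Next comes the key reduction. Write $\zeta''=e^{\mp i\pi/3}\zeta$ for the vertex of $M_\zeta$ adjacent to $\zeta$, so that the hypothesis $\eta'=e^{\pm i\pi/3}\eta$ is equivalent to $e^{\mp i\pi/3}\sigma'=[\zeta'',\eta]$. Combining this identity with the $\pm\pi/3$-invariance of $\Theta_\phi$ shows that $\sigma'=[\zeta,\eta']$ is a spectral edge of phase $\phi$ if and only if $[\zeta'',\eta]$ has admissible direction. Hence a pair $\{\eta,\eta'\}$ is admissible exactly when $\eta$ is the apex of a triangle over the base $[\zeta,\zeta'']$ both of whose legs $[\zeta,\eta]$ and $[\zeta'',\eta]$ point in directions of $\Theta_\phi$, with $\eta$ (and then automatically $\eta'$) lying outside $\zeta$ in the prescribed sense. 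Since $[\zeta,\zeta'']$ is an edge of $M_\zeta$, its direction is one of the three hexagon-edge directions, and the problem reduces to counting such outward apexes for which $\sigma$ and $\sigma'$ are genuinely distinct edges.

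Finally I would run the elementary count using the coordinates $\zeta=(R,0)$, $\zeta''=(R/2,-R\sqrt3/2)$. In case (1) the base direction lies in $\Theta_\phi$, so a leg parallel to the base yields a degenerate one-parameter family (to be excluded); the two legs must then realize the remaining two directions of $\Theta_\phi$, and a direct check shows that exactly one of the two assignments places the apex outside the hexagon, giving a single admissible $\{\eta,\eta'\}$. In case (2) the base direction is \emph{not} in $\Theta_\phi$, so all three admissible directions are available; organizing the finitely many assignments by the reflection across the perpendicular bisector of $[\zeta,\zeta'']$ (which swaps $\zeta\leftrightarrow\zeta''$ and permutes $\Theta_\phi$), one finds three apexes outside the hexagon, but exactly one of them makes $\sigma$ and $\sigma'$ collinear, i.e. a straight-through, non-branching configuration that is not a genuine pair. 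Discarding it leaves precisely two admissible $\{\eta,\eta'\}$, interchanged by the reflection. I expect the main obstacle to be the bookkeeping of the ``outside'' conditions (that $[\zeta,\eta]$ and $[\zeta,\eta']$ meet the convex hull of $M_\zeta$ only at $\zeta$, and that $M_\eta\supset M_\zeta$) together with the exclusion of the collinear solution; it is exactly these constraints that trim the naive intersection count to the stated values $1$ and $2$, and checking them seems to require the explicit coordinates rather than the symmetry argument alone.
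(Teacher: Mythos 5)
Your reduction to counting triangle apexes over the base $[\zeta,\zeta'']$ with both legs in admissible directions is sound, and it is essentially the same mechanism as the paper's construction (there $\eta$ is produced by intersecting an outgoing ray at $\zeta$ with the correspondingly rotated ray at $e^{\mp i\pi/3}\zeta$); your final tallies also agree with the lemma. The genuine gap is in your case (2). You assert the dichotomy is ``parallel'' versus ``rotated by $30^\circ$,'' but case (2) of the lemma covers \emph{every} non-parallel angle $\theta_0\in(0,\pi/3)$, and generic angles really occur in the induction: already after one step one has $\eta=1+e^{-i\pi/6}$ with $\mathrm{Arg}(\eta)=-\pi/12$, so $\Theta_\phi$ is rotated by $\pi/12$, not $\pi/6$. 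Your count of outside apexes then rests on the reflection across the perpendicular bisector of $[\zeta,\zeta'']$ permuting $\Theta_\phi$; that reflection acts on line directions by $d\mapsto 2\pi/3-d \pmod{\pi}$, so it preserves $\Theta_\phi=\{\theta_0,\theta_0+\pi/3,\theta_0+2\pi/3\}$ only for $\theta_0=0$ (the special case) or $\theta_0=\pi/6$. For all other $\theta_0$ the symmetry is absent and your count is unestablished precisely where the theorem needs it. (Your asserted totals --- three apexes inside, three outside, of which one is collinear --- do in fact hold for all $\theta_0\in(0,\pi/3)$, but verifying this requires either explicit coordinates at general $\theta_0$ or a continuity argument ruling out degenerations in the open interval, neither of which you give.) The paper counts differently and uniformly: at the single vertex $\zeta$ the six admissible rays split into four consecutive outgoing rays spaced $\pi/3$ in the non-special case (three in the special case), so the number of pairs of outgoing rays at angle exactly $2\pi/3$ is two (respectively one), independent of $\theta_0$.

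Your handling of the collinear solution is the second weak point, though here you have actually surfaced something the paper's proof passes over silently. Nothing in the lemma's stated hypotheses rules that configuration out: for the line direction $\theta_0+\pi/3\in(\pi/3,2\pi/3)$ \emph{both} rays at $\zeta$ are outgoing (it is a supporting line of the hexagon at the vertex), the corresponding apex lies outside, both segments have admissible directions, and both $\eta$ and $\eta'$ are ``outside $\zeta$'' in the paper's sense --- indeed $\zeta$ lies on the boundary edge $[\eta',\eta]$ of the hull of $M_\eta$, so the containment condition holds on its natural closed reading. The paper excludes this case only implicitly, by building the angle $2\pi/3$ into its notion of directional choice; the true justification is the intended application, since the collision (exact triangle) structure at $\zeta$ in Theorem~\ref{twistsn} requires that angle. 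So your instinct to discard it is correct, but ``straight-through, non-branching, not a genuine pair'' is an appeal to intended use rather than a deduction from the hypotheses: to close your argument you must either impose the $2\pi/3$ angle at $\zeta$ at the outset, as the paper effectively does, or exhibit a stated condition that the collinear apex violates --- and there is none. The same remark applies, more mildly, to your unexplained exclusion of the base-parallel family in case (1), which does follow from the hypotheses (there $\sigma'$ would run along a hull edge, violating the outside condition), but should be said.
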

\begin{proof}
Given a fixed phase $\phi$, let $\{ \phi \}$ denote the set of directions in the complex plane $S$ that are 
associated to phases $\phi + k / 3$ for $k\in \zz$. If $\zeta$ is a nonzero complex number, we say that
$\{ \phi \}$ is {\em special} with respect to $M_{\zeta}$ if the edge directions of the hexagon spanned by
$M_{\zeta}$ are in $\{ \phi \}$, {\em non-special} otherwise.

If $\{ \phi \}$ is special with respect to $M_{\zeta}$, then at each point $y\in M_{\zeta}$ there are
three {\em outgoing rays} and three {\em inward rays} in the directions of $\{ \phi \}$ at $y$.

If
$\{ \phi \}$ is non-special with respect to $M_{\zeta}$, then at each point $y\in M_{\zeta}$ there are
four {\em outgoing rays} and two {\em inward rays} in the directions of $\{ \phi \}$ at $y$. 
Figure \ref{inwardoutward-fig} gives the picture in the non-special case (we leave it to the reader to draw the special case). 
The special case is part (1) of the lemma, the non-special case is part (2).

\begin{figure}
\centering
\setlength{\unitlength}{.5mm}
\begin{picture}(200,120)

\put(100,75){\circle*{3}}
\put(40,40){\circle*{3}}
\put(160,40){\circle*{3}}

\linethickness{.2mm}
\qbezier(100,75)(100,75)(40,40)
\qbezier(100,75)(100,75)(160,40)

\qbezier(40,40)(40,10)(40,10)
\qbezier(160,40)(160,10)(160,10)

\qbezier(100,75)(100,75)(80,71)
\qbezier(100,75)(100,75)(86,91)
\qbezier(100,75)(100,75)(106,93)
\qbezier(100,75)(100,75)(120,79)
\qbezier(100,75)(100,75)(114,59)
\qbezier(100,75)(100,75)(94,57)

\qbezier(160,40)(160,40)(180,36)
\qbezier(160,40)(160,40)(174,56)
\qbezier(160,40)(160,40)(154,58)
\qbezier(160,40)(160,40)(140,44)
\qbezier(160,40)(160,40)(144,24)
\qbezier(160,40)(160,40)(166,22)

\qbezier(40,40)(40,40)(60,36)
\qbezier(40,40)(40,40)(54,56)
\qbezier(40,40)(40,40)(34,58)
\qbezier(40,40)(40,40)(20,44)
\qbezier(40,40)(40,40)(24,24)
\qbezier(40,40)(40,40)(46,22)

\put(99,63){$\zeta$}


\end{picture}
\caption{\small Two inward and four outward rays in $\{ \phi \}$ at each point of $M_{\zeta}$}
\label{inwardoutward-fig}
\end{figure}
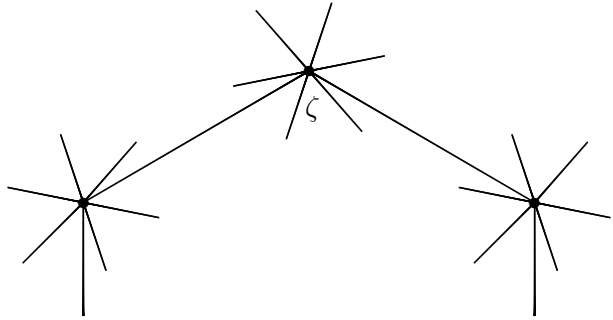

A {\em directional choice} for $M_{\zeta}$ consists of choosing, uniformly at each point in $M_{\zeta}$, either the leftmost or rightmost outgoing ray $R$ in the non-special case, and always the leftmost outgoing ray $R$ in the special case.
In each case, there is unique outgoing ray $\widetilde{R}$ making an angle of $2\pi /3$ with $R$.  
These are drawn in Figure \ref{directional-fig}, for the left directional choice.

\begin{figure}
\centering
\setlength{\unitlength}{.5mm}
\begin{picture}(200,120)

\put(100,75){\circle*{3}}
\put(40,40){\circle*{3}}
\put(160,40){\circle*{3}}

\linethickness{.2mm}
\qbezier(100,75)(100,75)(160,40)
\qbezier(100,75)(100,75)(40,40)

\qbezier(40,40)(40,10)(40,10)
\qbezier(160,40)(160,10)(160,10)

\linethickness{.5mm}
\qbezier(100,75)(100,75)(80,71)
\linethickness{.2mm}
\qbezier(100,75)(100,75)(86,91)
\linethickness{.5mm}
\qbezier(100,75)(100,75)(106,93)
\linethickness{.2mm}
\qbezier(100,75)(100,75)(120,79)
\qbezier(100,75)(100,75)(114,59)
\qbezier(100,75)(100,75)(94,57)

\qbezier(40,40)(40,40)(20,36)
\linethickness{.5mm}
\qbezier(40,40)(40,40)(26,56)
\linethickness{.2mm}
\qbezier(40,40)(40,40)(46,58)
\qbezier(40,40)(40,40)(60,44)
\qbezier(40,40)(40,40)(56,24)
\linethickness{.5mm}
\qbezier(40,40)(40,40)(34,22)
\linethickness{.2mm}

\qbezier(160,40)(160,40)(140,36)
\linethickness{.5mm}
\qbezier(160,40)(160,40)(146,56)
\linethickness{.2mm}
\qbezier(160,40)(160,40)(166,58)
\linethickness{.5mm}
\qbezier(160,40)(160,40)(180,44)
\linethickness{.2mm}
\qbezier(160,40)(160,40)(176,24)
\qbezier(160,40)(160,40)(154,22)

\put(99,64){$\zeta$}

\put(72,69){$R$}

\put(105,96){$\widetilde{R}$}




\end{picture}
\caption{\small Directional choice}
\label{directional-fig}
\end{figure}
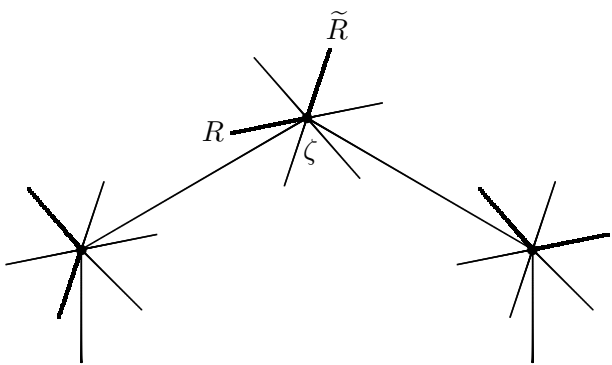

Given a directional choice, consider the ray $\widetilde{R}$ starting at $\zeta$, as well as the ray $R$ at $e^{-i\pi / 3} \zeta$ for the left directional choice or at $e^{i\pi / 3} \zeta$ for the right directional choice. 
These meet at a point $\eta$. 

Let $\eta ' := e^{i\pi / 3} \eta$ for the left directional choice and $\eta ' := e^{-i\pi / 3} \eta$ for the right directional choice. 
The ray $R$ starting at $\zeta$ meets $\eta '$. 
Let $\sigma$ be the path segment joining $\zeta$ to $\eta$ and $\sigma '$ the path segment joining $\zeta$ to $\eta '$. 
See Figure \ref{determining-fig}. 

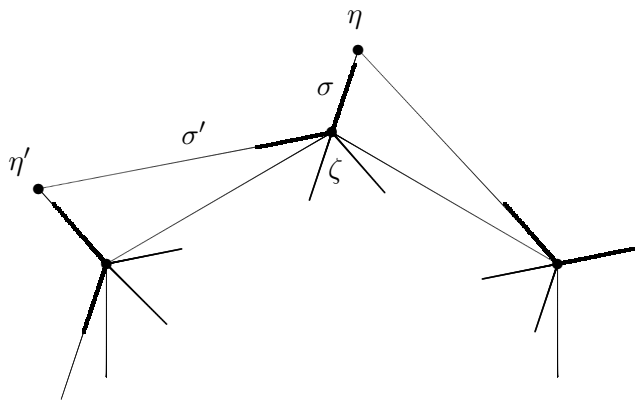
\begin{figure}
\centering
\setlength{\unitlength}{.5mm}
\begin{picture}(200,120)

\put(100,75){\circle*{3}}
\put(160,40){\circle*{3}}
\put(40,40){\circle*{3}}

\linethickness{.1mm}
\qbezier(100,75)(100,75)(160,40)
\qbezier(100,75)(100,75)(40,40)

\qbezier(40,40)(40,10)(40,10)
\qbezier(160,40)(160,10)(160,10)

\linethickness{.5mm}
\qbezier(100,75)(100,75)(80,71)
\qbezier(100,75)(100,75)(106,93)
\linethickness{.2mm}
\qbezier(100,75)(100,75)(114,59)
\qbezier(100,75)(100,75)(94,57)

\linethickness{.5mm}
\qbezier(40,40)(40,40)(26,56)
\linethickness{.2mm}
\qbezier(40,40)(40,40)(60,44)
\qbezier(40,40)(40,40)(56,24)
\linethickness{.5mm}
\qbezier(40,40)(40,40)(34,22)
\linethickness{.2mm}

\qbezier(160,40)(160,40)(140,36)
\linethickness{.5mm}
\qbezier(160,40)(160,40)(146,56)
\qbezier(160,40)(160,40)(180,44)
\linethickness{.2mm}
\qbezier(160,40)(160,40)(154,22)

\put(22,60){\circle*{3}}

\linethickness{.1mm}
\qbezier(100,75)(100,75)(22,60)
\qbezier(40,40)(40,40)(22,60)

\put(107,97){\circle*{3}}

\linethickness{.1mm}
\qbezier(100,75)(100,75)(107,97)
\qbezier(160,40)(160,40)(107,97)

\qbezier(40,40)(40,40)(28,4)

\put(99,63){$\zeta$}

\put(14,65){$\eta '$}

\put(104,104){$\eta $}

\put(60,72){$\sigma '$}
\put(96,85){$\sigma $}



\end{picture}
\caption{\small Determining $\eta$}
\label{determining-fig}
\end{figure}

Given the directional choice, the pair $\{\eta,\eta '\}$ is uniquely determined. 
In the non-special case (2) there are two possible directional choices, giving two possibilities for $\{ \eta , \eta '\}$. 
In the special case (1) there is only one possible choice of a pair of outgoing rays separated by $2\pi / 3$ since there
are only three outgoing rays, yielding the single choice in (1). This completes the proof. 
\end{proof}

Note from the picture that for $\zeta=m+e^{-i\pi/6}n$ we get $\eta=m+e^{-i\pi/6}(n+m)$ for the left directional choice and $\eta=(m+n)+e^{-i\pi/6}n$ for the right directional choice.
Thus, starting with $\zeta=1$ one stays in the sector with $\frac{1}{\pi}\mathrm{Arg}(\zeta)\in (-1/3,0]$ for any sequence of directional choices.
Given our conventions for the quadratic differential $\nu_\zeta$, this means that starting from a spectral network of phase 0 we obtain spectral networks (at least their underlying graphs) with phases in $[0,1/3)$.

We assume now that $\{ \eta , \eta '\}$ is the left of the two directional choices given by Lemma \ref{twochoices}, as in our pictures so far. 
The case of right directional choice is similar except that twists need to be replaced by dual twists.

We choose $\sigma$ for the path $\rho$ in Proposition \ref{functorH}, so we use the extension functor $\epsilon _{\sigma}$, and are now going to apply Corollary \ref{secondpart} to the new graph $G\langle \Sigma \rangle$ with endpoints
on $M_{\eta}$. For this, we will be choosing an object $Z = \bigoplus _{k=0}^5 Z_k$ in $\Pp$. 
Let $Z_{\eta , \mK} = \bigoplus Z_{k,\eta , \mK}$ denote the resulting object in  $\Pp _{\mK}$
with $Z_{k,\eta , \mK} = H_{\eta} (Z_k)$. 

We are going to choose objects 
$$
Z_{k,\eta} \in \Ff _{G\langle \Sigma  \rangle }(S_\eta; \Ee )
$$
lifting the $Z_{k,\eta , \mK}$, and such that $Z_{k,\eta}$ is supported on $e^{i\pi k/3}\Sigma$. This yields the lift 
$$
Z_{\eta} = \bigoplus _{k=0}^5 Z_{k,\eta}.
$$
The spherical twist functor along $Z_{\eta}$ provides a lift of the spherical twist along $Z_{\eta , \mK}$.

\begin{lemma}
\label{sphericaltwistdiagram}  
These fit into the diagram
$$
\begin{array}{ccc}
\Ff _{G\langle \Sigma  \rangle }(S_\eta; \Ee ) & \stackrel{T_{Z_{\eta}}}{\longrightarrow} & 
\Ff _{G\langle \Sigma  \rangle }(S_\eta; \Ee ) 
\\
\downarrow & & \downarrow \\
\Ff _{G\langle \Sigma  \rangle }(S_\eta; \Ee )_{\mK}  & \stackrel{T_{Z_{\eta,\mK}}}{\longrightarrow} & 
\Ff _{G\langle \Sigma  \rangle }(S_\eta; \Ee )_{\mK} 
\\
{\scriptstyle H_{\eta}} \uparrow & & \uparrow {\scriptstyle H_{\eta}} \\
\Pp _{\mK} & \stackrel{T_{Z}}{\longrightarrow} & \Pp _{\mK}  .
\end{array}
$$
\end{lemma}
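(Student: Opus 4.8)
The plan is to establish the diagram one square at a time, since it is built from two squares sharing the middle row $\Ff_{G\langle\Sigma\rangle}(S_\eta;\Ee)_{\mK}$. The lower square, with vertical arrows $H_\eta$ and horizontal arrows $T_Z$ and $T_{Z_{\eta,\mK}}$, is essentially a restatement of Corollary~\ref{secondpart}. Applying that corollary with $\eta$ in place of $\zeta$, the equivalence $H_\eta$ carries $Z$ to $Z_{\eta,\mK}=H_\eta(Z)$, and for every object $B\in\Pp_{\mK}$ we get $H_\eta(T_Z(B))\cong T_{Z_{\eta,\mK}}(H_\eta(B))$. As noted in the proof of that corollary, this is just functoriality of the twist along a spherical adjunction under the equivalence $H_\eta$; since the isomorphism is natural in $B$, the lower square commutes up to natural isomorphism.

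For the upper square I would first pin down $T_{Z_\eta}$ as an $\mR$-linear functor. By construction $Z_\eta=\bigoplus_{k=0}^5 Z_{k,\eta}$ with $Z_{k,\eta}$ supported on $e^{i\pi k/3}\Sigma$, and these lifts are defined over $\mR$ because objects of $\Ff_{G\langle\Sigma\rangle}(S_\eta;\Ee)$ are graphs with edges labelled by objects of $\Ee$, so the $\mK$-objects representing the $Z_{k,\eta,\mK}$ are already given by $\mR$-data on the edges $e^{i\pi k/3}\Sigma$. The functor $T_{Z_\eta}$ is then the cone of the evaluation morphism
\[
\bigoplus_{k}\Hom^\bullet_{\mR}(Z_{k,\eta},X)\otimes_{\mR}Z_{k,\eta}\longrightarrow X,
\]
formed entirely within the $\mR$-linear category, exactly as in~\eqref{twist6}.

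It then remains to see that $\otimes_{\mR}\mK$ carries this cone to the cone defining $T_{Z_{\eta,\mK}}$. Base change along $\mR\to\mK$ is exact, since $\mK$ is a localization of the domain $\mR$ hence flat, so it preserves cones; and because all morphism complexes in $\Ff_{G\langle\Sigma\rangle}(S_\eta;\Ee)$ are finite-rank free $\mR$-modules, tensoring with $\mK$ commutes with taking cohomology, giving the natural isomorphism
\[
\Hom^\bullet_{\mR}(Z_{k,\eta},X)\otimes_{\mR}\mK\cong\Hom^\bullet_{\mK}(Z_{k,\eta,\mK},X_{\mK}),
\]
as in the proof of Proposition~\ref{prop_quasiequiv} using Proposition~\ref{prop_RmapSVD}. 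Applying $\otimes_{\mR}\mK$ to the defining triangle of $T_{Z_\eta}(X)$ therefore yields a natural isomorphism $(T_{Z_\eta}(X))_{\mK}\cong T_{Z_{\eta,\mK}}(X_{\mK})$, which is precisely the commutativity of the upper square. The main obstacle is this upper square: one must check that the evaluation map and its cone are genuinely formed over $\mR$ and that forming the $\Hom$-complexes commutes with extension of scalars to $\mK$. The finite-rank freeness of the $\Hom$-modules is exactly what makes the base change compatible, and stacking the two commuting squares gives the full diagram.
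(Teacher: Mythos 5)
Your proposal is correct and follows the same route the paper takes: the lower square is exactly Corollary~\ref{secondpart}, and for the upper square the paper simply appeals to ``compatibility of spherical twists along functors, applied to the extension of scalars functor from the category over $\mR$ to the category over $\mK$,'' which is precisely what you verify. In fact the paper states the lemma without a detailed proof, so your explicit check --- that the evaluation cone is formed over $\mR$ and that $\Hom^\bullet_{\mR}(Z_{k,\eta},X)\otimes_{\mR}\mK\cong\Hom^\bullet_{\mK}(Z_{k,\eta,\mK},X_{\mK})$ by flatness of $\mK$ and (degreewise) finite-rank freeness of the morphism modules --- supplies the details the paper leaves implicit.
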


Choose the point $\eta$ as in Lemma \ref{twochoices} above. 
Let $\delta$ be the difference
between the phases of the main segments
for $M_{\eta}$ and $M_{\zeta}$. The sign is normalized by saying that the spectral network condition of phase
$\phi + \delta$ relative to $\nu_\eta$ should be the same as the spectral network condition of phase $\phi$ relative
to $\nu_\zeta$. 

\begin{theorem}
\label{twistsn}
Suppose $Z_{k,\eta}$ is chosen so that its restriction to the edge $e^{i\pi k/3}\sigma '$ is a spectral network object
of phase $\phi +\delta + 1$ relative to $\nu_\eta$  and rank $1$. 
If $A\in \Ff _G(S_\zeta; \Ee )$ is a spectral network object of phase $\phi$ with boundary on $M_{\zeta}$ 
then 
$$
T_{Z_{\eta}}(\epsilon _{\sigma} A) \in \Ff _G(S_\eta; \Ee )
$$
is a spectral network object of phase $\phi + \delta$ with boundary on $M_{\eta}$. 
\end{theorem}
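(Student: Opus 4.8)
The plan is to check the two defining conditions of a spectral network directly for the twisted complex $T_{Z_\eta}(\epsilon_\sigma A)$: that its support is a graph of straight $|\nu_\eta|$-geodesic segments, and that the fibre object on each edge is semistable of phase $\phi+\delta$ relative to $\nu_\eta$. Throughout I would use the cone description of the spherical twist,
\[
T_{Z_\eta}(\epsilon_\sigma A)=\mathrm{Cone}\!\left(\bigoplus_{k=0}^{5}\Hom^\bullet(Z_{k,\eta},\epsilon_\sigma A)\otimes Z_{k,\eta}\xrightarrow{\ \mathrm{ev}\ }\epsilon_\sigma A\right),
\]
together with the sheaf property of the central fibre (Subsection~\ref{subsec_centralfiber}), which reduces the analysis to a neighbourhood of each vertex.

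First I would determine the support. The object $\epsilon_\sigma A$ is supported on $G$ together with the spokes $e^{i\pi k/3}\sigma$, while $Z_{k,\eta}$ is supported on $e^{i\pi k/3}(\sigma\cup\sigma')$; hence the only overlap of $Z_{k,\eta}$ with $\epsilon_\sigma A$ occurs along the spoke $e^{i\pi k/3}\sigma$ and at the vertex $y_k:=e^{i\pi k/3}\zeta$. The central computation, which I expect to be the main obstacle, is to show that at each $y_k$ the space $\Hom^\bullet(Z_{k,\eta},\epsilon_\sigma A)$ is one-dimensional, concentrated in a single degree, and that the evaluation map is, up to that degree shift, an isomorphism onto the $\sigma$-spoke summand of $\epsilon_\sigma A$. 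This rests on the definition of $\epsilon_\sigma$ (the fibre object on the spoke is the extension of the incoming objects at $y_k$) and on the explicit rank-one semistable fibre objects of $Z_{k,\eta}$ along $\sigma$ and $\sigma'$. Granting it, the cone cancels each old spoke $e^{i\pi k/3}\sigma$ against the $\sigma$-arm of $Z_{k,\eta}$ and retains the $\sigma'$-arm, so that $T_{Z_\eta}(\epsilon_\sigma A)$ is supported on $G\cup\bigcup_k e^{i\pi k/3}\sigma'$, a graph of straight segments with boundary on $M_\eta$.

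Next I would verify the phases. On the edges of $G$ the fibre objects are unchanged from $A$, hence still semistable; since $A$ is a spectral network of phase $\phi$ for $\nu_\zeta$ and $\delta$ is normalised so that the phase-$\phi$ condition for $\nu_\zeta$ coincides with the phase-$(\phi+\delta)$ condition for $\nu_\eta$, these edges satisfy the constant-phase condition of phase $\phi+\delta$ relative to $\nu_\eta$. On each new spoke $e^{i\pi k/3}\sigma'$ the surviving fibre object is the cone-shift of $Z_{k,\eta}|_{\sigma'}$; the rank-one, phase-$(\phi+\delta+1)$ normalisation in the hypothesis is exactly calibrated so that this shift produces a semistable object of phase $\phi+\delta$ in the direction of $\sigma'$. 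Together these give both the geometric and the fibrewise conditions, so $T_{Z_\eta}(\epsilon_\sigma A)$ is a spectral network of phase $\phi+\delta$.

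Finally, as a global consistency check I would appeal to Lemma~\ref{sphericaltwistdiagram} and Corollary~\ref{secondpart}: after base change to $\mK$, the object $T_{Z_\eta}(\epsilon_\sigma A)$ corresponds under $H_\eta$ to $T_Z(B)$, where $B:=H_\zeta^{-1}(A_{\mK})\in\Pp_{\mK}$. By the slope computation of Proposition~\ref{prop_t1t2slopes}, $T_Z(B)$ is semistable of phase $\phi+\delta$, and its central charge~\eqref{graph_Z} agrees with the one read off from the edgewise description above; this confirms that no part of the support has been inadvertently cancelled. The genuine difficulty is confined to the local morphism computation at the $y_k$ --- the one-dimensionality of $\Hom^\bullet(Z_{k,\eta},\epsilon_\sigma A)$ and the clean cancellation of the old spokes in the cone --- everything else being phase bookkeeping and a routine check that the Maurer--Cartan data at the vertices is compatible with the two anti-HN filtrations described in Subsection~\ref{subsec_uniqueness}.
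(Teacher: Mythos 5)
There is a genuine error at the heart of your support computation, and it is precisely the point the theorem turns on. You claim that $\Hom^\bullet(Z_{k,\eta},\epsilon_\sigma A)$ is one-dimensional, that evaluation is an isomorphism onto the $\sigma$-spoke summand, and hence that the cone cancels each old spoke, leaving support $G\cup\bigcup_k e^{i\pi k/3}\sigma'$. This is false. Write $U=B^{\oplus m}$ and $V=C^{\oplus n}$ for the coefficients on the two inward edges of $A$ at $\zeta$ (semistable of phases differing by $1/3$); since $U$ and $V$ have no mutual extensions, the coefficient of $\epsilon_\sigma A$ along $\sigma$ is $U\oplus V$, and the coefficient of $Z_{k,\eta}$ along $\sigma$ is forced to be $A$ (given $A[-1]$ on $\sigma'$, which your phase normalisation correctly produces). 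The wrapping morphisms at the blow-up of $\eta$ reduce $\Hom^\bullet(Z_{k,\eta},\epsilon_\sigma A)$ to morphisms in $\mc E=\mc A_2$, namely $\Hom^\bullet(A,B^{\oplus m}\oplus C^{\oplus n})\cong \mathbf k^m$ (using $\Hom^\bullet(A,C)=0$), which is $m$-dimensional and whose evaluation $A^{\oplus m}\to B^{\oplus m}\oplus C^{\oplus n}$ is neither surjective nor an isomorphism onto the spoke summand. Because restriction to a fixed edge is an exact functor, the coefficient of $T_{Z_\eta}(\epsilon_\sigma A)$ along $\sigma$ is the fibrewise twist $T_A(U\oplus V)$, and the elementary $\mc A_2$ computation $T_A(A)=0$, $T_A(B)=C$, $T_A(C)=C$ gives $C^{\oplus(m+n)}\neq 0$. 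So the spokes $\sigma$ survive in the support, now carrying a semistable object of the correct phase: the twist does not delete the non-semistable edge, it repairs its coefficient. This is exactly what makes $T_{Z_\eta}(\epsilon_\sigma A)$ a spectral network, and it is the step your proposal replaces with an incorrect cancellation.

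Your claimed support is also inconsistent on purely geometric grounds: if the $\sigma$-spokes vanished, the vertex at each $e^{i\pi k/3}\zeta$ would have the incoming edges of $G$ carrying $U$ and $V$ and a single rank-one object departing along $\sigma'$, which cannot satisfy the exactness condition at a vertex (the classes in $K_0$ do not balance for general $m,n$). The correct local picture at $\zeta$ is a collision among $\sigma$, $\sigma'$ and one inward edge, together with a straight piece formed by $\sigma$ and the other inward edge. Your remaining points --- the phase calibration of $Z_{k,\eta}[1]$ on $\sigma'$ where $\epsilon_\sigma A$ vanishes, the unchanged edges of $G$, and the base-change consistency check via the commutativity of the twist with $H_\eta$ --- agree with the paper's argument, but they do not substitute for the missing computation of the twisted coefficient on $\sigma$, which is where the actual content of the proof lies.
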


\begin{coro}
\label{inductiveconclusion}
Suppose we know that every semistable object of $\Ff (S_\zeta,M_{\zeta}; \Ee ) _{\mK}$ of phase $\phi$ has a 
spectral network representative. 
Then every semistable object 
of $\Ff (S_\eta,M_{\eta}; \Ee ) _{\mK}$ of phase 
$\phi + \delta $ has a 
spectral network representative. 
\end{coro}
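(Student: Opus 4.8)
The plan is to reduce the Corollary to Theorem~\ref{twistsn} by transporting the given object to the B-side, stripping off a single twist, and invoking the inductive hypothesis at the lower phase $\phi$. So let $X'$ be a nonzero semistable object of $\Ff(S_\eta, M_\eta; \mc E)_{\mK}$ of phase $\phi + \delta$ (the zero object is trivial). First I would transport it across the equivalence $H_\eta$ of Proposition~\ref{functorH}, setting $B' := H_\eta^{-1}(X') \in \Pp_{\mK}$. Since every indecomposable object of $\Pp_{\mK} = D^b(C)$ is slope-semistable (as recorded in the proof of Proposition~\ref{prop63}) and the twist $T_Z$ is an autoequivalence, the object $B := T_Z^{-1}(B')$ is again semistable; here $T_Z$ is $T_2$ for the left directional choice of Lemma~\ref{twochoices} and $T_1^{-1}$ for the right. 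The crucial bookkeeping is to check that $X := H_\zeta(B)$ is semistable of phase exactly $\phi$ on $M_\zeta$. This amounts to verifying that the single application of $T_Z$ accounts precisely for the phase difference $\delta$ between the main segments of $M_\eta$ and $M_\zeta$, which is exactly how $\delta$ and the directional choice were arranged; concretely one tracks the action of $T_1^{-1}$ and $T_2$ on the class $(\mathrm{deg}, -\mathrm{rk})$ through both $H_\zeta$ and $H_\eta$, as in the remark following Lemma~\ref{twochoices}.

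Next I would apply the inductive hypothesis to $X$. By assumption, $X$ has a spectral network representative $A$ of phase $\phi$, supported on some graph; after enlarging this graph by the inclusion functors $F_{G',G}$ and subdividing as needed, I may assume $A \in \Ff_G(S_\zeta; \mc E)$ with $G$ connected and meeting every vertex of $M_\zeta$, as required by Proposition~\ref{functorH} and Theorem~\ref{twistsn}. I would then choose the twisting datum $Z_{k,\eta}$ as demanded in Theorem~\ref{twistsn}, namely of rank $1$ and phase $\phi + \delta + 1$ on each edge $e^{i\pi k/3}\sigma'$, arranged so that the corresponding B-side object is precisely the $Z$ with twist $T_Z$ via Lemma~\ref{sphericaltwistdiagram}. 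Applying Theorem~\ref{twistsn} then produces a genuine spectral network object $T_{Z_\eta}(\epsilon_\sigma A) \in \Ff_{G\langle \Sigma \rangle}(S_\eta; \mc E)$ of phase $\phi + \delta$.

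It then remains to identify which $\mK$-object this represents. By the commuting square of Proposition~\ref{functorH}, the $\mK$-image of $\epsilon_\sigma A$ is $\epsilon_\sigma(X) = H_\eta(B)$, and by Lemma~\ref{sphericaltwistdiagram} together with Corollary~\ref{secondpart} the functor $T_{Z_\eta}$ descends under $H_\eta$ to $T_Z$ on $\Pp_{\mK}$; hence $T_{Z_\eta}(\epsilon_\sigma A)$ represents $H_\eta(T_Z(B)) = H_\eta(B') = X'$. This exhibits the desired spectral network representative of $X'$ of phase $\phi + \delta$, completing the argument. The step I expect to be the main obstacle is the phase bookkeeping in the first paragraph: one must confirm that the geometric shift $\delta$, defined through the main segments and the directional choice, coincides with the phase shift imposed by $T_Z$ when measured through \emph{both} equivalences $H_\zeta$ and $H_\eta$, taking care that the left versus right directional choice is correctly matched with $T_2$ versus $T_1^{-1}$ and that no spurious integer shift is introduced by the differing choices of $\mathrm{Arg}(\zeta)$ and $\mathrm{Arg}(\eta)$.
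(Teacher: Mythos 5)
Your proposal follows essentially the same route as the paper's proof: transport across $H_\eta$, strip off one twist (the paper phrases this via Proposition~\ref{prop63} as $B'=T_Z(B)$, which is the same content as your $B:=T_Z^{-1}(B')$), invoke the inductive hypothesis at phase $\phi$, and then apply $\epsilon_\sigma$, Lemma~\ref{sphericaltwistdiagram}, and Theorem~\ref{twistsn} to produce the spectral network and identify its image over $\mK$. The phase bookkeeping you flag as the main obstacle is precisely what Propositions~\ref{prop63} and~\ref{prop_t1t2slopes} supply (matching $T_2$ and the dual twist $T_1^{-1}$ to the two directional choices), so your argument is correct and matches the paper's.
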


\begin{proof}
Suppose given a semistable object $A'_{\mK}$ of $\Ff (S_\eta,M_{\eta}; \Ee ) _{\mK}$ of phase $\phi + \delta$. 
By the equivalence of Proposition \ref{functorH}, we have $A'_{\mK}=H_{\eta}(B')$ for $B'$ a semistable object of $\Pp _{\mK}$
of phase $\phi + \delta$. We have seen in Proposition~\ref{prop63} that $B'=T_Z(B)$ for a semistable object $B$ of phase
$\phi$, such that $H_{\zeta}(B)$ is a semistable object in 
$\Ff (S_\zeta,M_{\zeta}; \Ee ) _{\mK}$ of the same phase $\phi$.

By the inductive hypothesis, there is a spectral network object $A$ in $\Ff (S_\zeta,M_{\zeta}; \Ee )$ whose projection to the 
category over $\mK$ is $A_{\mK}=H_{\zeta}(B)$. Consider the object $A'':= \epsilon _{\sigma}(A)$
whose projection to the category over $\mK$ is
$$
A''_{\mK} = \epsilon _{\sigma}(A_{\mK}) = \epsilon _{\sigma}H_{\zeta}(B) = H_{\eta}(B),
$$
the last equivalence being by the commutative diagram in
Proposition \ref{functorH}. By Lemma \ref{sphericaltwistdiagram} we have
$$
T_{Z_{\eta, \mK}}(A''_{\mK}) = 
T_{Z_{\eta, \mK}}( H_{\eta}(B)) = H_{\eta}(T_Z(B))=H_{\eta}(B').
$$
We set $A':= T_{Z_{\eta}}(A'') = T_{Z_{\eta}}(\epsilon _{\sigma} A)$. The functor of projection from the category over 
$\mR$ to the category over $\mK$ is compatible with spherical twists, so the projection of $A'$ over $\mK$
which we'll provisionally denote by $(A')_{\mK}$ is equal to the original $A'_{\mK}$:
$$
(A')_{\mK} = T_{Z_{\eta, \mK}}(A''_{\mK})=H_{\eta}(B') =A'_{\mK}.
$$
On the other hand, by Theorem \ref{twistsn}, $A'$ is a spectral network object of phase $\phi$. This completes the proof
that our semistable object $A'_{\mK}$ admits a spectral network representative of phase $\phi$. 
\end{proof}

\bigskip

The rest of this section is devoted to the proof of Theorem \ref{twistsn}.

\bigskip

Recall from \eqref{sum_of_phases} that an object $A\in \Ff _G(S_\eta; \Ee )$ is a {\em spectral network object} of phase $\phi$ if, when we extend scalars to $\mbk$ and then restrict to any edge, we obtain an object of $\Ee$ that is semistable of the appropriate phase combining $\phi$ with the phase of the edge.

Suppose now that $\{ \phi \}$ is fixed. We assume case (2) of Lemma \ref{twochoices},
and furthermore we fix one of the directional choices; the other leads to similar pictures that
are left to the reader.  We'll draw pictures of the objects discussed below following the drawings that
were started in Lemma \ref{twochoices}. For convenience
various notations are summed up in Figure \ref{notations-fig}.

\begin{figure}
\centering
\setlength{\unitlength}{.5mm}
\begin{picture}(200,120)

\put(100,75){\circle*{3}}
\put(160,40){\circle*{3}}
\put(40,40){\circle*{3}}

\linethickness{.1mm}
\qbezier(100,75)(130,61)(160,40)
\qbezier(100,75)(70,61)(40,40)

\qbezier(40,40)(40,10)(40,10)
\qbezier(160,40)(160,10)(160,10)

\linethickness{.2mm}
\qbezier(100,75)(100,75)(114,59)
\qbezier(100,75)(100,75)(94,57)

\qbezier(40,40)(40,40)(60,44)
\qbezier(40,40)(40,40)(56,24)

\qbezier(160,40)(160,40)(140,36)
\qbezier(160,40)(160,40)(154,22)

\put(22,60){\circle*{3}}

\qbezier(100,75)(100,75)(22,60)
\qbezier(40,40)(40,40)(22,60)

\put(107,97){\circle*{3}}

\qbezier(100,75)(100,75)(107,97)
\qbezier(160,40)(160,40)(107,97)

\qbezier(40,40)(40,40)(28,4)
\qbezier(160,40)(160,40)(180,44)

\put(99,63){$\zeta$}

\put(14,65){$\eta '$}

\put(104,104){$\eta $}

\put(60,72){$\sigma '$}
\put(96,85){$\sigma $}





\end{picture}
\caption{\small A summary of the notations}
\label{notations-fig}
\end{figure}
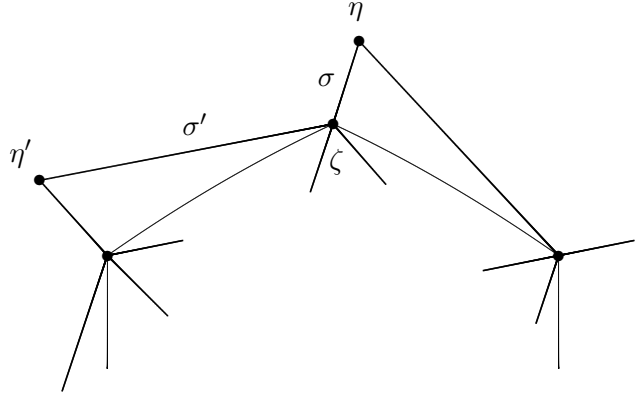

The path $\sigma$ and its rotational translates are used to carry the object $A_{\zeta}$, that we are assuming
is a spectral network object with respect to $M_{\zeta}$, to an object 
$\epsilon _{\sigma}A_{\zeta}$ as shown in Figure \ref{epsilonA-fig}. 
This will retain the spectral network property on the interior of the hexagon spanned by
$M_{\zeta}$, but it will not have that property along the edge $\sigma$. This is going to be corrected by the spherical
twist. 

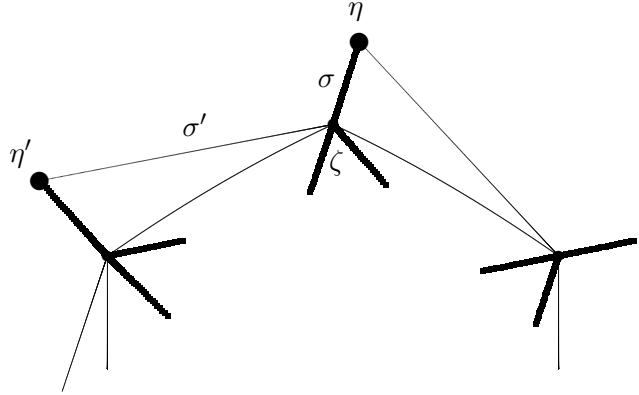
\begin{figure}
\centering
\setlength{\unitlength}{.5mm}
\begin{picture}(200,120)

\put(100,75){\circle*{3}}
\put(160,40){\circle*{3}}
\put(40,40){\circle*{3}}

\linethickness{.1mm}
\qbezier(100,75)(130,61)(160,40)
\qbezier(100,75)(70,61)(40,40)

\qbezier(40,40)(40,10)(40,10)
\qbezier(160,40)(160,10)(160,10)

\linethickness{.7mm}
\qbezier(100,75)(100,75)(114,59)
\qbezier(100,75)(100,75)(94,57)

\qbezier(40,40)(40,40)(60,44)
\qbezier(40,40)(40,40)(56,24)

\qbezier(160,40)(160,40)(140,36)
\qbezier(160,40)(160,40)(154,22)

\put(22,60){\circle*{5}}

\linethickness{.1mm}
\qbezier(100,75)(100,75)(22,60)
\linethickness{.7mm}
\qbezier(40,40)(40,40)(22,60)
\linethickness{.1mm}

\put(107,97){\circle*{5}}

\linethickness{.7mm}
\qbezier(100,75)(100,75)(107,97)
\linethickness{.1mm}
\qbezier(160,40)(160,40)(107,97)

\qbezier(40,40)(40,40)(28,4)
\linethickness{.7mm}
\qbezier(160,40)(160,40)(180,44)
\linethickness{.1mm}

\put(99,63){$\zeta$}

\put(14,65){$\eta '$}

\put(104,104){$\eta $}

\put(60,72){$\sigma '$}
\put(96,85){$\sigma $}


\end{picture}
\caption{\small The object $\epsilon _{\sigma} A_{\zeta}$ includes $\sigma$ but not $\sigma '$ }
\label{epsilonA-fig}
\end{figure}

We have the path $\Sigma = \Sigma (\eta ,\eta ' )$ 
between $\eta$ and $\eta '$ as follows.  From $\eta$ go along $\sigma$ to $\zeta$, then along $\sigma '$ to $\eta '$. 
This path is shown in Figure \ref{spherical-fig}. 

\begin{figure}
\centering
\setlength{\unitlength}{.5mm}
\begin{picture}(200,120)

\put(100,75){\circle*{3}}
\put(160,40){\circle*{3}}
\put(40,40){\circle*{3}}

\linethickness{.1mm}
\qbezier(100,75)(130,61)(160,40)
\qbezier(100,75)(70,61)(40,40)

\qbezier(40,40)(40,10)(40,10)
\qbezier(160,40)(160,10)(160,10)

\linethickness{.1mm}
\qbezier(100,75)(100,75)(114,59)
\qbezier(100,75)(100,75)(94,57)

\qbezier(40,40)(40,40)(60,44)
\qbezier(40,40)(40,40)(56,24)

\qbezier(160,40)(160,40)(140,36)
\qbezier(160,40)(160,40)(154,22)

\put(22,60){\circle*{5}}

\linethickness{.7mm}
\qbezier(100,75)(100,75)(22,60)
\linethickness{.1mm}
\qbezier(40,40)(40,40)(22,60)

\put(107,97){\circle*{5}}

\linethickness{.7mm}
\qbezier(100,75)(100,75)(107,97)
\linethickness{.1mm}
\qbezier(160,40)(160,40)(107,97)

\qbezier(40,40)(40,40)(28,4)
\qbezier(160,40)(160,40)(180,44)


\put(14,65){$\eta '$}

\put(104,104){$\eta $}

\put(60,72){$\sigma '$}
\put(96,85){$\sigma $}


\end{picture}
\caption{\small The path $\Sigma (\eta ,\eta ' )$ supporting the object $Z_{0,\zeta}$ }
\label{spherical-fig}
\end{figure}
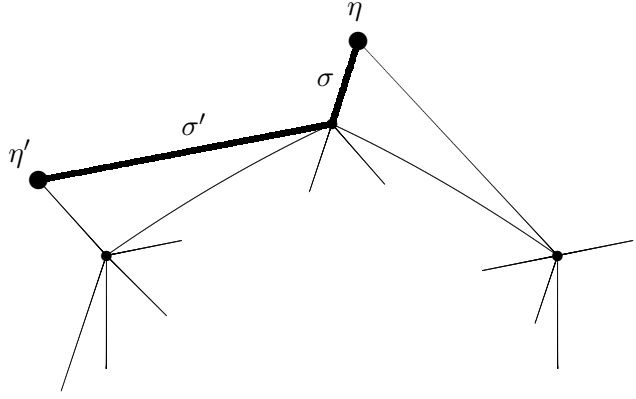

We will then choose our lifted object $Z_{\eta}$ to be a direct sum of $Z_{i,\eta }$ where these are rotations of the
basic $Z_{0,\eta }$ that is supported on the path $\Sigma (\eta ,\eta ' )$. We make the hypothesis that
$Z_{0,\eta }$ is a spectral network object of phase $\phi$  on $\sigma '$, it follows that it is 
a spectral network object of phase
$\phi + 1 / 3$  on $\sigma$. 

Recall from \eqref{twist6} that the twist $T_Z$ is defined by exact triangle
$$
\bigoplus_{i=1}^6\Hom(Z_i,X)\otimes Z_i \rightarrow X \rightarrow T_Z(X) \rightarrow \bigoplus_{i=1}^6\Hom(Z_i,X)\otimes Z_i[1].
$$
We note that for the other directional choice, one needs to use instead the dual twist given by
$$
\bigoplus_{i=1}^6\Hom(X,Z_i)\otimes Z_i[-1] \rightarrow T_Z(X) \rightarrow X \rightarrow \bigoplus_{i=1}^6\Hom(X,Z_i)\otimes Z_i.
$$

In the situation of our picture we'll be looking at the twist about the object $Z_{\eta}$ which is the direct sum of the $Z_{i,\eta }$.  
Applied to our extended object we obtain $T_{Z_{\eta}} (\epsilon _{\sigma} A_{\zeta}) $ whose support is
shown in Figure \ref{twist-fig}. 

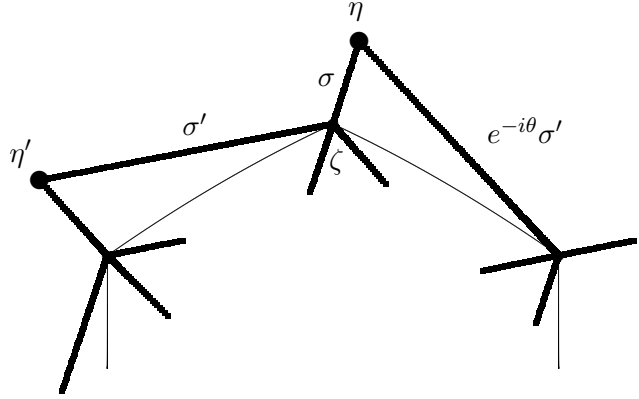
\begin{figure}
\centering
\setlength{\unitlength}{.5mm}
\begin{picture}(200,120)

\put(100,75){\circle*{3}}
\put(160,40){\circle*{3}}
\put(40,40){\circle*{3}}

\linethickness{.1mm}
\qbezier(100,75)(130,61)(160,40)
\qbezier(100,75)(70,61)(40,40)

\qbezier(40,40)(40,10)(40,10)
\qbezier(160,40)(160,10)(160,10)

\linethickness{.7mm}
\qbezier(100,75)(100,75)(114,59)
\qbezier(100,75)(100,75)(94,57)

\qbezier(40,40)(40,40)(60,44)
\qbezier(40,40)(40,40)(56,24)

\qbezier(160,40)(160,40)(140,36)
\qbezier(160,40)(160,40)(154,22)

\put(22,60){\circle*{5}}

\qbezier(100,75)(100,75)(22,60)
\qbezier(40,40)(40,40)(22,60)

\put(107,97){\circle*{5}}

\qbezier(100,75)(100,75)(107,97)
\qbezier(160,40)(160,40)(107,97)

\qbezier(40,40)(40,40)(28,4)
\qbezier(160,40)(160,40)(180,44)

\put(99,63){$\zeta$}

\put(14,65){$\eta '$}

\put(104,104){$\eta $}

\put(60,72){$\sigma '$}
\put(96,85){$\sigma $}

\put(141,70){$e^{-i\theta}\sigma '$}


\end{picture}
\caption{\small The new twisted object $A'_{\eta} := T_{Z_{\eta}} (\epsilon _{\sigma} A_{\zeta}) $ is going to be a spectral network. At 
$\zeta$ it combines a collision of $\sigma$, $\sigma '$ and one of the inward edges, together with a straight piece
consisting of $\sigma $ and the other inward edge } 
\label{twist-fig}
\end{figure}

After making a directional  choice and letting $\sigma$ and the choice of liftings $Z_{i,\eta }$ be fixed 
as above, suppose that 
$A_{\zeta}$ is a spectral network object in $\Ff _G(S_\eta; \Ee )$, of phase $\phi$, lifting $A_{\zeta , \mK}$. 
We obtain the extension to an object 
$$
\epsilon _{\sigma} A_{\zeta} \in \Ff _{G\langle \Sigma \rangle }(S_\eta; \Ee ).
$$

The objective is to show that the spherical twist
$$
A'_{\eta} := T_{Z_{\eta}} (\epsilon _{\sigma} A_{\zeta}) 
$$
is a spectral network object lifting 
$$
A'_{\eta , \mK } = T_{Z_{\eta , \mK}} (A_{\eta , \mK}) = H_{\eta}(B').
$$

The fact that the spherical twist $A'_{\eta}$ is an object lifting $A'_{\eta , \mK }$ is due to the compatibility
of spherical twists along functors, applied to the extension of scalars functor from the category over $\mR$ to the
category over $\mK$.

We would like to show that $A'_{\eta}$ satisfies the spectral network property. 
This is true everywhere on the support of the original object $A_{\zeta}$. It is also true from our choice
of objects $Z_{i,\eta}$ on the rotational translates of the edges $\sigma '$, since the extended object 
$\epsilon _{\sigma} A_{\zeta}$ is zero on these edges. See below for more details on the choice
of $Z_{i,\eta}$  that is designed to insure the spectral network property on $\sigma$. 
For the proof, what will remain is to justify why the spectral network property holds on the rotational
translates of $\sigma$. It suffices to consider just $\sigma$ itself.

If we include the real blow-up interval at the point $\eta$ into the picture, and allow an ``unzipping'' perturbation
(using Lemma~\ref{edgeremoval}), the
picture of the two objects $\epsilon _{\sigma} A_{\zeta}$ and $Z_{0,\zeta}$ is shown in Figure~\ref{circle-fig}. 

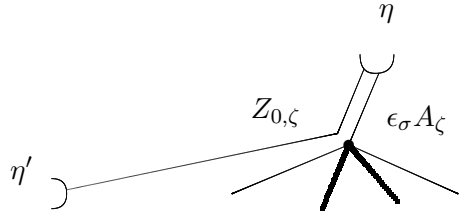
\begin{figure}
\centering
\setlength{\unitlength}{.5mm}
\begin{picture}(200,120)

\put(101,74){\circle*{3}}

\linethickness{.1mm}
\qbezier(101,74)(70,61)(70,61)
\qbezier(101,74)(130,61)(130,61)

\linethickness{.6mm}
\qbezier(101,74)(101,74)(114,59)
\qbezier(101,74)(101,74)(94,57)

\linethickness{.1mm}
\qbezier(22,65)(26,65)(26,61)
\qbezier(26,61)(26,57)(22,57)

\qbezier(98,77)(98,77)(26,62)

\qbezier(113,98)(113,93)(108,93)
\qbezier(108,93)(104,93)(104,98)

\qbezier(98,77)(98,77)(105,94)

\qbezier(101,74)(101,74)(109,93)


\put(11,65){$\eta '$}

\put(109,108){$\eta$}

\put(75,81){$Z_{0,\zeta}$}

\put(111,79){$\epsilon _{\sigma} A_{\zeta}$}


\end{picture}
\caption{\small The objects $\epsilon _{\sigma} A_{\zeta}$ and $Z_{0,\zeta}$ meeting at the real blow-up of $\eta$ } 
\label{circle-fig}
\end{figure}

We need to discuss relative edge orientations and shifts of objects. We'll draw arrows on the edges to indicate
orientations, or just say towards which points the orientations go. Recall that to specify a grading we really need
to specify a lifting of the unit tangent vector to the universal cover of $S^1$. In the pictures, the directions of the 
inward pointing edges, and the directions of the outward pointing edges, will both fall into the same half-circle of 
$S^1$. This condition determines the half-circle up to an angle that is $< \pi$, hence it determines a simply
connected segment of the circle containing all the directions. Thus, while an additional integer is needed to specify
the lifted orientations, the relative positions of the lifted orientations are well defined. 

In this situation, if we want to place an object along a single edge segment that contains an incoming and an
outgoing edge, then the coefficients should be the same object of the fiber category. The Maurer--Cartan element
at the vertex is the identity map, considered as a map of degree $1$ between appropriate shifts of the objects.
On the other hand, suppose we would like to do that for two inward edges, as shown in Figure \ref{through-fig}. 

\begin{figure}
\centering
\setlength{\unitlength}{.5mm}
\begin{picture}(200,80)

\put(100,40){\circle*{3}}

\linethickness{.2mm}
\qbezier(100,40)(100,40)(100,70)

\linethickness{.7mm}
\qbezier(100,40)(120,28)(140,16)
\qbezier(100,40)(80,28)(60,16)
\linethickness{.2mm}

\qbezier(100,70)(100,70)(106,64)
\qbezier(100,70)(100,70)(94,64)

\qbezier(120,28)(120,28)(127,30)
\qbezier(120,28)(120,28)(121,21)

\qbezier(80,28)(80,28)(73,30)
\qbezier(80,28)(80,28)(79,21)

\put(98,73){$0$}
\put(51,5){$W$}
\put(140,5){$W'$}


\end{picture}
\caption{\small To go through the vertex using these edge orientations we should have $W'=W[-1]$ } 
\label{through-fig}
\end{figure}
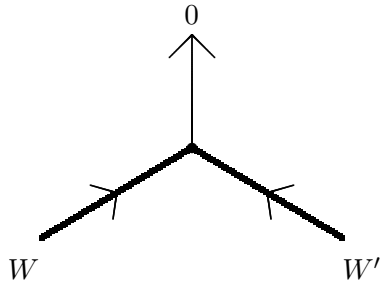

In this case, the Maurer--Cartan element should be the identity viewed as a degree one map
$$
W \stackrel{+1}{\rightarrow} W' = W[-1].
$$

Let us also consider the situation of a collision, as shown in Figure \ref{collision-fig}. The Maurer--Cartan maps
are morphisms of degree $0$ from the left to the upper edge and from the upper to the right edge, while
it is a morphism of degree $+1$ from the right to the left bottom edges. 

\begin{figure}
\centering
\setlength{\unitlength}{.5mm}
\begin{picture}(200,80)

\put(100,40){\circle*{3}}

\linethickness{.2mm}
\qbezier(100,40)(100,40)(100,70)

\linethickness{.7mm}
\qbezier(100,40)(120,28)(140,16)
\qbezier(100,40)(80,28)(60,16)
\linethickness{.2mm}

\qbezier(100,70)(100,70)(106,64)
\qbezier(100,70)(100,70)(94,64)

\qbezier(120,28)(120,28)(127,30)
\qbezier(120,28)(120,28)(121,21)

\qbezier(80,28)(80,28)(73,30)
\qbezier(80,28)(80,28)(79,21)

\put(98,73){$B$}
\put(51,8){$C$}
\put(143,8){$A$}


\end{picture}
\caption{\small In a collision we have an exact triangle $A\rightarrow B \rightarrow C \rightarrow A[1]$ } 
\label{collision-fig}
\end{figure}
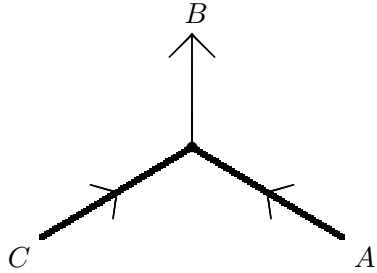

In our picture for the proof, we choose edge orientations of the lower edges in the upward direction towards $\zeta$;
of $\sigma '$ also in the upward direction towards $\eta '$, and of $\sigma$ to the right towards $\eta$,
as shown in Figure \ref{orient-fig}. The above convention is then used to fix the relative lifted orientations. 

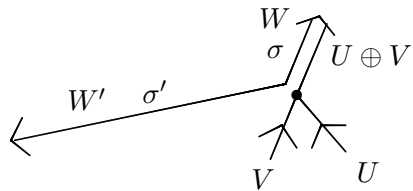
\begin{figure}
\centering
\setlength{\unitlength}{.5mm}
\begin{picture}(200,120)

\put(101,74){\circle*{3}}

\linethickness{.2mm}

\qbezier(101,74)(101,74)(114,59)
\qbezier(101,74)(101,74)(94,57)

\qbezier(98,77)(98,77)(26,62)

\qbezier(98,77)(98,77)(105,94)

\qbezier(101,74)(101,74)(109,93)

\qbezier(97,66)(97,66)(91,63)
\qbezier(97,66)(97,66)(101,60)

\qbezier(107.5,66)(107.5,66)(105,60)
\qbezier(107.5,66)(107.5,66)(114,66)

\qbezier(25,62)(25,62)(28,68)
\qbezier(25,62))(25,62))(30,58)

\qbezier(107,95)(107,95)(101,93)
\qbezier(107,95)(107,95)(111,90)

\put(60,72){$\sigma '$}

\put(93,84){$\sigma $}

\put(91,92){$W$}
\put(40,70){$W '$}

\put(117,51){$U$}
\put(89,49){$V$}

\put(110,82){$U\oplus V$}

\end{picture}
\caption{\small Edge orientations and objects } 
\label{orient-fig}
\end{figure}

We note that $\sigma'$ and $\sigma$ have both outward orientations, so here we are in the situation described above
(up to a symmetry). In particular, if we would like to create an object $Z_{0,\zeta}$ along this single edge using
coefficient objects $W'$ on $\sigma '$ and $W$ on $\sigma$, then the Maurer-Cartan element at the junction 
should be an identification $W' = W[-1]$.

Using these  orientation conventions, 
let's denote by  $U$ and $V$ the coefficient objects in $A_{\zeta}$ along, respectively, the left and right inward edges at $\zeta$.

The spectral network condition means that these are semistable objects of $\Ee = \mc A_2$, of phases differing by $1 / 3$. 
More precisely, we can write
\begin{equation}
\label{expressUV}
U = B^{\oplus m}, \;\;\;\; V = C^{\oplus n}
\end{equation}
in terms of a triple of generating objects $A,B,C$ for $A_2$ that fit into an exact triangle
$$
A \rightarrow B \rightarrow C \rightarrow A[1] .
$$
This choice of notations is motivated by the picture that in the end we'll have a collision between $B$ on the $U$ edge, $A$ on $\sigma '$, and $C$ on $\sigma $.

Now, we will be placing a rank $1$ object of $\Ee$ along the edge $\sigma '$ as coefficient in $Z_{0,\zeta}$, and this is supposed
to be semistable of a certain phase related to the phase of $A_{\zeta}$.

Recall the exact triangle for the twist $T_Z(X)$:
$$
\bigoplus_{i=1}^6\Hom(Z_i,X)\otimes Z_i \rightarrow X \rightarrow T_Z(X) \rightarrow \bigoplus_{i=1}^6\Hom(Z_i,X)\otimes Z_i[1].
$$
Along $\sigma '$, our object $X=\epsilon _{\sigma} A_{\zeta}$ has no coefficient, so if we want $T_Z(X)$ to be semistable of phase $\phi$, we need to have $Z[1]$ semistable of that phase on $\sigma '$.

The object of the good phase on $\sigma'$ is $A$, so the coefficient object of $Z[1]$ along $\sigma '$ should be $A$. 
That is to say, the
coefficient object $W'$ of $Z$ along $\sigma '$ should be $A[-1]$. 
From the above considerations, this gives $W= A$ for coefficient of $Z_{0,\zeta}$ along $\sigma$. 

We note that $U$ and $V$ don't have any mutual extensions, so the coefficient of $\epsilon _{\sigma} A_{\zeta}$ 
along $\sigma$ is just the direct sum $U\oplus V$. By looking at the picture, the morphisms between 
$Z_{0,\zeta}$ and $\epsilon _{\sigma} A_{\zeta}$  are wrapping morphisms at the real blow-up of $\eta$,
and these are just going to be morphisms in the category $\Ee$.

\begin{lemma}
\label{coefftwist}
The coefficient of the twist
$$
T_{Z_{\eta}} (\epsilon _{\sigma} A_{\zeta})
$$
along $\sigma$ is just the twist in the fiber category $\Ee = \mc A_2$
$$
{\rm coeff} = T_A(U\oplus V).
$$
\end{lemma}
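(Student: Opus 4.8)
The plan is to compute $T_{Z_\eta}(\epsilon_\sigma A_\zeta)$ by restricting the defining exact triangle of the twist to the single edge $\sigma$ and to check that the restriction reproduces exactly the defining triangle of the fibre twist $T_A$ applied to the coefficient $U\oplus V$. Recall from \eqref{twist6} that $T_{Z_\eta}$ fits into the exact triangle
\[
\bigoplus_{k=0}^5 \Hom(Z_{k,\eta},\epsilon_\sigma A_\zeta)\otimes Z_{k,\eta}\longrightarrow \epsilon_\sigma A_\zeta\longrightarrow T_{Z_\eta}(\epsilon_\sigma A_\zeta)\longrightarrow \bigoplus_{k=0}^5 \Hom(Z_{k,\eta},\epsilon_\sigma A_\zeta)\otimes Z_{k,\eta}[1].
\]
In the central fibre, an object of $\Ff_{G\langle\Sigma\rangle}(S_\eta;\Ee)$ is determined edge-by-edge by its coefficient in $\Ee$, and mapping cones are formed coefficient-wise along each edge; so it suffices to read off the coefficient of each term of this triangle along $\sigma$.

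First I would isolate which summands $Z_{k,\eta}$ contribute. By construction $Z_{k,\eta}$ for $k\neq 0$ is supported on the rotational translate $e^{i\pi k/3}\Sigma$, which is disjoint from $\sigma$; hence its coefficient along $\sigma$ vanishes, and therefore so does the $\sigma$-coefficient of $\Hom(Z_{k,\eta},\epsilon_\sigma A_\zeta)\otimes Z_{k,\eta}$. Only $Z_{0,\eta}$, whose coefficient along $\sigma$ is the rank-one object $A$ (forced by the choice arranged so that its restriction to $\sigma'$ is the phase-appropriate $A[-1]$), survives. Thus the $\sigma$-coefficient of the leftmost term is $\Hom(Z_{0,\eta},\epsilon_\sigma A_\zeta)\otimes A$.

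Next I would compute the relevant part of the morphism complex $\Hom(Z_{0,\eta},\epsilon_\sigma A_\zeta)$. After the unzipping perturbation of Lemma~\ref{edgeremoval} (Figure~\ref{circle-fig}), the supports of $Z_{0,\eta}$ and $\epsilon_\sigma A_\zeta$ meet only at the real blow-up of $\eta$, so the relevant morphisms are the wrapping boundary paths there; these carry no disk corrections and reduce to morphisms in $\Ee$ between the coefficient of $Z_{0,\eta}$ on $\sigma$, namely $A$, and the coefficient of $\epsilon_\sigma A_\zeta$ on $\sigma$, namely $U\oplus V$. Since $\epsilon_\sigma A_\zeta$ is zero along $\sigma'$, the $\sigma'$ portion of the support of $Z_{0,\eta}$ contributes nothing further. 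Hence this contribution is $\Hom_{\Ee}(A,U\oplus V)$, and the restricted triangle reads
\[
\Hom_{\Ee}(A,U\oplus V)\otimes A\longrightarrow U\oplus V\longrightarrow {\rm coeff}\longrightarrow \Hom_{\Ee}(A,U\oplus V)\otimes A[1],
\]
whose connecting map is the evaluation map; this is precisely the defining triangle of $T_A(U\oplus V)$ in $\mc A_2$, giving ${\rm coeff}=T_A(U\oplus V)$.

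The main obstacle I anticipate is the bookkeeping of gradings and signs: one must verify that the shift conventions dictated by the edge orientations of Figure~\ref{orient-fig} (with $W'=A[-1]$ on $\sigma'$ and $W=A$ on $\sigma$) yield the \emph{unshifted} defining triangle of $T_A$ along $\sigma$, and that no boundary path other than the short wrapping path at the blow-up of $\eta$, and no disk correction, contributes to the $\sigma$-coefficient. This reduces to tracking the intersection indices at the real blow-up of $\eta$, after which the identification ${\rm coeff}=T_A(U\oplus V)$ follows.
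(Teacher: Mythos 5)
Your argument is correct and takes essentially the same route as the paper: the paper's proof of Lemma~\ref{coefftwist} is the one-line observation that the edge-restriction functor $\mc F_G(S;\mc E)\to\mc E$ is exact, which is precisely your statement that cones are formed coefficient-wise along each edge, while the supporting identifications you spell out --- that only $Z_{0,\eta}$ has nonzero coefficient along $\sigma$, and that $\Hom(Z_{0,\eta},\epsilon_{\sigma}A_{\zeta})$ reduces to wrapping morphisms at the real blow-up of $\eta$, i.e.\ to morphisms in $\Ee$ between $A$ and $U\oplus V$ --- are exactly the points established in the paper's discussion immediately preceding the lemma. Your proposal simply makes that one-sentence proof explicit, including the grading bookkeeping, and contains no gap.
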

\begin{proof}
This follows from the fact that the functor $\mc F_G (S;\mc E)\to \mc E$ which takes an object on the graph to the object of $\mc E$ on a fixed chosen edge is exact.
\end{proof}

We would like to show that this has the spectral network property along the edge $\sigma$. In view of the phase calculations, this amounts to saying that we would like this coefficient object in $\Ee$ to be a direct sum of objects of the form $C$. 
In view of the expressions of $U$ and $V$ in \eqref{expressUV}, this property is provided by the following lemma and this will complete the proof of the theorem in the case of the directional choice we have made for the pictures above.

\begin{lemma}
Consider, as before, the category $\Ee = \mc A_2$ with objects $A,B,C$ fitting into an exact triangle
$$
A \rightarrow B \rightarrow C \rightarrow A[1] .
$$
Let $T_A$ be the twist functor around the object $A$. 
Then 
$$
T_A(A) = 0, \;\;\;\; T_A(B) = C,  \;\;\;\; T_A(C) = C.
$$
\end{lemma}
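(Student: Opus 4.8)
The plan is to read off each value directly from the defining exact triangle of the twist functor, together with the three $\mathrm{Hom}$-complexes $\mathrm{Hom}^\bullet(A,A)$, $\mathrm{Hom}^\bullet(A,B)$, $\mathrm{Hom}^\bullet(A,C)$, all of which are immediate from the explicit DG model of $\mc A_2$ fixed in Subsection~\ref{subsec_coeffcat}. Recall that for the adjunction with $F(X)=X\otimes A$ and $G=\mathrm{Hom}^\bullet(A,-)$ the twist $T_A(X)$ sits in the exact triangle
$$\mathrm{Hom}^\bullet(A,X)\otimes A\xrightarrow{\ \mathrm{ev}\ }X\to T_A(X)\to\mathrm{Hom}^{\bullet+1}(A,X)\otimes A,$$
where $\mathrm{ev}$ is the evaluation (counit) map. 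Thus in each case it suffices to identify $\mathrm{Hom}^\bullet(A,X)$ as a graded vector space and to pin down $\mathrm{ev}$, after which $T_A(X)$ is the cone.

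First I would record the elementary $\mathrm{Hom}$-computations. From the model, $\mathrm{End}(A)=\mathbf k$ concentrated in degree $0$ and $A$ has no higher self-extensions, so $\mathrm{Hom}^\bullet(A,A)=\mathbf k$ and the evaluation map $\mathbf k\otimes A\to A$ is the identity; its cone is $0$, giving $T_A(A)=0$. Next, $\Hom(A,C)=0$ with no higher $\mathrm{Ext}$'s between $A$ and $C$, so $\mathrm{Hom}^\bullet(A,C)=0$ and the triangle degenerates to $0\to C\to T_A(C)\to 0$, whence $T_A(C)=C$.

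For $T_A(B)$ I would use the defining triangle $A\xrightarrow{\iota}B\to C\to A[1]$ of the object $B$. Applying $\mathrm{Hom}^\bullet(A,-)$ yields a long exact sequence relating $\mathrm{Hom}^\bullet(A,A)=\mathbf k$, $\mathrm{Hom}^\bullet(A,B)$, and $\mathrm{Hom}^\bullet(A,C)=0$; it follows that $\mathrm{Hom}^\bullet(A,B)=\mathbf k$ in degree $0$, generated by the structure map $\iota\colon A\to B$. Hence the evaluation map $\mathrm{Hom}^\bullet(A,B)\otimes A\to B$ is exactly $\iota$, and $T_A(B)=\mathrm{Cone}(\iota)=C$ by the very same triangle.

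The only point requiring care --- and the main, if modest, obstacle --- is verifying that these $\mathrm{Hom}$-complexes carry no hidden higher cohomology and that each evaluation map is the one claimed (the identity for $A$, zero for $C$, the structural inclusion $\iota$ for $B$). This is a finite check carried out in the DG model of Subsection~\ref{subsec_coeffcat}, using that $A$, $B$, $C$ are the indecomposables of $\mc A_2$ and the listed morphism spaces $\Hom(A,C)=0$, $\Hom(C,A)=\mathbf k[-1]$, and $\mathrm{End}(A)=\mathrm{End}(C)=\mathbf k$.
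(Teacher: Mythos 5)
Your proposal is correct and follows essentially the same route as the paper: identify $\mathrm{Hom}^\bullet(A,X)$ for $X=A,B,C$ and take the cone over the evaluation map in the defining triangle of the twist, getting the cone of the identity (zero), the cone of the structure map $A\to B$ (namely $C$), and an untouched $C$, respectively. Your only addition is deriving $\mathrm{Hom}^\bullet(A,B)=\mathbf k\cdot\iota$ via the long exact sequence from $A\to B\to C\to A[1]$, where the paper simply reads this off from the DG model and notes the evaluation is ``tautologically the standard map''; this is a harmless elaboration of the same argument.
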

\begin{proof}
By definition, $T_A(X)$ fits into the exact triangle as the  cone
$$
\Hom(A,X)\otimes A\rightarrow X \rightarrow T_A(X)
$$
over the evaluation map.
For $X=A$, we have $\Hom(A,A) =\mathbf k\cdot  1$, so $T_A(A)$ is the cone on the identity map $A\rightarrow A$, thus it is zero. 

For $X=B$, we also have $\Hom(A,B)=\mathbf k$ and the map 
$$
A\cong \Hom(A,B)\otimes A \rightarrow B
$$
is tautologically the standard map. Its  cone is $C$. 

For $X=C$ we note that $\Hom(A,C)=0$, thus, the twisting terms vanish and $T_A(C)$ is isomorphic to $C$. 
\end{proof}

As described above, in view of the expressions \eqref{expressUV} and
Lemma \ref{coefftwist}, this lemma completes the proof of Theorem \ref{twistsn}.

\subsection{Summary of $A_5\otimes A_2$ and further questions}
\label{subsec_summary}

We summarize the proof of Theorem \ref{thm_a5a2}, pointing out that the steps outlined in Subsection \ref{subsec_outline} are now filled in. 

\begin{itemize}
    \item 
From the B-side considerations of Subsection \ref{subsec_bside}, the category $\mc A_5\otimes \mc A_2$ over $\mK$ has a stability condition with the property of Proposition \ref{prop63}, that the action of $ \widetilde{SL(2,\mathbb Z)}$ permutes the subcategories of objects of different slopes. The generators for this action are the spherical twist functors. The action on the set of phases is transitive.

\item
Applying symmetries of the coefficient category --- geometrically a rotation by $\pi/3$ --- reduces the problem to finding spectral network representatives of semistable objects with phase in $[0,1/3)$.

\item
The proof for semistable objects of phase $\phi\in[0,1/3)$ is by induction on the number of spherical twists and dual twists necessary to get from phase $0$ to phase $\phi$. 

\item
The initiation of the recursion is given by the results of Subsection \ref{subsec_phase0}, giving the statement of Theorem \ref{thm_a5a2} for semistable objects of phase $0$, namely they have spectral network representatives. 

\item
For the induction step, we need to show that if Theorem  \ref{thm_a5a2} is known for a phase $\phi$, and if a spherical twist moves from $\phi$ to $\phi + \delta$, then Theorem  \ref{thm_a5a2} holds for $\phi + \delta$. This statement is given by Corollary \ref{inductiveconclusion} that depends on  Theorem \ref{twistsn}. The proof of that theorem has been completed in the previous subsection. 

\item
We note that the inductive process follows the picture in Figure \ref{fig_recursion}, indeed starting from each $\phi$ (aside from the initial one) there are two phases $\phi + \delta$ and $\phi + \delta '$ obtained from $\phi$ by the two generating spherical twists. In the pictures in the preceding subsection that choice corresponds to the choice of $\eta$ provided by Lemma \ref{twochoices}. 
\end{itemize}

Finally, let us formulate some open problems concerning the case of $A_5\otimes A_2$.
Notably, we have verified the main conjecture only in the case where the base is a regular hexagon and the stability condition on the coefficient category is also the most symmetric one.

\begin{prob}
Verify the main conjecture in the case where $(S,M)$ is a disk with six marked points and $\mc E=\mc A_2$ for more general configurations of marked points.
\end{prob}

One can hope that this is still within reach of the methods of this paper. This would mean that all spectral networks are still obtained by starting from the ones for some finite set of special phases and applying spherical twists (but not following the entire binary tree).

\begin{q}
What does the set of phases of semistable objects (mod 1) look like for non-regular hexagons? Is it finite, dense in some subset of $S^1$, or dense in all of $S^1$?
\end{q}

A probably quite difficult problem is to understand the entire 10-dimensional space of stability conditions of $\mc A_5\otimes \mc A_2$.

\begin{prob}
Determine $\mathrm{Stab}(\mc A_5\otimes \mc A_2)$ as a complex manifold with local homeomorphism to $\CC^{10}$ and, if possible, its wall-and-chamber structure.
Characterize those stability conditions which are of ``product type'', i.e. arise from a pair of stability conditions on $\mc A_5$ and $\mc A_2$.
\end{prob}

There are two known sources of stability conditions on $\mc A_5\otimes \mc A_2$: 1) from the identification with $D^b(E/(\ZZ/6))$ and slope stability, 2) from various finite-dimensional algebras $A$ with $D^b(A)\cong \mc A_5\otimes \mc A_2$ --- these each yield an open subset of $\mathrm{Stab}$. Besides $A_5\otimes A_2$, with various orientations on $A_5$, we have the algebra \eqref{beilinson10}, as well as many others obtained by mutation.
There is no overlap between the two, since for 1) the set of phases is dense, while for 2) it always has a gap around $0$.

\bibliographystyle{plain}
\bibliography{stabspec}

\Addresses

\end{document}